\newcommand{\footremember}[2]{
   \footnote{#2}
    \newcounter{#1}
    \setcounter{#1}{\value{footnote}}
}
\newcommand{\footrecall}[1]{
    \footnotemark[\value{#1}]
}
\def\input{symbols} \clearpage{\input{symbols} \clearpage}
 \def\addsymbol #1: #2#3{$#1$ \> \parbox{5in}{#2 \dotfill \pageref{#3}}\\}
 \newtheorem{thm}{Theorem}[section]
 \newtheorem{theorem}[thm]{Theorem}
 \newtheorem{cor}[thm]{Corollary}
 \newtheorem{lem}[thm]{Lemma}
 \newtheorem{prop}[thm]{Proposition}
 \newtheorem{conj}[thm]{Conjecture}
 \newtheorem*{thm*}{Theorem}
 \newtheorem{conjecture}[thm]{Conjecture}
 \theoremstyle{definition}
 \theoremstyle{remark}
 \newtheorem{rem}[thm]{Remark}
 \numberwithin{equation}{section}
\newcommand{\kommentar}[1]{}
\let \nc \newcommand
\let \rnc \renewcommand
\nc {\R}{{\mathbb{R}}} \nc {\C}{{\mathbb{C}}} \nc {\Z}{{\mathbb{Z}}}\nc {\Q}{{\mathbb{Q}}}
\nc {\bd}{\begin{description}} \nc {\ed}{\end{description}} \nc {\bi}{\begin{itemize}} \nc {\ei}{\end{itemize}}
\nc {\be}{\begin{enumerate}} \nc {\ee}{\end{enumerate}} \nc {\bdm}{\begin{displaymath}} \nc
\nc {\bea}{\begin{eqnarray*}} \nc {\eea}{\end{eqnarray*}} \nc {\baa}{\begin{alignat*}}
\nc {\eaa}{\end{alignat*}} \nc {\bsp}{\begin{split}} \nc {\esp}{\end{split}} \nc {\beq}{\begin{equation}} \nc
\nc {\btab}{\begin{tabular}} \nc {\etab}{\end{tabular}} \nc {\ba}{\begin{array}} \nc
\newcommand{\relmiddle}[1]{\mathrel{}\middle#1\mathrel{}}
\newcommand{\fL}{{L}}
\newcommand{\cA}{{\mathcal A}}
\newcommand{\cC}{{\mathcal C}}
\newcommand{\cM}{{\mathcal M}}
\newcommand{\cG}{{\mathcal G}}
\newcommand{\cL}{{\mathcal L}}
\newcommand{\cH}{{\mathcal H}}
\newcommand{\cP}{{\mathcal P}}
\newcommand{\cQ}{{\mathcal Q}}
\newcommand{\fg}{\ensuremath{\mathfrak{g}}}
\newcommand{\calM}{{\mathcal M}}
\def\co{\colon\thinspace}
\nc{\LOT}[1]{L^2(\Omega^{0+1+2}(T;#1))} \nc{\LO}[2]{L^2(\Omega^{0+1+2}(#1;#2))} \nc{\HT}[1]{{H^{0+1+2}(T;#1)}}
\nc{\cHT}[1]{{\cH^{0+1+2}(T;#1)}} \nc{\stor}{D^2\times S^1}
\nc{\pardeg}{\operatorname{\mathrm{pardeg}}}
\nc{\parslope}{\operatorname{\mathrm{par}\mu}}
\nc{\Det}{\mathrm{Det}}
\nc{\U}{\operatorname{\mathrm{U}}}
\nc{\SO}{\operatorname{\mathrm{SO}}}
\nc{\SU}{\operatorname{\mathrm{SU}}} \nc{\Om}{\Omega}
\nc{\dist}{{\mathrm{dist}}}
\renewcommand{\deg}{\operatorname{\mathrm{deg}}}
\nc{\homeo}{\approx} \nc{\im}{\operatorname{\mathrm{im}}}
\nc{\diag}{\operatorname{\mathrm{diag}}}
\nc{\Crit}{\operatorname{\mathrm{Crit}}}
\nc{\grad}{\operatorname{\mathrm{grad}}}
\nc{\Hess}{\operatorname{\mathrm{Hess}}}
\nc{\Ad}{\operatorname{\mathrm{Ad}}}
\rnc{\O}{\operatorname{\mathrm{O}}}
\nc{\PD}{\operatorname{\mathrm{PD}}}
\nc{\ad}{\operatorname{\mathrm{ad}}}
\nc{\vol}{\operatorname{\mathrm{vol}}}
\newcommand{\rk}{\operatorname{\mathrm{rk}}}
\nc{\hol}{\operatorname{\mathrm{hol}}}
\nc{\re}{\operatorname{\mathrm{Re}}}
\nc{\Id}{\operatorname{\mathrm{Id}}}
\nc{\Mas}{\operatorname{\mathrm{Mas}}}
\nc{\SF}{\operatorname{\mathrm{SF}}}
\rnc{\ker}{\operatorname{\mathrm{ker}}}
\nc{\tr}{\operatorname{\mathrm{tr}}}
\nc{\sign}{\operatorname{\mathrm{sign}}}
\nc{\Spec}{\operatorname{\mathrm{Spec}}}\nc{\coker}{\operatorname{\mathrm{coker}}}
\rnc{\hom}{\operatorname{\mathrm{Hom}}}
\nc{\ob}{\operatorname{\mathrm{Ob}}}
\nc{\ch}{\operatorname{\mathrm{ch}}}
\nc{\Lie}{\operatorname{\mathrm{Lie}}}
\nc{\End}{\operatorname{\mathrm{End}}}
\nc{\Aut}{\operatorname{\mathrm{Aut}}} \nc{\lat}{(\frac{1}{2}\Z)^2}
\nc{\hz}{\tfrac{1}{2}\Z} \nc{\la}{\langle} \nc{\ra}{\rangle} \nc
\nc {\La}{\Leftarrow} \nc {\lla}{\longleftarrow}
\nc {\nach}{\rightarrow} \nc {\equ}{\Leftrightarrow} \nc
\nc {\lra}{\longrightarrow} \nc
\nc {\lmt}{\longmapsto} \nc
\nc {\tensor}{\otimes} \nc {\Rt}{\widetilde\R}
\nc {\contract}{\lrcorner} \nc{\sgn}{\operatorname{\mathrm{sgn}}}
\newcommand{\kom}[1]{}
\newcommand{\isom}{\cong}
\nc{\Cl}{\operatorname{\rm Cl}}
\nc{\CS}{\operatorname{\mathrm{CS}}}
\nc{\Ch}{\operatorname{\mathrm{Ch}}}
\nc{\Td}{\operatorname{\mathrm{Td}}}
\nc{\Rank}{\operatorname{\mathrm{Rank}}}
\nc{\GL}{\operatorname{\mathrm{GL}}}
\nc{\proj}{\operatorname{\mathrm{proj}}}
\newcommand{\bSigma}{\overline{\Sigma}}
\newcommand{\Diff}{\mathrm{Diff}}
\newcommand{\Sib}{\bSigma}
\newcommand{\frakg}{\mathfrak{g}}
\DeclareMathOperator{\Hom}{Hom}
\hfill\begin{minipage}{\dimexpr\textwidth-.5cm}
\xdef\tpd{\the\prevdepth}
\begin{document}

\title{The Witten--Reshetikhin--Turaev invariant for links in finite order mapping tori I}

\author{J{\o}rgen Ellegaard Andersen\footremember{qgm}{Supported in part
by the Danish National Research Foundation grant DNRF95 (Centre for
Quantum Geometry of Moduli Spaces -- QGM).}
 \footremember{itgp}{Supported in part by the European Science Foundation Network `Interactions of Low-Dimensional Topology and Geometry with Mathematical Physics' (ITGP).} \and Benjamin Himpel\footrecall{qgm} \and
S{\o}ren Fuglede
J{\o}rgensen\footrecall{qgm} \ 
\footremember{sweden}{Supported in part  by the Swedish Research Council
Grant 621--2011--3629.
} \and Johan Martens\footrecall{qgm} \ 
\footrecall{itgp} \and Brendan McLellan\footrecall{qgm} }

\date{\today}

\maketitle

\begin{abstract}
We state Asymptotic Expansion and Growth Rate conjectures for the Witten--Reshetikhin--Turaev invariants of arbitrary framed links in $3$-manifolds, and we prove these conjectures for the natural links in mapping tori of finite-order automorphisms of marked surfaces.  Our approach is based upon geometric quantisation of the moduli space of parabolic bundles on the surface, which we show coincides with the construction of the Witten--Reshetikhin--Turaev invariants using conformal field theory, as was recently completed by Andersen and Ueno.
\end{abstract}

\section{Introduction}

In this paper we study the asymptotic expansion of the Witten--Reshetikhin--Turaev (WRT) invariants of certain 3-manifolds with links, building on the work  
\cite{andersen95, andersen-himpel2011}, 
which also used the geometric construction of the WRT-TQFT via the geometric quantisation of moduli spaces of flat connections on surfaces as first considered by Axelrod--Della Pietra--Witten \cite{axelrod-dellapietra-witten91}, Hitchin \cite{hitchin90} and further explored by the first named author to prove asymptotic faithfulness \cite{andersen2006_Asymptoticfaithfulness}. For references concerning the study of the large level asymptotics of the WRT quantum invariants of closed  3-manifolds see the references in \cite{andersen95}.  Let us here first 
present a generalisation of the Asymptotic Expansion Conjecture to pairs consisting of a general closed oriented 3-manifold together with an embedded oriented framed link, labelled by level dependent labels.

\subsection*{The Asymptotic Expansion and Growth Rate Conjectures}
The quantum invariants and their associated Topological Quantum Field Theories were proposed in Witten's seminal paper \cite{witten-jones} on quantum Chern--Simons theory with a general compact simple simply-connected Lie group $K$, and subsequently constructed by Reshetikhin and Turaev  \cite{RT1,RT2,Turaev} for $K = {\rm SU}(2)$ and then for $K={\rm SU}(N)$ by Wenzl and Turaev in \cite{WenzlTuraev1,WenzlTuraev2}. These TQFTs were also constructed from skein theory by Blanchet, Habegger, Masbaum and Vogel in \cite{BHMV1, BHMV2} for $K = {\rm SU}(2)$ and for $K={\rm SU}(N)$ in \cite{Blanchet}. We will denote these TQFTs for $K={\rm SU}(N)$ by $Z_N^{(k)}$. The WRT-TQFT associated to a general simple simply-connected Lie group $K$ will be denoted by $Z_K^{(k)}$, 
 e.g. $Z_N^{(k)} = Z_{{\rm SU}(N)}^{(k)}$.

The label set of the WRT-TQFT $Z_K^{(k)}$ theory is given as 
\begin{equation}\label{labelset}
 \Lambda^{(k)}_K = \{\, \lambda \in 
P_+ \, | \,
   0\le\langle\theta, \lambda\rangle \le k \, \},
\end{equation}
where $P_+$ is the set of dominant integral weights of $\mathfrak{k}$, the Lie algebra of $K$. Here $\langle\phantom{a}, \phantom{a}\rangle$ is the normalized Cartan--Killing form defined to be a constant multiple of 
the Cartan--Killing form such that
$  
 \langle\theta ,\, \theta\rangle = 2, 
$
for the longest root $\theta$ of $\mathfrak{k}$.  We will use $\langle\phantom{a}, \phantom{a}\rangle$ at various places throughout the text to identify weights and coweights.

Let $X$ be an oriented closed 3-manifold and let $L$ be a framed link contained in $X$. For notational purposes pick an ordering of the components of $L = L_1 \cup \ldots \cup L_n$. Let $\overline{\lambda}^{(k)} = (\lambda^{(k)}_1,\ldots, \lambda^{(k)}_n)$, be a labelling of the components of $L$ which is $k$-dependent (possibly only for $k$ forming a sub-sequence of $\mathbb{N}$). 
In fact, throughout this paper we will restrict to the simple example $\lambda^{(k)}_i = \lambda_i s$ for $k$-independent $\lambda_i\in \Lambda_N^{(k_0)}$, with $k=sk_0$ for some fixed $k_0$.  After identifying the $\lambda_i$ with elements in the Cartan algebra of $\mathfrak{su}(N)$ using $\langle\phantom{a}, \phantom{a}\rangle$ we denote the conjugacy class in $\operatorname{SU}(N)$ containing $e^{\lambda_i}$ as $c_i$.

We conjecture that the asymptotic expansion of the Witten--Reshetikhin--Turaev invariant of $(X,\fL,\allowbreak \overline{\lambda}^{(k)})$ associated with the quantum group $U_q(\fg)$ at the root of unity 
 $q=e^{2\pi i/\tilde k}$, 
 ${\tilde k} =k+h^{\vee}$, $k$ being the level, $h^{\vee}$ the dual Coxeter number, and $\mathfrak{g}$ the Lie algebra of the complex reductive group $K_{\mathbb{C}}$ has the following form.

\begin{conj}[Asymptotic Expansion Conjecture for triples $(X,\fL, c)$]\label{conj:AEC}
There exist functions of $\overline{c}=(c_1,\dots,c_n)$ (depending on $X,L$), 
$d_{j,r}(\overline{c}) \in \Q$ and
$b_{j,r}(\overline{c}) \in \C$ for $r=1,\ldots u_j(\overline{c})$, $j=1, \ldots, m(\overline{c})$, and $a_{j,r}^p(\overline{c}) \in  \C$ for
$j=1, \ldots, v(\overline{c})$,\, $p=1,2,\ldots$, such that the asymptotic
expansion of $Z^{(k)}_K(X, L, \overline{\lambda}^{(k)})$ in the limit $k \rightarrow \infty$ is
given by
\[Z^{(k)}_K(X, L, \overline{\lambda}^{(k)}) \sim \sum_{j=1}^{v(\overline{c})} e^{2\pi i {\tilde k} q_j} \sum_{r=1}^{u_j(\overline{c})}{\tilde k}^{d_{j,r}(\overline{c})}
b_{j,r}(\overline{c}) \left( 1 + \sum_{p=1}^\infty a_{j,r}^p(\overline{c}) {\tilde k}^{-p/2}\right), \]
where $q_1, \ldots, q_{v(\overline{c})}$ are the finitely many
different values of the Chern--Simons functional on the space
of flat $K$-connections on $X\setminus L$ with meridional holonomy around $L_i$ contained in $c_i$, $i=1, \ldots, n$.
\end{conj}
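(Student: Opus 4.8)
\emph{Sketch of proof for the finite-order case.}
The plan is not to prove Conjecture~\ref{conj:AEC} in general but to establish it for the pairs treated in this paper: $X$ the mapping torus of a finite-order automorphism $\phi$ of a marked surface $(\Sigma,p_1,\dots,p_n)$ and $L\subset X$ the natural link swept out by the orbits of the marked points, with the colouring $\lambda_i^{(k)}=\lambda_i s$. The point of departure is that for such $(X,L)$ the WRT invariant $Z_N^{(k)}(X,L,\overline\lambda^{(k)})$ equals, inside the TQFT, the trace of the quantum mapping-class representation $\rho_k(\phi)$ on the finite-dimensional vector space $V_k(\Sigma,\overline\lambda)$ attached to $\Sigma$ with the $p_i$ coloured by $\overline\lambda$ (when $\phi$ permutes the marked points one works with $\phi$-equivariant colourings). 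First I would invoke the Andersen--Ueno identification of the conformal field theory TQFT with the one built by geometric quantisation of the moduli space $\cM=\cM(\Sigma,\overline c)$ of flat $\SU(N)$-connections on $\Sigma$ with meridional holonomy at $p_i$ in $c_i$ --- equivalently, the moduli space of parabolic bundles. Under this identification $V_k(\Sigma,\overline\lambda)\cong H^0(\cM,\cL^k\otimes\cF)$ for the natural prequantum line bundle $\cL$ and a fixed, level-independent bundle $\cF$ (the metaplectic/half-density correction), and $\rho_k(\phi)$ becomes the geometric action induced by $\phi$, up to an explicit unitary scalar and a framing/central-charge anomaly phase that one tracks separately, exactly as in \cite{andersen95,andersen-himpel2011}.

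Second I would compute $\tr\rho_k(\phi)$ by the holomorphic Lefschetz fixed-point theorem --- in its equivariant Riemann--Roch / Atiyah--Singer $G$-index form --- for the finite-order holomorphic automorphism of $\cM$ induced by $\phi$. Its fixed-point set decomposes into finitely many components $\cM_j$; each is, after the standard orbifold unfolding, a component of the moduli space of flat $\SU(N)$-connections on the link complement $X\setminus L$ with meridional holonomies in the $c_i$. The scalar by which the canonical lift of $\phi$ acts on the fibre of $\cL^k$ over $\cM_j$ is $e^{2\pi i k q_j}$, where $q_j$ is the value of the Chern--Simons functional at the corresponding flat connection on $X\setminus L$; the metaplectic $K_{\cM}^{1/2}$-twist in $\cF$ upgrades this to $e^{2\pi i\tilde k q_j}$, producing the prefactors of the conjecture and, at the same time, showing that the $q_j$ are precisely the finitely many Chern--Simons values appearing in the statement.

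Third comes the asymptotics in $k\to\infty$. The Lefschetz formula writes $\tr\rho_k(\phi)$ as a sum over the $\cM_j$ of $e^{2\pi i k q_j}$ times the integral over $\cM_j$ of the equivariant Chern character of $\cL^k\otimes\cF$ and the equivariant Todd class of $\cM$, divided by the equivariant Euler class of the normal bundle; writing $\Ch(\cL^k)=e^{k\omega}$ and expanding in powers of $k$ --- a stationary-phase/heat-kernel expansion, $\cL$ being positive along $\cM_j$ --- yields for each $j$ a leading power $\tilde k^{d_{j,r}}$ controlled by $\tfrac{1}{2}\dim_\R\cM_j$ and then a full asymptotic series in $\tilde k^{-1/2}$, the half-integer steps coming from the metaplectic correction and from odd-dimensional fixed strata as in the link-free case. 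Reinstating the anomaly phase and the Andersen--Ueno scalar gives the form of Conjecture~\ref{conj:AEC}, with $b_{j,r}(\overline c)$ emerging as a Reidemeister/Ray--Singer torsion of the flat connection on $X\setminus L$ weighted by a spectral-flow sign, as expected semiclassically.

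I expect the main obstacle to be the geometry of $\cM$ and of the $\phi$-fixed loci: for general colourings $\cM$ is singular and $\phi$ need not have smooth or isolated fixed points, so the holomorphic Lefschetz theorem must either be applied only for those $(c_1,\dots,c_n)$ making $\cM$ and its fixed strata smooth orbifolds --- the admissible range of which is itself something to identify --- or be replaced by a stratified, Kawasaki-type equivariant index asymptotics. A more delicate bookkeeping issue is to make the parabolic weights at the $p_i$, which enter both the definition of $\cL$ over $\cM(\Sigma,\overline c)$ and the normalisation of the Chern--Simons functional on $X\setminus L$, consistent across the two constructions, so that the level shift $k\mapsto\tilde k$ appears uniformly and the comparison with the Andersen--Ueno isomorphism is exact.
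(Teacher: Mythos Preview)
Your overall strategy matches the paper's: identify $Z_N^{(k)}(X,L,\overline\lambda)$ with a trace on $H^0$ of a line bundle over the moduli space of parabolic bundles via Andersen--Ueno, then apply a Lefschetz fixed-point theorem and read off the Chern--Simons phases from the action on the fibre over each fixed component. However, several of your details are wrong or unnecessarily complicated compared with what the paper actually does.

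First, there is no metaplectic correction bundle $\mathcal{F}$ in this story. The Andersen--Ueno isomorphism factors the TQFT vector space as $(\mathcal{V}^\dagger_{\mathrm{ab}})^{-\frac{1}{2}\zeta}(\Sigma_\sigma)\otimes \mathcal{V}^\dagger_{N,k,\overline\lambda}(\Sigma_\sigma,\cP)$, and the second factor is identified \emph{exactly} with $H^0(\mathcal{M}_{\overline\alpha},\mathcal{L}^k_{\CS})$ --- no $K_{\mathcal{M}}^{1/2}$ twist. The central-charge contribution is the scalar $\Det(f)^{-\frac{1}{2}\zeta}$ coming from the one-dimensional abelian factor, not from a half-form bundle on the moduli space. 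Consequently your mechanism for upgrading $e^{2\pi i k q_j}$ to $e^{2\pi i\tilde k q_j}$ via $K_{\mathcal{M}}^{1/2}$ is incorrect; the fibre action is $e^{2\pi i k q_j}$ (this is Lemma~\ref{tracelem}), and any shift to $\tilde k$ in the final statement comes from expanding $\Det(f)^{-\frac{1}{2}\zeta}$ and absorbing constants into the $b_{j,r}$.

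Second, the paper obtains an \emph{exact} formula, not merely an asymptotic one: $Z_N^{(k)}=\Det(f)^{-\frac{1}{2}\zeta}\sum_\gamma e^{2\pi i k q_\gamma} k^{d_\gamma}P_\gamma(1/k)$ with $P_\gamma$ a genuine polynomial. No stationary-phase or heat-kernel expansion is needed; one simply expands $\ch(\mathcal{L}^k|_{\mathcal{M}^\gamma})=\exp(k\Omega|_{\mathcal{M}^\gamma})$ as a finite sum since $\mathcal{M}^\gamma$ is finite-dimensional. There are no odd-dimensional fixed strata (they are complex subvarieties) and no half-integer powers of $k$ arise from the localisation; the $\tilde k^{-p/2}$ in the conjecture is an artefact of the general formulation, and here only integer powers appear.

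Third, the obstacle you flag --- singularities of $\mathcal{M}_{\overline\alpha}$ and its fixed loci --- is exactly what the paper's choice of tool is designed to handle. Rather than the smooth holomorphic Lefschetz theorem or a Kawasaki orbifold index, the paper uses the Baum--Fulton--MacPherson--Quart Lefschetz--Riemann--Roch theorem for projective varieties, which applies directly without any smoothness hypothesis. Combined with the vanishing of higher cohomology of $\mathcal{L}^k_{\operatorname{pd}}$ (Theorem~\ref{cohvanishing}), the alternating trace reduces to the trace on $H^0$. So the anticipated difficulty dissolves once you use the right fixed-point theorem.
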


Here {\bf $\sim$} means {\bf asymptotic expansion} in the
Poincar\'{e} sense, which means the following: let
\[d(\overline{c}) = \max_{j,r} \{d_{j,r}(\overline{c})\}.\]
Then for any non-negative integer $P$, there is a $C_P \in \R$
such that
\[\left\lvert Z^{(k)}_K(X, L, \overline{\lambda}^{(k)}) - \sum_{j=1}^{v(\overline{c})} e^{2\pi i {\tilde k} q_j} \sum_{r=1}^{u_j(\overline{c})}{\tilde k}^{d_{j,r}(\overline{c})}
b_{j,r}(\overline{c}) \left( 1 + \sum_{p=1}^P a_{j,r}^p(\overline{c}) {\tilde k}^{-p/2}\right) \right\rvert \leq C_P {\tilde k}^{d(\overline{c})-P-1}\]
for all levels $k$ that occur. Of course such a condition only puts
limits on the large $k$ behaviour of $Z^{(k)}_K(X, L, \overline{\lambda}^{(k)})$.

Note that a priori, the Chern--Simons functional of a manifold with boundary defines a section over the relevant moduli space of flat connections on the boundary. However, by specifying holonomy conditions on the boundary as in Conjecture~\ref{conj:AEC}, the framing structure of the link allows us to make sense of the Chern--Simons functional as real valued modulo integers. This is discussed in more detail in Appendix~\ref{chernsimonsboundaryappendix}.

Let us introduce 
$$d_j(\overline{c}) = \max_{r} d_{j,r}(\overline{c}).$$

For a flat $K$-connection $A$ on the $3$-manifold $X\setminus L$ with holonomy around $L_i$ given by $c_i$, $i=1, \ldots n$,
denote by $h_{A}^{i}$ the dimension of
the $i$-th $A$-twisted cohomology groups of $X\setminus L$ with Lie algebra coefficients. In analogy with the growth rate conjecture stated in \cite{andersen95} we offer the following conjecture for a topological formula for $d_j(\overline{c})$.

\begin{conjecture}[The Growth Rate Conjecture]\label{conj:GR}
Let $\mathcal{M}_{X,L,\overline{c}}^{q_j}$ be the union of components of the moduli space
of flat $K$-connections on $X\setminus L$, with holonomy around $L_i$ given by $c_i$, $i=1, \ldots n$, and which have Chern--Simons
value $q_{j}$. Then
$$
d_{j}(\overline{c}) = \frac{1}{2} \max_{\nabla \in \mathcal{M}_{X,L,\overline{c}}^{q_j}}
            \left( h_{\nabla,\operatorname{par}}^{1} - h_{\nabla}^{0} \right),
$$
where $\max$ here means the maximum value
$h_{\nabla,\operatorname{par}}^{1} - h_{\nabla}^{0}$ attains on a non-empty Zariski open subset
of $\mathcal{M}_{X,L,\overline{c}}^{q_j}$ on which $h_{\nabla,\operatorname{par}}^{1} - h_{\nabla}^{0}$ is constant.
\end{conjecture}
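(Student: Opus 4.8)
In the setting of this paper the target is a mapping torus $X = T_\phi = \Sigma\times[0,1]/(x,1)\sim(\phi(x),0)$ of a finite-order mapping class $\phi$ of a marked surface $(\Sigma,\{p_i\})$, and $L$ is the image of $\{p_i\}\times[0,1]$, the $i$-th component carrying the label $\lambda_i s$ and hence (after exponentiating) the conjugacy class $c_i$. The plan is to write $Z_K^{(k)}(X,L,\overline{\lambda}^{(k)})$ as a trace and compute its large-$k$ asymptotics by localisation. By the gluing axiom of the modular functor, $Z_K^{(k)}(X,L,\overline{\lambda}^{(k)}) = \tr\bigl(Z_K^{(k)}(\phi)\bigr)$, the trace of the operator induced by $\phi$ on the conformal block space $V_K^{(k)}(\Sigma,\overline{\lambda})$ (up to a standard framing correction, affecting only an overall root-of-unity phase). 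By the Andersen--Ueno identification of the conformal-field-theoretic WRT-TQFT with the one built from geometric quantisation, $V_K^{(k)}(\Sigma,\overline\lambda)$ is $H^0(\cN,\cL^k\otimes\cF)$, where $\cN = \cN(\Sigma,\overline c)$ is the moduli space of semistable parabolic bundles on $\Sigma$ with parabolic data fixed by the $\lambda_i$, $\cL$ is the determinant (prequantum) line bundle, and $\cF$ is the metaplectic/half-form correction implementing the shift $k\mapsto\tilde k = k+h^\vee$; moreover $Z_K^{(k)}(\phi)$ is the operator induced by the holomorphic automorphism that $\phi$ determines on $\cN$ together with its canonical lift to $\cL$ (up to a projective scalar, which only affects the constants $b_{j,r}$ and a finite-order phase).

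The key step is then a fixed-point analysis of $\tr\bigl(Z_K^{(k)}(\phi)\bigr)$. Because $\phi$ has finite order, Andersen's theory of the asymptotics of the quantum representations \cite{andersen2006_Asymptoticfaithfulness, andersen-himpel2011} presents $Z_K^{(k)}(\phi)$ as the geometric-quantisation operator up to Toeplitz operators of lower order, so that the trace asymptotics localise to the fixed-point locus $\cN^{\phi}$. The first geometric input is the identification $\cN^{\phi}\cong\cM_{X,L,\overline c}$ of $\phi$-fixed parabolic bundles with gauge-equivalence classes of flat $K$-connections on $X\setminus L$ with meridional holonomy around $L_i$ in $c_i$: a $\phi$-equivariant parabolic bundle on $\Sigma$ is exactly the data needed to build such a connection on the mapping torus. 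Under this identification the $\phi$-eigenvalue on the fibre of $\cL^k$ over a component $F\subset\cN^{\phi}$ equals $e^{2\pi i k\,\CS(\nabla_F)}$; including the $\cF$-correction (the $k\mapsto\tilde k$ shift) and the framing-dependent lift of $\CS$ on a link complement discussed in Appendix~\ref{chernsimonsboundaryappendix}, this is precisely $e^{2\pi i\tilde k q_j}$ when $F\subset\cM^{q_j}_{X,L,\overline c}$. This produces the decomposition of the trace over the finitely many phases appearing in Conjecture~\ref{conj:AEC}.

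It remains to extract $d_j(\overline c)$. On the smooth locus one applies the Atiyah--Bott holomorphic Lefschetz fixed-point formula (with a stationary-phase argument at the singular strata, controlling the Toeplitz errors uniformly in $k$): the contribution of a component $F\subset\cN^{\phi}$ with Chern--Simons value $q_j$ is asymptotically
\[
e^{2\pi i\tilde k q_j}\int_{F}e^{\tilde k\,\omega_F}\wedge\frac{\ch\bigl(\phi|_{\cF|_F}\bigr)\,\Td(F)}{\Det\bigl(1-\phi|_{N_F}\bigr)},
\]
which grows like $\tilde k^{\dim_{\C}F}$. Hence the exponent attached to $q_j$ is the complex dimension of the top-dimensional such component. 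By the deformation theory of flat connections on the $3$-manifold $X\setminus L$ — the tangent complex has cohomology $h^0_\nabla, h^1_{\nabla,\operatorname{par}}, h^2, h^3$ with $h^3\cong h^0_\nabla$ and $h^2\cong h^1_{\nabla,\operatorname{par}}$ by Poincar\'e--Lefschetz duality — this complex dimension equals $\tfrac12\bigl(h^1_{\nabla,\operatorname{par}}-h^0_\nabla\bigr)$ on the Zariski-open stratum of $\cM^{q_j}_{X,L,\overline c}$ on which these numbers are locally constant, exactly as in the closed case of \cite{andersen95}. Maximising over components of $\cM^{q_j}_{X,L,\overline c}$ gives $d_j(\overline c) = \tfrac12\max_{\nabla\in\cM^{q_j}_{X,L,\overline c}}\bigl(h^1_{\nabla,\operatorname{par}}-h^0_\nabla\bigr)$.

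The main obstacle is that $\cN$ and the fixed locus $\cN^{\phi}\cong\cM_{X,L,\overline c}$ are in general singular and the $\phi$-action need not be transverse, so Atiyah--Bott cannot be applied verbatim: one must instead run a careful stationary-phase expansion of $\tr\bigl(Z_K^{(k)}(\phi)\bigr)$ near the reducible connections and near the non-transverse part of the fixed locus, since both the degeneracy of the transverse Hessian and the jumping of $h^1_{\operatorname{par}}$ and $h^0$ enter the precise power of $\tilde k$ — which is what forces the Zariski-open qualifier in the statement. A second delicate point is matching the combinatorially defined phase $e^{2\pi i\tilde k q_j}$ with the topological Chern--Simons invariant of the link complement, which requires the framing data of $L$ and is the content of Appendix~\ref{chernsimonsboundaryappendix}.
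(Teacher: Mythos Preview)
Your overall strategy matches the paper's: express the invariant as a trace of $f^*$ on $H^0(\mathcal{M}_{\overline{\alpha}},\mathcal{L}^k)$, localise to the fixed-point set $\mathcal{M}_{\overline{\alpha}}^f$, and identify the leading power of $k$ with the complex dimension of each fixed component. But several of your intermediate steps differ from (and are weaker than) what the paper actually does.

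First, the Toeplitz/stationary-phase layer is unnecessary. Since $f$ has finite order it preserves a complex structure $\sigma$, and $Z_N^{(k)}(f)$ \emph{is} the induced action on $H^0(\mathcal{M}_{\overline{\alpha}},\mathcal{L}^k_{\operatorname{CS}})$ exactly, not just up to lower-order Toeplitz operators. The paper then applies the Baum--Fulton--Quart Lefschetz--Riemann--Roch theorem (Theorem \ref{LRR}), which is valid on the singular projective variety $\mathcal{M}_{\overline{\alpha}}$ as it stands. This entirely sidesteps what you flag as your ``main obstacle'': there is no need for Atiyah--Bott plus ad hoc stationary-phase analysis near the singular locus. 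Second, your claimed identification $\mathcal{N}^{\phi}\cong \mathcal{M}_{X,L,\overline{c}}$ is not quite right: the restriction map $r:\mathcal{M}_{\Sigma_f,L,\overline{\alpha}}\to \mathcal{M}_{\overline{\alpha}}^f$ is only a $|Z(K)|$-fold cover over the irreducible locus (Section \ref{discussion}). This does not affect dimensions, but it matters for the phase computation, which the paper handles via Lemma \ref{tracelem}.

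The genuine gap is the step where you compute the complex dimension of a fixed component $F\subset\mathcal{M}_{\overline{\alpha}}^f$ as $\tfrac12(h^1_{\nabla,\operatorname{par}}-h^0_{\nabla})$. Your one-line appeal to ``Poincar\'e--Lefschetz duality on the tangent complex of $X\setminus L$'' does not establish this: the duality $h^3_c\cong h^0$, $h^2_c\cong h^1$ gives the Euler characteristic but not the dimension of the fixed locus directly, and you have not connected the tangent space of $F$ inside $\mathcal{M}_{\overline{\alpha}}$ to the $3$-manifold cohomology. The paper instead proves (Proposition \ref{goodjobsoeren}) via an explicit Wang-type exact sequence in \emph{parabolic} group cohomology that the $1$-eigenspace of $f^*$ on $T_{[\rho]}\mathcal{M}_{\overline{\alpha}}\cong H^1_{\operatorname{par}}(\pi_1(\Sigma^o),\Ad\rho)$ has real dimension $\dim H^1_{\operatorname{par}}(\pi_f,\Ad(\rho,g))-\dim H^0(\pi_f,\Ad(\rho,g))$. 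This is the missing computation in your sketch; the Wang sequence is where the parabolic condition is actually used.
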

Here $h^0_{\nabla}$ is the dimension of the $0$-th cohomology with twisted coefficients for the local system induced by the flat connection $\nabla$ on the adjoint bundle, and, following \cite{biswas}, we define $h_{\nabla,\operatorname{par}}^{1}$ to be the dimension  of the image of the $1$-st cohomology with twisted coefficients and compact support of this local system in the usual $1$-st cohomology with twisted coefficients.

\subsection*{Links in mapping tori}

We will in this paper prove these conjectures for $Z^{(k)}_N$ in the situation where  the 3-manifold $X$ admits the structure of a finite order mapping torus over a closed oriented surface $\Sigma$ of genus $\geq 2$, and the oriented framed link $L$ is induced from marked points on the surface in the following way.  Let $f \co \Sigma \to\Sigma$ be a diffeomorphism of $\Sigma$. The mapping torus $X=\Sigma_f$ 
is defined as
\beq \label{finordX}
X = (\Sigma \times I) / [(x,1) \sim (f(x),0)]
\eeq
with the orientation on $X$ given by the product orientation, and with the
standard orientation on the unit interval $I=[0,1]$.  We consider special links $\fL$ that wrap the natural fibre direction in $X$.  Let $\cP\subset \Sigma$ denote a finite $f$-invariant subset of $\Sigma$, i.e. $f(\cP)=\cP$.  Then,
\beq
\fL=(\cP\times I)/[(x,1) \sim (f(x),0)].
\eeq
Given a labelling $\overline{\lambda}$ of $\fL$ we get induced a labelling of the points $\cP$ on $\Sigma$.

The two dimensional part of the  WRT-TQFT we are considering is a modular functor. For the axioms of modular functor see e.g. \cite{Turaev}, \cite{Walker}, \cite{AU2, AU4, AU5}. A modular functor is a functor from the category of labelled marked surfaces to the category of finite dimensional vector spaces.

A marked surface is the following datum: $\bSigma= (\Sigma, \cP, V, W)$, where $\Sigma$ is a closed oriented surface, $\cP=\{p_1,\dots,p_n\}$ is a set of points on $\Sigma$, $V$ is a set of `projective' tangent vectors at the marked points (i.e. non-zero elements of $T_{p_i}\Sigma/\mathbb{R}_+$) and $W$ is a Lagrangian subspace of the first real cohomology of $\Sigma$. A labelling of $\bSigma$ is a map $\overline{\lambda}: \cP \rightarrow \Lambda: p_i\mapsto \lambda_i$, where $\Lambda$ is a finite label set specific to the modular functor in question.  
From now on we will assume that $f$ is an automorphism of the labelled marked surface $(\bSigma, \overline{\lambda})$.

Note that the link inside the mapping torus of an automorphism of a marked surface naturally inherits a framing.  
From the general axioms for a TQFT we have 
\beq\label{general}Z^{(k)}_K(\Sigma_f, L, \overline{\lambda})=\tr\left(Z^{(k)}_K(f):Z^{k}_K(\bSigma,\overline{\lambda})\rightarrow Z^{(k)}_K(\bSigma,\overline{\lambda})\right).\eeq

We shall in this paper use the gauge theory construction of the vector space $Z_{K}^{(k)}(\bSigma, \overline{\lambda})$ that the WRT-TQFT $Z_{K}^{(k)}$ associates to a labelled marked surface $(\bSigma, \overline{\lambda})$. Let us from now on in this paper specialise to the case $K={\rm SU}(N)$.  This allows us to use the work of Andersen and Ueno \cite{AU2,AU4,AU5,andersen-ueno2007} as follows: if ${\mathcal V}^\dagger_{N,k}$ is the vacua modular functor constructed in \cite{AU2}, 
then the main result of \cite{AU5} states

\begin{theorem}[Andersen \& Ueno]
For any $\Lambda^{(k)}_N=\Lambda^{(k)}_{\operatorname{SU}(N)}$-labelled marked surface $(\Sib, \overline{\lambda})$, there is a natural isomorphism
$$ I_{N,k} : Z_N^{(k)}(\Sib,\overline{\lambda}) \rightarrow {\mathcal V}^\dagger_{N,k}(\Sib, \overline{\lambda})$$
which is an isomorphism of modular functors.
\end{theorem}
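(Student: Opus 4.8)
\emph{Proof strategy.} The plan is to exhibit both $Z_N^{(k)}(\Sib,\overline\lambda)$ and $\mathcal V^\dagger_{N,k}(\Sib,\overline\lambda)$ as the values of modular functors with the common label set $\Lambda^{(k)}_N$, and then to appeal to the rigidity principle that a modular functor is determined, up to canonical isomorphism, by its genus zero data together with the gluing (factorisation) axioms; it then suffices to produce an isomorphism at the level of genus zero surfaces with marked points that respects the fusion and braiding structure, and to propagate it naturally through pair-of-pants decompositions. Unwinding the two constructions, on the Reshetikhin--Turaev side $Z_N^{(k)}$ is built --- via skein theory, or equivalently from the representation theory of $U_q(\mathfrak{sl}_N)$ at the root of unity $q=e^{2\pi i/\tilde k}$, $\tilde k=k+N$ --- from the semisimplification of the category of tilting modules, a modular tensor category with simple objects indexed by $\Lambda^{(k)}_N$ and prescribed fusion rules, braiding and ribbon twist; on the conformal field theory side $\mathcal V^\dagger_{N,k}$ is the sheaf of vacua of the level $k$ $\widehat{\mathfrak{sl}}_N$ WZW model (Tsuchiya--Ueno--Yamada), equipped with its projectively flat connection over the moduli of pointed curves, whose genus zero data is the modular tensor category of integrable level $k$ highest weight $\widehat{\mathfrak{sl}}_N$-modules.

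The genus zero equivalence I would obtain by importing the Kazhdan--Lusztig--Finkelberg equivalence between the category of integrable level $k$ $\widehat{\mathfrak{sl}}_N$-modules and the semisimplified quantum group category at the relevant root of unity, matching the two copies of $\Lambda^{(k)}_N$ together with the fusion rules, the associativity and commutativity constraints, and the ribbon element; one must in addition match the central charge
\[
c_{N,k}=\frac{k(N^2-1)}{k+N},
\]
which governs the framing anomaly on both sides. The delicate point is that Finkelberg's argument is known to require care at roots of unity, so I would treat the semisimplification (the quotient by negligible morphisms) and the verification of the ribbon and twist signs very explicitly, and then upgrade this equivalence of abstract ribbon categories to one compatible with the full genus zero modular functor structure --- in particular the propagation of vacua and the normalisation encoded in the ``$\dagger$''.

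It then remains to bootstrap from genus zero to arbitrary labelled marked surfaces: any $(\Sib,\overline\lambda)$ admits a pants decomposition, and the gluing axioms realise both $Z_N^{(k)}(\Sib,\overline\lambda)$ and $\mathcal V^\dagger_{N,k}(\Sib,\overline\lambda)$ as the same direct sum, over admissible $\Lambda^{(k)}_N$-colourings of the cutting curves, of tensor products of three-pointed-sphere spaces, so the genus zero isomorphism --- being compatible with the pentagon, the hexagon and the $S$- and $T$-matrices --- induces an isomorphism $I_{N,k}$ independent of the chosen decomposition, which one then verifies commutes with disjoint union, with gluing, and with the action of homeomorphisms. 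I expect the main obstacle to be precisely this last compatibility in positive genus and with the full mapping class group: the genus zero categorical equivalence only determines matters up to scalars, and matching the curve operators, the Dehn twist eigenvalues and the overall normalisation globally --- so that one obtains an isomorphism of modular functors, intertwining the two projectively flat connections and hence the two mapping class group representations, rather than merely a projective isomorphism --- is where the real work lies; it is here that one needs the ``modular functors are determined by their genus zero data'' principle in a form strong enough to transport naturality along every gluing.
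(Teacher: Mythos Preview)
This theorem is not proved in the present paper: it is quoted as ``the main result of \cite{AU5}'' and used as a black box, with the surrounding discussion only unpacking its consequences (in particular the factorisation \eqref{combitrace} and the identification of the abelian factor $\Det(f)^{-\frac12\zeta}$). So there is no in-paper proof to compare your attempt against.

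That said, your outline is essentially the architecture of the Andersen--Ueno programme \cite{AU2,AU4,AU5}: the rigidity principle you invoke is precisely the content of \cite{AU4} (``Modular functors are determined by their genus zero data''), and the passage from a genus-zero identification to an isomorphism of full modular functors is the heart of \cite{AU5}. One point where your sketch diverges from what Andersen--Ueno actually do: you propose to match the genus-zero data via the Kazhdan--Lusztig/Finkelberg equivalence of ribbon categories. Andersen and Ueno instead compare the genus-zero modular-functor data directly (fusion rules, $F$- and $R$-matrices, twists, duality), using the skein-theoretic description on the Reshetikhin--Turaev side and the explicit TUY construction on the conformal blocks side, rather than importing a categorical equivalence whose status at small roots of unity is, as you note, delicate. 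Your concern that the hard part is upgrading a projective genus-zero match to a genuine isomorphism of modular functors compatible with all gluings and mapping class group actions is well placed --- that is exactly what \cite{AU4,AU5} are devoted to --- but it is handled there by an axiomatic reconstruction argument, not by transporting the Kazhdan--Lusztig equivalence.
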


By Definition 11.3 in \cite{AU2}, ${\mathcal V}^\dagger_{N,k}(\Sib, \overline{\lambda})$ is the space of covariantly constant sections of a bundle equipped with a flat connection over the Teichm\"uller space ${\mathcal T}_{\Sib}$ of the marked surface $\Sib$ (see \cite[\S 3]{AU2} for a discussion of ${\mathcal T}_{\Sib}$).   Further, 
by Remark 11.4 of \cite{AU2}, we have for any point $\sigma$ in ${\mathcal T}_{\Sib}$ (giving rise to the Riemann surface $\Sigma_{\sigma}$) that
$$ {\mathcal V}^\dagger_{N,k}(\Sib, \overline{\lambda}) \cong({\mathcal V}^\dagger_{\mathrm{ab}})^{-\frac12 \zeta}(\Sigma_\sigma) \otimes {\mathcal V}^\dagger_{N,k,\overline{\lambda}}(\Sigma_\sigma,\cP) .$$  
Here $\zeta$ is the \emph{central charge} of the Wess--Zumino--Novikov--Witten (WZNW) conformal field theory, i.e. $$\zeta=\frac{k\dim(K)}{k+h^{\vee}},$$  $({\mathcal V}^\dagger_{\mathrm{ab}})^{-\frac12 \zeta}(\Sigma_\sigma)$ is the fibre over $\sigma$ of a certain line bundle over ${\mathcal T}_{\Sib}$ (depending on $W$), defined in Theorem 11.3 of \cite{AU2}, and ${\mathcal V}^\dagger_{N,k,\overline{\lambda}}(\Sigma_\sigma,\cP)$ is the space of vacua  or conformal blocks for the WZNW model for the curve $\Sigma_\sigma$ (see Section \ref{sectionconfblocks}).

From now on we will assume that $f$ is of finite order $m$. 
Then there exists $\sigma \in {\mathcal T}_{\Sib}$ which is a fixed point for $f$. Let us also denote by $f$ the element $(f,0)$ in the extended mapping class group of $\Sib$. Then by Remark 11.4 in \cite{AU2} we have that 

\begin{align}\label{combitrace}
\tr(Z_N^{(k)}(f))  &=  \tr\Big(f^* :  ({\mathcal V}^\dagger_{\mathrm{ab}})^{-\frac12 \zeta}(\Sigma_\sigma)  \rightarrow ({\mathcal V}^\dagger_{\mathrm{ab}})^{-\frac12 \zeta}(\Sigma_\sigma)\Big)\\
&\phantom{={}} \cdot \tr \Big({\mathcal V}^\dagger_{N,k,\overline{\lambda}}(f) : {\mathcal V}^\dagger_{N,k,\overline{\lambda}}(\Sigma_\sigma,\cP) \rightarrow {\mathcal V}^\dagger_{N,k,\overline{\lambda}}(\Sigma_\sigma,\cP)\Big). \nonumber
\end{align}
Let use the notation
\beq\label{abcontrib}\Det(f)^{-\frac12 \zeta} = \tr(f^* : ({\mathcal V}^\dagger_{\mathrm{ab}})^{-\frac12 \zeta}(\Sigma_\sigma) \rightarrow ({\mathcal V}^\dagger_{\mathrm{ab}})^{-\frac12 \zeta}(\Sigma_\sigma)).\eeq
The factor $\Det(f)^{-\frac12 \zeta}$ was computed explicitly in \cite[Theorem 5.3]{andersen95} in terms of the Seifert invariants of $X$.  
We shall denote for short
$$\tr\left({\mathcal V}^\dagger_{N,k,\overline{\lambda}}(f)\right) = \tr \left({\mathcal V}^\dagger_{N,k,\overline{\lambda}}(f) : {\mathcal V}^\dagger_{N,k,\overline{\lambda}}(\Sigma_\sigma,\cP) \rightarrow {\mathcal V}^\dagger_{N,k,\overline{\lambda}}(\Sigma_\sigma,\cP)\right).$$
In order to compute this trace, we will need an alternate description of the vector space ${\mathcal V}^\dagger_{N,k,\overline{\lambda}}(\Sigma_\sigma,\cP)$ and the action ${\mathcal V}^{\dagger}_{N,k,\overline{\lambda}}(f)$ of $f$ on it. Indeed, one can consider the algebraic stack $\mathfrak{M}_{\Sigma_{\sigma},\cP,P_i}$ of (quasi-) parabolic bundles on $\Sigma_{\sigma}$ (i.e. algebraic $K_{\mathbb{C}}=G$-bundles on $\Sigma_{\sigma}$ with a reduction of structure group to the parabolic subgroups $P_i$ at the marked points), and through a presentation of this stack involving the loop group of $G$, one can identify the spaces of conformal blocks with spaces of sections of line bundles $\mathcal{L}_{(k,\overline{\lambda})}$ on this stack, as was shown in \cite{pauly, laszlo-sorger}.  Moreover, each of these line bundles determines a stability condition on the stack, and the substack it selects has a so-called \emph{good moduli space}, which is a variety, a coarse moduli space for the moduli problem, polarised by that line bundle.  In turn these moduli spaces of (semi-)stable parabolic bundles can be identified with gauge-theoretic moduli spaces of flat connections on the punctured surface $\Sigma\setminus \cP$ with prescribed holonomy around the punctures.  

We shall denote these moduli spaces as $\mathcal{M}_{\Sigma_{\sigma},\cP,\overline{\alpha}}$, or $\mathcal{M}_{\overline{\alpha}}$ for short; here $\overline{\alpha}$ is $\frac{\overline{\lambda}}{k_0}$. From the gauge theory side they come equipped with a natural symplectic structure, and one can construct a \emph{pre-quantum line bundle} $\mathcal{L}_{\operatorname{CS}}^k$ for them, from (classical) Chern--Simons theory. In a suitable sense this can be identified with the polarising line bundle from the algebro-geometric perspective, which we shall denote by $\mathcal{L}^k_{\operatorname{pd}}$:
\begin{thm*}
We have
$$\mathcal{L}^k_{\operatorname{CS}}\cong \mathcal{L}^k_{\operatorname{pd}}.$$
\end{thm*}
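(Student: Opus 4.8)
The plan is to compare the two line bundles by exhibiting a canonical section, or rather by identifying both as the determinant of cohomology line bundle twisted appropriately by the parabolic data, and then checking the identification of the hermitian/holomorphic structures. First I would recall the construction of $\mathcal{L}^k_{\operatorname{pd}}$: on the moduli space $\mathcal{M}_{\overline\alpha}$ of semistable parabolic bundles, it arises as a power of the ample generator, which in the parabolic setting is a twisted determinant-of-cohomology bundle $\Det(E)^{\otimes ?}\otimes\bigotimes_i(\text{flag line bundles at }p_i)^{\otimes m_i}$, following Pauly and Laszlo--Sorger; the precise twisting weights are dictated by $(k,\overline\lambda)$ and the normalisation $\langle\theta,\theta\rangle=2$. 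On the gauge-theoretic side, $\mathcal{L}^k_{\operatorname{CS}}$ is built as the prequantum line bundle whose curvature is $k$ times the Atiyah--Bott--Goldman symplectic form $\omega$ on the space of flat connections on $\Sigma\setminus\cP$ with fixed conjugacy classes $c_i$ around the punctures; concretely one takes the Chern--Simons line bundle on the space of connections, which carries the natural $\mathcal{G}$-equivariant structure, and descends it to the moduli space using the holonomy/prescribed-residue conditions at the punctures as discussed in Appendix~\ref{chernsimonsboundaryappendix}.

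The key steps, in order, would be: (1) fix a point $\sigma\in\mathcal{T}_{\Sib}$ so that $\mathcal{M}_{\overline\alpha}$ acquires a complex structure and both bundles become holomorphic; (2) compute the first Chern class of each side — for $\mathcal{L}^k_{\operatorname{pd}}$ this is classical (the determinant line bundle has $c_1=$ a multiple of the Kähler class, with the parabolic correction terms contributing the flag-line-bundle classes), and for $\mathcal{L}^k_{\operatorname{CS}}$ it is $\frac{k}{2\pi}[\omega]$; (3) invoke the Mehta--Seshadri-type identification (which is implicit in the passage from conformal blocks to flat connections already cited in the excerpt) that sends the parabolic Kähler class precisely to $[\omega]$ with the weights $\overline\alpha=\overline\lambda/k_0$ matching the conjugacy classes $c_i$, so that the Chern classes agree; (4) upgrade the equality of Chern classes to an isomorphism of holomorphic line bundles by checking that the relevant moduli space is simply connected (or has no torsion in $H^1(\mathcal{O})$ / $\operatorname{Pic}^0$ that could obstruct this), which holds for moduli of parabolic bundles on a curve of genus $\geq 2$; and (5) pin down the remaining ambiguity (a nonzero scalar) by comparing the two metrics or the two natural hermitian structures — the Quillen metric on the determinant side versus the metric coming from the Chern--Simons cocycle — at a single well-chosen point, e.g.\ a point corresponding to a reducible or especially symmetric connection.

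I expect the main obstacle to be step (5), the matching of the \emph{hermitian} (equivalently, the precise holomorphic) structures rather than merely the underlying topological line bundles: identifying $c_1$'s is comparatively routine, but the Chern--Simons prequantum line bundle is defined via a somewhat transcendental cocycle on the infinite-dimensional space of connections, while $\mathcal{L}^k_{\operatorname{pd}}$ is defined algebraically via determinants of cohomology, and reconciling the two requires either a careful analysis of how the Chern--Simons action descends under the holonomy correspondence with fixed residues at the punctures (the parabolic/framing subtlety flagged in the excerpt), or an explicit computation of the Quillen-type metric on $\mathcal{L}_{\operatorname{pd}}$ and a check that its curvature and its behaviour under the mapping-class/gauge group match those of $\mathcal{L}_{\operatorname{CS}}$. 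A secondary technical point is to make sure the normalisations — the factor $\langle\theta,\theta\rangle=2$, the central charge $\zeta$, and the level-versus-$k_0$ bookkeeping ($k=sk_0$, $\overline\alpha=\overline\lambda/k_0$) — are threaded consistently through both constructions, since an off-by-a-factor there would break the isomorphism. Once these are in place, the isomorphism $\mathcal{L}^k_{\operatorname{CS}}\cong\mathcal{L}^k_{\operatorname{pd}}$ follows, and moreover it can be taken to be equivariant for the finite-order automorphism $f$, which is what is needed for the subsequent trace computations.
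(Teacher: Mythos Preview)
Your steps (1)--(4) are exactly the paper's argument: both bundles carry connections with curvature $\tfrac{k}{2\pi i}\Omega$ (for $\mathcal{L}^k_{\operatorname{CS}}$ this is Daskalopoulos--Wentworth; for $\mathcal{L}^k_{\operatorname{pd}}$ the paper cites the Quillen-metric computation of Biswas--Raghavendra via $\pi$-bundles), and since $\mathcal{M}_{\overline\alpha}$ is simply connected (proved separately in the paper) a line bundle with connection is determined up to isomorphism by its curvature, so the two are isomorphic as bundles with connection and hence as holomorphic line bundles.

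Your step (5), however, is not needed, and identifying it as ``the main obstacle'' is a misreading of the statement. The theorem asserts only the existence of an isomorphism $\mathcal{L}^k_{\operatorname{CS}}\cong\mathcal{L}^k_{\operatorname{pd}}$; there is no claim of a \emph{canonical} isomorphism or of matching hermitian metrics, so the residual scalar ambiguity is harmless and nothing has to be pinned down at a distinguished point. The paper does later need $f$-equivariance of this isomorphism, but that is handled as a separate proposition by a different argument (comparing the two lifts of $f$ at an $f$-invariant base connection and using projectivity of the moduli space), not by matching Quillen metrics. So you can simply drop step (5) and the attendant worries about reconciling transcendental versus algebraic hermitian structures.
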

Finally all of this combines under some minor conditions (see Sections \ref{csbundle} and \ref{sectionconfblocks}) to give
 
\begin{thm*}There is a natural isomorphism, canonical up to scalars,
$$ {\mathcal V}^\dagger_{N,k,\overline{\lambda}}(\Sigma_\sigma) \cong H^0(\cM_{\overline{\alpha}}, \mathcal{L}_{\operatorname{CS}}^k).$$
\end{thm*}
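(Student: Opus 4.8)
The plan is to build the claimed isomorphism by passing through the algebro-geometric description of conformal blocks and then identifying the two line bundles. First I would invoke the results of \cite{pauly, laszlo-sorger}, which identify the space of conformal blocks ${\mathcal V}^\dagger_{N,k,\overline{\lambda}}(\Sigma_\sigma)$ with the space of global sections $H^0(\mathfrak{M}_{\Sigma_\sigma,\cP,P_i}, \mathcal{L}_{(k,\overline\lambda)})$ of the natural line bundle on the stack of quasi-parabolic bundles. The next step is to descend from the stack to the coarse moduli space: the stability condition determined by $\mathcal{L}_{(k,\overline\lambda)}$ selects the open substack of semistable parabolic bundles, whose good moduli space is the projective variety $\cM_{\overline\alpha}$ (with $\overline\alpha = \overline\lambda/k_0$), and the line bundle descends to $\mathcal{L}^k_{\operatorname{pd}}$ on $\cM_{\overline\alpha}$. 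One must check that the restriction-from-stack-to-semistable-locus map and the descent-to-good-moduli-space map together induce an isomorphism on global sections; this is where the \emph{minor conditions} alluded to in the statement enter — one needs, e.g., that the unstable locus has codimension at least $2$ (so sections extend over the stack, by a Hartogs-type argument / normality of $\cM_{\overline\alpha}$), and that the parabolic weights $\overline\alpha$ are generic enough that semistable equals stable, so that the good moduli space is actually a fine-enough quotient for sections to descend. Under these hypotheses one concludes
\[
{\mathcal V}^\dagger_{N,k,\overline\lambda}(\Sigma_\sigma) \cong H^0(\mathfrak{M}_{\Sigma_\sigma,\cP,P_i}, \mathcal{L}_{(k,\overline\lambda)}) \cong H^0(\cM_{\overline\alpha}, \mathcal{L}^k_{\operatorname{pd}}).
\]

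The final ingredient is the preceding boxed theorem, $\mathcal{L}^k_{\operatorname{CS}} \cong \mathcal{L}^k_{\operatorname{pd}}$, which lets us rewrite the right-hand side as $H^0(\cM_{\overline\alpha}, \mathcal{L}^k_{\operatorname{CS}})$, giving the statement. I would be careful to track the sense in which these isomorphisms are \emph{canonical up to scalars}: the conformal-blocks/sections identification of \cite{pauly, laszlo-sorger} is canonical, the descent of line bundles is canonical, but the identification $\mathcal{L}^k_{\operatorname{CS}} \cong \mathcal{L}^k_{\operatorname{pd}}$ of the Chern--Simons prequantum bundle with the Pauly--determinant bundle is only canonical up to a scalar (indeed only up to a choice of how one trivialises over a base point of Teichmüller space, and up to the usual $\U(1)$ ambiguity in prequantisation), so the composite is canonical up to scalars, which is exactly what is asserted. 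It is worth noting that the statement is for a fixed $\sigma \in {\mathcal T}_{\Sib}$, so no issue of varying the complex structure arises here; that compatibility is dealt with separately when one later assembles the projectively flat connection.

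The main obstacle I expect is not any single deep theorem but rather the bookkeeping around the \emph{minor conditions}: one must pin down precisely which hypotheses on $N$, $k_0$, the labels $\overline\lambda$, and the number and configuration of marked points in $\cP$ guarantee simultaneously that (i) the semistable locus in the stack has complement of codimension $\geq 2$, so that $H^0$ over the stack agrees with $H^0$ over the semistable substack; (ii) $\cM_{\overline\alpha}$ is normal (or at least that sections of $\mathcal{L}_{(k,\overline\lambda)}$ on the semistable substack push forward isomorphically to sections on the good moduli space) — this typically follows from the GIT construction but needs a reference such as \cite{pauly} or the parabolic Narasimhan--Seshadri correspondence; and (iii) the weights are such that the translation between the CFT-normalised level $(k,\overline\lambda)$ and the gauge-theoretic parabolic weights $\overline\alpha = \overline\lambda/k_0$ with $k = s k_0$ is the integrality statement making $\mathcal{L}^k_{\operatorname{pd}}$ a genuine line bundle (rather than a fractional/orbifold one). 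Once these are isolated and cited — referencing Sections~\ref{csbundle} and \ref{sectionconfblocks} as the statement indicates — the proof is a short concatenation of the three isomorphisms above.
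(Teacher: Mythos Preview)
Your proposal follows essentially the same three-step route as the paper: identify conformal blocks with sections on the stack via Pauly and Laszlo--Sorger (Theorem~\ref{paulassor}), descend to the coarse moduli space to get sections of $\mathcal{L}^k_{\operatorname{pd}}$ (Theorem~\ref{cohvanishing}), then invoke $\mathcal{L}^k_{\operatorname{CS}}\cong\mathcal{L}^k_{\operatorname{pd}}$ (Corollary~\ref{bundleisom}).

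One correction concerning the ``minor conditions'': you speculate that genericity of weights (semistable $=$ stable) and a Hartogs argument are what is needed for the descent step, but the paper does not require this. It instead cites Teleman's quantisation-commutes-with-reduction theorem \cite[Thm.~9.6]{teleman} together with \cite[5.2]{pauly}, which yield $H^0(\mathfrak{M}_{\Sigma_\sigma,\cP,P_i},\mathcal{L}_{(k,\overline\lambda)})\cong H^0(\mathcal{M}_{\overline\alpha},\mathcal{L}^k_{\operatorname{pd}})$ without any genericity hypothesis on $\overline\alpha$. The actual conditions are the integrality ones: $\lambda_i=k\alpha_i\in\mathfrak{X}(P_i)$ and $\sum_i\lambda_i$ in the root lattice of $\operatorname{SL}(N,\mathbb{C})$, required respectively for $\mathcal{L}^k_{\operatorname{CS}}$ to exist on $\mathcal{M}_{\overline\alpha}$ (Theorem~\ref{liftunderconditions}) and for $\mathcal{L}_{(k,\overline\lambda)}$ to descend from the stack. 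Also note $\overline\alpha=\overline\lambda/k$, not $\overline\lambda/k_0$.
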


In Sections \ref{csbundle} and \ref{sectionconfblocks} we will construct an explicit action of $f$ on both of the line bundles occurring, covering its action on $\cM_{\overline{\alpha}}$ and establish that  this isomorphism is equivariant. From this we get that
\beq\label{maintrace}\tr\left({\mathcal V}^\dagger_{N,k,\overline{\lambda}}(f)\right) = \tr \Big(f^* : H^0(\cM_{\overline{\alpha}}, \mathcal{L}_{\operatorname{CS}}^k) \rightarrow H^0(\cM_{\overline{\alpha}}, \mathcal{L}_{\operatorname{CS}}^k)\Big).\eeq

Let $\mathcal{M}_{\Sigma_f,L,\overline{\alpha}}$ be the moduli space of flat connections on $\Sigma_f\setminus L$ whose holonomy around $L_i$ lies in the conjugacy class containing $e^{\alpha_i}$. 
We have the following main theorem of this paper.

\begin{theorem}\label{MainTheorem}
For $\overline{\lambda}=(\lambda_1,\ldots,\lambda_n)$ with all $\lambda_i\in\Lambda_{N}^{(k)}$, there exist unique polynomials $P_{\gamma}$ of degree $d_{\gamma}$,  such that
$$Z_N^{(k)}(\Sigma_f,L,\overline{\lambda}) = \Det(f)^{-\frac12 \zeta} \sum_{\gamma} e^{2 \pi i k q_{\gamma}} k^{d_{\gamma}} P_{\gamma}(1/k).$$ 
Here the sum is over all components $\gamma$ of $\mathcal{M}^f_{\overline{\alpha}}$, the $f$-fixed point locus of $\mathcal{M}_{\overline{\alpha}}$.   The number $q_{\gamma}$ is the value the Chern--Simons functional takes on any element of $\mathcal{M}_{\Sigma_f, L, \overline{\alpha}}$ restricting to the corresponding component of $\mathcal{M}_{\overline{\alpha}}$. If the component $\gamma$ is contained in the smooth locus of the moduli space $\mathcal{M}_{\overline{\alpha}}$ then 
$$P_\gamma (k) =  \exp{ (k \Omega\mid_{\mathcal{M}^\gamma_{\overline{\alpha}}})} \cup\operatorname{ch}(\lambda_{-1}^\gamma \mathcal{M}_{\overline{\alpha}})^{-1} \cup \operatorname{Td}(T_{\mathcal{M}^\gamma_{\overline{\alpha}}}) \cap [\mathcal{M}^\gamma_{\overline{\alpha}}],$$  where $\Omega$ is the K\"ahler form on $\mathcal{M}_{\overline{\alpha}}$, and $\lambda_{-1}^\gamma \mathcal{M}_{\overline{\alpha}}$ is a certain element in the $K$-theory of $\mathcal{M}_{\overline{\alpha}}$. 
In particular in the cases where $\mathcal{M}_{\overline{\alpha}}$ 
 is smooth we get a complete formula for the asymptotic expansion of $Z_N^{(k)}(\Sigma_f,L,\overline{\lambda})$, where each coefficient of a power of $k$ is expressed as a cohomology pairing on the moduli space $\mathcal{M}_{\overline{\alpha}}$.
\end{theorem}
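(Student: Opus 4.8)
The plan is to reduce the computation of $Z_N^{(k)}(\Sigma_f,L,\overline{\lambda})$ to a holomorphic Lefschetz fixed point computation on the moduli space $\mathcal{M}_{\overline{\alpha}}$, and then to read off the stated formula from the Atiyah--Bott theorem. Chaining the general TQFT identity \eqref{general}, the Andersen--Ueno isomorphism, the factorisation \eqref{combitrace}, and the equivariant identification $\mathcal{V}^\dagger_{N,k,\overline{\lambda}}(\Sigma_\sigma)\cong H^0(\mathcal{M}_{\overline{\alpha}},\mathcal{L}^k_{\operatorname{CS}})$ (whose construction and $f$-equivariance are carried out in Sections \ref{csbundle} and \ref{sectionconfblocks}), one obtains
\[ Z_N^{(k)}(\Sigma_f,L,\overline{\lambda})=\Det(f)^{-\frac12\zeta}\,\tr\!\Big(f^*:H^0(\mathcal{M}_{\overline{\alpha}},\mathcal{L}^k_{\operatorname{CS}})\to H^0(\mathcal{M}_{\overline{\alpha}},\mathcal{L}^k_{\operatorname{CS}})\Big), \]
in which the abelian prefactor $\Det(f)^{-\frac12\zeta}$ of \eqref{abcontrib} is already known explicitly by \cite[Theorem 5.3]{andersen95}. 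Everything thus comes down to evaluating the remaining trace, i.e.\ \eqref{maintrace}.

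To evaluate that trace I would apply the \emph{holomorphic Lefschetz fixed point theorem} to the finite-order automorphism $f$ of the polarised variety $(\mathcal{M}_{\overline{\alpha}},\mathcal{L}^k_{\operatorname{CS}})$. Three ingredients are needed. First, a Kodaira-type vanishing theorem giving $H^i(\mathcal{M}_{\overline{\alpha}},\mathcal{L}^k_{\operatorname{CS}})=0$ for $i>0$ and all relevant levels $k$, using the positivity of the Chern--Simons/polarising line bundle, so that $\tr(f^*\mid H^0)$ coincides with the full holomorphic Lefschetz number. Second, an identification of the fixed locus: via the mapping-torus construction a point of $\mathcal{M}^f_{\overline{\alpha}}$ is a flat connection on $\Sigma\setminus\cP$ preserved by $f$ up to gauge, which after a choice of lift in the fibre direction extends to an element of $\mathcal{M}_{\Sigma_f,L,\overline{\alpha}}$; hence the connected components $\gamma$ of $\mathcal{M}^f_{\overline{\alpha}}$ index the terms of the sum, and a component $\gamma$ lying in the smooth locus of $\mathcal{M}_{\overline{\alpha}}$ is a smooth closed subvariety with normal bundle $N_\gamma$ on which $df$ has no eigenvalue $1$. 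Third, the linearised $f$-action on the fibre of $\mathcal{L}^k_{\operatorname{CS}}$ over $\gamma$: since $\mathcal{L}_{\operatorname{CS}}$ is the Chern--Simons prequantum line bundle, this action is multiplication by $e^{2\pi i k q_\gamma}$, where $q_\gamma$ is the Chern--Simons value of the associated flat connection on $\Sigma_f\setminus L$; this value is constant along the connected $\gamma$ and is real modulo $\mathbb{Z}$ because of the framing of $L$ (cf.\ Appendix \ref{chernsimonsboundaryappendix}).

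Granting these, the Atiyah--Bott formula produces, for a component $\gamma$ contained in the smooth locus of $\mathcal{M}_{\overline{\alpha}}$, the contribution
\[ e^{2\pi i k q_\gamma}\int_{\mathcal{M}^\gamma_{\overline{\alpha}}}\exp\!\big(k\,\Omega\mid_{\mathcal{M}^\gamma_{\overline{\alpha}}}\big)\cup\operatorname{ch}\!\big(\lambda_{-1}^\gamma\mathcal{M}_{\overline{\alpha}}\big)^{-1}\cup\operatorname{Td}\!\big(T_{\mathcal{M}^\gamma_{\overline{\alpha}}}\big), \]
where $\lambda_{-1}^\gamma\mathcal{M}_{\overline{\alpha}}=\sum_j(-1)^j\Lambda^jN^*_\gamma$ is the conormal bundle of $\gamma$ with its $f$-equivariant structure --- which makes $\operatorname{ch}(\lambda_{-1}^\gamma\mathcal{M}_{\overline{\alpha}})$ invertible since $df$ has no eigenvalue $1$ on $N_\gamma$ --- and $\exp(k\,\Omega\mid_{\mathcal{M}^\gamma_{\overline{\alpha}}})$ is $\exp(c_1(\mathcal{L}^k_{\operatorname{CS}})\mid_{\mathcal{M}^\gamma_{\overline{\alpha}}})$ as $\mathcal{L}_{\operatorname{CS}}$ has first Chern class $[\Omega]$. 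This is exactly the polynomial the statement attaches to $\gamma$; expanding the exponential and pairing against $[\mathcal{M}^\gamma_{\overline{\alpha}}]$ exhibits it as $e^{2\pi i k q_\gamma}$ times a polynomial in $k$ of degree $d_\gamma=\dim_{\mathbb{C}}\mathcal{M}^\gamma_{\overline{\alpha}}$ (the top coefficient is nonzero because $\Omega$ restricts to a Kähler form on $\gamma$, so $\int_{\mathcal{M}^\gamma_{\overline{\alpha}}}\Omega^{d_\gamma}>0$). Summing over all components $\gamma$ of $\mathcal{M}^f_{\overline{\alpha}}$ and restoring $\Det(f)^{-\frac12\zeta}$ gives the theorem, with each $P_\gamma$ determined by this computation; polynomiality in $k$ is in particular the (terminating) asymptotic expansion.

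The main obstacle is the singularity of $\mathcal{M}_{\overline{\alpha}}$: the moduli space of semistable parabolic bundles is in general singular, so the Atiyah--Bott formula in the clean form above applies only to fixed components contained in the smooth locus --- which is precisely why the explicit cohomological formula for $P_\gamma$ is asserted only in that case. For a component $\gamma$ meeting the singular locus one must still extract a polynomial $P_\gamma$ of the asserted degree carrying the correct phase $e^{2\pi i k q_\gamma}$; here I would either compute on the algebraic stack $\mathfrak{M}_{\Sigma_\sigma,\cP,P_i}$ of (quasi-)parabolic bundles, or pass to an $f$-equivariant partial (Kirwan-type) desingularisation of $\mathcal{M}_{\overline{\alpha}}$ together with the pulled-back polarisation and invoke an equivariant Riemann--Roch/Lefschetz statement there, controlling the $k$-dependence through the stabilisation of higher direct images of the line bundle. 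A second, more technical point --- carried out in Sections \ref{csbundle} and \ref{sectionconfblocks} --- is to equip $\mathcal{L}^k_{\operatorname{CS}}$ and $\mathcal{L}^k_{\operatorname{pd}}$ with compatible $f$-equivariant structures, so that the isomorphism $\mathcal{V}^\dagger_{N,k,\overline{\lambda}}(\Sigma_\sigma)\cong H^0(\mathcal{M}_{\overline{\alpha}},\mathcal{L}^k_{\operatorname{CS}})$ is genuinely $f$-equivariant and the induced fibrewise action is exactly the Chern--Simons phase; pinning down that phase is what ties the geometric side to the Chern--Simons values $q_\gamma$ appearing in Conjectures \ref{conj:AEC} and \ref{conj:GR}.
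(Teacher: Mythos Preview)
Your overall strategy is correct and is essentially the paper's: reduce to the trace \eqref{maintrace} via \eqref{general}, \eqref{combitrace}, \eqref{abcontrib} and the $f$-equivariant identification of Sections \ref{csbundle}--\ref{sectionconfblocks}, then localise to the fixed components and identify the fibrewise action with the Chern--Simons phase via Lemma \ref{tracelem}.

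The substantive difference lies in how the singular case is handled. You invoke the Atiyah--Bott holomorphic Lefschetz theorem and then, for components meeting the singular locus of $\mathcal{M}_{\overline{\alpha}}$, propose to pass to the stack or to an $f$-equivariant Kirwan-type partial desingularisation. Neither route is carried out, and both would require real work: on a desingularisation one must control the $k$-dependence of the correction terms coming from the exceptional locus to recover a genuine polynomial in $k$ with the correct phase attached to each original component $\gamma$. The paper bypasses this entirely by applying, from the outset, the Lefschetz--Riemann--Roch theorem of Baum--Fulton--MacPherson--Quart for \emph{singular} projective varieties (Theorem \ref{LRR}). That theorem directly expresses the alternating trace as a sum over fixed components of $a_\gamma^k$ times a characteristic class $\operatorname{ch}(\mathcal{L}^k|_{M^\gamma})\cap \tau_\bullet\circ L_\bullet^\gamma(\mathcal{O}_M)$, valid without smoothness hypotheses; the polynomiality in $k$ of each summand then follows immediately from expanding $\operatorname{ch}(\mathcal{L}^k|_{M^\gamma})=\exp(k[\Omega]|_{M^\gamma})$, and the smooth formula \eqref{smoothcontribution} specialises to the Atiyah--Bott expression you wrote down. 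This is exactly why the explicit cohomological formula for $P_\gamma$ is stated only for $\gamma$ in the smooth locus, while existence and polynomiality hold unconditionally.

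Two smaller remarks. First, the higher cohomology vanishing you need is not classical Kodaira vanishing (which would require $\mathcal{M}_{\overline{\alpha}}$ smooth) but the result of Teleman and Pauly recorded as Theorem \ref{cohvanishing}; this is what reduces the alternating trace to the trace on $H^0$. Second, your claim that $d_\gamma=\dim_{\mathbb{C}}\mathcal{M}^\gamma_{\overline{\alpha}}$ with nonvanishing top coefficient is exactly right for $\gamma$ in the smooth locus, by positivity of $\Omega$; in the singular case the paper simply takes $d_\gamma$ to be the degree of the resulting polynomial, without a separate argument that it equals the dimension.
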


We will see in Section \ref{discussion} below that indeed the value the Chern--Simons functional takes on a connection in  $\mathcal{M}_{\Sigma_f,L,\overline{\alpha}}$ depends only on $\gamma$.
The theorem is proved in a way similar to the proof of the main theorem of \cite{andersen95}, namely by applying the
Baum--Fulton--MacPherson--Quart localisation theorem to compute (\ref{maintrace})
as a sum of contributions from each component of the fixed variety. This is then combined with an identification of the traces on the fibres of the line bundle $\mathcal{L}_{\operatorname{CS}}^k$ over each of these components.  It is exactly here that the use of the bundle $\mathcal{L}_{\operatorname{CS}}^k$ coming from Chern--Simons theory comes in, as this part of the contribution can be expressed as the integral of the classical Chern--Simons form over the mapping torus.
See Section \ref{endgame} for the details.
Our main theorem has the immediate following corollary:

\begin{cor}
\label{aecthm}
The Asymptotic Expansion Conjecture holds for the pairs $(X=\Sigma_f,L)$ obtained from finite order mapping tori and for any $k=sk_0$-dependent labelling ($s\in\mathbb{N}$) $$\overline{\lambda}^{(k)}=k\overline{\lambda},$$ where $\overline{\lambda}=(\lambda_1,\ldots,\lambda_n)$ and all $\lambda_i \in \Lambda_{N}^{(k_0)}$ for some fixed $k_0\in\mathbb{N}$. \end{cor}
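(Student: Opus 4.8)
The plan is to deduce Corollary~\ref{aecthm} directly from Theorem~\ref{MainTheorem} by matching the shape of the formula given there against the shape demanded by Conjecture~\ref{conj:AEC}. First I would observe that in the set-up of the corollary we are in precisely the situation treated by the main theorem: the labelling is $\overline{\lambda}^{(k)} = k\overline{\lambda}$ with each $\lambda_i \in \Lambda_N^{(k_0)}$, so the relevant parameter $\overline{\alpha} = \overline{\lambda}/k_0$ is $k$-independent, and hence so are the moduli spaces $\mathcal{M}_{\overline{\alpha}}$, $\mathcal{M}^f_{\overline{\alpha}}$, their components $\gamma$, and the associated data $d_\gamma$, $q_\gamma$, $P_\gamma$. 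By Theorem~\ref{MainTheorem} we therefore have the \emph{exact} identity
\[
Z_N^{(k)}(\Sigma_f, L, \overline{\lambda}^{(k)}) = \Det(f)^{-\frac12\zeta}\sum_\gamma e^{2\pi i k q_\gamma} k^{d_\gamma} P_\gamma(1/k),
\]
valid for all admissible levels $k = s k_0$.

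Next I would translate this into the Poincaré asymptotic form of Conjecture~\ref{conj:AEC}. Since the sum is finite and each $P_\gamma$ is a polynomial in $1/k$ of degree $d_\gamma$, the right-hand side is manifestly a finite sum of terms $e^{2\pi i k q_\gamma}$ times a Laurent polynomial in $1/k$; the prefactor $\Det(f)^{-\frac12\zeta}$ is a fixed nonzero scalar (computed in \cite[Theorem~5.3]{andersen95}) which simply gets absorbed into the leading coefficients $b_{j,r}$. I would group the components $\gamma$ according to the distinct values of $q_\gamma$ — there are finitely many, call them $q_1,\dots,q_v$ — so that $v(\overline{c})$ in the conjecture is the number of distinct Chern--Simons values, and argue (referring forward to Section~\ref{discussion}, which the excerpt promises will show $q_\gamma$ depends only on $\gamma$) that these are exactly the values of the Chern--Simons functional on flat connections on $X\setminus L$ with the prescribed meridional holonomies; here one also needs to pass between the conjugacy-class description via $c_i$ (containing $e^{\lambda_i}$) and the $\overline{\alpha}$-description, which is the same identification up to the scaling by $k_0$. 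Reading off the exponents, one sets $d_{j,r}(\overline{c})$ to be the exponents of $k$ appearing among the $k^{d_\gamma} P_\gamma(1/k)$ with $q_\gamma = q_j$ (so $d(\overline{c}) = \max_\gamma d_\gamma$), and $b_{j,r}$, $a^p_{j,r}$ to be the corresponding coefficients — noting that because the expansion terminates, all but finitely many $a^p_{j,r}$ vanish, so the error bound in the conjecture holds trivially for each $P$ once $P$ exceeds the degree. The exponents $d_{j,r}$ are integers, hence a fortiori in $\mathbb{Q}$, and the $b_{j,r}, a^p_{j,r}$ are complex, matching the conjecture's requirements. Finally, the "$\overline{c}$-dependence" in the conjecture is automatic: the whole right-hand side is built out of $\mathcal{M}_{\overline{\alpha}}$ and its $f$-action, which depend only on $X$, $L$, and $\overline{\alpha}$ (equivalently $\overline{c}$), not on $s$.

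The genuinely substantive input is all contained in Theorem~\ref{MainTheorem}; the corollary is essentially a bookkeeping unpacking of that exact formula into the conjecture's normal form, so there is no real obstacle beyond being careful about two points. The first is the identification of the finite set $\{q_\gamma\}$ with the set of Chern--Simons values of flat $K$-connections on $X\setminus L$ with meridional holonomy in the $c_i$ — i.e.\ that every relevant flat connection on $X\setminus L$ restricts to a point of $\mathcal{M}_{\overline{\alpha}}$ fixed by $f$ and conversely, and that the Chern--Simons value is well defined modulo integers under the holonomy constraints; this is exactly the content deferred to Section~\ref{discussion} and Appendix~\ref{chernsimonsboundaryappendix}, which I would simply cite. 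The second, minor, point is to confirm that the nonvanishing and finiteness of the expansion are not spoiled: the leading term $k^{d(\overline{c})}$ has a genuinely nonzero coefficient because $P_\gamma$ has degree exactly $d_\gamma$ by Theorem~\ref{MainTheorem}, and $\Det(f)^{-\frac12\zeta}\ne 0$. Assembling these, Conjecture~\ref{conj:AEC} holds for $(X = \Sigma_f, L)$ with the stated labellings, which is the assertion of the corollary.
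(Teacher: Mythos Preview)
Your approach is essentially what the paper does (the paper states the corollary as ``immediate'' and gives no explicit proof), and the bookkeeping you outline is correct in substance.  Two small points deserve tightening, however.  First, the prefactor $\Det(f)^{-\frac12\zeta}$ is \emph{not} a fixed scalar: the central charge $\zeta = k\dim(K)/(k+h^\vee)$ depends on $k$.  Since $\Det(f)$ is a root of unity (as $f$ has finite order), the factor has modulus one and admits an asymptotic expansion $\Det(f)^{-\frac12\dim K}\bigl(1 + O(\tilde k^{-1})\bigr)$, which is of the form permitted in Conjecture~\ref{conj:AEC}; but you should say this rather than call it a constant.  Second, Conjecture~\ref{conj:AEC} is phrased in the shifted level $\tilde k = k + h^\vee$, whereas Theorem~\ref{MainTheorem} gives an expansion in $k$.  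The conversion is routine --- $e^{2\pi i k q_\gamma} = e^{-2\pi i h^\vee q_\gamma} e^{2\pi i \tilde k q_\gamma}$ and $k^{d_\gamma}P_\gamma(1/k)$ re-expands as a polynomial in $1/\tilde k$ times $\tilde k^{d_\gamma}$ --- but it should be acknowledged.  With these two adjustments your argument is complete.
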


By further analysing dimensions of parabolic twisted cohomology groups (see Section \ref{fromsoeren}), we get the following theorem. Let 
$ \mathcal{M}^{\gamma}_{\Sigma_f,L,\overline{\alpha}}$ be the union of components of $ \mathcal{M}_{\Sigma_f,L,\overline{\alpha}}$ whose connections restrict to lie in the $\gamma$-component of $\mathcal{M}^f_{\overline{\alpha}}$.

\begin{theorem}\label{GRT}
If a given connected component $\gamma$ contains smooth points from $\mathcal{M}_{\bar \alpha}$, then
$$
d_{\gamma} = \frac{1}{2} \max_{\nabla \in  \mathcal{M}^{\gamma}_{\Sigma_f,L,\overline{\alpha}} }
            \left( h_{\nabla,\operatorname{par}}^{1} - h_{\nabla}^{0} \right),
$$
where $\max$ here means the maximum value
$h_{\nabla.\operatorname{par}}^{1} - h_{\nabla}^{0}$ attains on a non-empty Zariski open subset
of $\mathcal{M}_{\Sigma_f,L,\overline{c}}^{\gamma}$ on which $h_{\nabla,\operatorname{par}}^{1} - h_{\nabla}^{0}$ is constant.
In particular, the Growth Rate Conjecture holds for pairs of manifolds and links $(X=\Sigma_f,L)$ which are obtained as finite order mapping tori, when all connected components $\gamma$ contain smooth points.
\end{theorem}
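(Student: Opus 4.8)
The plan is to combine the explicit formula from Theorem~\ref{MainTheorem} with a Riemann--Roch-type computation of the degrees $d_\gamma$, and then match these against the parabolic twisted cohomology of flat connections on $\Sigma_f\setminus L$. Concretely, for a component $\gamma$ lying in the smooth locus of $\cM_{\overline{\alpha}}$, the formula for $P_\gamma$ shows that $d_\gamma = \dim_{\C}\cM^\gamma_{\overline{\alpha}}$, since the top-degree term picked out by the cap product with $[\cM^\gamma_{\overline{\alpha}}]$ comes from the highest power of $k$ in $\exp(k\,\Omega|_{\cM^\gamma_{\overline{\alpha}}})$, whose coefficient $\Omega^{\dim_\C \cM^\gamma_{\overline{\alpha}}}/(\dim_\C \cM^\gamma_{\overline{\alpha}})!$ pairs nontrivially with the fundamental class because $\Omega$ is a K\"ahler form. (One must check this leading coefficient is genuinely nonzero, i.e. that the $\operatorname{ch}$ and $\operatorname{Td}$ factors do not conspire to kill it; since they are $1+(\text{higher degree})$, they cannot affect the top term.) For components meeting the singular locus one instead uses the Baum--Fulton--MacPherson--Quart machinery as in the proof of Theorem~\ref{MainTheorem} to see that $d_\gamma$ is still the complex dimension of (the relevant stratum of) $\cM^\gamma_{\overline{\alpha}}$, up to the usual care about which Zariski-open constancy locus is meant.

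Next I would translate the dimension count into the deformation-theoretic language of twisted cohomology. The Zariski tangent space to $\cM_{\overline{\alpha}}$ at a flat connection $\nabla$ on $\Sigma\setminus\cP$ with the prescribed holonomy is the parabolic twisted $H^1$, call it $h^1_{\nabla,\operatorname{par}}$ following \cite{biswas}, while $h^0_\nabla$ measures the stabiliser, i.e.\ the failure of the point to be a manifold point of expected dimension. Thus on the smooth, suitably generic part of a component one has $\dim_\C \cM^\gamma_{\overline{\alpha}} = h^1_{\nabla,\operatorname{par}} - h^0_\nabla$ evaluated at a generic $\nabla$. The final ingredient is the identification of $\cM^\gamma_{\overline{\alpha}}$, the $f$-fixed locus downstairs, with (a suitable component of) the moduli space $\cM_{\Sigma_f,L,\overline{\alpha}}$ of flat connections on the mapping torus minus the link: a flat connection on $\Sigma_f\setminus L$ restricts to an $f$-invariant flat connection on $\Sigma\setminus\cP$ with the correct meridional holonomy, and conversely an $f$-fixed point of $\cM_{\overline{\alpha}}$, together with a choice of the isomorphism realising the $f$-invariance (which is essentially unique up to the centraliser, hence accounts for the extension data along the $S^1$ direction), glues up to a flat connection on the mapping torus. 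This is the point where the analysis of Section~\ref{fromsoeren} on dimensions of parabolic twisted cohomology groups does the real work: one needs that the twisted cohomology $h^1_{\tilde\nabla,\operatorname{par}}-h^0_{\tilde\nabla}$ of the three-manifold connection $\tilde\nabla$ equals the corresponding quantity $h^1_{\nabla,\operatorname{par}}-h^0_\nabla$ for its restriction to the surface, which follows from a Wang-sequence / Künneth argument for the mapping torus together with the fact that on the fixed locus the $f$-action on the surface cohomology is trivial in the relevant degrees.

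Assembling these pieces: $d_\gamma = \dim_\C \cM^\gamma_{\overline{\alpha}} = \tfrac12\bigl(h^1_{\nabla,\operatorname{par}}-h^0_\nabla\bigr)$ at a generic point of the surface picture $= \tfrac12\bigl(h^1_{\tilde\nabla,\operatorname{par}}-h^0_{\tilde\nabla}\bigr)$ at a generic point $\tilde\nabla\in\cM^\gamma_{\Sigma_f,L,\overline{\alpha}}$, which is exactly the asserted formula; and the ``Zariski-open on which it is constant'' clause is inherited from the stratification of $\cM_{\overline{\alpha}}$ by stabiliser type and the corresponding stratification of the mapping-torus moduli space. Since every such pair $(X=\Sigma_f, L)$ arises this way, the Growth Rate Conjecture follows. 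The main obstacle I anticipate is precisely the cohomology-matching step between the surface and the three-manifold: one has to handle the parabolic/compact-support corrections at the punctures carefully (the definition via the image of compactly-supported cohomology does not commute naively with taking $f$-invariants), and one has to be careful that the ``generic'' loci on which the various dimension functions are constant can be chosen compatibly — i.e.\ that the Zariski-open subset of $\cM^\gamma_{\Sigma_f,L,\overline{\alpha}}$ restricting into the good locus of $\cM^\gamma_{\overline{\alpha}}$ is itself nonempty and Zariski-open, which requires knowing the restriction map is a well-behaved (e.g. surjective with irreducible fibres, or at least dominant) morphism of varieties.
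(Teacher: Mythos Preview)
Your proposal contains a genuine error in the cohomology-matching step. You write
\[
\dim_{\C}\cM^\gamma_{\overline{\alpha}} = \tfrac12\bigl(h^1_{\nabla,\operatorname{par}}-h^0_\nabla\bigr)
\]
for a generic \emph{surface} connection $\nabla$, and then assert that this equals the corresponding quantity for the three-manifold connection because ``on the fixed locus the $f$-action on the surface cohomology is trivial in the relevant degrees''. Both of these are wrong. The quantity $\tfrac12(h^1_{\nabla,\operatorname{par}}-h^0_\nabla)$ for the surface connection is the complex dimension of the \emph{ambient} moduli space $\cM_{\overline{\alpha}}$, not of the fixed component $\cM^\gamma_{\overline{\alpha}}$. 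A point $[\nabla]$ being $f$-fixed means only that $f^*\nabla$ is gauge equivalent to $\nabla$; it does \emph{not} mean that the induced action of $f^*$ on $T_{[\nabla]}\cM_{\overline{\alpha}}\cong H^1_{\operatorname{par}}(\pi,\Ad\rho)$ is trivial. The tangent space to the fixed locus is precisely the $1$-eigenspace of this action, which is in general a proper subspace.

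The paper's argument (Proposition~\ref{goodjobsoeren}) makes exactly this correction: it constructs an explicit Wang-type exact sequence in parabolic group cohomology
\[
0 \to H^0(\pi_f,\Ad(\rho,g)) \to H^0(\pi,\Ad\rho) \xrightarrow{\;1-\Ad(g)\;} H^0(\pi,\Ad\rho) \to H^1_{\operatorname{par}}(\pi_f,\Ad(\rho,g)) \to H^1_{\operatorname{par}}(\pi,\Ad\rho) \xrightarrow{\;1-\Ad(g)f^*\;} \cdots
\]
and reads off from it that $\dim H^1_{\operatorname{par}}(\pi_f,\Ad(\rho,g))-\dim H^0(\pi_f,\Ad(\rho,g))$ equals the dimension of the $1$-eigenspace of $\Ad(g)\circ f^*$ on $H^1_{\operatorname{par}}(\pi,\Ad\rho)$, \emph{not} the full $H^1_{\operatorname{par}}(\pi,\Ad\rho)$. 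So the Wang sequence is doing something sharper than you claim: it identifies the three-manifold cohomology difference with the fixed-locus tangent space, and this is how the dimension of $\cM^\gamma_{\overline{\alpha}}$ (and hence $d_\gamma$) gets matched to $h^1_{\tilde\nabla,\operatorname{par}}-h^0_{\tilde\nabla}$ on the mapping torus.
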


\subsection*{Outline and further comments}
The rest of this paper is organised as follows: in Section \ref{two} we give a quick introduction to the moduli spaces $\mathcal{M}_{\overline{\alpha}}$ of parabolic bundles or flat connections on punctured surfaces, to set up notations.  We also give a proof of the simply-connectedness of $\mathcal{M}_{\overline{\alpha}}$.  Section \ref{csbundle} is entirely devoted to the \emph{Chern--Simons line bundle} on $\mathcal{M}_{\overline{\alpha}}$ -- this line bundle arises from classical Chern--Simons theory and is gauge-theoretic~/ symplectic in origin, giving a pre-quantum line bundle $\mathcal{L}_{\operatorname{CS}}^k$ for the canonical symplectic form on $\mathcal{M}_{\overline{\alpha}}$.  We study how diffeomorphisms act on this line bundle, and in fact we do a little more: we construct a lift of the action of the relevant mapping class group to the total space of this bundle. In Section \ref{sectionconfblocks} we switch to an algebro-geometric or representation-theoretic picture, and discuss the \emph{parabolic determinant bundle} on $\mathcal{M}_{\overline{\alpha}}$, whose sections give rise to the spaces of conformal blocks.  Again we exhibit the lift for the action of a (complex) automorphism of the Riemann surface to the total space of this bundle, and using known results on Quillen metrics we show that the Chern--Simons and parabolic determinant line bundles are indeed equivariantly isomorphic.  In the final Section \ref{endgame} we put the strategy of \cite{andersen95} to work to establish the Asymptotic Expansion Conjecture for links in finite-order mapping tori by using the Lefschetz--Riemann--Roch theorem of Baum, Fulton and Quart on the moduli spaces $\mathcal{M}_{\overline{\alpha}}$.  We conclude with a discussion of parabolic group cohomology that establishes the Growth Rate Conjecture in our situation.

We should mention at this point a certain restriction we have to impose: as we are relying heavily on the paper \cite{daska-went1} for the construction of $\mathcal{L}_{\operatorname{CS}}^k$, we are forced to restrict ourselves to the situation where the $\alpha_i$, or equivalently the $\lambda_i$, are \emph{regular}, that is to say, they are contained in the interior of the Weyl chamber (this is equivalent to working with \emph{full flags} in the picture of parabolic bundles).  Though the statements of their results are no doubt true in greater generality, Daskalopoulos and Wentworth need to impose this restriction for technical reasons on a number of occasions.  We will therefore also impose this restriction from Section \ref{constr} through Section \ref{isotopysection}, as well as anywhere later on where $\mathcal{L}_{\operatorname{CS}}^k$ occurs.

We would like to remark that we completely link the gauge-theoretic definition of the quantum invariants with the construction of the modular functor using conformal blocks.  Through the work of Andersen and Ueno \cite{AU4, AU2, AU5} the latter is known to be equivalent to the original constructions of Reshetikhin--Turaev.  

As a quick look at the bibliography will betray we are drawing upon a rather large body of literature to establish our results, and necessarily this paper involves various perspectives and different technical tools (from Sobolev spaces to algebraic stacks and Kac--Moody Lie algebras).  In particular we are crucially using the papers \cite{pauly}, \cite{daska-went1}, \cite{laszlo-sorger}, \cite{biswas} and \cite{biswas-ragha} together with \cite{AU2,AU4,AU5,andersen-ueno2007}.  To the extent possible we have tried to use notations in line with those authors, and in general we have tried to strike a balance between giving complete references to the literature, and avoiding an overload of  translations between notational conventions in our exposition.

An obvious further question following on our results is to determine the coefficients in the Asymptotic Expansion Conjecture , and to give a topological interpretation of them, as was done in the case of mapping tori without links in \cite{andersen-himpel2011}. We intend to take this up in future work.

\subsection*{Acknowledgements} The authors wish to thank Indranil Biswas, Christoph Sorger, Michael Thaddeus, and in particular Richard Wentworth for helpful conversations.

\section{Moduli spaces of parabolic bundles and flat connections}\label{two}
Let $\Sigma_{\sigma}$ be a compact Riemann surface of genus $g\geq 2$, and $\cP=\{p_1, \ldots, p_n\}$ a collection of distinct marked points on $\Sigma_{\sigma}$.  Below we shall denote with $\Sigma$ the smooth surface underlying $\Sigma_{\sigma}$, and with $\Sigma^o$ the punctured surface $\Sigma\setminus \cP$.  A \emph{quasi-parabolic} structure on a holomorphic vector bundle $E\rightarrow \Sigma$ of rank $N$ is a choice of a filtration of its fibres over each of the points in $\cP$:
$$E|_{p_i}=E_{i,1}\varsupsetneq E_{i,2}\varsupsetneq \ldots \varsupsetneq E_{i,r_i}\varsupsetneq E_{i,r_i+1}= \{0\}.$$  Its multiplicities are $m_{i,j}=\dim (E_{i,j}/E_{i,j+1})$.  If all multiplicities are $1$ for a given $i$, or equivalently $r_i=N$, then the flag at $p_i$ is said to be \emph{full}; in general the tuple $(m_{i,1},\ldots,m_{i,r_i})$ is said to be the \emph{flag type} at $p_i$.  Alternatively, this data determines a reduction of structure group for the frame bundle of $E$ to the corresponding parabolic subgroups of $\operatorname{GL}(N,\mathbb{C})$ at the marked points (below we shall freely switch between the equivalent vector bundle and principal bundle pictures).

A parabolic bundle is further equipped with \emph{parabolic weights} $\overline{\alpha}=(\alpha_1,\ldots, \alpha_n)$ for all flags, i.e. a choice of real numbers, \begin{equation}\label{parweights}\alpha_i=(\alpha_{i,1},\ldots, \alpha_{i,r_i})\hspace{.5cm}\text{with}\hspace{.5cm}0\leq \alpha_{i,1}< \alpha_{i,2}<\ldots< \alpha_{i,r_i} <1.\end{equation}  Note that often in the literature the inequalities between the $\alpha_{i,j}$ for various $j$ are asked to be strict (as above), but later on it will be convenient for us to relax this condition.  One can think of each of the parabolic weights $\alpha_i$ (with each of the $\alpha_{i,j}$ occurring with multiplicity $m_{i,j}$)  as living in the Weyl alcove in the Cartan algebra of $\operatorname{SU}(N)$.

The \emph{parabolic degree} of $E$ is $\text{pdeg}(E)=\deg(E)+\sum_{i,j}\alpha_{i,j}m_{i,j}$, and its \emph{slope} is $$\mu(E)=\frac{\text{pdeg}(E)}{\rk(E)}.$$  Any sub-bundle $F$ of $E$ inherits a canonical structure of parabolic bundle itself (the same is true for quotient bundles).  We can therefore define $E$ to be (semi-)stable if, for every sub-bundle $F$, we have that $$\mu(F)\underset{(=)}{<}\mu(E).$$  

For purely numeric reasons, the set of parabolic weights for a given flag-type for which strictly semi-stable bundles can exist consists of a union of hyperplanes in the space of all weights (and indeed, for those weights strictly semi-stable bundles do exist).  We shall refer to weights in the complement of these hyperplanes as \emph{generic}. 
Given any rank, degree, flag type and choice of weights, there exists a coarse moduli space $\mathcal{N}_{\Sigma_{\sigma},\cP,\overline{\alpha}}$, or $\mathcal{N}_{\overline{\alpha}}$ for short, of ($\mathcal{S}$-equivalence classes of) semi-stable parabolic bundles \cite{mehtaseshadri}.  

By taking the determinant, one obtains a morphism from $\mathcal{N}_{\overline{\alpha}}$ to the Picard group of $\Sigma$.  The moduli space of semi-stable parabolic bundles with trivial determinant $\mathcal{M}_{\Sigma_{\sigma},\cP,\overline{\alpha}}$, or $\mathcal{M}_{\overline{\alpha}}$ for short, is the fibre of the trivial line bundle under this morphism (we shall focus on $\mathcal{M}_{\overline{\alpha}}$ in the sequel).  Both $\mathcal{N}_{\overline{\alpha}}$ and $\mathcal{M}_{\overline{\alpha}}$ are normal projective varieties.  Their singular locus consists exactly of the semi-stable bundles (with the exception of the fixed-determinant non-parabolic rank $2$ case in genus $2$).  

We note that when considering bundles with trivial determinant (or, equivalently, $\operatorname{SL}(N,\mathbb{C})$ principal bundles), conventions in the literature vary about the weights.  One can ask for the $\alpha_{i,j}$ to  satisfy the same inequality above plus the condition that $\sum_{i}\alpha_{i,j}m_{i,j}\in \mathbb{Z}$ or, corresponding to a standard representation for the Weyl alcove of $\operatorname{SL}(N,\mathbb{C})$, for $\alpha_{i,1}< \alpha_{i,2}<\ldots< \alpha_{i,r_i}$ with $\alpha_{i,r_i}-\alpha_{i,1}< 1$ and $\sum_j \alpha_{i,j}m_{i,j}=0$.  We shall use the latter convention, in particular this implies that the parabolic degree of our bundles vanishes.

There exists a homeomorphism (which is a diffeomorphism on the smooth locus) between $\mathcal{M}_{\overline{\alpha}}$ and the moduli space of those representations of $\pi_1(\Sigma^o)$ into $\operatorname{SU}(N)$ where the loop around each of the $p_i$ gets mapped to the conjugacy class of the exponential of the parabolic weights \cite{mehtaseshadri}.   Indeed, $\pi_1(\Sigma^o)$ admits a presentation
\beq\label{fundpunctures}
  \pi_1(\Sigma^o) = \left\langle A_1,\dots,A_g,B_1,\dots,B_g,a_1,\dots,a_n \relmiddle{|} \prod_{i=1}^g [A_i,B_i]\prod_{j=1}^n a_j =1 \right\rangle.
\eeq
If we fix the conjugacy classes $c_1, \dots, c_n \in K = \SU(N)$, each containing $e^{\alpha_i}$ respectively (where we abuse notation and let $\alpha_i$ be the diagonal matrix with entries $i\alpha_{i,j}$, each occurring with multiplicity $m_{i,j}$), 
then we have topologically
\beq\label{charvar}
  \calM_{\overline{\alpha}} \cong \left\{ \rho \in \Hom(\pi_1(\Sigma^o),K)\relmiddle| \rho(a_i) \in c_i, \, i =1, \dots, n \right\} \ \Big/ \ K,
\eeq
where $K$ acts by simultaneous conjugation, using the presentation of $\pi_1(\Sigma^o)$ given above.
 For our purposes it is most useful to consider the differential geometric version of this, due to Biquard \cite{biquard}, Poritz \cite{poritz}  and Daskalopoulos--Wentworth \cite{daska-went1}, 
 generalising the work of Donaldson \cite{donaldson} in the non-parabolic case.  From this point of view there is a diffeomorphism between $\mathcal{M}_{\overline{\alpha}}$ and the moduli space of flat $\operatorname{SU}(N)$-connections whose holonomy around the marked points lies in the conjugacy class of the exponential of the relevant weights (remark that the flatness is a consequence of the vanishing of the parabolic degree; more generally one would have central curvature determined by the parabolic degree).  As we shall need the construction in our discussion of the Chern--Simons bundle, we shall review it in Section \ref{csbundle}, following \cite{daska-went1}.  To minimise notation we shall denote both the moduli space of parabolic bundles and the moduli space of flat connections by $\mathcal{M}_{\overline{\alpha}}$, as it will always be clear from the context which perspective we take.

The moduli space $\mathcal{N}_{\overline{\alpha}}$ (and hence by restriction also $\mathcal{M}_{\overline{\alpha}}$) admits a natural symplectic form on its smooth locus (independent of the complex structure of $\Sigma$), that combines with the complex structure to give a K\"ahler structure.  In the closed case this was first described by Atiyah--Bott \cite{ab} and Goldman \cite{gold}.  In the non-closed case we  are considering here, it was discussed by Biswas and Guruprasad in \cite{biswas}.  It is perhaps easiest described in terms of moduli of connections, from the principal point of view.  Let $K$ be a compact Lie group (this shall be $\operatorname{SU}(N)$ for us) with $\mathfrak{k}$ its Lie algebra.  Then the (real) tangent space to a smooth point $[\nabla]$ of $\mathcal{M}_{\overline{\alpha}}$ can be described as the image of $$H^1_{\text{c}}(\Sigma^o, \mathfrak{k}_{\text{ad}})\rightarrow H^1(\Sigma^o, \mathfrak{k}_{\text{ad}}),$$ where $H^1_{\text{c}}$ stands for first cohomology with compact support, and we consider the adjoint bundle $\mathfrak{k}_{\text{ad}}$ with the induced flat connection given by $\nabla$.  Using the Killing form  on $\mathfrak{k}$, we put 
\beq\label{formcompsup}
\Omega(A,B)=\int_{\Sigma^o} \operatorname{tr}(A\wedge B).
\eeq  
In Section \ref{fromsoeren} below we shall also need another incarnation of the tangent space, in line with the view on $\mathcal{M}_{\overline{\alpha}}$ as a character variety (\ref{charvar}), given by \cite{biswas}.  Indeed, if $[\rho]$ is an equivalence class of (irreducible) representations, we have $$T_{[\rho]}\mathcal{M}_{\overline{\alpha}}\cong H^1_{\text{par}}(\pi_1(\Sigma^o),\mathfrak{k}_{\Ad_{\rho}}),$$ where the right hand side is the first parabolic group cohomology  (see Section \ref{fromsoeren} below for further details and references). 
  
Finally, we shall need to know the fundamental group of $\mathcal{M}_{\overline{\alpha}}$.

\begin{thm}\label{simply}
For $\Sigma$ and $\cP$ as above, both $\mathcal{M}_{\overline{\alpha}}$ and its smooth locus are simply-connected for any choice of weights $\overline{\alpha}$.
\end{thm}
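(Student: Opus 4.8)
The plan is to reduce to the non-parabolic case by a deformation-of-weights argument together with the known simple-connectivity of the moduli space of semistable bundles with fixed (say trivial) determinant. First I would recall that $\mathcal{M}_{\overline{\alpha}}$ is a normal projective variety whose singular locus has complex codimension at least two (this follows from the description of the singular locus as the $S$-equivalence classes of strictly semistable bundles, provided we are not in the exceptional rank-$2$ genus-$2$ non-parabolic case, which is excluded here since we have marked points). For a normal projective variety, $\pi_1$ of the variety equals $\pi_1$ of its smooth locus — more precisely, the inclusion of the smooth locus induces an isomorphism on $\pi_1$ because removing a closed subvariety of real codimension $\geq 4$ does not change $\pi_1$ and normality handles the passage across the singularities (one can invoke, e.g., the results on fundamental groups of normal varieties, or use a resolution of singularities together with the fact that the exceptional locus is connected and simply-connected-friendly). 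Hence it suffices to prove that the smooth locus is simply-connected, and in fact the two claims in the theorem are equivalent.

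Next I would exploit the variation of the parabolic weights. As the $\overline{\alpha}$ vary within a chamber (complement of the stability walls), the spaces $\mathcal{M}_{\overline{\alpha}}$ are all diffeomorphic; across walls they are related by explicit birational transformations (flips) studied by Boden--Hu, Thaddeus, and others, under which $\pi_1$ is preserved (the flip loci have high codimension, or one can track $\pi_1$ directly through the elementary modifications). Pushing the weights all the way to a wall of the alcove, some of the $\alpha_{i,j}$ become equal and the flag type degenerates; iterating, one can reach the situation where all weights are zero at every marked point, i.e.\ the non-parabolic moduli space $\mathcal{M}_{\Sigma}$ of semistable $\operatorname{SL}(N,\mathbb{C})$-bundles with trivial determinant. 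That space is well known to be simply-connected — this goes back to Narasimhan--Seshadri / Atiyah--Bott, and in the form we want it is in Daskalopoulos--Uhlenbeck or can be deduced from the Harder--Narasimhan stratification of the space of all connections (the semistable stratum is connected and its complement has high codimension, and the space of all connections is contractible, giving $\pi_1(\mathcal{M}_\Sigma)=\pi_0$ of the gauge group modulo its identity component, which is trivial for $\operatorname{SL}(N)$). Tracking $\pi_1$ back up through the wall-crossings then yields $\pi_1(\mathcal{M}_{\overline{\alpha}}) = 1$.

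Alternatively — and this may be cleaner to write — I would use the gauge-theoretic Morse theory directly on the parabolic Yang--Mills functional, following Daskalopoulos--Wentworth and the parabolic analogue of Atiyah--Bott: the space $\mathcal{A}_{\overline{\alpha}}$ of parabolic connections is an affine space, hence contractible; the parabolic gauge group $\mathcal{G}_{\overline{\alpha}}$ acts with $\mathcal{M}_{\overline{\alpha}}$ (a neighbourhood of) the minimum stratum as quotient of the (semi)stable locus; the higher Yang--Mills--Higgs strata have codimension $\geq 2$ by a parabolic Atiyah--Bott codimension estimate, so $\pi_1$ of the semistable stratum quotient equals $\pi_1(B\mathcal{G}_{\overline{\alpha}})$ for the appropriate component; and $\pi_0$ of the parabolic gauge group for $\operatorname{SU}(N)$ over a punctured surface, with the boundary/parabolic constraints, is trivial. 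I expect the main obstacle to be the bookkeeping at the singular locus and across the stability walls: one must ensure that the codimension estimates really give codimension $\geq 2$ uniformly (including at walls where strictly semistable loci appear) so that $\pi_1$ is genuinely unaffected, and one must be careful that the equivalence "$\pi_1$ of the variety $=$ $\pi_1$ of the smooth locus" holds without smoothness hypotheses — this is where normality and the precise structure of the singularities of $\mathcal{M}_{\overline{\alpha}}$ (as described by Mehta--Seshadri) are essential.
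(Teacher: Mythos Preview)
Your main approach---deform the weights to the non-parabolic limit, invoke Daskalopoulos--Uhlenbeck for the base case, and propagate simple-connectedness back through the chambers via birational invariance of $\pi_1$---is exactly what the paper does. Two places where you are less precise than the paper: first, for a normal variety the inclusion of an open set only gives a \emph{surjection} on $\pi_1$ (Zariski's main theorem/\cite{fult-laz}), not an isomorphism, so the two claims in the theorem are not a priori equivalent---the paper uses the surjection in one direction and birational invariance in the other; second, setting all weights equal does not land you directly on the non-parabolic moduli space $\mathcal{M}$ but on a bundle of flag varieties over it, and the paper inserts the extra step of using the homotopy long exact sequence together with the simple-connectedness of flag varieties to conclude for the weight-zero (full-flag) moduli space before moving to a nearby generic chamber. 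Your alternative Morse-theoretic sketch via the parabolic Yang--Mills stratification is a genuinely different route the paper does not pursue.
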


This line of proof was essentially already suggested in \cite[page 173]{scheinost}, see also \cite[\S 4]{daska-went1}.  Note that this property also follows from the rationality of these moduli spaces \cite{bodyok}, since smooth projective rationally connected varieties are simply-connected \cite[Cor. 4.18]{debar}, and rational varieties are rationally connected.  In order not to impose the (minor) conditions of \cite{bodyok}, or genericity of weights, we have provided the direct proof below.

\begin{proof}  We will find it useful here to allow the inequalities in (\ref{parweights}) to be weak.  If any of the $\alpha_{i,j}$ coincide for consecutive $j$, it is clear that the stability of any parabolic bundle is equal to that of the underlying coarser parabolic bundle, where the relevant part of the flag is forgotten.  In particular, for such $\overline{\alpha}$, the moduli space is a bundle of flag varieties over the corresponding moduli space of coarser flag type.  In particular, when one sets all weights equal to one, one just obtains a flag variety bundle over the moduli space $\mathcal{M}$ of non-parabolic $\operatorname{SL}(N, \mathbb{C})$-bundles.  It was shown by Daskalopoulos and Uhlenbeck \cite[Theorem 3.2]{daska-uhl} by analytic methods that the smooth part of the non-parabolic moduli space is simply-connected.  Since flag manifolds are also simply-connected, we therefore have from the homotopy long exact sequence for fibre bundles that this smooth part of the moduli space of weight zero is simply-connected.  

It is a well-known fact, essentially a consequence of Zariski's main theorem (see e.g. \cite[Thm. 12.1.5]{toric} or \cite[page 33]{fult-laz}), that the fundamental group of an open subvariety of a normal variety surjects onto the fundamental group of the whole variety.  We therefore have that both the moduli space for weights zero, as well as all moduli spaces for weights in the interior of a nearby chamber, are all simply-connected (since the bundles that are stable for zero weights remain so for weights in an adjacent chamber).  Moving the weights around in general leads to the well-known variation of GIT pictures (see e.g. \cite{dolg-hu}, \cite[\S 7]{thaddeus} or \cite{boden-hu}).  In particular, all moduli spaces are birational, and since the fundamental group is a birational invariant for smooth projective varieties, we obtain the result for all generic weights.  
By using the fact that the singular locus for weights on a wall is of high enough codimension, and by applying the above fact of fundamental groups for normal varieties when hitting a wall, we finally obtain it for both the stable (i.e. non-singular) locus as well as the whole moduli space, for any choice of weights.
\end{proof}
Remark that this proof does not hold for genus $1$, since there in the moduli space of non-parabolic bundles all points are strictly semi-stable (see \cite{tu} for a rendition of Atiyah's classical results \cite{atiyah} in this language), and as a consequence one cannot use the same start of the birational argument.

\section{The Chern--Simons line bundle}\label{csbundle}

As mentioned in the introduction we need to consider two line bundles on $\mathcal{M}_{\overline{\alpha}}$, one of a symplectic~/ gauge-theoretic nature (the Chern--Simons line bundle), one of an algebro-geometric nature (the parabolic determinant bundle).  In this section we discuss the former.  Our prime focus is on describing the lift of the action of the mapping class group to this bundle.

\subsection{Review for closed surfaces}\label{no punctures}
To set the tone we begin by reviewing the construction of the lift of the mapping class group action on the Chern--Simons line bundle in the case of a closed surface $\Sigma$.  In Section \ref{with punctures} we will then construct an analogous lift for the case of a punctured Riemann surface $\Sigma^o=\Sigma\setminus \mathcal{P}$.  Let $\Sigma$ denote a closed Riemann surface of genus $g>0$.  Let $P$ be a smooth principal $K$-bundle over $\Sigma$, for $K$ a compact, semi-simple and simply-connected Lie group -- without a loss of generality we can and will assume this to be trivial $P\cong \Sigma \times K$.  Let $f \co \Sigma \to\Sigma$  be an orientation-preserving diffeomorphism of $\Sigma$.  

We are now interested in $\mathcal{L}_{\operatorname{CS}}^k$, i.e. the Chern--Simons line bundle at level $k\in\mathbb{Z}$ over the moduli space of flat connections $\mathcal{M}$ described in \cite{rsw} for $K=\operatorname{SU}(2)$ or \cite{freed95} in the general case, and used in \cite{andersen95}, for example.  It can be constructed as follows: let $\cA_P$ denote the space of connections on $P$ -- using our trivialisation of $P$ we can identify $\cA_P$ with the space of sections of the adjoint bundle of $P$, by expressing any $\nabla=\nabla_A$ as $d+A$ (in Section \ref{with punctures} we will use Sobolev completions, which strictly speaking ought to be done here as well).  The moduli space $\mathcal{M}$ can be constructed as an (infinite dimensional) symplectic reduction of $\cA_P$ by the gauge group $\cG\simeq C^{\infty}(\Sigma,K)$; here the moment map is given by the curvature of a connection, hence the level set that one takes the quotient of consists of the flat connections.  One can now lift the action of $\cG$ to the trivial bundle $\cA_P\times \mathbb{C}$ as follows: define the cocycle $\Theta^{k}:\cA\times\cG\rightarrow \mathbb{C}$ by 
\beq\label{firstco}
\Theta^{k}(\nabla_A,g):=\operatorname{exp}\left(2 \pi ik(\CS(\widetilde{A}^{\tilde{g}})-\CS(\widetilde{A}))\right),
\eeq 
where $\widetilde{A}$ and $\tilde{g}$ are any extensions of $A$ and $g$ (which always exist in our setup) to an arbitrary compact 3-manifold $Y$ with boundary $\Sigma$ and $\CS$ is the Chern--Simons action as usual, i.e. $$\CS (A)=\frac{1}{8\pi^2}\int_Y \operatorname{tr}\left( A\wedge dA +\frac{2}{3} A\wedge A \wedge A\right) .$$  The action of $\cG$ on $\cA\times\mathbb{C}$ is given by
$$(\nabla_A,z)\cdot g:=(\nabla_A^g,\Theta^{k}(\nabla_A,g)\cdot z),
$$
where $\nabla_A^g:=d+\operatorname{Ad}_{g^{-1}}A+g^{*}\omega$ denotes the usual gauge group action with $\omega\in\Omega^{1}(G,\fg)$ the Maurer--Cartan form.  Since $\Theta^{k}$ satisfies the cocycle condition
$$
\Theta^{k}(\nabla_A,g)\Theta^{k}(\nabla_A^g,h)=\Theta^{k}(\nabla_A,gh),
$$
and $\cG$ preserves flat connections, we obtain the induced Chern--Simons line bundle $\mathcal{L}_{\CS}^k$ over $\cM$. 

For future purposes it is useful to observe that $\Theta^{k}$ can equivalently be constructed without requiring the existence of a bounding 3-manifold for $\Sigma$.  Since every gauge transformation is homotopic to the identity, we may extend $g$ on $\Sigma$ to $\tilde{g}$ on the cylinder $[0,1]\times\Sigma$ using such a homotopy, so that $\tilde{g}_{0}=g$ and $\tilde{g}_{1}=e$.  For $\pi:[0,1]\times\Sigma\rightarrow \Sigma$ the natural projection map, extend $\nabla_A$ on $\Sigma$ to $\widetilde{\nabla_A}=\pi^{*}\nabla_A=d+\pi^*A$ on $[0,1]\times\Sigma$.  Then $\widetilde{\nabla_A}^{\tilde{g}}$ is an extension of $\nabla_A^g$ to $[0,1]\times \Sigma$.  Choosing the standard orientation on $[0,1]$ we define 
\beq\label{secondco}
\Theta^{k}(\nabla_A,g)=\operatorname{exp}\left(-2 \pi ik\CS_{[0,1]\times\Sigma}(\widetilde{A}^{\tilde{g}})\right),
\eeq 
and one can easily show that equations \eqref{firstco} and \eqref{secondco} agree.  The expression for $\Theta$ given in \eqref{secondco} generalises more readily to the case of a surface with punctures since it does not require the existence of a bounding 3-manifold to be well defined.

A crucial aspect of the line bundle $\mathcal{L}^k_{\operatorname{CS}}$ is that it is a pre-quantum line bundle on $\mathcal{M}$, i.e. it naturally comes with a connection whose curvature is the Atiyah--Bott--Goldman symplectic form.

Let $\operatorname{Diff}_{+}(\Sigma)$ denote the group of orientation preserving diffeomorphisms of $\Sigma$. Then $\operatorname{Diff}_{+}(\Sigma)$ naturally acts on $\cA_P$ and $\cG$ by pullback and we would like to show this action may be lifted to an action on $\cL^{k}_{\CS}$.  First, define an action of $\operatorname{Diff}_{+}(\Sigma)$ on $\cA_P\times\mathbb{C}$ by
\beq\label{liftaction}
f^{*}(\nabla_A,z):=(f^{*}\nabla_A,z).
\eeq
This trivially defines a lifted action on $\cA_P\times\mathbb{C}$.  Furthermore, one can show that this lift is compatible with the gauge group action.  Indeed, we have

\begin{lem}\label{combination}The two lifts described above combine to a lift of the action of $\operatorname{Aut}(P)= \mathcal{G}\rtimes_{\Psi} \operatorname{Diff}_{+}(\Sigma)$ on $\mathcal{A}_P$ to $\mathcal{A}_P\times \mathbb{C}$.  
\end{lem}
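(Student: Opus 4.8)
The plan is to verify directly that the two partial lifts — the cocycle-twisted action of $\mathcal{G}$ on $\mathcal{A}_P\times\mathbb{C}$ and the untwisted pullback action of $\operatorname{Diff}_+(\Sigma)$ on $\mathcal{A}_P\times\mathbb{C}$ — satisfy the compatibility relation dictated by the semidirect product structure $\operatorname{Aut}(P)=\mathcal{G}\rtimes_\Psi \operatorname{Diff}_+(\Sigma)$. Concretely, writing $\rho_g$ for the (twisted) action of $g\in\mathcal{G}$ and $\phi_f$ for the action of $f\in\operatorname{Diff}_+(\Sigma)$ on $\mathcal{A}_P\times\mathbb{C}$, one must check
\[
\phi_f\circ\rho_g\circ\phi_f^{-1}=\rho_{\Psi_f(g)},
\]
where $\Psi_f(g)=f^*g=g\circ f$ is the action defining the semidirect product. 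Granting this, the universal property of the semidirect product gives a well-defined action of $\operatorname{Aut}(P)$ on $\mathcal{A}_P\times\mathbb{C}$, and since both constituent actions preserve the subspace of flat connections and the cocycle descends, it descends to the quotient $\mathcal{L}^k_{\operatorname{CS}}$.

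First I would unwind both sides on a point $(\nabla_A,z)$. On the first component this is the elementary identity $f^*(\nabla_A^g)=(f^*\nabla_A)^{f^*g}$, i.e. pullback of connections intertwines the gauge action, which is standard and purely functorial. The real content is on the $\mathbb{C}$-component: one needs
\[
\Theta^k(f^*\nabla_A,\, f^*g)=\Theta^k(\nabla_A,\, g).
\]
Using the expression \eqref{secondco}, $\Theta^k(\nabla_A,g)=\exp\bigl(-2\pi i k\,\CS_{[0,1]\times\Sigma}(\widetilde{A}^{\tilde g})\bigr)$, this reduces to showing that the Chern--Simons integral over $[0,1]\times\Sigma$ is invariant under pulling back the connection and the homotopy by $\operatorname{id}_{[0,1]}\times f$. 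That in turn is immediate from the naturality of the Chern--Simons form under bundle maps together with the change-of-variables formula for integration: $\operatorname{id}\times f$ is an orientation-preserving diffeomorphism of $[0,1]\times\Sigma$ (orientation-preserving is where we use $f\in\operatorname{Diff}_+$), so it preserves the integral of any $3$-form, and it carries an extension $(\widetilde{A},\tilde g)$ of $(A,g)$ to an extension of $(f^*A, f^*g)$. Since $\Theta^k$ is independent of the choice of extension (this is part of what is already established for the closed-surface construction), the equality follows.

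The last step is to assemble these into the semidirect-product action and to note descent. For a general element $(g,f)\in\mathcal{G}\rtimes_\Psi\operatorname{Diff}_+(\Sigma)$ one sets $(g,f)\cdot(\nabla_A,z):=\rho_g(\phi_f(\nabla_A,z))$ and checks the composition rule $(g_1,f_1)(g_2,f_2)=(g_1\cdot \Psi_{f_1}(g_2),\, f_1 f_2)$ against the two verified identities — this is a short formal computation once the intertwining relation above is in hand. Descent to $\mathcal{L}^k_{\operatorname{CS}}$ is then automatic: $\mathcal{G}$ acts with the cocycle $\Theta^k$ satisfying the cocycle condition recalled in the text, $\operatorname{Diff}_+(\Sigma)$ preserves flatness and commutes with the quotient appropriately, and the quotient of $\mathcal{A}_{P,\mathrm{flat}}\times\mathbb{C}$ by the combined action is exactly the total space of $\mathcal{L}^k_{\operatorname{CS}}$ with its induced $\operatorname{Diff}_+(\Sigma)$-action.

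I expect the only genuine obstacle to be bookkeeping rather than mathematics: making sure that the chosen extensions $(\widetilde{A},\tilde g)$ over the cylinder and their pullbacks match up as extensions of the right boundary data, and that orientation conventions on $[0,1]\times\Sigma$ are used consistently so that no sign is lost in $\Theta^k$. (If one used the bounding $3$-manifold formula \eqref{firstco} instead, one would additionally need $f$ to extend over $Y$, which is why the cylinder formulation is preferable here — this should be remarked.) The Sobolev-completion caveat flagged in the text does not affect the argument, since all the maps involved are smooth and the Chern--Simons functional is continuous in the relevant topologies.
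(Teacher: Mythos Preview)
Your proposal is correct and follows essentially the same approach as the paper's own proof (which is given as the proof of Lemma~\ref{liftlem} in the punctured case and declared identical for Lemma~\ref{combination}). Both reduce the semidirect-product compatibility to the identity $(f^*\nabla_A)^{g\circ f}=f^*(\nabla_A^g)$ on the connection component and to $\Theta^{k}(f^*\nabla_A,\,g\circ f)=\Theta^{k}(\nabla_A,g)$ on the $\mathbb{C}$-component, and both establish the latter via the cylinder expression for $\Theta^k$ together with diffeomorphism invariance of the Chern--Simons integral under $\operatorname{id}\times f$.
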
  Here the semi-direct product is made with respect to the morphism $\Psi:\operatorname{Diff}_+(\Sigma) \rightarrow \operatorname{Aut}(\mathcal{G}): \Psi(f)(g)=g\circ f$.  Strictly speaking we have defined a left action of $\operatorname{Diff}_+(\Sigma)$, and we use a right action of $\mathcal{G}$, so we switch to a right action of $\operatorname{Diff}_+(\Sigma)$ to obtain a right action of the semi-direct product.
The proof of this lemma is identical to the proof given in Lemma \ref{liftlem} for the case with punctures.
Since this action preserves flatness, we get an induced action of $\operatorname{Diff}_{+}(\Sigma) = \operatorname{Aut}(P)/\mathcal{G}$ on $\mathcal{L}^k_{\CS}$ over the moduli space $\mathcal{M}$.

Let $\operatorname{Diff}_{0}(\Sigma)<\operatorname{Diff}_{+}(\Sigma)$ denote the subgroup of diffeomorphisms isotopic to the identity.  It is straightforward to see that $\operatorname{Diff}_{0}(\Sigma)$ acts trivially on $\cM$ -- perhaps the easiest way to see this is through the identification of the moduli space as the representation space $\Hom(\pi_1(\Sigma), K)/K$, and to then observe that since maps homotopic to the identity induce the identity map on the fundamental group, the $\operatorname{Diff}_0(\Sigma)$-action is trivial. 

To see that the action of the mapping class group lifts to an action on
the Chern--Simons line bundle, suppose that $f \in \Diff_0(\Sigma)$, let
$[A] \in \mathcal{M}$ be given, and let $g$ be a gauge transformation
with $f^*A = A^g$. We then claim that $\Theta^k(A,g) = 1$.

Any isotopy from $f$ to the identity diffeomorphism defines a suitable
extension of $\nabla_A$ and $\nabla_A^g$ to $[0,1] \times \Sigma$. Moreover, using
the isotopy, now considered as a diffeomorphism of $[0,1] \times \Sigma
$, this extension is a pullback of a product connection on $[0,1] \times
\Sigma$, and since the Chern--Simons functional of a product connection
vanishes, the claim follows by diffeomorphism invariance of the
Chern--Simons functional.

As in \cite[\S 7]{andersen95}, given a fixed point $[\nabla_A]\in\cM$, so that $[f^{*}\nabla_A]=[\nabla_A]$, then since $P$ is isomorphic to $f^{*}P$, there exists an isomorphism $\psi$ from $P$ to $f^{*}P$ such that $\psi f^{*}\nabla_A=\nabla_A$.  Composing $\psi$ with the natural bundle map from $f^{*}P$ to $P$ covering $f$, we get a lift $\phi:P\rightarrow P$ covering $f$.  Let $\nabla_{A_{\phi}}$ denote the connection induced by $\nabla_A$ on the mapping torus $P_{\phi}=[0,1]\times_{\phi} P$.
\begin{lem}{\cite[Lemma 7.2]{andersen95}, \cite[Thm. 2.19]{freed95}}\label{analem}
We have
\begin{equation*}
\operatorname{tr}\left(f^{*}:\cL^{k}_{\CS}\Big|_{[\nabla_A]}\rightarrow \cL^{k}_{\CS}\Big|_{[\nabla_A]}\right)=\operatorname{exp}\left(2\pi ik\CS(P_\phi,A_{\phi})\right).
\end{equation*}
\end{lem}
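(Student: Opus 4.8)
The plan is to evaluate the trace in two stages: first to express the action of $f^{*}$ on the line $\cL^{k}_{\CS}\big|_{[\nabla_A]}$ as a single value of the cocycle $\Theta^{k}$, and then to recognise that value as $\exp(2\pi ik\,\CS(P_\phi,A_\phi))$ by gluing the cylinder $[0,1]\times\Sigma$ up into the mapping torus $\Sigma_f$. For the first stage, I would observe that the representative $\nabla_A$ trivialises the line $\cL^{k}_{\CS}\big|_{[\nabla_A]}\cong\C$. In the trivialisation of $P$ the lift $\phi$ with $\phi^{*}\nabla_A=\nabla_A$ has the form $\phi(x,v)=(f(x),g(x)v)$ for a smooth $g\colon\Sigma\to K$, and a short computation shows that $\phi^{*}\nabla_A=\nabla_A$ is equivalent to $f^{*}\nabla_A=\nabla_A^{g^{-1}}$. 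Feeding $h=g^{-1}$ into the defining relation $(\nabla_A,z)\cdot h=(\nabla_A^{h},\Theta^{k}(\nabla_A,h)z)$ of the cocycle, the point $(f^{*}\nabla_A,z)=(\nabla_A^{g^{-1}},z)$ represents the same element of $\cL^{k}_{\CS}$ as $(\nabla_A,\Theta^{k}(\nabla_A,g^{-1})^{-1}z)$, so the lifted map $f^{*}(\nabla_A,z)=(f^{*}\nabla_A,z)$ of \eqref{liftaction} acts on this one-dimensional fibre by multiplication by the scalar $\Theta^{k}(\nabla_A,g^{-1})^{-1}$, which is therefore the trace. (That the lift descends, so that this is well defined, is the content of Lemma~\ref{combination}.)

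For the second stage I would use the cylinder description \eqref{secondco} of $\Theta^{k}$. Pick an extension $\widetilde{g^{-1}}$ of $g^{-1}$ over $[0,1]\times\Sigma$ with $\widetilde{g^{-1}}\big|_{t=0}=g^{-1}$ and $\widetilde{g^{-1}}\big|_{t=1}=e$ (this exists because $K$ is simply connected, so every gauge transformation is homotopic to the identity) and set $\widetilde{A}=\pi^{*}A$; then $\Theta^{k}(\nabla_A,g^{-1})^{-1}=\exp\!\bigl(2\pi ik\,\CS_{[0,1]\times\Sigma}(\widetilde{A}^{\,\widetilde{g^{-1}}})\bigr)$, and $\widetilde{A}^{\,\widetilde{g^{-1}}}$ is a flat connection on the cylinder restricting to $f^{*}\nabla_A$ over $t=0$ and to $\nabla_A$ over $t=1$. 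Now glue the two ends of $[0,1]\times\Sigma$ along $f$: the relation $f^{*}\nabla_A=\nabla_A^{g^{-1}}$ is precisely the condition that makes $\widetilde{A}^{\,\widetilde{g^{-1}}}$ descend to a flat connection on a $K$-bundle over $\Sigma_f$, which, after unwinding the trivialisations and using that both bundles are trivial since $K$ is simply connected, one identifies with $(P_\phi,\nabla_{A_\phi})$. Conversely $\CS(P_\phi,A_\phi)\in\R/\Z$ is computed by cutting $\Sigma_f$ back open along $\Sigma$, choosing a global trivialisation of $P_\phi$, and integrating the Chern--Simons $3$-form of $A_\phi$, written in that trivialisation, over $[0,1]\times\Sigma$ with its product orientation; this integral is $\CS_{[0,1]\times\Sigma}$ of the product connection $\pi^{*}A$ transformed by the cylinder reading of that trivialisation. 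Since $\CS_{[0,1]\times\Sigma}$ of a gauge transform of $\pi^{*}A$ is determined, modulo $\Z$, by the boundary values of the gauge transformation alone (two such with equal boundary values differing by a map $[0,1]\times\Sigma\to K$ whose contribution to $\CS$ is an integer, where one uses $\pi_{1}(K)=\pi_{2}(K)=0$), and since $k\in\Z$ renders $\exp(2\pi ik\,\cdot)$ insensitive to this ambiguity, we get $\CS_{[0,1]\times\Sigma}(\widetilde{A}^{\,\widetilde{g^{-1}}})\equiv\CS(P_\phi,A_\phi)\pmod{\Z}$, and hence $\operatorname{tr}(f^{*})=\exp(2\pi ik\,\CS(P_\phi,A_\phi))$.

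The step I expect to be the main obstacle is this final identification: one must check that the cylinder reading of a global trivialisation of $P_\phi$ carries the same boundary data (up to a homotopy permitted by $\pi_{1}(K)=\pi_{2}(K)=0$) as the extension $\widetilde{g^{-1}}$ appearing in \eqref{secondco}, and one must keep careful track of the orientation conventions along the cut so the two Chern--Simons numbers come out equal rather than opposite. Everything else is routine bookkeeping with the cocycle identities and the two equivalent descriptions \eqref{firstco} and \eqref{secondco} of $\Theta^{k}$. Alternatively, one may simply appeal to \cite[Lemma 7.2]{andersen95} and \cite[Thm. 2.19]{freed95}, whose proofs carry out exactly this argument.
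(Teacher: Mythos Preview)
Your proposal is correct and follows essentially the same route as the paper: the paper (which defers the proof to the punctured-surface analogue, Lemma~\ref{tracelem}) also writes $f^*\nabla_A=\nabla_A^g$ for some gauge transformation, computes $f^*[(\nabla_A,z)]=[(\nabla_A,\Theta^k(\nabla_A,g)^{-1}z)]$, and then observes that the cylinder connection $\widetilde{\nabla_A}^{\tilde g}$ descends to $\nabla_{A_\phi}$ on the mapping torus, so that $\Theta^k(\nabla_A,g)^{-1}=\exp(2\pi ik\,\CS(P_\phi,A_\phi))$. Your treatment of the gluing step and the $\bmod\ \Z$ ambiguity is more explicit than the paper's, which simply asserts the descent, but the argument is the same up to your relabelling $g\leftrightarrow g^{-1}$.
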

The proof of this is identical to the proof of Lemma \ref{tracelem} given in the sequel.

\subsection{Punctured surfaces}\label{with punctures}
\subsubsection{Introduction}
In this section we give a construction of the \emph{Chern--Simons line bundle} over the moduli space of flat connections with prescribed holonomy around the punctures.  This line bundle has been discussed in many places in the literature (e.g. \cite{rsw, konno1, konno2, daska-went1, fujita, meinwood, charles}), but apparently never quite in the generality or the setting we need. 
All the basic ideas are well-known however.   Remark that the line bundle we construct differs somewhat from the one considered by Freed in \cite{freed95}.  In general Freed lays out the classical Chern--Simons field theory in great detail, hence motivating the appearance of the Chern--Simons line bundle.  However, for surfaces with boundary he chooses to use a slight modification of the Chern--Simons action, so as to obtain a line bundle that descends to all moduli spaces of flat connections with prescribed holonomy, without an integrality condition on the latter.   As mentioned by Freed, by just using the Chern--Simons action instead one would obtain the line bundle as in \cite{daska-went1}; this is the approach we shall follow.  Unlike the description in \cite{meinwood, charles}, we work with punctured surfaces, rather than surfaces with boundary, though the end results should be equivalent.

The analytic construction of the moduli spaces was done by Biquard \cite{biquard}, Poritz \cite{poritz} and Das\-ka\-lo\-poul\-os--Went\-worth \cite{daska-went1, daska-went2}.  
All of them use \emph{weighted} Sobolev spaces, the use of which in a gauge-theoretic context was pioneered by Taubes \cite{taubes}.  As only Daskalopoulos--Wentworth discuss the line bundle we are interested in, we shall follow their approach.  Strictly speaking, the exposition in \cite{daska-went1} was only for the case of once-punctured Riemann surfaces (but arbitrary rank), and in \cite{daska-went2} these authors discuss the case for arbitrary finite punctures, but only for the case of $\operatorname{SU}(2)$.  The general case is a superposition of these two though, and we summarise it here just for the sake of completeness (see also \cite[\S 3]{matic} for some of the analytic background).  Remark that unlike in Section~\ref{no punctures} we now specialise to the case $K=\operatorname{SU}(N)$.

The construction of the \emph{Chern--Simons line bundle} $\cL^k_{\CS}$ can be found in \cite{daska-went1} in the case where $\cP$ consists of a single point.  Note that the construction of $\mathcal{M}_{\overline{\alpha}}$ and $\cL_{\CS}^k$ by \cite{daska-went1} happens in two steps: in the first step one takes the quotient of the space of all connections by the group of gauge transformations that vanish at the marked points, and in the second step one further quotients out by a finite-dimensional compact Lie group.  The construction of the line bundle also follows this two step procedure.  The first step follows closely the case for a surface without punctures, as outlined above, using a certain cocycle to define the line bundle.  This part of the construction always goes through.  In the second step one needs however a certain integrality condition on the parabolic weights times the level $k$ to hold for the line bundle to fully descend to $\mathcal{M}_{\overline{\alpha}}$.

\subsubsection{Construction of the moduli space}\label{constr}

Let $(D_{i},z_i)$ be (disjoint) local coordinates around each $p_i\in\cP$, so that $z_i$ are local isomorphisms onto the open unit disk in $\mathbb{C}$ with $z_i(p_i)=0$.  Setting $w_i=-\log z_i$, then $w_i$ maps $D_i\setminus\{p_i\}$ to the semi-infinite cylinder
$$\cC=\left\{(\tau,\theta)\,\,|\,\,\tau\geq 0, \theta\in[0,2\pi]\right\}/\left[(\tau,\theta)\sim (\tau,\theta+2\pi)\right].$$
Let $(\tau_{i},\theta_i)$ denote the corresponding coordinates on $D_i\setminus\{p_i\}$.  Also fix a metric $h$ on $\Sigma^o$ compatible with the complex structure on $\Sigma$ such that it restricts to the standard flat metric on the semi-infinite cylinder, $h\big|_{D_i\setminus\{p_i\}}=d\tau_{i}^{2}+d\theta_{i}^{2}$.  Note that a priori the $z_i$ are just smooth functions, but if a complex structure is chosen to obtain a Riemann surface $\Sigma_{\sigma}$, we will assume the $z_i$ to be holomorphic.

We suppose we have chosen weights for each of the $p_i$ as in (\ref{parweights}) -- recall from the introduction that we will now assume these to be regular.  Recall that we think of these as living in the Weyl alcove of $\operatorname{SU}(N)$, and we denote the corresponding diagonal matrix in $\mathfrak{su}(N)$ also as $\alpha_i$. 
 
 We shall need the centraliser of $e^{\alpha_i}$ in $\operatorname{SU}(N)$, denoted by $L_i^c$, as well as the Lie algebra of its normaliser in $\operatorname{GL}(N,\mathbb{C})$, which  we shall denote as $\mathfrak{l}^{\mathbb{C}}_i$ (note that in \cite{daska-went1} the former are denoted as ${\operatorname{P}_a}$).

 Let $P$ be the trivial principal $K=\operatorname{SU}(N)$-bundle over $\Sigma^o$; further let $E$ be the vector bundle associated to $P$ and the defining representation of $\operatorname{SU}(N)$, $\mathfrak{g}_P$ the adjoint bundle of $P$, and $\mathfrak{gl}_E=E\otimes E^*$ -- all of these are smooth complex hermitian bundles, and we can think of $\mathfrak{g}_P$ as a sub-bundle of $\mathfrak{gl}_E$.
We fix a base connection $\nabla_0$ on $P$ that takes the form $d+\alpha_id\theta_i$ on $D_i\setminus \{p_i\}$ (such a $\nabla_0$ always exists), and by abuse of notation denote the induced linear connections on $E$, $\mathfrak{g}_P$ and $\mathfrak{gl}_E$ by $\nabla_0$ as well.

For a given Hermitian vector bundle $F$ over $\Sigma^o$ equipped with a hermitian connection $\nabla_0$ (e.g. $E\otimes T^*\Sigma^o$, where we combine the base connection on $E$ with the Chern connection on $T^*\Sigma^o$), we will need to consider the \emph{weighted} Sobolev spaces of sections of $F$.  For any $\delta \in \mathbb{R}$, recall that this is the completion of the space of (compactly supported) smooth sections of $T^{*}\Sigma^o\otimes E$ in the norm 
$$
|| \sigma ||_{l,\delta}^p = \left( \int_{\Sigma} e^{\tau \delta} \left( |\nabla_0^{(l)} \sigma|^p +\dots + |\nabla_0 \sigma |^p+ |\sigma |^p\right) \right)^{\frac{1}{p}}.
$$Here $\tau:\Sigma^o \rightarrow \mathbb{R}$ is the smooth function coinciding with $\tau_i$ on each $D_i\setminus\{p_i\}$, and zero outside of the $D_i$.  
We remark that the weights $\delta$ used for the Sobolev spaces and the parabolic weights $\alpha_i$ are different notions.

We define a space of connections modelled on these Banach spaces by
\begin{equation}\label{Adelta}\mathcal{A}_{\delta} = \nabla_0 + L^2_{1,\delta}(T^*\Sigma^o \otimes \mathfrak{g}_{P}),\end{equation}  and we will denote by $\mathcal{A}_{\delta,F}$ the subspace of flat irreducible connections in $\mathcal{A}_{\delta}$.  Note that these spaces do not change if we replace $\nabla_0$ by another connection which coincides with $\nabla_0$ on the $D_i$.  From the complex structure of $\Sigma$ the vector space $ L^2_{1,\delta}(T^*\Sigma^o \otimes \mathfrak{g}_{P})$ moreover inherits a canonical complex structure (cfr. \cite[\S 5]{ab} and for further discussion see also \cite{andersen1998_NewPolarizations}). 

Next we turn our attention to the group of gauge transformations.  Define $$\mathcal{D}= \left\{ \phi\in L^2_{2,\text{loc.}}(\mathfrak{gl}(\mathfrak{g}_{P}))\ \big| \ ||\nabla_0 \phi || _{1,\delta}^{2} < \infty \right\}.$$ Here, as usual, $L^2_{2,\text{loc.}}$ refers to those sections whose product with the characteristic function $1_K$, for any compact $K\subset \Sigma^o$, is in $L^2_2$.

We furthermore have a natural map, $$\sigma: \mathcal{D}\rightarrow \prod_i \mathfrak{l}_i^{\mathbb{C}}: \phi\mapsto (\sigma_1(\phi),\ldots, \sigma_n(\phi)).$$  Here we identify $\mathfrak{l}_i^{\mathbb{C}}$ with the space of parallel sections (with respect to $\nabla_0$) of $\mathfrak{gl}(\mathfrak{g}_P)$ restricted to a circle around $p_i$ in $D_i$, and we put $\sigma_i(\phi)(\theta)=\lim_{\tau\rightarrow \infty} \phi(w_i^{-1}(\tau, \theta))$ (see \cite[\S 3.1]{daska-went1} or \cite[\S 2]{matic} for more details).

We can now define the Banach Lie groups 
\begin{align*}
\mathcal{G}_{\delta} &=\left\{ \phi \in \mathcal{D}\ \big| \ \phi\phi^*=\phi^*\phi=I, \det \phi = 1\right\},\\
\mathcal{G}_{0,\delta}&=\left\{ \phi \in \mathcal{G}_{\delta}\ \big|\ \sigma (\phi)=I\right\},
\end{align*}
and we have a short exact sequence 
\beq \label{longexactsplits}1\longrightarrow \mathcal{G}_{0,\delta}\longrightarrow \mathcal{G}_{\delta}\longrightarrow \prod_i L_i^c\longrightarrow 1.\eeq
We are thinking here of each $L_i^c$ as sitting inside $\mathfrak{l}_{i}^{\mathbb{C}}$.  The sequence (\ref{longexactsplits}) in fact splits (at least when we are using regular $\alpha_i$ so that all $L_i^c$ are equal to the maximal torus in $\operatorname{SU}(N)$, see \cite[p. 26]{daska-went2}), so we have $\mathcal{G}_{\delta}=\mathcal{G}_{0,\delta} \rtimes\left(\prod_i L_i^c\right)$.
The spaces we are interested in are 
$$ \mathcal{F}_{\delta} = \mathcal{A}_{\delta,F}/\mathcal{G}_{0,\delta},\hspace{1cm} \mathcal{M}_{\delta}=\mathcal{A}_{\delta,F}/\mathcal{G}_{\delta},$$
and we have of course
 $$\mathcal{M}_{\delta}=\mathcal{F}_{\delta}/ \prod_i L_i^c.$$

One of the reasons for setting up the weighted Sobolev spaces (as opposed to just working with the Fr\'echet spaces  of smooth sections) is access to index theorems.  In particular we have the following, an application of an Atiyah--Patodi--Singer index theorem \cite{APS}.  Let 
$$\delta_{\nabla}=(\nabla, e^{-\tau\delta}\nabla^*e^{\tau\delta}): L^2_{1,\delta}(T^*(\Sigma)\otimes \mathfrak{g}_{P}) \rightarrow L^2_{1,\delta}\left(\Lambda^2 T^*\Sigma^o\otimes \mathfrak{g}_{P}\right)\oplus L^2_{1,\delta}(\mathfrak{g}_{P}).$$  
Here $\nabla^*$ is the $L^2$ adjoint of $\nabla$.  Then, for a small positive range of $\delta$, this operator $\delta_{\nabla}$ is bounded Fredholm, of index $2(g-1)(N^2-1) +\sum_i \dim \left(\operatorname{SU}(N)/L_i^c\right)$ \cite[Prop. 3.5]{daska-went1}.  Henceforth, we shall always assume $\delta$ to be in this range.

As a consequence, Daskalopoulos and Wentworth prove the following:

\begin{thm}[{\cite[Theorems 3.7 and 3.13]{daska-went1}}]\label{twomoduli}
The spaces $\mathcal{F}_{\delta}$ and $\mathcal{M}_{\delta}$ are smooth manifolds of dimensions  $(2(g-1)+n)(N^2-1)$ and $2(g-1)(N^2-1)+ \sum_i \dim(\operatorname{SU}(N)/L_i^c)$ respectively.  Moreover there is a diffeomorphism between $\mathcal{M}_{\delta}$ and the stable locus of $\mathcal{M}_{\overline{\alpha}}$.
Finally a complex structure on $\Sigma$ naturally puts the structure of an almost complex manifold on $\mathcal{M}_{\delta}$ that makes the diffeomorphism biholomorphic.
\end{thm}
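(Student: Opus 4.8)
The plan is to run the standard infinite-dimensional gauge-theoretic argument, adapted to the weighted Sobolev setting, and to establish the three assertions --- smoothness with the stated dimensions, the identification with the stable locus of $\mathcal{M}_{\overline{\alpha}}$, and biholomorphicity --- in that order. The only genuinely hard inputs are the elliptic/Fredholm theory on the cylindrical ends and the nonlinear parabolic Narasimhan--Seshadri correspondence; everything else amounts to bookkeeping with the tangent spaces.

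For the smooth structure, I would first check that $\mathcal{G}_{0,\delta}$ acts freely and properly on $\mathcal{A}_{\delta,F}$: freeness because a nontrivial constant central gauge transformation has $\sigma_i\neq I$ and hence lies outside $\mathcal{G}_{0,\delta}$, while the stabiliser of an irreducible connection in $\mathcal{G}_\delta$ is exactly the centre; properness by an Uhlenbeck-type compactness argument adapted to the ends. A local slice theorem at a flat irreducible $\nabla$ then follows: the orbit is a Banach submanifold with tangent space $\im\nabla$, a complementary slice is cut out by the weighted Coulomb condition $e^{-\tau\delta}\nabla^{*}e^{\tau\delta}(\cdot)=0$, and intersecting this with the linearised flatness equation and applying the implicit function theorem --- using that $\delta_\nabla$ is Fredholm by \cite[Prop. 3.5]{daska-went1} --- produces a chart modelled on the finite-dimensional space $H^{1}_\delta(\Sigma^{o},\mathfrak{g}_{P})$. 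One then verifies the vanishing $H^{0}_\delta=0=H^{2}_\delta$ at irreducible $\nabla$ (no parallel sections of the adjoint bundle, using also the duality $H^{2}_\delta\cong(H^{0}_{-\delta})^{*}$), so that $\dim\mathcal{M}_\delta$ equals the index of $\delta_\nabla$, namely $2(g-1)(N^2-1)+\sum_i\dim(\operatorname{SU}(N)/L_i^{c})$. Since the sequence \eqref{longexactsplits} splits and the centre acts trivially on $\mathcal{A}_{\delta,F}$, the projection $\mathcal{F}_\delta\to\mathcal{M}_\delta$ is a principal $\prod_i L_i^{c}$-bundle (after dividing out the centre), so $\mathcal{F}_\delta$ is smooth of dimension $\dim\mathcal{M}_\delta+\sum_i\dim L_i^{c}=(2(g-1)+n)(N^2-1)$.

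For the identification with $\mathcal{M}_{\overline{\alpha}}$, I would define a map $\Phi$ sending a flat connection $\nabla$ to the holomorphic bundle $(E,\bar\partial_\nabla)$, carrying at each $p_i$ the quasi-parabolic flag given by the eigenspace filtration of the limit $\sigma_i$ and the parabolic weights $\alpha_i$; flatness forces vanishing parabolic degree and trivial determinant, and stability of $\Phi(\nabla)$ follows because a destabilising parabolic subbundle would produce a $\nabla$-invariant subbundle, contradicting irreducibility (via the weighted Chern--Weil slope inequality). The inverse is the hard analytic direction: from a stable parabolic bundle one must produce a unique, modulo $\mathcal{G}_\delta$, compatible flat $\operatorname{SU}(N)$-connection with the prescribed singular behaviour at the punctures --- this is parabolic Narasimhan--Seshadri in the differential-geometric form of Mehta--Seshadri, Biquard, Poritz and Daskalopoulos--Wentworth, proved by a Donaldson-type heat flow (or a continuity method) for the adapted Hermitian metric, with delicate weighted a priori estimates near the punctures. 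Granting both maps, one checks that they are mutually inverse and that $\Phi$ and $\Phi^{-1}$ are smooth by comparing the de Rham model $H^{1}_\delta$ for $T\mathcal{M}_\delta$ with the parabolic Dolbeault model for $T\mathcal{M}_{\overline{\alpha}}$.

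Finally, a complex structure on $\Sigma$ gives the Hodge star, an operator $J$ on $L^{2}_{1,\delta}(T^{*}\Sigma^{o}\otimes\mathfrak{g}_{P})$ with $J^{2}=-1$ that is invariant under $\mathcal{G}_\delta$ and preserves both the flat locus and the slices, hence descends to an almost complex structure on $\mathcal{M}_\delta$; and $\Phi$ is pseudoholomorphic in both directions because, passing to harmonic representatives, $J$ on $H^{1}_\delta$ corresponds exactly to the $\bar\partial$-complex structure on the parabolic $H^{1}$. The main obstacle is thus entirely analytic and is taken care of by the two inputs quoted above: the cylindrical-end elliptic theory --- Fredholmness of $\delta_\nabla$ for the correct range of $\delta$ avoiding the indicial roots, and the Atiyah--Patodi--Singer index computation, both from \cite[Prop. 3.5]{daska-went1} --- and the nonlinear step in the parabolic Narasimhan--Seshadri correspondence, where the weighted estimates at the marked points are the delicate point; once these are in hand, the slice theorem, the dimension count and the transport of $J$ are routine.
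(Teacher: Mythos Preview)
The paper does not actually prove this theorem: it is stated with a citation to \cite[Theorems 3.7 and 3.13]{daska-went1} and taken as a black box, so there is no ``paper's own proof'' to compare against beyond that citation. Your outline is a faithful sketch of the standard argument carried out in that reference --- slice theorem from the Fredholm package for $\delta_\nabla$, dimension from the index plus the $\prod_i L_i^c$-fibration, the parabolic Narasimhan--Seshadri correspondence for the identification with the stable locus, and the Hodge star for the almost complex structure --- and the dimension bookkeeping checks out. In short, your proposal is correct and is precisely the approach the cited source takes; the present paper simply quotes the result.
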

 In \cite{daska-went1} only irreducible connections are discussed, but it is well-known that if one also includes reducible flat connections, one obtains a homeomorphism with all of $\mathcal{M}_{\overline{\alpha}}$, see e.g. \cite[Theorem 6.4]{poritz}.  The image of $\mathcal{G}_{\delta}$-orbits of reducible flat connections is exactly the semi-stable locus in $\mathcal{M}_{\overline{\alpha}}$.
 
 \subsubsection{Construction of $\mathcal{L}_{\CS}^k$}\label{constrLCS}
Now we define a cocycle that will be the analogue of \eqref{secondco}.  It will be convenient to define this cocycle in terms of the Chern--Simons action on the cylinder $[0,1]\times\Sigma^o$.  
Let 
$$\widetilde{\cG}_{0,\delta}:=\{\tilde{g}:[0,1]\times\Sigma^o\rightarrow G\,|\,\tilde{g}(t,\cdot)\in\cG_{0,\delta},\,\forall\,t\in[0,1],\,\text{and is continuous and piecewise smooth in}\,t\}.$$  Since every gauge transformation in $\cG_{0,\delta}$ is smoothly homotopic to the identity  (\cite[Prop. 3.3]{daska-went1}), we may extend $g\in\cG_{0,\delta}$ on $\Sigma^o$ to $\tilde{g}\in\widetilde{\cG}_{0,\delta}$ so that $\tilde{g}_{0}:=\tilde{g}(0,\cdot)=g$ and $\tilde{g}_{1}:=\tilde{g}(1,\cdot)=e$.  
Similarly, using  the natural projection map $\pi:[0,1]\times\Sigma^o\rightarrow \Sigma^o$, $\nabla_A=\nabla_0+A$ on $\Sigma^o$ extends to $\widetilde{\nabla_A}=\pi^{*}\nabla_A=d+\widetilde{A\!+\!A_0}$, where $\widetilde{A\!+\!A_0}=\pi^*(A\!+\!A_0)$.  Then $\widetilde{\nabla_A}^{\tilde{g}}\in\widetilde{\cA}_{\delta}$ is an extension of $\nabla_A^g$ to $[0,1]\times \Sigma^o$, and we define 
\beq\label{punctco}
\Theta^{k}(\nabla_A,g):=\operatorname{exp}\left(-2\pi ik\CS_{[0,1]\times\Sigma^o}(\widetilde{A\!+\!A_0}^{\tilde{g}})\right).
\eeq 
Recall that  Daskalopoulos and Wentworth in \cite[Eq. 5.1]{daska-went1} define a cocycle
$$
\widetilde{\Theta}^{k}:L^{2}_{1,\delta}(T^{*}\Sigma^o\otimes\mathfrak{g}_{P})\times \cG_{0,\delta}\rightarrow\operatorname{U}(1),
$$
$$
\widetilde{\Theta}^{k}(\nabla_A,g):=\operatorname{exp}\left(\frac{ik}{4\pi}\int_{\Sigma^o}\operatorname{tr}(\operatorname{Ad}_{g^{-1}}(A\!+\!A_0)\wedge g^{-1}dg)-\frac{ik}{12\pi}\int_{[0,1]\times \Sigma^o}\operatorname{tr}\left(\tilde{g}^{-1}\tilde{d}\tilde{g}\right)^{3}\right),
$$
and use this to define the Chern--Simons line bundle $\cL^{k}_{\CS}$ over $\cM_{\delta}$ (here $\tilde{d}=d+\frac{d}{dt}$).  Note that we have used $\operatorname{Ad}$-invariance of $\operatorname{tr}$ to write \cite[Eq. 5.1]{daska-went1} in a slightly different form than it originally appeared.  
The two cocycles are equal: 
\begin{lem}\label{equivlem}
$\widetilde{\Theta}^{k}(\nabla_A,g)=\Theta^{k}(\nabla_A,g).$
\end{lem}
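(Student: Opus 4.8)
The plan is to expand both cocycles into integrals of the Chern--Simons $3$-form over the cylinder $Y=[0,1]\times\Sigma^o$ and to reduce the identity to the classical transformation law of the Chern--Simons density under a gauge transformation, combined with Stokes' theorem on $Y$.

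First I would record two preliminary points. Since $\widetilde{\nabla_A}=\pi^*\nabla_A$ is pulled back from the $2$-dimensional $\Sigma^o$, its Chern--Simons $3$-form is the pullback of a $3$-form on $\Sigma^o$ and hence vanishes identically; thus $\CS_{[0,1]\times\Sigma^o}(\widetilde{A+A_0})=0$, so this term may be subtracted for free. Secondly, the weighted Sobolev conditions on $A$ and on $g\in\cG_{0,\delta}$ --- in particular $\sigma(g)=I$, which forces $g\to I$ and $g^{-1}dg\to 0$ at the punctures, while $A$ decays in the weighted norm --- guarantee that every integrand appearing below is integrable over the non-compact cylinder, so all manipulations are legitimate. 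This is exactly the role played by the weighted spaces in \cite{daska-went1}; the only term requiring a moment's thought is $\operatorname{tr}(\Ad_{g^{-1}}A_0\wedge g^{-1}dg)$ near a puncture, where $A_0=\alpha_i\,d\theta_i$ does not decay but $g^{-1}dg$ does.

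Next I would invoke the standard pointwise transformation identity for the Chern--Simons density under $B\mapsto B^{h}=\Ad_{h^{-1}}B+h^{-1}dh$, which up to the usual sign conventions reads
\[
\operatorname{tr}\!\left(B^{h}\wedge dB^{h}+\tfrac{2}{3}(B^{h})^{3}\right)=\operatorname{tr}\!\left(B\wedge dB+\tfrac{2}{3}B^{3}\right)-\tfrac{1}{3}\operatorname{tr}\!\left((h^{-1}dh)^{3}\right)+d\operatorname{tr}\!\left(\Ad_{h^{-1}}B\wedge h^{-1}dh\right),
\]
and apply it with $B=\widetilde{A+A_0}$, $h=\tilde{g}$ on $Y$, using the total differential $\tilde{d}=d+dt\,\partial_{t}$ in place of $d$. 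Integrating over $Y$, the first term on the right contributes $0$ by the vanishing above, the second contributes the bulk Wess--Zumino term $\int_{Y}\operatorname{tr}((\tilde{g}^{-1}\tilde{d}\tilde{g})^{3})$ up to a numerical constant, and the third contributes, by Stokes' theorem, a boundary integral over $\partial Y=(\{1\}\times\Sigma^o)-(\{0\}\times\Sigma^o)$. On $\{1\}\times\Sigma^o$ one has $\tilde{g}_{1}=e$, so $\tilde{g}^{-1}d\tilde{g}$ vanishes there and that end drops out; on $\{0\}\times\Sigma^o$ one has $\tilde{g}_{0}=g$, and the surviving boundary term is a numerical multiple of $\int_{\Sigma^o}\operatorname{tr}(\Ad_{g^{-1}}(A+A_0)\wedge g^{-1}dg)$. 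Substituting the resulting expression for $\CS_{[0,1]\times\Sigma^o}(\widetilde{A+A_0}^{\tilde{g}})$ into $\Theta^{k}(\nabla_A,g)=\exp(-2\pi ik\,\CS_{[0,1]\times\Sigma^o}(\widetilde{A+A_0}^{\tilde{g}}))$ and matching the numerical constants (the $8\pi^{2}$ and $24\pi^{2}$ from the definition of $\CS$ against $\tfrac{ik}{4\pi}$ and $\tfrac{ik}{12\pi}$) should reproduce $\widetilde{\Theta}^{k}(\nabla_A,g)$ exactly. Finally, since $\widetilde{\Theta}^{k}$ is a well-defined cocycle on $\cG_{0,\delta}$ by \cite{daska-went1}, the identity in particular re-establishes that $\Theta^{k}$ is independent of the chosen homotopy $\tilde{g}$ and of the pullback extension.

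I expect the main obstacle to be purely bookkeeping: getting all signs and powers of $2\pi$ to line up simultaneously --- the sign in the transformation law, the orientation convention on $\partial([0,1]\times\Sigma^o)$, the direction of the chosen homotopy ($\tilde{g}_{0}=g$, $\tilde{g}_{1}=e$), and the reordering $\operatorname{tr}(\Ad_{g^{-1}}(A+A_0)\wedge g^{-1}dg)=\operatorname{tr}((A+A_0)\wedge dg\,g^{-1})$ used to pass between the two natural forms of the boundary term --- so that the two expressions agree on the nose rather than merely up to a sign or an overall constant.
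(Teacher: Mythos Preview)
Your proposal is correct and follows essentially the same route as the paper's proof: expand $\CS_{[0,1]\times\Sigma^o}(\widetilde{A+A_0}^{\tilde g})$ via the gauge-transformation law for the Chern--Simons density, identify the Wess--Zumino bulk term and an exact term, apply Stokes' theorem (justified by the weighted Sobolev decay) to convert the latter into the boundary integral over $\{0\}\times\Sigma^o$, and match constants. The only cosmetic difference is that the paper writes out the transformation identity by hand---starting from the $\operatorname{tr}(A\wedge F-\tfrac{1}{6}A\wedge[A\wedge A])$ form, expanding $\widetilde{A+A_0}^{\tilde g}$ and $F_{\widetilde{\nabla_A}^{\tilde g}}$, and using the Maurer--Cartan equation---whereas you invoke it as a known pointwise formula; the content is identical.
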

\begin{proof}
We have the formula
\begin{eqnarray*}
8\pi^2 \CS_{[0,1]\times\Sigma^o}\left(\widetilde{A\!+\!A_0}^{\tilde{g}}\right)&=&\int_{[0,1]\times\Sigma^o}\operatorname{tr}\left(\widetilde{A\!+\!A_0}^{\tilde{g}}\wedge F_{\widetilde{\nabla_A}^{\tilde{g}}}-\frac{1}{6}\widetilde{A\!+\!A_0}^{\tilde{g}}\wedge[\widetilde{A\!+\!A_0}^{\tilde{g}}\wedge\widetilde{A\!+\!A_0}^{\tilde{g}}]\right).
\end{eqnarray*}
Now, the usual gauge change formula for the curvature is
$$
F_{\widetilde{\nabla_A}^{\tilde{g}}}=\operatorname{Ad}_{\tilde{g}^{-1}}F_{\widetilde{\nabla_A}}.
$$
and by definition
$$
\widetilde{A\!+\!A_0}^{\tilde{g}}=\operatorname{Ad}_{\tilde{g}^{-1}}\widetilde{A\!+\!A_0}+\tilde{g}^{*}\omega,
$$
where $\omega$ is the Maurer--Cartan form on $G$.  Since $\widetilde{A\!+\!A_0}=\pi^{*}(A\!+\!A_0)$, some of the forms involving only $\widetilde{A\!+\!A_0}$ on $[0,1]\times\Sigma^o$ vanish and we have
$$
\operatorname{tr}\left(\widetilde{A\!+\!A_0}^{\tilde{g}}\wedge F_{\widetilde{\nabla_A}^{\tilde{g}}}\right)=\operatorname{tr}\left(\tilde{g}^{*}\omega\wedge
\operatorname{Ad}_{\tilde{g}^{-1}}d\widetilde{A\!+\!A_0}+\frac{1}{2}\tilde{g}^{*}\omega\wedge\operatorname{Ad}_{\tilde{g}^{-1}}[\widetilde{A\!+\!A_0}\wedge\widetilde{A\!+\!A_0}]\right). 
$$
Also, one computes
\begin{multline*} \frac{1}{6}\operatorname{tr}\left(\widetilde{A\!+\!A_0}^{\tilde{g}}\wedge[\widetilde{A\!+\!A_0}^{\tilde{g}}\wedge\widetilde{A\!+\!A_0}^{\tilde{g}}]\right)=\\
\frac{1}{2}\operatorname{tr}\left(\tilde{g}^{*}\omega\wedge\operatorname{Ad}_{\tilde{g}^{-1}}[\widetilde{A\!+\!A_0}\wedge\widetilde{A\!+\!A_0}]\right)+\frac{1}{2}\operatorname{tr}\left(\operatorname{Ad}_{\tilde{g}^{-1}}\widetilde{A\!+\!A_0}\wedge[\tilde{g}^{*}\omega\wedge\tilde{g}^{*}\omega]\right)+\frac{1}{6}\operatorname{tr}\left(\tilde{g}^{*}\omega\wedge[\tilde{g}^{*}\omega\wedge\tilde{g}^{*}\omega]\right). 
\end{multline*}
We have
\begin{multline*}
8\pi^2 \CS_{[0,1]\times\Sigma^o}(\widetilde{A\!+\!A_0}^{\tilde{g}})=\\ \int_{[0,1]\times\Sigma^o}\operatorname{tr}\left(\tilde{g}^{*}
\omega\wedge\operatorname{Ad}_{\tilde{g}^{-1}}d\left(\widetilde{A\!+\!A_0}\right)-\frac{1}{2}\operatorname{Ad}_{\tilde{g}^{-1}}\widetilde{A\!+\!A_0}\wedge[\tilde{g}^{*}\omega\wedge\tilde{g}^{*}\omega]-\frac{1}{6}\tilde{g}^{*}\omega\wedge[\tilde{g}^{*}\omega\wedge\tilde{g}^{*}\omega]\right),
\end{multline*}
and one can show that
\beq\label{useq}
\operatorname{Ad}_{\tilde{g}^{-1}}d\left(\widetilde{A\!+\!A_0}\right)=d\left(\operatorname{Ad}_{\tilde{g}^{-1}}
\widetilde{A\!+\!A_0}\right)+[\tilde{g}^{*}\omega\wedge \operatorname{Ad}_{\tilde{g}^{-1}}
\widetilde{A\!+\!A_0}].
\eeq
Using \eqref{useq},
\begin{align*}
\int_{[0,1]\times\Sigma^o} \operatorname{tr}&\left(\tilde{g}^{*}
\omega \wedge \operatorname{Ad}_{\tilde{g}^{-1}}d\left(\widetilde{A\!+\!A_0}\right)\right)\\ =&\int_{[0,1]\times\Sigma^o}\operatorname{tr}\left(\tilde{g}^{*}
\omega\wedge\left[d\left(\operatorname{Ad}_{\tilde{g}^{-1}}
\widetilde{A\!+\!A_0}\right)+[\tilde{g}^{*}\omega\wedge \operatorname{Ad}_{\tilde{g}^{-1}}
\widetilde{A\!+\!A_0}]\right]\right)\\
 =&\int_{[0,1]\times\Sigma^o}\operatorname{tr}\left(\tilde{g}^{*}
\omega\wedge d\left(\operatorname{Ad}_{\tilde{g}^{-1}}
\widetilde{A\!+\!A_0}\right)+\operatorname{Ad}_{\tilde{g}^{-1}}
\widetilde{A\!+\!A_0}\wedge[\tilde{g}^{*}\omega\wedge \tilde{g}^{*}\omega]\right)\\
 =&\int_{[0,1]\times\Sigma^o}\operatorname{tr}\left(\operatorname{Ad}_{\tilde{g}^{-1}}
\widetilde{A\!+\!A_0}\wedge d\tilde{g}^{*}\omega+d\left(\operatorname{Ad}_{\tilde{g}^{-1}}
\widetilde{A\!+\!A_0}\wedge\tilde{g}^{*}\omega\right)+\operatorname{Ad}_{\tilde{g}^{-1}}
\widetilde{A\!+\!A_0}\wedge[\tilde{g}^{*}\omega\wedge \tilde{g}^{*}\omega]\right).\\
\end{align*}
Thus
$$
8\pi^2\CS_{[0,1]\times\Sigma^o}(\widetilde{A\!+\!A_0}^{\tilde{g}})=\int_{[0,1]\times\Sigma^o}\operatorname{tr}\left(d\left(\operatorname{Ad}_{\tilde{g}^{-1}}
\widetilde{A\!+\!A_0}\wedge\tilde{g}^{*}\omega\right)-\frac{1}{6}\tilde{g}^{*}\omega\wedge[\tilde{g}^{*}\omega\wedge\tilde{g}^{*}\omega]\right),
$$
since the Maurer-Cartan equation says
$$d\tilde{g}^{*}\omega+\frac{1}{2}[\tilde{g}^{*}\omega\wedge\tilde{g}^{*}
\omega]=0.$$
Since Stokes' theorem holds when $A\in L^{2}_{1,\delta}(T^{*}\Sigma^o\otimes\mathfrak{g}_{P})$, the Lemma is proven after writing $\tilde{g}^{*}\omega=\tilde{g}^{-1}\tilde{d}\tilde{g}$.
\end{proof}
Given Lemma \ref{equivlem}, one has again that $\Theta^{k}$ is independent of the choice of the path in $\cG_{0,\delta}$ (\cite[Lemma 5.2]{daska-went1}). 
The action of $\cG_{0,\delta}$ on $\cA_{\delta}\times\mathbb{C}$ is given by
\beq\label{liftgauge}(\nabla_A,z)\cdot g:=(\nabla_A^g,\Theta^{k}(\nabla_A,g)\cdot z),
\eeq
and we have the following
\begin{lem}\label{cocyclelem}
$\Theta^{k}$ satisfies the cocycle condition
$$
\Theta^{k}(\nabla_A,g)\Theta^{k}(\nabla_A^g,h)=\Theta^{k}(\nabla_A,gh).
$$\end{lem}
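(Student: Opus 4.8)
The plan is to deduce the cocycle identity from the behaviour of the Chern--Simons functional under concatenation of cylinders, exactly as in the closed case, combined with the path-independence of $\Theta^{k}$ recorded after Lemma~\ref{equivlem} (\cite[Lemma 5.2]{daska-went1}).

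First I would fix extensions $\tilde g,\tilde h\in\widetilde{\cG}_{0,\delta}$ with $\tilde g_{0}=g$, $\tilde g_{1}=e$ and $\tilde h_{0}=h$, $\tilde h_{1}=e$; these exist because every element of $\cG_{0,\delta}$ is smoothly homotopic to the identity. I then form a path $\tilde G\colon[0,2]\to\cG_{0,\delta}$ by
\[
\tilde G(t)=\begin{cases} g\,\tilde h(t), & t\in[0,1],\\[2pt] \tilde g(t-1), & t\in[1,2].\end{cases}
\]
Since $\cG_{0,\delta}$ is a group, $\tilde G(t)\in\cG_{0,\delta}$ for all $t$; both pieces equal $g$ at $t=1$, so $\tilde G$ is continuous and piecewise smooth; and $\tilde G(0)=gh$, $\tilde G(2)=e$. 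After the orientation-preserving reparametrisation $[0,2]\to[0,1]$, under which $\CS$ is invariant, $\tilde G$ is an admissible extension of $gh$ in the sense used to define $\Theta^{k}(\nabla_A,gh)$, so path-independence gives
\[
\CS_{[0,2]\times\Sigma^o}\!\big((\pi^*\nabla_A)^{\tilde G}\big)=\CS_{[0,1]\times\Sigma^o}\!\big((\pi^*\nabla_A)^{\widetilde{gh}}\big),
\]
the right-hand side being the Chern--Simons term in $\Theta^{k}(\nabla_A,gh)$.

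Next I would split the left-hand integral over $[0,2]\times\Sigma^o=([0,1]\times\Sigma^o)\cup([1,2]\times\Sigma^o)$, which is legitimate since the Chern--Simons integrand is a $3$-form and the relevant integrals converge in the weighted Sobolev setting of \cite{daska-went1}. On $[1,2]$, after the shift $t\mapsto t-1$, the connection $(\pi^*\nabla_A)^{\tilde G}$ is exactly $(\pi^*\nabla_A)^{\tilde g}$, so this piece is the Chern--Simons term of $\Theta^{k}(\nabla_A,g)$. On $[0,1]$ we have $(\pi^*\nabla_A)^{g\,\tilde h(t)}=\big(\pi^*(\nabla_A^g)\big)^{\tilde h(t)}$, and since $g\in\cG_{0,\delta}$ we have $\nabla_A^g\in\cA_\delta$, so $\Theta^{k}(\nabla_A^g,h)$ is by definition the exponential of $-2\pi i k$ times the Chern--Simons integral of this very connection; hence this piece is the Chern--Simons term of $\Theta^{k}(\nabla_A^g,h)$. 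Adding the two pieces, multiplying by $-2\pi i k$ and exponentiating yields $\Theta^{k}(\nabla_A,gh)=\Theta^{k}(\nabla_A,g)\,\Theta^{k}(\nabla_A^g,h)$; the factors are $\U(1)$-valued, so their order is immaterial and this is the claim.

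The only genuine points to check are bookkeeping ones: that $\tilde G\in\widetilde{\cG}_{0,\delta}$, and that the Chern--Simons integrals over the non-compact cylinders are finite and additive under the decomposition above. Both are supplied by the weighted-Sobolev framework and by the version of Stokes' theorem used in the proof of Lemma~\ref{equivlem}, so I expect this to be routine. As a fallback one may bypass the geometric argument entirely: by Lemma~\ref{equivlem}, $\Theta^{k}=\widetilde{\Theta}^{k}$, and the cocycle property of $\widetilde{\Theta}^{k}$ is the Polyakov--Wiegmann-type identity for the explicit Wess--Zumino formula defining it, which can be verified directly as in \cite[\S 5]{daska-went1}.
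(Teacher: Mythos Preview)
Your proof is correct and follows essentially the same concatenation argument as the paper: the paper defines auxiliary extensions $\tilde g_0,\tilde h_0$ on $[0,1]$ whose product is precisely your $\tilde G$ after the reparametrisation $[0,2]\to[0,1]$, and then splits the Chern--Simons integral at the midpoint exactly as you do. The only differences are cosmetic (the paper builds the product of two separately-defined paths rather than writing down $\tilde G$ directly), and your fallback via Lemma~\ref{equivlem} is not used in the paper.
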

This corresponds to \cite[Lemma 5.3]{daska-went1}; as no proof is given there we include one here for completeness, using our construction of $\Theta^k$.
\begin{proof}
Let
$$
\tilde{h}_{1}:[0,1]\times\Sigma^o\rightarrow G,
$$
be an extension of $h$ from $\Sigma^o$ to $[0,1]\times\Sigma^o$ such that
$$\tilde{h}_{1}(0,\cdot)=h(\cdot),\,\,\,\,\,\text{and},\,\,\,\,\tilde{h}_{1}(1,\cdot)=e(\cdot),$$
where $e:\Sigma^o\rightarrow G$ is the identity gauge transformation.
Let
$$
\tilde{g}_{1}:[0,1]\times\Sigma^o\rightarrow G,
$$
be an extension of $g$ from $\Sigma^o$ to $[0,1]\times\Sigma^o$ such that
$$\tilde{g}_{1}(0,\cdot)=g(\cdot),\,\,\,\,\,\text{and},\,\,\,\,\tilde{g}_{1}(1,\cdot)=e(\cdot).$$
Define an extension $\tilde{h}_{0}:[0,1]\times\Sigma^o\rightarrow G$ of $h$ by:
$$
\tilde{h}_{0}(t,\cdot):=\begin{cases}
\tilde{h}_{1}(2t,\cdot),\,\,t\leq 1/2,\\
\pi^{*}e(2t-1,\cdot),\,\,t\geq 1/2,
\end{cases}
$$
so that,
$$\tilde{h}_{0}(0,\cdot)=h(\cdot),\,\,\,\,\,\text{and},\,\,\,\,\tilde{h}_{0}(1,\cdot)=e(\cdot).$$
Also, define an extension $\tilde{g}_{0}:[0,1]\times\Sigma^o\rightarrow G$ of $g$ by:
$$
\tilde{g}_{0}(t,\cdot):=\begin{cases}
\pi^{*}g(2t,\cdot),\,\,t\leq 1/2,\\
\tilde{g}_{1}(2t-1,\cdot),\,\,t\geq 1/2,
\end{cases}
$$
so that
$$\tilde{g}_{0}(0,\cdot)=g(\cdot),\,\,\,\,\,\text{and},\,\,\,\,\tilde{g}_{0}(1,\cdot)=e(\cdot).$$
By construction we have
\begin{align*}
\Theta^{k}(\nabla_A,gh)&=\operatorname{exp}\left[-2\pi ik\CS_{[0,1]\times\Sigma^o}\left(\widetilde{A\!+\!A_0}^{\widetilde{gh}}\right)\right]\\
                &=\operatorname{exp}\left[-2\pi ik\CS_{[0,1]\times\Sigma^o}\left(\widetilde{A\!+\!A_0}^{\tilde{g}_{0}\tilde{h}_{0}}\right)\right],\,\,\text{since $\Theta^{k}$ is independent of extension},\\
                &=\operatorname{exp}\left[-2\pi ik\left(\CS_{[0,\frac{1}{2}]\times\Sigma^o}\left(\widetilde{A\!+\!A_0}^{\tilde{g}_{0}\tilde{h}_{0}}\right)+
                \CS_{[\frac{1}{2},1]\times\Sigma^o}\left(\widetilde{A\!+\!A_0}^{\tilde{g}_{0}\tilde{h}_{0}}\right)
                \right)\right]
                \\
                &=\operatorname{exp}\left[-2\pi ik\left(\CS_{[0,1]\times\Sigma^o}
                \left(\left(\pi^{*}\left(A\!+\!A_0\right)^g\right)^{\tilde{h}_1}\right)+
                \CS_{[0,1]\times\Sigma^o}\left(\widetilde{A\!+\!A_0}^{\tilde{g}_{1}}\right)
                \right)\right],\,\,\text{by definition of $\tilde{h}_{0}, \tilde{g}_{0}$},\\
                &=\operatorname{exp}\left[-2\pi ik\left(\CS_{[0,1]\times\Sigma^o}
                \left(\left(\pi^{*}\left(A\!+\!A_0\right)^g\right)^{\tilde{h}_{1}}\right)+
                \CS_{[0,1]\times\Sigma^o}\left(\widetilde{A\!+\!A_0}^{\tilde{g}_{1}}\right)
                \right)\right]
                \\
                &=\Theta^{k}\left(\nabla_A^g,h\right)\cdot\Theta^{k}\left(\nabla_A,g\right),\,\,\text{by 
                definition of $\Theta^{k}$}.
\end{align*}
\end{proof}
Since $\Theta$ satisfies the cocycle condition and $\cG_{0,\delta}$ preserves flat connections, we obtain the induced Chern--Simons line bundle over $\mathcal{F}_{\delta}$.  This line bundle will not always descend to $\mathcal{M}_{\delta}$ however -- the weights from (\ref{parweights}) need to satisfy an integrality condition for that.  The full result is:

\begin{theorem}[{Cfr. \cite[Theorems 5.8 and 6.1]{daska-went1}}] \label{liftunderconditions}
Suppose that, for $k\in\mathbb{N}$, $k$ times the parabolic weights $\alpha_i$ from (\ref{parweights}) are in the co-character lattice of $L_i^c$ for each marked point $p_i\in \cP$, and that $k\sum_i \alpha_i$ is in the co-root lattice of $\operatorname{SU}(N)$.  Then the $k$-th power of the line bundle on $\mathcal{F}_{\delta}$ constructed above descends to $\mathcal{M}_{\delta}$.  It comes naturally equipped with a connection, whose curvature is $\frac{k}{2\pi i}$ times the symplectic form $\Omega$ from (\ref{formcompsup}).
\end{theorem}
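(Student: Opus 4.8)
The plan is to follow the two-step quotient strategy of Daskalopoulos--Wentworth \cite{daska-went1}. The bundle $\mathcal{L}^k_{\CS}$ over $\mathcal{F}_\delta$ is the descent of $\mathcal{A}_{\delta,F}\times\mathbb{C}$ by the $\mathcal{G}_{0,\delta}$-action \eqref{liftgauge}; to push it down to $\mathcal{M}_\delta=\mathcal{A}_{\delta,F}/\mathcal{G}_\delta$ I would extend that action along the split sequence \eqref{longexactsplits}. For regular weights each $L_i^c$ is the maximal torus $T\subset\SU(N)$, hence $\prod_iL_i^c$ and --- with \cite[Prop.~3.3]{daska-went1} and the splitting --- also $\mathcal{G}_\delta$ are connected, so every $g\in\mathcal{G}_\delta$ admits a continuous piecewise-smooth path $\tilde g$ in $\mathcal{G}_\delta$ with $\tilde g_0=g$, $\tilde g_1=e$, asymptotically valued in $L_i^c$ near each $p_i$. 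I would then extend $\Theta^k$ to $\mathcal{A}_\delta\times\mathcal{G}_\delta$ by the same formula \eqref{punctco}, $\Theta^k(\nabla_A,g):=\exp(-2\pi ik\,\CS_{[0,1]\times\Sigma^o}(\widetilde{A\!+\!A_0}^{\tilde g}))$; convergence of the integral and the validity of Stokes' theorem (as in Lemma~\ref{equivlem}) use that near the punctures $\tilde g$ is valued in the \emph{abelian} group $L_i^c$ while $A\in L^2_{1,\delta}$ decays. The cocycle identity then follows verbatim from the argument of Lemma~\ref{cocyclelem} by concatenating homotopies.

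The crux is that $\Theta^k(\nabla_A,g)$ be independent of the choice of $\tilde g$, and this is where the integrality hypotheses enter. Given two extensions $\tilde g,\tilde g'$ of the same $g$, gluing the two copies of $[0,1]\times\Sigma^o$ along their common ends produces $S^1\times\Sigma^o$ carrying $(\pi^*(A\!+\!A_0))^h$, where $h$ is a gauge transformation which near $p_i$ is asymptotically a loop $\ell_i\co S^1\to L_i^c$ (constant in the angular variable). The difference of the two Chern--Simons integrals is
$$\CS_{S^1\times\Sigma^o}\bigl((\pi^*(A\!+\!A_0))^h\bigr)-\CS_{S^1\times\Sigma^o}\bigl(\pi^*(A\!+\!A_0)\bigr),$$
whose second term vanishes since $\pi^*(A\!+\!A_0)$ is pulled back from the surface. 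For the first term I would use the gauge-variation formula for $\CS$ on the manifold-with-cylindrical-ends $S^1\times\Sigma^o$: it splits into the bulk Wess--Zumino term $-\tfrac{1}{24\pi^2}\int_{S^1\times\Sigma^o}\operatorname{tr}(h^{-1}\tilde{d}h)^3$ and boundary terms supported near the punctures. Near $p_i$ one has $\nabla_0=d+\alpha_i\,d\theta_i$ and $h\to\ell_i$, so the $p_i$-boundary term is a fixed nonzero multiple of $\langle\alpha_i,[\ell_i]\rangle$ ($[\ell_i]\in\pi_1(L_i^c)$, paired with $\alpha_i$ via the normalized Killing form); and the bulk term differs from an integer only by the correction incurred when modifying $h$ near the punctures to extend it over $S^1\times\Sigma$, itself governed by the classes $[\ell_i]$ and by an element of the coroot lattice of $\SU(N)$ recording the overall toral winding.

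Assembling, $k$ times the defect is $\sum_i\langle k\alpha_i,[\ell_i]\rangle$ plus an integer (up to the global coroot term); under the hypotheses that $k\alpha_i\in\pi_1(L_i^c)$ for each $i$ and $k\sum_i\alpha_i$ lies in the coroot lattice of $\SU(N)$, each summand is an integer, so $\exp(-2\pi ik\cdot\text{defect})=1$ and $\Theta^k$ is well defined on $\mathcal{A}_\delta\times\mathcal{G}_\delta$. As it still satisfies the cocycle condition and $\mathcal{G}_\delta$ preserves flat connections, the bundle descends to $\mathcal{L}^k_{\CS}$ over $\mathcal{M}_\delta$, agreeing with the bundle of \S\ref{constrLCS} upon restriction to $\mathcal{G}_{0,\delta}$. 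For the connection, the $1$-form $\beta_{\nabla_A}=\tfrac{k}{4\pi i}\int_{\Sigma^o}\operatorname{tr}(A\wedge\,\cdot\,)$ on $\mathcal{A}_\delta$ together with $\Theta^k$ defines a connection on $\mathcal{A}_\delta\times\mathbb{C}$ of curvature $\tfrac{k}{2\pi i}\Omega$ (with $\Omega$ as in \eqref{formcompsup}); its $\mathcal{G}_{0,\delta}$-invariance is in \cite{daska-went1}, and invariance under the extra $\prod_iL_i^c$ holds because these act via gauge transformations constant on the $D_i$, so it descends to the asserted connection on $\mathcal{L}^k_{\CS}\to\mathcal{M}_\delta$. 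The genuine obstacle is the computation in the second paragraph: normalising the gauge-variation formula for $\CS$ on $S^1\times\Sigma^o$ and isolating the puncture-boundary and bulk-correction terms so that the resulting integrality requirement is \emph{exactly} the cocharacter/coroot condition stated --- precisely the content of \cite[Theorems 5.8 and 6.1]{daska-went1}, whose single-puncture and $\SU(2)$ arguments would be adapted to the general case as elsewhere in this section.
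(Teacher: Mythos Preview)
The paper does not actually prove this theorem: it is stated as a citation of \cite[Theorems 5.8 and 6.1]{daska-went1}, with only the remark afterwards that the extension from irreducible to reducible connections reduces to checking that the centre of $\SU(N)$ acts trivially. So there is no proof in the paper to compare against, beyond the description (in \S\ref{with punctures}) of the Daskalopoulos--Wentworth strategy as a genuine \emph{two-step} construction: first descend $\mathcal{A}_{\delta,F}\times\mathbb{C}$ to $\mathcal{F}_\delta$ via the cocycle $\Theta^k$ on $\mathcal{G}_{0,\delta}$, then \emph{separately} lift the residual $\prod_i L_i^c$-action to that line bundle and check it descends.

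Your approach is different in organisation: you attempt a \emph{one-step} descent by extending $\Theta^k$ by the same Chern--Simons formula \eqref{punctco} to all of $\mathcal{G}_\delta$. This is conceptually cleaner, but the paper itself flags the obstruction in Appendix~\ref{dehntwistappendix}: for $g\in\mathcal{G}_\delta\setminus\mathcal{G}_{0,\delta}$ the Chern--Simons integral on $[0,1]\times\Sigma^o$ may fail to converge, and ``then the full two-step approach to constructing the line bundle, as in \cite[\S 5]{daska-went1}, is required.'' Your claim that abelianness of $L_i^c$ plus $L^2_{1,\delta}$-decay of $A$ suffices for convergence is plausible but not established here --- the relevant terms in the equivalent expression of Lemma~\ref{equivlem} involve $\int_{\Sigma^o}\operatorname{tr}(\Ad_{g^{-1}}(A+A_0)\wedge g^{-1}dg)$ with $A_0=\alpha_i\,d\theta_i$ non-decaying and $g^{-1}dg$ only in a weighted Sobolev space, so the integrability near the ends needs a genuine argument. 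Likewise, your path-independence computation on $S^1\times\Sigma^o$ is exactly where the integrality conditions must emerge, and you correctly identify this as the crux and defer to \cite{daska-went1}; but that is precisely what the theorem is asserting, so the proposal is really a reduction to the cited result rather than an independent proof. In short: the strategy is reasonable and morally equivalent to the two-step approach, but the convergence step is a real gap relative to the cautious route the paper (following \cite{daska-went1}) actually takes.
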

Once again, strictly speaking this is only discussed in \cite{daska-went1} only for the locus of $\mathcal{M}_{\overline{\alpha}}$ consisting of irreducible connections.  It however carries over to the whole of $\mathcal{M}_{\overline{\alpha}}$: as discussed in \cite[page 268]{freed95}, one needs to be concerned only about the connected components of the stabilisers of flat connections acting trivially, and by \cite[Proposition 6.8]{andersen95}, in the case of $K=\SU(N)$, this reduces to checking that the centre of $\SU(N)$ acts trivially, which is indeed covered by \cite{daska-went1}.

By abuse of notation we shall refer to these line bundles as the \emph{Chern--Simons line bundles}, denoted by $\mathcal{L}^k_{\CS}$.  Recall that the (complex) codimension of the strictly semi-stable locus is at least two (except if $g=2, r=2$), hence by Hartogs' theorem this line bundle extends canonically to all of $\mathcal{M}_{\overline{\alpha}}$.  In fact, there is no obstruction to carrying the construction of \cite{daska-went1} of the line bundle $\mathcal{L}_{\operatorname{CS}}^k$ through also for reducible connections, which would construct $\mathcal{L}_{\operatorname{CS}}^k$ directly for all of $\mathcal{M}_{\overline{\alpha}}$.

\subsection{Lift of the mapping class group action}\label{liftCS}

In this section we will discuss how any suitable diffeomorphism of $\Sigma^o$ gives an action on $\mathcal{M}_{\overline{\alpha}}$ that lifts to the line bundle $\mathcal{L}^k_{\operatorname{CS}}$.  In fact we will do a little more, and show that this action factors through the mapping class group.  The diffeomorphisms, and isotopies, in question are supposed to preserve some first order information at the marked points, and there are a number of ways to encode this.  One could allow diffeomorphisms that only permute points in $\cP$ that carry the same label, and as before preserves some projective tangent vector there.  We will however choose a different description, dictated by our construction of $\mathcal{M}_{\overline{\alpha}}$ using weighted Sobolev spaces as outlined above.  In particular, we will only allow those diffeomorphisms that preserve the chosen local coordinates around marked points (only permuting those with equal weights), and all isotopies have to do the same.  It is a straightforward exercise that the mapping class group so obtained is isomorphic to the one where only projective tangent vectors are asked to be preserved.

\subsubsection{Diffeomorphisms}\label{diffeos}
Let 
$\operatorname{Diff}_{+}(\Sigma, \overline{z},\overline{\alpha})$ denote the orientation-preserving diffeomorphisms of $\Sigma$ preserving each subset of $\cP$ whose points carry the same weights, as well as their neighbourhoods $D_i$ and local coordinates $z_i$ we have chosen, i.e. $z_j\circ f=z_i$ if $f(p_i)=p_j$.  Our goal is to show that an analogue of Lemma \eqref{analem} holds for the punctured surface $\Sigma^o$ when $f\in \operatorname{Diff}_{+}(\Sigma, \overline{z},\overline{\alpha})$.   First remark that $\operatorname{Diff}_{+}(\Sigma, \overline{z},\overline{\alpha})$ acts by pullback on $\cA_{\delta}$, as by construction the weights used in the Sobolev norms are preserved.   
We lift this action to the trivial line bundle 
$\cA_{\delta}\times\mathbb{C}$ 
by
\beq\label{liftactionbis}
f^{*}(\nabla_A,z):=(f^{*}\nabla_A,z),
\eeq for $f\in \operatorname{Diff}_{+}(\Sigma, \overline{z},\overline{\alpha})$.  
As in the case without punctures, we can define a morphism $\Psi: \operatorname{Diff}_{+}(\Sigma, \overline{z},\overline{\alpha})\rightarrow\operatorname{Aut}(\mathcal{G}_{0,\delta})$ by $\Psi(f)(g):=g\circ f$, and we have again
\begin{lem}\label{liftlem}
The lifts (\ref{liftgauge}) and (\ref{liftactionbis}) combine to an action of $\mathcal{G}_{0,\delta}\rtimes_{\Psi}\operatorname{Diff}_{+}(\Sigma, \overline{z},\overline{\alpha})$ on $\mathcal{A}_{\delta}\times\mathbb{C}$.
\end{lem}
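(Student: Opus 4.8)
The plan is to verify that the two lifts $(\ref{liftgauge})$ and $(\ref{liftactionbis})$ are compatible in the precise sense that, when we define a candidate action of the semi-direct product $\mathcal{G}_{0,\delta}\rtimes_{\Psi}\operatorname{Diff}_{+}(\Sigma,\overline{z},\overline{\alpha})$ on $\mathcal{A}_{\delta}\times\mathbb{C}$, it actually satisfies the group law. Concretely, for $g\in\mathcal{G}_{0,\delta}$ and $f\in\operatorname{Diff}_{+}(\Sigma,\overline{z},\overline{\alpha})$ we set
\[
(\nabla_A,z)\cdot(g,f):=\bigl(f^{*}(\nabla_A^{g}),\,\Theta^{k}(\nabla_A,g)\cdot z\bigr).
\]
First I would recall that in the semi-direct product the multiplication is $(g,f)\cdot(h,e)=(g\cdot\Psi(f)(h),\,f)$ with $\Psi(f)(h)=h\circ f$ (note $e$ here denotes $f'$, a second diffeomorphism — I would just write the two diffeomorphisms as $f_1,f_2$ to avoid clashing with the identity gauge transformation $e$). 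The content of the lemma is then the single identity
\[
\bigl((\nabla_A,z)\cdot(g,f_1)\bigr)\cdot(h,f_2)=(\nabla_A,z)\cdot\bigl((g,f_1)\cdot(h,f_2)\bigr),
\]
which on the base reduces to the already-known functoriality of pullback together with the gauge action, and on the $\mathbb{C}$-factor reduces to a cocycle-type identity for $\Theta^{k}$.

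The key steps, in order, are: (i) check the base components agree, using that $(f_1f_2)^{*}=f_2^{*}f_1^{*}$ and that $f_1^{*}(\nabla_A^{g})=(f_1^{*}\nabla_A)^{g\circ f_1}$ — the latter is the elementary statement that pulling back a gauge-transformed connection gives the connection gauge-transformed by the pulled-back gauge transformation, which is exactly why the twist $\Psi(f)(g)=g\circ f$ appears; (ii) on the $\mathbb{C}$-factor, reduce the claim to showing
\[
\Theta^{k}(\nabla_A,g)\cdot\Theta^{k}\bigl(f_1^{*}\nabla_A^{g},\,h\circ f_1\bigr)=\Theta^{k}(\nabla_A,\,g\cdot(h\circ f_1));
\]
(iii) deduce (ii) from Lemma~\ref{cocyclelem} (the cocycle condition for $\Theta^{k}$) combined with the diffeomorphism-invariance of the Chern--Simons functional $\CS_{[0,1]\times\Sigma^o}$ under the diffeomorphism $\mathrm{id}\times f_1$ of the cylinder, which shows $\Theta^{k}(f_1^{*}\nabla_A^{g},\,h\circ f_1)=\Theta^{k}(\nabla_A^{g},h)$; and (iv) assemble these to conclude the group law holds, and note that associativity for the remaining (pure diffeomorphism, pure gauge) cases is immediate since $\operatorname{Diff}_{+}(\Sigma,\overline{z},\overline{\alpha})$ already acts on $\mathcal{A}_{\delta}\times\mathbb{C}$ by $(\ref{liftactionbis})$ and $\mathcal{G}_{0,\delta}$ already acts by $(\ref{liftgauge})$ via Lemma~\ref{cocyclelem}.

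The main obstacle I anticipate is step (iii): one must be careful that the chosen extension $\widetilde{h\circ f_1}$ of $h\circ f_1$ to the cylinder $[0,1]\times\Sigma^o$ can be taken to be $(\mathrm{id}\times f_1)^{*}\tilde h$, and that the pulled-back connection $(\mathrm{id}\times f_1)^{*}\bigl(\widetilde{\nabla_A^{g}}\bigr)^{\tilde h}$ is again of the product form $\pi^{*}(f_1^{*}\nabla_A^{g})$ twisted by this extension, so that the diffeomorphism invariance of $\CS$ applies verbatim. This works precisely because $f_1$ preserves the local coordinates $z_i$ near the punctures (so $\mathrm{id}\times f_1$ is a genuine diffeomorphism of the cylinder compatible with the weighted Sobolev setup) and because $\CS_{[0,1]\times\Sigma^o}$, being the integral of a locally-defined density, is invariant under orientation-preserving diffeomorphisms of $[0,1]\times\Sigma^o$ even in the non-compact/weighted setting, where Stokes' theorem was already verified in the proof of Lemma~\ref{equivlem}. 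Once this bookkeeping is done the remaining manipulations are the same formal cocycle computation as in Lemma~\ref{cocyclelem}, so I would present step (iii) in detail and leave (i), (ii), (iv) as routine.
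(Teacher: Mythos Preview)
Your proposal is correct and takes essentially the same approach as the paper: both reduce to the identity $\Theta^{k}(f^{*}\nabla_A,\,g\circ f)=\Theta^{k}(\nabla_A,g)$ (your step (iii), the paper's equation (\ref{seceq})), proved via diffeomorphism invariance of the Chern--Simons integral under $F=\mathrm{id}\times f$ on the cylinder, together with the elementary relation $(f^{*}\nabla_A)^{g\circ f}=f^{*}(\nabla_A^{g})$. The paper streamlines slightly by checking only the single compatibility relation $f^{*}\bigl((\nabla_A,z)\cdot g\bigr)=\bigl(f^{*}(\nabla_A,z)\bigr)\cdot(g\circ f)$ --- which, given that each factor already acts individually, suffices for the semi-direct product --- rather than verifying the full associativity law for two general elements $(g,f_1),(h,f_2)$ as you do, but the content is identical.
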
 
As with Lemma \ref{combination} we have switched again to a right action of $\operatorname{Diff}_{+}(\Sigma, \overline{z},\overline{\alpha})$ to obtain a right action of the semi-direct product.
\begin{proof}
This statement reduces to showing that \begin{equation}\label{boil down}
f^{*}\left((\nabla_A,z)\cdot g\right)=\left(f^{*}(\nabla_A,z)\right)\cdot (g\circ f),
\end{equation}
for $f\in\operatorname{Diff}_{+}(\Sigma, \overline{z},\overline{\alpha}), g\in\cG_{0,\delta}$.  To see this, first observe that
\beq\label{firsteq}
(f^{*}\nabla_A)^{g\circ f}=f^{*}(\nabla_A^g),
\eeq
which is easy to show directly.  Then compute
\begin{align*}
\left(f^{*}(\nabla_A,z)\right)\cdot (g\circ f)&=\left((f^{*}\nabla_A)^{g\circ f},\Theta^{k}(f^{*}\nabla_A,g\circ f)\cdot z\right)\\
                            &=\left(f^{*}(\nabla_A^g),\Theta^{k}(f^{*}\nabla_A,g\circ f)\cdot z\right),\,\,\text{by \eqref{firsteq}},\\
                            &=f^{*}\left(\nabla_A^g,\Theta^{k}(f^{*}\nabla_A,g\circ f)\cdot z\right).
\end{align*} 
Hence to establish (\ref{boil down}) it suffices to show that
\beq\label{seceq}
\Theta^{k}(f^{*}\nabla_A,g\circ f)=\Theta^{k}(\nabla_A,g).
\eeq
The verification of \eqref{seceq} boils down to basic diffeomorphism invariance of integration on manifolds.  Indeed, by definition
$$
\Theta^{k}(f^{*}\nabla_A,g\circ f)=\operatorname{exp}\left(-2\pi ik\CS_{[0,1]\times\Sigma^o}\left(\pi^*\left({f^{*}(A\!+\!A_0)}\right)\right)^{\widetilde{g\circ f}}\right),
$$
and if we put $F=\operatorname{id}\times f$ we can write this as 
\begin{eqnarray*}
\Theta^{k}(f^{*}\nabla_A,g\circ f)&=\operatorname{exp}\left(-2\pi ik\CS_{[0,1]\times\Sigma^o}\left(F^{*}\left(\widetilde{A\!+\!A_0}\right)\right)^{\tilde{g}\circ F}\right)\\
  &=\operatorname{exp}\left(-2\pi ik\CS_{[0,1]\times\Sigma^o}\left(F^{*}\left(\widetilde{A\!+\!A_0}^{\tilde{g}}\right)\right)\right).
\end{eqnarray*}
By diffeomorphism invariance of integration on manifolds,
$$\CS_{[0,1]\times\Sigma^o}\left(F^{*}\left(\widetilde{A\!+\!A_0}^{\tilde{g}}\right)\right)=\CS_{[0,1]\times\Sigma^o}\left(\widetilde{A\!+\!A_0}^{\tilde{g}}\right),$$
and therefore (\ref{seceq}) follows.
\end{proof} 
Lemma \ref{liftlem} implies that the action of $\operatorname{Diff}_{+}(\Sigma, \overline{z},\overline{\alpha})$ on $\cA_{\delta}\times\mathbb{C}$ descends to an action on the Chern--Simons line bundle over $\mathcal{F}_{\delta}$.   As explained in \cite[\S 4.2 and \S 5.2]{daska-went1}, there are some rationality conditions on the weights to further descend the line bundle to $\mathcal{M}_{\delta}$, but when it comes to lifting the action of $\operatorname{Diff}_{+}(\Sigma, \overline{z},\overline{\alpha})$ these pose no further problem,  as the action of $\operatorname{Diff}_{+}(\Sigma, \overline{z},\overline{\alpha})$ commutes with the action of $\prod_i L_i^c$.  Hence we obtain as desired a lift of the action of $\operatorname{Diff}_{+}(\Sigma, \overline{z},\overline{\alpha})$ to $\mathcal{L}^k_{\CS}$, whenever the latter exists.

Finally, once again as in \cite[\S 7]{andersen95} we have the following basic observation:
\begin{lem}\label{repetition}Let $\nabla_A$ be a connection with prescribed holonomy such that $[\nabla_A]\in \mathcal{M}_{\overline{\alpha}}^f$.  We can then find a lift $\tilde{f}$ of $f$ to $P$ such that $\nabla_A$ is invariant under $\tilde{f}$.
\end{lem}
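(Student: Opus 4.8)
\emph{Plan of proof.} The plan is to run the same argument as in the closed-surface case recalled before Lemma~\ref{analem}, but with the gauge group $\mathcal G_\delta$ and the weighted Sobolev space $\mathcal A_\delta$ of Section~\ref{constr} in place of their closed counterparts. First I would note that, since $f\in\operatorname{Diff}_{+}(\Sigma,\overline z,\overline\alpha)$ preserves the local coordinates $z_i$ (hence the function $\tau$, the cylindrical metric near the punctures, and the base connection $\nabla_0$ on each $D_i$) and permutes marked points only within equal-weight strata, the pullback $f^*$ preserves the norms $\|\cdot\|_{l,\delta}^p$ and sends $\nabla_0$ to a connection agreeing with $\nabla_0$ on every $D_i$; hence $f^*\nabla_A\in\mathcal A_\delta$ and $f^*$ restricts to a self-map of $\mathcal A_{\delta,F}$. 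The hypothesis $[\nabla_A]\in\mathcal M_{\overline\alpha}^f$ then says precisely that $f^*\nabla_A$ and $\nabla_A$ define the same point of $\mathcal M_{\overline\alpha}=\mathcal A_{\delta,F}/\mathcal G_\delta$ (of $\mathcal M_\delta$ in the smooth case of Theorem~\ref{twomoduli}, and of the full moduli space if $\nabla_A$ is reducible), i.e.\ there is some $g\in\mathcal G_\delta$ with $f^*\nabla_A=\nabla_A^{g}$; such a $g$ exists even in the reducible case, it is then merely not unique.

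Next I would build the lift. Since $P=\Sigma^o\times\operatorname{SU}(N)$ is trivial, so is $f^*P$, and there is the tautological principal-bundle map $\hat f\colon f^*P\to P$ covering $f$; under the canonical identification $f^*P\cong P$ of trivial bundles, $\hat f^*\nabla_A$ corresponds to $f^*\nabla_A$. Let $\rho$ be the bundle automorphism of $P$ covering the identity determined by the gauge transformation $g^{-1}$, so that $\rho^*\nabla=\nabla^{g^{-1}}$ for every connection $\nabla$, and set
$$\tilde f:=\hat f\circ\rho\colon P\longrightarrow P,$$
a bundle map covering $f$, of the same Sobolev regularity as the elements of $\mathcal G_\delta$ (which is the class in which the whole construction is set up). Using that the gauge action is a right action, one computes
$$\tilde f^*\nabla_A=\rho^*\bigl(\hat f^*\nabla_A\bigr)=\rho^*\bigl(f^*\nabla_A\bigr)=\rho^*\bigl(\nabla_A^{g}\bigr)=\bigl(\nabla_A^{g}\bigr)^{g^{-1}}=\nabla_A,$$
so $\nabla_A$ is invariant under $\tilde f$, as required.

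\emph{Main obstacle.} The conceptual content is slight once $\mathcal M_{\overline\alpha}$ has been realised as the quotient $\mathcal A_{\delta,F}/\mathcal G_\delta$; the one point that genuinely requires care is the behaviour at the punctures, namely that $f^*\nabla_A$ again lies in the affine space $\mathcal A_\delta$ modelled on the \emph{same} weighted Sobolev space, so that the gauge transformation $g$ can be found inside $\mathcal G_\delta$ rather than in some larger completion. This is exactly why we restricted to $\operatorname{Diff}_{+}(\Sigma,\overline z,\overline\alpha)$ rather than to arbitrary orientation-preserving diffeomorphisms of $\Sigma$, and it is what keeps the construction compatible with the splitting $\mathcal G_\delta=\mathcal G_{0,\delta}\rtimes\prod_i L_i^c$ of Section~\ref{constr} (the action descends from $\mathcal F_\delta$ to $\mathcal M_\delta$ because it commutes with the $\prod_i L_i^c$-action, cf.\ Lemma~\ref{liftlem}). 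A minor wrinkle is that for reducible $\nabla_A$ the automorphism $\rho$, hence $\tilde f$, is only determined up to the stabiliser of $\nabla_A$; but since the lemma asserts only existence of a lift, this is immaterial.
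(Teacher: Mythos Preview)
Your proof is correct and follows essentially the same route as the paper: both arguments use the hypothesis $[\nabla_A]\in\mathcal M_{\overline\alpha}^f$ to produce a gauge transformation (equivalently, a bundle isomorphism $\psi\colon P\to f^*P$ over the identity) taking $\nabla_A$ to $f^*\nabla_A$, and then compose with the tautological map $f^*P\to P$ covering $f$. You have merely spelled out the weighted-Sobolev bookkeeping (that $f^*$ preserves $\mathcal A_\delta$ because $f\in\operatorname{Diff}_+(\Sigma,\overline z,\overline\alpha)$) and the reducible case more explicitly than the paper does.
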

Remark however that while $f^m=\text{id}$, the same need not be the case for $\tilde{f}$.
\begin{proof}
Let $f':f^*(P)\rightarrow P$ be the natural bundle isomorphism covering $f$.  Then, since $[A]\in \mathcal{M}_{\overline{\alpha}}^f$, there exists a bundle isomorphism $\psi$ from $P$ to $f^*P$ (covering the identity on $\Sigma^o$) such that $\psi(\nabla_A)=f^*(\nabla_A)$.  The composition of $f'$ with $\psi$ gives us the desired $\tilde{f}$.
\end{proof}

Given such a $\nabla_A$ and $\tilde{f}$, we can create a connection $\nabla_{A_{\phi}}$ on the mapping torus $Q_{\phi}=[0,1]\times_{\phi} Q$ thought of as a bundle over $\Sigma^o_f=$.  We have the following:

\begin{lem}\label{tracelem}
\begin{equation}
\operatorname{tr}\left(f^{*}:\cL^{k}_{\CS}\Big|_{[\nabla_A]}\rightarrow \cL^{k}_{\CS}\Big|_{[\nabla_A]}\right)=\operatorname{exp}\left(2\pi ik\CS_{\Sigma^o_f}(Q_\phi,\nabla_{A_{\phi}})\right).
\end{equation}
\end{lem}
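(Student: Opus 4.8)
The plan is to compute the scalar by which $f^{*}$ acts on the one-dimensional fibre $\cL^{k}_{\CS}\big|_{[\nabla_A]}$ directly from the cocycle description of $\cL^k_{\CS}$, and then to recognise this scalar as the exponentiated Chern--Simons invariant of the induced connection on the mapping torus.

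Since $[\nabla_A]\in\mathcal{M}^f_{\overline{\alpha}}$, exactly as in the proof of Lemma~\ref{repetition} there is a gauge transformation $g\in\cG_{0,\delta}$ with $f^{*}\nabla_A=\nabla_A^{g}$. Trivialise $\cL^k_{\CS}\big|_{[\nabla_A]}$ using the representative $\nabla_A$. By the lift (\ref{liftactionbis}) the map $f^{*}$ sends the class of $(\nabla_A,z)$ to the class of $(f^{*}\nabla_A,z)=(\nabla_A^{g},z)$, and by the gauge-group action (\ref{liftgauge}) the latter is the class of $(\nabla_A,\Theta^{k}(\nabla_A,g)^{-1}z)$. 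Hence $f^{*}$ is multiplication by $\Theta^{k}(\nabla_A,g)^{-1}$, so that, using (\ref{punctco}),
\[\operatorname{tr}\left(f^{*}:\cL^{k}_{\CS}\big|_{[\nabla_A]}\rightarrow \cL^{k}_{\CS}\big|_{[\nabla_A]}\right)=\Theta^{k}(\nabla_A,g)^{-1}=\exp\left(2\pi ik\,\CS_{[0,1]\times\Sigma^o}\big(\widetilde{A\!+\!A_0}^{\tilde g}\big)\right)\]
for any path $\tilde g\in\widetilde{\cG}_{0,\delta}$ with $\tilde g_{0}=g$, $\tilde g_{1}=e$; by Lemma~\ref{equivlem} and the remark following it the right-hand side does not depend on this choice.

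It remains to identify $\CS_{[0,1]\times\Sigma^o}\big(\widetilde{A\!+\!A_0}^{\tilde g}\big)$ with $\CS_{\Sigma^o_f}(Q_\phi,\nabla_{A_\phi})$ modulo $\mathbb{Z}$, which is enough since $k\in\mathbb{Z}$ renders the ambiguity immaterial after multiplication by $2\pi ik$. For this I would realise $\Sigma^o_f$ as the quotient of $[0,1]\times\Sigma^o$ by $(1,x)\sim(0,f(x))$ and $Q_\phi$ as the corresponding quotient of $[0,1]\times P$ (recall $Q=P$ is trivial). Over the complement $[0,1)\times\Sigma^o$ of the gluing locus the pulled-back bundle is canonically trivial, and, the Chern--Simons $3$-form being a local expression, one computes $\CS_{\Sigma^o_f}(Q_\phi,\nabla_{A_\phi})$ as $\tfrac{1}{8\pi^2}\int_{[0,1]\times\Sigma^o}\operatorname{tr}\big(\mathcal{A}\wedge d\mathcal{A}+\tfrac{2}{3}\mathcal{A}\wedge\mathcal{A}\wedge\mathcal{A}\big)$, where $\mathcal{A}$ is the connection form of $\nabla_{A_\phi}$ in a trivialisation of $Q_\phi$ over $\Sigma^o_f$ minus a point. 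The key point is that such a trivialisation, pulled back to $[0,1]\times\Sigma^o$, differs from the product trivialisation by a gauge transformation of the type $\tilde g$: here one uses that, in the trivialisation of $P$, the lift $\phi=\tilde f$ of Lemma~\ref{repetition} is $g$ composed with the tautological lift of $f$, so that the winding of the trivialisation of $Q_\phi$ in the mapping-torus direction is governed by $g$. With such a trivialisation $\mathcal{A}=\widetilde{A\!+\!A_0}^{\tilde g}$ and the two integrals coincide; convergence near the punctures is ensured by the weighted Sobolev setup together with the fact that $\nabla_0$ and $f\in\operatorname{Diff}_{+}(\Sigma,\overline{z},\overline{\alpha})$ preserve the standard model on the $D_i$.

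I expect the main obstacle to be this last step: giving a clean meaning to $\CS_{\Sigma^o_f}(Q_\phi,\nabla_{A_\phi})$ for the possibly non-trivial bundle $Q_\phi$ over the non-compact manifold $\Sigma^o_f$ so that it is well defined modulo $\mathbb{Z}$, and carefully tracking the base connection $A_0$ and the path $\tilde g$ so as to exhibit the mapping-torus invariant as the cocycle integral. This is the punctured counterpart of \cite[Lemma 7.2]{andersen95} and \cite[Thm.~2.19]{freed95}, whose arguments adapt once the weighted-Sobolev convergence and the compatibility of $\phi$, $g$ and $\tilde g$ with the chosen trivialisations have been set up.
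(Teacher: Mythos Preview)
Your proposal is correct and follows essentially the same approach as the paper: compute the action of $f^{*}$ on the fibre via the cocycle $\Theta^k$ to get $\Theta^k(\nabla_A,g)^{-1}$, then identify the cylinder Chern--Simons integral with the mapping-torus Chern--Simons value. The paper's proof is more terse at the last step --- it simply observes that $\widetilde{\nabla_A}^{\tilde g}$ descends to $\nabla_{A_\phi}$ on $Q_\phi$ over $\Sigma^o_f$, so the two integrals are literally equal (not merely equal modulo $\mathbb{Z}$) --- whereas you elaborate on the trivialisations and express concern about well-definedness; the paper also begins by normalising $\nabla_A$ to the standard form on the $D_i$ via \cite[Lemma 2.7]{daska-went1}, which absorbs the convergence issues you mention.
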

\begin{proof}  Without a loss of generality, we can assume that $\nabla_A$ takes the standard form $d + \alpha_i d\theta_i$ on $D_i$ \cite[Lemma 2.7]{daska-went1}.  
Given that $[f^{*}\nabla_A]=[\nabla_A]$ with $f\in \operatorname{Diff}_{+}(\Sigma, \overline{z},\overline{\alpha})$, there exists $g\in\cG_{0,\delta}$ such that
$$f^{*}\nabla_A=\nabla_A^g.$$
Then
\begin{align*}
f^{*}[(\nabla_A,z)]&=[(f^{*}\nabla_A,z)]\\
              &=[(\nabla_A^g,z)]\\
              &=[(\nabla_A,\Theta^{k}(\nabla_A,g)^{-1}\cdot z)],
\end{align*}
where by definition
$$\Theta^{k}(\nabla_A,g)=\operatorname{exp}\left(-2\pi ik\CS_{[0,1]\times\Sigma^o}\left(\widetilde{A\!+\!A_0}^{\tilde{g}}\right)\right),$$
and the connection $\widetilde{\nabla_A}^{\tilde{g}}$ descends to the connection $A_{\phi}$ on the bundle $Q_{\phi}=[0,1]\times_{\phi}Q$ over the open mapping torus $\Sigma^o_{f}:=[0,1]\times_{f}\Sigma^o$.  Thus
$$\Theta^{k}(\nabla_A,g)=\operatorname{exp}\left(-2\pi ik\CS_{\Sigma_f}(Q_\phi,\nabla_{A_{\phi}})\right),$$
and indeed
$$\operatorname{tr}\left(f^{*}:\cL^{k}_{\CS}\Big|_{[\nabla_A]}\rightarrow \cL^{k}_{\CS}\Big|_{[\nabla_A]}\right)=\operatorname{exp}\left(2\pi ik\CS(Q_\phi,\nabla_{A_{\phi}})\right).$$
\end{proof}

\subsubsection{Isotopy}
\label{isotopysection}
Suppose now that $f$ is an element of $\operatorname{Diff}_{0}(\Sigma, \overline{z},\overline{\alpha})$, the subgroup of $\operatorname{Diff}_{+}(\Sigma, \overline{z},\overline{\alpha})$ consisting of diffeomorphisms isotopic to the identity within $\operatorname{Diff}_{+}(\Sigma, \overline{z},\overline{\alpha})$.  Suppose we have a smooth isotopy $f_t$ given, with $f_0=\operatorname{id}$, and $f_1=f$. It is standard that the action of any $f_t$ on $\nabla_A\in\mathcal{A}_{\delta}$ can be understood as a gauge transformation.  Indeed, we can just let $g_t(p)$ be the holonomy of $\nabla_A$ along the path $s\mapsto f_{(1-t)(1-s)}(p)$.  Given that $f\in \operatorname{Diff}_{0}(\Sigma, \overline{z},\overline{\alpha})$ immediately implies that $g_t\in \mathcal{G}_{0,\delta}$ for all $t$.   This shows that $\operatorname{Diff}_{0}(\Sigma, \overline{z},\overline{\alpha})$ acts trivially on $\mathcal{M}_{\overline{\alpha}}$.  Moreover we have that

\begin{prop}
For $\nabla_A$ and $g_t$ as above, we have that $\Theta^k(\nabla_A,g_0)=1$.
\end{prop}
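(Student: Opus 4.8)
The plan is to adapt the argument used for closed surfaces in Section~\ref{no punctures}, where one checks that $\Theta^k$ is trivial on gauge transformations realising the action of a diffeomorphism isotopic to the identity. Concretely, I will exhibit the connection entering the definition of $\Theta^k(\nabla_A,g_0)$ as the pullback, under a diffeomorphism of $[0,1]\times\Sigma^o$ built from the isotopy, of a connection pulled back from $\Sigma^o$, and then conclude from diffeomorphism invariance of the Chern--Simons integrand together with the vanishing of $\CS$ on such ``product'' connections.

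First I would pick a convenient path from $g_0$ to $e$ to evaluate $\Theta^k$. Since $\Theta^k$ does not depend on this choice (Lemma~\ref{equivlem} and \cite[Lemma 5.2]{daska-went1}), and since $g_1=e$ (the holonomy of $\nabla_A$ along the constant path), I may take $\tilde g\in\widetilde{\cG}_{0,\delta}$ with $\tilde g(t,\cdot):=g_t$; this is admissible because the isotopy $f_t$ is smooth and stays in $\operatorname{Diff}_{+}(\Sigma, \overline{z},\overline{\alpha})$, so, as observed just before the statement, $g_t\in\cG_{0,\delta}$ for every $t$. Note moreover that, by continuity of the isotopy, each $f_t$ fixes every $p_i$, and since $f_t$ preserves the coordinate $z_i$ it is the identity on a neighbourhood of $p_i$; hence $g_t\equiv e$ near the punctures and all connections occurring below agree with the standard $d+\alpha_i\,d\theta_i$ on the cylindrical ends, so that the Chern--Simons integrals converge and no analytic subtlety stemming from the non-compactness of $\Sigma^o$ intervenes.

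The key point is the identity
\[
\widetilde{\nabla_A}^{\tilde{g}}\;=\;F^*\bigl(\pi^*\nabla_A\bigr),\qquad F \co [0,1]\times\Sigma^o\to[0,1]\times\Sigma^o,\quad F(t,p)=(t,f_{1-t}(p)),
\]
where $F$ is an orientation-preserving diffeomorphism of $[0,1]\times\Sigma^o$, equal to the identity near $[0,1]\times\cP$, restricting to $f$ on $\{0\}\times\Sigma^o$ and to the identity on $\{1\}\times\Sigma^o$. On these two boundary slices the identity is immediate, since $\widetilde{\nabla_A}^{\tilde{g}}$ restricts to $\nabla_A^{g_0}=f^*\nabla_A$ and to $\nabla_A^{g_1}=\nabla_A$ while $F^*\pi^*\nabla_A$ restricts to $f^*\nabla_A$ and to $\nabla_A$; at an intermediate slice $\{t\}\times\Sigma^o$ both sides equal $f_{1-t}^*\nabla_A$. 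I expect the main (and only genuinely non-formal) obstacle to be the matching of the $dt$-components of the two connections on $[0,1]\times\Sigma^o$: this is precisely the standard fact that the action of the isotopy is implemented by parallel transport of $\nabla_A$ along its orbits, and it is exactly what the defining formula $g_t(p)=\hol_{\nabla_A}\!\bigl(s\mapsto f_{(1-t)(1-s)}(p)\bigr)$ encodes, so it can be verified by the same computation as in the closed case.

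Granting the identity, that identity together with diffeomorphism invariance of the Chern--Simons integrand under $F$ (unproblematic as $F$ is the identity near the ends) gives
\[
\CS_{[0,1]\times\Sigma^o}\bigl(\widetilde{A\!+\!A_0}^{\tilde{g}}\bigr)=\CS_{[0,1]\times\Sigma^o}\bigl(F^*\pi^*(A\!+\!A_0)\bigr)=\CS_{[0,1]\times\Sigma^o}\bigl(\pi^*(A\!+\!A_0)\bigr).
\]
Finally $\pi^*(A\!+\!A_0)$ is pulled back from the surface $\Sigma^o$, so its Chern--Simons $3$-form $\operatorname{tr}\bigl((A\!+\!A_0)\wedge d(A\!+\!A_0)+\tfrac23(A\!+\!A_0)\wedge(A\!+\!A_0)\wedge(A\!+\!A_0)\bigr)$ is the $\pi$-pullback of a $3$-form on the $2$-manifold $\Sigma^o$ and therefore vanishes identically. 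Hence $\CS_{[0,1]\times\Sigma^o}(\pi^*(A\!+\!A_0))=0$ and $\Theta^k(\nabla_A,g_0)=\exp(0)=1$.
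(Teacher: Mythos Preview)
Your proof is correct and follows essentially the same route as the paper: build a diffeomorphism $F$ of $[0,1]\times\Sigma^o$ from the isotopy and use diffeomorphism invariance of the Chern--Simons integral to compare $\widetilde{\nabla_A}^{\tilde g}$ with a connection whose $\CS$ vanishes. The paper phrases this slightly differently --- it takes $F(t,p)=(t,f_t(p))$, observes that $F^*\widetilde{\nabla_A}^{\tilde g}$ is trivial along the lines $t\mapsto(t,p)$, and concludes $\CS=0$ from that --- but your formulation, exhibiting $\widetilde{\nabla_A}^{\tilde g}$ directly as $F^*(\pi^*\nabla_A)$ for $F(t,p)=(t,f_{1-t}(p))$, is equivalent and makes the final vanishing step more transparent.
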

\begin{proof} 
Let $\widetilde{\nabla_A }= \pi^* \nabla_A$  be as in (\ref{punctco}).  We consider the gauge transformation $\tilde g$ over the surface cylinder induced by $g_t$ from above. We observe that $g_t$ is constructed exactly such that ${\nabla_A}^{g_t}$ has trivial holonomy along the curves $t\mapsto f_{t}(p)$, for $t\in [0,1]$ and any $p\in \Sigma^o$, which also carries over to $\widetilde{\nabla_A}^{\tilde{g}}$ having trivial holonomy along the curves $t\mapsto (t, f_t(p))$ in $[0,1]\times \Sigma^o$. Now we define the diffeomorphism
$$ F : [0,1]\times \Sigma^o  \rightarrow [0,1]\times \Sigma^o $$
by the formula
$$F(t,p) = (t,f_t(p)).$$
We observe that $F^*\widetilde{\nabla_A}^{\tilde g}$ is trivial along the lines $t\mapsto (t,p)$ for all $p\in \Sigma^o$, since $F$ maps these lines to the curves $t\mapsto (t, f_t(p))$. This implies that
$$\operatorname{CS}_{[0,1]\times \Sigma^0}\left(F^*\left(\widetilde{A\!+\!A_0}^{\tilde g}\right)\right) = 0.$$
We conclude that
$$ \operatorname{CS}_{[0,1]\times \Sigma^0}\left(\widetilde{A\!+\!A_0}^{\tilde g}\right) = \operatorname{CS}_{[0,1]\times \Sigma^0}\left(F^*\left(\widetilde{A\!+\!A_0}^{\tilde g}\right)\right) = 0.$$
\end{proof}

\begin{cor}
We have an induced action of the mapping class group  
$$
 \operatorname{Diff}_{+}(\Sigma, \overline{z},\overline{\alpha})\ \Big/\ \operatorname{Diff}_{0}(\Sigma, \overline{z},\overline{\alpha})$$on $\mathcal{M}_{\overline{\alpha}}$ with a lift to $\mathcal{L}^k_{\operatorname{CS}}$.
\end{cor}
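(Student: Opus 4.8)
The plan is to assemble the ingredients already established. By Lemma~\ref{liftlem} the lifts \eqref{liftgauge} and \eqref{liftactionbis} combine into an action of $\mathcal{G}_{0,\delta}\rtimes_\Psi\operatorname{Diff}_{+}(\Sigma,\overline z,\overline\alpha)$ on $\mathcal{A}_\delta\times\mathbb{C}$, and the discussion following that lemma shows this descends to an action of $\operatorname{Diff}_{+}(\Sigma,\overline z,\overline\alpha)$ on $\mathcal{L}^k_{\CS}$ over $\mathcal{F}_\delta$; when the rationality hypotheses of Theorem~\ref{liftunderconditions} hold it descends further to $\mathcal{M}_\delta$ (because it commutes with the $\prod_i L_i^c$-action), and by Hartogs it extends over all of $\mathcal{M}_{\overline\alpha}$. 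Since $\operatorname{Diff}_{0}(\Sigma,\overline z,\overline\alpha)$ already acts trivially on the base $\mathcal{M}_{\overline\alpha}$ (shown in Section~\ref{isotopysection}), the only thing left to check is that $\operatorname{Diff}_{0}(\Sigma,\overline z,\overline\alpha)$ in fact acts trivially on the \emph{total space} of $\mathcal{L}^k_{\CS}$; granting this, the $\operatorname{Diff}_{+}(\Sigma,\overline z,\overline\alpha)$-action descends to the quotient group, which is the mapping class group in the statement, and the resulting action visibly covers the action on $\mathcal{M}_{\overline\alpha}$.

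To see triviality on the total space, fix $f\in\operatorname{Diff}_{0}(\Sigma,\overline z,\overline\alpha)$ with isotopy $f_t$, $f_0=\operatorname{id}$, $f_1=f$, and take any $\nabla_A\in\mathcal{A}_{\delta,F}$. As in Section~\ref{isotopysection}, setting $g_t(p)$ equal to the holonomy of $\nabla_A$ along $s\mapsto f_{(1-t)(1-s)}(p)$ yields $g_t\in\mathcal{G}_{0,\delta}$ with $f^*\nabla_A=\nabla_A^{g_0}$. From the definition of the lifted action one then computes
\[
f^*[(\nabla_A,z)] = [(f^*\nabla_A,z)] = [(\nabla_A^{g_0},z)] = [(\nabla_A,\Theta^{k}(\nabla_A,g_0)^{-1}z)].
\]
By the Proposition of Section~\ref{isotopysection}, $\Theta^{k}(\nabla_A,g_0)=1$, so $f^*[(\nabla_A,z)]=[(\nabla_A,z)]$; hence $f$ acts as the identity on $\mathcal{L}^k_{\CS}$ over $\mathcal{F}_\delta$, and since the descent to $\mathcal{M}_\delta$ and the Hartogs extension to $\mathcal{M}_{\overline\alpha}$ are canonical, the identity action persists over all of $\mathcal{M}_{\overline\alpha}$.

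The genuine content of the argument is the vanishing $\Theta^{k}(\nabla_A,g_0)=1$ — equivalently, that an isotopically trivial $f$ acts on each fibre by the scalar $1$ and not merely by some unit. This is where it matters that the diffeomorphism action is realized by a gauge transformation lying in the smaller group $\mathcal{G}_{0,\delta}$, and that the Chern--Simons action of the pulled-back connection $F^*\widetilde{\nabla_A}^{\tilde g}$ on $[0,1]\times\Sigma^o$ vanishes because $F(t,p)=(t,f_t(p))$ straightens $\widetilde{\nabla_A}^{\tilde g}$ into a connection with trivial holonomy along the $[0,1]$-direction; but this is precisely the content of the preceding Proposition, so no new work is needed. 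The remaining verifications — that the two quotient steps and the Hartogs extension are equivariant — are routine, each map involved being canonical, so I would state them without further elaboration.
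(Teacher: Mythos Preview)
Your proposal is correct and follows essentially the same approach as the paper: the Corollary is stated there without proof, as an immediate consequence of the preceding Proposition ($\Theta^k(\nabla_A,g_0)=1$) together with the construction of the $\operatorname{Diff}_{+}(\Sigma,\overline z,\overline\alpha)$-lift from Section~\ref{diffeos}. You have simply made explicit the assembly of these ingredients, including the descent from $\mathcal{F}_\delta$ to $\mathcal{M}_\delta$ and the Hartogs extension, exactly as the paper intends.
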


\begin{rem}
\label{dehntwistboundaryremark}
Note that for the action on $\mathcal{M}_{\overline{\alpha}}$ and $\mathcal{L}^k_{\operatorname{CS}}$, we could actually also allow the diffeomorphisms  to `rotate' the local coordinates around the marked points (i.e. such that $z_j\circ f= e^{\vartheta_i}z_i$ if $f(p_i)=p_j$, for some $\vartheta_i\in\mathbb{R}$, or more generally simply such that the function $\tau$ is preserved).  The crucial thing to observe is that, though isotopies that may rotate the $z_i$ act trivially on $\mathcal{M}_{\overline{\alpha}}$, they do not on $\mathcal{L}^k_{\operatorname{CS}}$.  

In the set up that we have used, where the $z_i$ are preserved by the diffeomorphisms $f$, this corresponds to the fact that the Dehn twists around the marked points act trivially on $\mathcal{M}_{\overline{\alpha}}$.  This was also observed in \cite{charles} (in the context of surfaces with boundary), where the character with which the Dehn twists act on the fibres of $\mathcal{L}^k_{\operatorname{CS}}$ was also determined. Charles does not allow for boundary components to be permuted but the result is otherwise the same. We have included in Appendix~\ref{dehntwistappendix} the explicit evaluation of this character in our setup.
\end{rem}
\section{Conformal blocks}\label{sectionconfblocks}
In this Section we relate the space of holomorphic sections of the Chern--Simons line bundle $\mathcal{L}^k_{\CS}$ (for a given choice of complex structure $\sigma$ on $\Sigma$) constructed in Section \ref{csbundle} to the space of conformal blocks as defined in \cite{TUY}.  The idea that these spaces are linked goes back to Witten's first paper \cite{witten-jones}; our aim is mainly to make this explicit and mathematically rigorous for the various precise definitions of Chern--Simons bundle and space of conformal blocks we use.  This allows us to link the sections of the Chern--Simons bundle with the  Reshetikhin--Turaev invariants \cite{RT1,RT2,Turaev}, as the modular functor that determines these can be described using conformal blocks \cite{AU2}.  Once again, essentially all of the ingredients for this are in the literature, but we are unaware of any place where they are linked in the way we need them.  In Section \ref{non-equi} below we outline the isomorphism, drawing on various known descriptions and correspondences.  In Section \ref{equi} we then discuss how the isomorphism can be shown to be $f$-equivariant, which is crucial for our application.

\subsection{Conformal blocks and the stack of quasi-parabolic bundles}\label{non-equi}
Given a Riemann surface $\Sigma_{\sigma}$ and a divisor of marked points $\cP$ as above, a parabolic subgroup $P_i$ of $\operatorname{SL}(N,\mathbb{C})$ corresponding to a flag type for every point $p_i$ of $\cP$ (the $L_i^c$ used in Section \ref{csbundle} are compact forms of the Levi factors of these), Tsuchiya, Ueno and Yamada in \cite{TUY} construct a corresponding space of conformal blocks $\mathcal{V}^{\dagger}_{N,k,\overline{\lambda}}(\Sigma_{\sigma},\cP)$  with it for every level $k\in \mathbb{N}$ and $\overline{\lambda}=(\lambda_1,\ldots,\lambda_n)$, where the (integral) weights $\lambda_i$ lie in the Weyl alcove at level $k$, and moreover in  the wall of the Weyl chamber corresponding to $P_i$,  for every point in $\cP$ (we will review the construction below in Section \ref{actionsconfblocks}, see also \cite{looij,coordfree} for coordinate-free constructions).  
In turn, this is linked to moduli of parabolic bundles by Pauly \cite{pauly} and Laszlo--Sorger \cite{laszlo-sorger}.  In particular, these authors consider the stack $\mathfrak{M}_{\Sigma_{\sigma},\cP,P_i}$ of quasi-parabolic bundles.  Its Picard group is given by 
\beq\label{picstack}\text{Pic}(\mathfrak{M}_{\Sigma_{\sigma},\cP,P_i})=\mathbb{Z} \oplus \bigoplus_i \mathfrak{X}(P_i),
\eeq where the latter terms are the character lattices of the parabolic groups, i.e. $\text{Hom}(P_i,\mathbb{G}_m)$, which can be identified with the Picard group of the flag varieties $\operatorname{SL}(N,\mathbb{C})/P_i$.    For the structure group $\operatorname{SL}(N,\mathbb{C})$, the $\mathbb{Z}$-summand is generated by the \emph{determinant of cohomology line bundle}, or determinant line bundle for short.  It assigns to a family $\mathcal{F}$ of quasi-parabolic bundles parametrized by $S$ the line bundle whose fibre over $s\in S$ is given by $\Lambda^{\text{top}}(H^0(\Sigma_{\sigma},\mathcal{F}(s))^*)\otimes \Lambda^{\text{top}}(H^1(\Sigma_{\sigma},\mathcal{F}(s)))$ -- when thinking in terms of principal bundles, note that we use the standard representation of $\operatorname{SL}(N,\mathbb{C})$ to define this.  The determinant line bundle only depends on the underlying (non-parabolic) bundle, and not on the parabolic structure of $\mathcal{F}$.

Following \cite{teleman} we say a line bundle $\mathcal{L}_{(k,\overline{\lambda})}$ on $\mathfrak{M}_{\Sigma_{\sigma},\cP,P_i}$ is \emph{semipositive} if in the above presentation of the Picard group it is given by $(k, \overline{\lambda})$, where $k\geq 0$ and the $\lambda_i$ are dominant weights, necessarily in the face of the Weyl chamber corresponding to $P_i$, with $\langle\lambda_i,\theta\rangle\leq k$.  The line bundle is \emph{positive} if all these inequalities are strict, and $\lambda$ is moreover regular.  Given a positive $\mathcal{L}_{(k,\overline{\lambda})}$, the complement of the base locus of all powers of $\mathcal{L}_{(k,\overline{\lambda})}$ is the semi-stable locus of the stack, denoted by $\mathfrak{M}_{\Sigma_{\sigma},\cP,P_i}^{\mathcal{L}_{(k,\overline{\lambda})}-\operatorname{ss}}$.  It consists of those quasi-parabolic vector bundles that are semi-stable for the weights $\alpha_i=\frac{\lambda_i}{k}$.  

We now have the following result by Pauli and Sorger: 
\begin{thm}[{\cite[Prop 6.5 and 6.6]{pauly}, \cite[Thm 1.2]{laszlo-sorger}}]\label{paulassor}
Given a line bundle $\mathcal{L}_{(k,\overline{\lambda})}$ on $\mathfrak{M}_{\Sigma_{\sigma},\cP,P_i}$ as above, there exists an isomorphism
$$H^0(\mathfrak{M}_{\Sigma_{\sigma},\cP,P_i},\mathcal{L}_{(k,\overline{\lambda})})\cong \mathcal{V}^{\dagger}_{N,k,\overline{\lambda}}(\Sigma_{\sigma},\cP),$$ which is canonical once the local coordinates are chosen.  
\end{thm}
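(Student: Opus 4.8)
The plan is to follow the arguments of Pauly \cite{pauly} and Laszlo--Sorger \cite{laszlo-sorger}: \emph{uniformise} the stack $\mathfrak{M}_{\Sigma_{\sigma},\cP,P_i}$ by a product of affine flag ind-varieties, identify the global sections of $\mathcal{L}_{(k,\overline{\lambda})}$ on that product with a restricted dual of a tensor product of integrable highest-weight modules of $\widehat{\mathfrak{sl}}_N$ via an affine Borel--Weil theorem, and finally cut this space down to $\mathcal{V}^{\dagger}_{N,k,\overline{\lambda}}(\Sigma_{\sigma},\cP)$ by taking invariants under the group of algebraic maps $\Sigma_{\sigma}\setminus\cP\to\operatorname{SL}(N,\mathbb{C})$. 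First I would set up the uniformisation: writing $G=\operatorname{SL}(N,\mathbb{C})$, $\mathcal{O}_i\subset\mathcal{K}_i$ for the completed local ring at $p_i$ and its fraction field (identified with $\mathbb{C}[[z_i]]\subset\mathbb{C}((z_i))$ through the chosen coordinate $z_i$), $\mathcal{P}_i\subset G(\mathcal{O}_i)$ for the parahoric subgroup obtained as the preimage of $P_i$ under $G(\mathcal{O}_i)\to G$, and $\mathcal{F}l_{P_i}=G(\mathcal{K}_i)/\mathcal{P}_i$ for the associated affine flag ind-variety, one lets $\Gamma=G(\mathcal{O}(\Sigma_{\sigma}\setminus\cP))$ act diagonally on $\prod_i\mathcal{F}l_{P_i}$ via the embeddings $\mathcal{O}(\Sigma_{\sigma}\setminus\cP)\hookrightarrow\mathcal{K}_i$. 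Since $G$ is semisimple and simply connected, every $G$-bundle on the affine curve $\Sigma_{\sigma}\setminus\cP$ is trivial, so the loop-group uniformisation (Beauville--Laszlo, Faltings) yields an isomorphism of stacks $\mathfrak{M}_{\Sigma_{\sigma},\cP,P_i}\cong\Gamma\backslash\prod_i\mathcal{F}l_{P_i}$, a point of $\mathcal{F}l_{P_i}$ recording simultaneously a transition function across $p_i$ and a $P_i$-reduction of the glued bundle at $p_i$, with the residual $\Gamma$-action absorbing the remaining trivialisation ambiguity.

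Next I would match the line bundles and apply affine Borel--Weil. Pulling back along $\prod_i\mathcal{F}l_{P_i}\to\mathfrak{M}_{\Sigma_{\sigma},\cP,P_i}$ sends the determinant-of-cohomology generator of the $\mathbb{Z}$-summand of $\operatorname{Pic}$ in (\ref{picstack}) to the external product of the level-one theta bundles on the factors, and the characters of the $P_i$ to the corresponding bundles pulled from the finite flag varieties $G/P_i\subset\mathcal{F}l_{P_i}$; hence $\mathcal{L}_{(k,\overline{\lambda})}$ pulls back to $\boxtimes_i\mathcal{L}_{k,\lambda_i}$, where $\mathcal{L}_{k,\lambda_i}$ is the line bundle on $\mathcal{F}l_{P_i}$ whose sections, by the affine analogue of the Borel--Weil theorem (Kumar, Mathieu; in this ind-scheme setting Kumar--Narasimhan--Ramanathan and Faltings), are the restricted dual $\mathcal{H}_{k,\lambda_i}^{\vee}$ of the integrable highest-weight module of $\widehat{\mathfrak{sl}}_N$ at level $k$ with highest weight $\lambda_i$ (here one uses precisely that $\lambda_i$ lies in the face of the level-$k$ alcove attached to $P_i$, so that parahoric and highest weight are compatible). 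Taking the external product over $i$ gives $H^0\bigl(\prod_i\mathcal{F}l_{P_i},\boxtimes_i\mathcal{L}_{k,\lambda_i}\bigr)\cong\bigl(\bigotimes_i\mathcal{H}_{k,\lambda_i}\bigr)^{\vee}$.

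I would then descend to the stack and recognise conformal blocks. Global sections of $\mathcal{L}_{(k,\overline{\lambda})}$ on the quotient stack are exactly the $\Gamma$-invariant sections upstairs, so $H^0(\mathfrak{M}_{\Sigma_{\sigma},\cP,P_i},\mathcal{L}_{(k,\overline{\lambda})})\cong\bigl((\bigotimes_i\mathcal{H}_{k,\lambda_i})^{\vee}\bigr)^{\Gamma}$. Since $\Gamma$ is connected with Lie algebra the current algebra $\mathfrak{sl}_N\otimes\mathcal{O}(\Sigma_{\sigma}\setminus\cP)$, and the action on sections is locally finite, $\Gamma$-invariance may be computed infinitesimally and coincides with $\mathfrak{sl}_N\otimes\mathcal{O}(\Sigma_{\sigma}\setminus\cP)$-invariance; the resulting space of linear functionals on $\bigotimes_i\mathcal{H}_{k,\lambda_i}$ annihilating the current-algebra action is, by definition, the space of conformal blocks $\mathcal{V}^{\dagger}_{N,k,\overline{\lambda}}(\Sigma_{\sigma},\cP)$ of \cite{TUY}. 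Every step depends only on the chosen local coordinates $z_i$ (through the identifications $\mathcal{K}_i\cong\mathbb{C}((z_i))$ used both in the uniformisation and in the vertex-algebraic definition of $\mathcal{H}_{k,\lambda_i}$ and of the TUY space), which gives the asserted canonicity.

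The main obstacle is the equivariance in the last two steps. Each $\mathcal{L}_{k,\lambda_i}$ is only equivariant for the central extension $\widehat{G(\mathcal{K}_i)}$ of $G(\mathcal{K}_i)$, not for $G(\mathcal{K}_i)$ itself, so a priori $\boxtimes_i\mathcal{L}_{k,\lambda_i}$ is acted on by $\Gamma$ only up to the product of the local central characters, and one must show that this product character is trivial on $\Gamma$. Equivalently, the determinant-of-cohomology bundle on $\mathfrak{M}_{\Sigma_{\sigma},\cP,P_i}$ carries a canonical rigidification whose pullback is $\boxtimes_i\mathcal{L}_{k,\lambda_i}$: the central extension of $\prod_i G(\mathcal{K}_i)$ restricts to a canonically split extension over $\Gamma$, a reciprocity fact ultimately resting on the residue theorem on $\Sigma_{\sigma}$ and using that the same level $k$ occurs at every marked point. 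The remaining technical points -- the affine Borel--Weil isomorphism over a possibly non-reduced ind-scheme base, and the reduction of $\Gamma$-invariance of sections to Lie-algebra invariance -- are handled as in \cite{pauly,laszlo-sorger}; compatibility of this presentation with the global geometry of the stack also invokes the higher cohomology vanishing of \cite{teleman}, although that is not needed for the bare statement here.
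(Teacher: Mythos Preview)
The paper itself does not prove this theorem; it is quoted as a result of Pauly and Laszlo--Sorger, and the surrounding discussion (in particular Proposition~\ref{uniform} and the alternative description via \cite[Proposition 2.3]{beauville}) indicates the argument those authors use.  Your approach is correct but genuinely different from theirs in the choice of uniformisation.

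Pauly and Laszlo--Sorger introduce an \emph{auxiliary} point $q\notin\cP$ (the paper later replaces this by a whole $f$-orbit $\cQ$), and present the stack as
\[
\mathfrak{M}_{\Sigma_{\sigma},\cP,P_i}\cong G\bigl(U_{\{q\}}\bigr)\Big\backslash\Bigl(\mathfrak{Q}_G\times\prod_{i}G/P_i\Bigr),
\]
i.e.\ the affine Grassmannian sits only at $q$, while the marked points contribute \emph{finite} flag varieties $G/P_i$.  The line bundle $\mathcal{L}_{(k,\overline{\lambda})}$ then pulls back to the level-$k$ theta bundle on $\mathfrak{Q}_G$ times the Borel--Weil bundles for $\lambda_i$ on $G/P_i$; affine Borel--Weil at the single point $q$ gives $\mathcal{H}_{k,0}^{\vee}$, and taking $G(U_{\{q\}})$-invariants together with Beauville's comparison (finite $V_{\lambda_i}$ at $\cP$ versus affine $\mathcal{H}_{k,\lambda_i}$) yields the conformal blocks.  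By contrast you work directly at the marked points, quotienting $G(\mathcal{K}_i)$ by the \emph{parahoric} $\mathcal{P}_i$ and never introducing $q$; the parabolic data is absorbed into the affine flag variety $\mathcal{F}l_{P_i}$, and affine Borel--Weil for parahorics (Kumar, Mathieu) produces $\mathcal{H}_{k,\lambda_i}^{\vee}$ in one step.  Both routes need the residue-theorem splitting of the central extension over $\Gamma$ and the reduction of group invariance to Lie-algebra invariance, which you handle correctly.  Your presentation is slightly more streamlined (no auxiliary point, no separate finite Borel--Weil step), at the cost of invoking the parahoric version of affine Borel--Weil; the paper's choice has the advantage that the single-point uniformisation is \'etale-locally trivial, making the Picard group comparison transparent, and that it generalises directly to the $f$-equivariant version in Section~\ref{equi} by replacing $q$ with a full $f$-orbit $\cQ$.
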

This theorem generalises earlier results of Beauville--Laszlo \cite{beaulas} and Kumar, Narasimhan and Ramanathan \cite{KNR} to the parabolic case.

Moreover, for structure group $\operatorname{SL}(N,\mathbb{C})$, most of these line bundles descend to the moduli spaces $\mathcal{M}_{\overline{\alpha}}$ (essentially due to a descent lemma of Kempf \cite[Thm. 2.3]{DZ}, see also \cite[Theorem 10.3]{Alper} -- in the terminology of the latter $\mathcal{M}_{\overline{\alpha}}$ is a \emph{good moduli space} for the stack $\mathfrak{M}_{\Sigma_{\sigma},\cP,P_i}^{\mathcal{L}_{(k,\overline{\lambda})}-\operatorname{ss}}$).  One just needs to verify that the stabilisers of (semi-)stable bundles act trivially on the fibre of the line bundle over the corresponding closed point of $\mathfrak{M}_{\Sigma_{\sigma},\cP,P_i}$, which for $\operatorname{SL}(N,\mathbb{C})$ reduces to verifying that the exponential $e^{\sum_i \lambda_i}$ is trivial on the centre of $\operatorname{SL}(N,\mathbb{C})$.  
In this case we shall refer to  the descent of $\mathcal{L}_{(k,\overline{\lambda})}$ to $\mathcal{M}_{\overline{\alpha}}$ with $\alpha_i=\frac{\lambda_i}{k}$ as the parabolic determinant bundle, following \cite{biswas-ragha}, and denote it as $\mathcal{L}^k_{\operatorname{pd}}$.   We have 
\begin{thm}[{\cite[Thm 9.6]{teleman}, \cite[5.2]{pauly}}]\label{cohvanishing}
Whenever $\sum_{i} \lambda_i$ lies in the root lattice of $\operatorname{SL}(N, \mathbb{C})$,  
there exists a canonical isomorphism (up to scalars)
$$H^0(\mathfrak{M}_{\Sigma_{\sigma},\cP,P_i},\mathcal{L}_{(k,\overline{\lambda})})\cong H^0(\mathcal{M}_{\overline{\alpha}},\mathcal{L}^k_{\operatorname{pd}}),$$  where $\alpha_i=\frac{\lambda_i}{k}$.  Moreover all higher cohomology of these line bundles vanishes.
\end{thm}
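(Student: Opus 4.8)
The plan is to bootstrap from Theorem~\ref{paulassor}, which already identifies $H^0(\mathfrak{M}_{\Sigma_\sigma,\cP,P_i},\mathcal{L}_{(k,\overline\lambda)})$ with the conformal block space, by (a) descending along the good moduli space morphism to pass from the stack to $\cM_{\overline\alpha}$, and (b) invoking the stack-level cohomology vanishing of \cite{teleman} together with its parabolic refinement in \cite{pauly}. Throughout write $\pi\colon \mathfrak{M}^{\mathcal{L}_{(k,\overline\lambda)}-\operatorname{ss}}_{\Sigma_\sigma,\cP,P_i}\to\cM_{\overline\alpha}$ for the good moduli space morphism attached to the positive line bundle $\mathcal{L}_{(k,\overline\lambda)}$, with $\alpha_i=\lambda_i/k$, as recalled above (so in particular $\pi$ is cohomologically affine and $\pi_*\mathcal{O}=\mathcal{O}_{\cM_{\overline\alpha}}$).

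First I would carry out the descent. The hypothesis that $\sum_i\lambda_i$ lie in the root lattice of $\operatorname{SL}(N,\mathbb{C})$ says exactly that $e^{\sum_i\lambda_i}$ is trivial on the centre $\mu_N\subset\operatorname{SL}(N,\mathbb{C})$, which, as noted in the paragraph preceding the theorem, forces the stabiliser of every semistable quasi-parabolic bundle to act trivially on the fibre of $\mathcal{L}_{(k,\overline\lambda)}$. Kempf's descent lemma (\cite[Thm.~2.3]{DZ}, \cite[Thm.~10.3]{Alper}) then produces a line bundle on $\cM_{\overline\alpha}$ with $\mathcal{L}_{(k,\overline\lambda)}\big|_{\mathfrak{M}^{\operatorname{ss}}}\cong\pi^{*}\mathcal{L}^k_{\operatorname{pd}}$, the parabolic determinant bundle of \cite{biswas-ragha}. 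Since $\pi$ is cohomologically affine one has $R^{>0}\pi_{*}\mathcal{O}=0$, so $R\pi_{*}(\pi^{*}\mathcal{L}^k_{\operatorname{pd}})=\mathcal{L}^k_{\operatorname{pd}}$ by the projection formula, and a Leray spectral sequence yields $H^i(\cM_{\overline\alpha},\mathcal{L}^k_{\operatorname{pd}})\cong H^i(\mathfrak{M}^{\operatorname{ss}}_{\Sigma_\sigma,\cP,P_i},\mathcal{L}_{(k,\overline\lambda)})$ in every degree $i$; canonicity up to a single overall scalar is inherited from Theorem~\ref{paulassor}.

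Next I would compare the semistable locus with the whole stack and invoke the vanishing theorem. The complement of $\mathfrak{M}^{\mathcal{L}_{(k,\overline\lambda)}-\operatorname{ss}}_{\Sigma_\sigma,\cP,P_i}$ in the smooth stack $\mathfrak{M}_{\Sigma_\sigma,\cP,P_i}$ is covered by the Harder--Narasimhan instability strata, and, as in \cite[\S9]{teleman} (with the parabolic variant in \cite{pauly}), for a positive line bundle these strata have codimension large enough that restriction induces an isomorphism $H^{\bullet}(\mathfrak{M}_{\Sigma_\sigma,\cP,P_i},\mathcal{L}_{(k,\overline\lambda)})\cong H^{\bullet}(\mathfrak{M}^{\operatorname{ss}}_{\Sigma_\sigma,\cP,P_i},\mathcal{L}_{(k,\overline\lambda)})$. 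Finally, \cite[Thm.~9.6]{teleman} gives the vanishing of $H^{>0}$ of a positive line bundle on the stack of (non-parabolic) $\operatorname{SL}(N,\mathbb{C})$-bundles, and combining this with Borel--Weil--Bott along the flag-variety fibres of the forgetful map from $\mathfrak{M}_{\Sigma_\sigma,\cP,P_i}$ to that stack --- which is precisely the content of \cite[5.2]{pauly} --- yields $H^{>0}(\mathfrak{M}_{\Sigma_\sigma,\cP,P_i},\mathcal{L}_{(k,\overline\lambda)})=0$. Assembling the three isomorphisms proves the theorem.

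The main obstacle is the vanishing theorem itself: \cite[Thm.~9.6]{teleman} is a deep input, resting on the uniformisation of the moduli stack by affine flag-type ind-schemes and strong cohomology-vanishing results for Demazure-type modules over the relevant Kac--Moody algebra, and we will simply quote it (together with \cite{pauly} for the parabolic/flag refinement). The remaining points that deserve care are the codimension (equivalently, depth) estimates for the unstable strata needed to make the stack-versus-semistable-locus comparison valid in positive degrees, and the bookkeeping ensuring that the chain of identifications --- Theorem~\ref{paulassor}, Kempf descent, and the good-moduli projection formula --- is mutually compatible, so that the composite isomorphism is well defined up to one scalar.
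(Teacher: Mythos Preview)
The paper does not give its own proof of this statement: as the attributions in the theorem header indicate, it is simply quoted from \cite[Thm.~9.6]{teleman} and \cite[5.2]{pauly}. The paragraph immediately preceding the theorem already records the descent step you carry out in (a) --- the Kempf/Alper criterion reducing to triviality of $e^{\sum_i\lambda_i}$ on the centre --- and the rest is deferred to the cited references. Your outline is a faithful unpacking of how those references combine (good-moduli descent, Harder--Narasimhan stratification to pass between the stack and its semistable locus, Teleman's vanishing plus Borel--Weil--Bott along the flag fibres), and you correctly flag that the substantive input is Teleman's vanishing theorem, which you take as a black box. There is nothing further to compare against in the paper itself.
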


We want to link this with the Chern--Simons line bundles $\mathcal{L}^k_{\CS}$ constructed earlier.  It suffices to show that the line bundle $\mathcal{L}^k_{\operatorname{pd}}$ considered above can be given a connection whose curvature is the $k$-th multiple of the K\"ahler class.  Indeed, the same is true for the $k$-th power of Chern--Simons bundle, and since we know that the moduli spaces are simply connected (by Theorem \ref{simply}), all line bundles are determined by their curvature \cite[ Cor. II.9.2]{kob-nom}.  Hence the Chern--Simons line bundle $\mathcal{L}^k_{{\CS}}$ and $\mathcal{L}^k_{\operatorname{pd}}$ are isomorphic as bundles with connections, and therefore also as holomorphic line bundles for the natural holomorphic structure on $\mathcal{L}^k_{{\CS}}$ (see e.g. \cite[Thm 5.1]{ahs}).  

By the seminal work of Quillen \cite{quillen}, there exists a natural `regularised' Hermitian metric on determinant line bundles over  moduli spaces such as $\mathcal{M}_{\overline{\alpha}}$.  From the discussion above however, it follows that we are not interested in the determinant line bundle (which is $\mathcal{L}_{(1,\overline{0})}$ in the notation above) itself, but rather by a twist of the determinant line bundle (or a power thereof) by a line bundle coming from the parabolic structures, chosen to correspond to the weights\footnote{Though less relevant for our approach as we work with punctured Riemann surfaces, the matter of the Quillen metric in the case of Riemann surfaces with boundary was discussed in \cite{chang}.}.  There is, however, another viewpoint on parabolic bundles (for rational weights), as they correspond to orbifold bundles, or alternatively, equivariant bundles on a suitable ramified cover of $\Sigma_{\sigma}$ (also referred to as \emph{$\pi$-bundles}).  In particular, under this correspondence the moduli spaces of semi-stable bundles are isomorphic (but their natural K\"ahler structures differ by a factor).

Biswas and Raghavendra take the latter approach and show that, on the stable locus of the moduli space of $\pi$-bundles, the $\pi$-determinant bundle equipped with the Quillen metric has as curvature $N$ times the natural K\"ahler form \cite[Theorem 3.27]{biswas-ragha} (see also \cite{bisw3,bisw2}).  Moreover, using this $\pi$-determinant they show \cite[\S 5]{biswas-ragha} that on the moduli space of parabolic bundles $\mathcal{M}_{\overline{\alpha}}$, there exists a metrized line bundle (which they dub the \emph{parabolic determinant}; it corresponds to the descent of the line bundle $\mathcal{L}_{(Np,Np\overline{\alpha})}$ we considered before on the stack, where $p$ is the least common multiple of all denominators in the $\alpha_{i,j}$), whose curvature is $\frac{Np}{2\pi i}$ times the K\"ahler form $\Omega$ from (\ref{formcompsup})
on $\mathcal{M}_{\overline{\alpha}}$ \cite[Theorem 5.3]{biswas-ragha}. 
In particular this implies \beq\label{frombiswas}c_1\left(\mathcal{L}^k_{\operatorname{pd}}\right)=k[\Omega ].\eeq
 To be precise, they do this without fixing the determinant of $E$ and then need a correction factor in the line bundle, which is however trivial in the fixed-determinant case.
 
We can conclude (recall again our assumption that all $\lambda_i$ are regular)
\begin{cor}\label{bundleisom}
The line bundles $\mathcal{L}^k_{\CS}$ and $\mathcal{L}^k_{\operatorname{pd}}$ are isomorphic as holomorphic line bundles on $\mathcal{M}_{\overline{\alpha}}$.
\end{cor}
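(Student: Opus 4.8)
\noindent\emph{Plan of proof.} The strategy is the one already indicated above: exhibit Hermitian connections on the two line bundles with the \emph{same} curvature, and then exploit the simple-connectedness of $\mathcal{M}_{\overline{\alpha}}$ established in Theorem~\ref{simply}. On the Chern--Simons side this is immediate: by Theorem~\ref{liftunderconditions} the bundle $\mathcal{L}^k_{\CS}$ comes with a connection whose curvature is $\tfrac{k}{2\pi i}$ times the Atiyah--Bott--Goldman form $\Omega$ of \eqref{formcompsup}. So the first task is to put a Hermitian metric on $\mathcal{L}^k_{\operatorname{pd}}$ whose Chern connection has that very same curvature.

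For this I would invoke the Quillen-metric results of Biswas and Raghavendra recalled above: \cite[Thm.~5.3]{biswas-ragha} produces precisely such a metrized line bundle, but for the descent of $\mathcal{L}_{(Np,Np\overline{\alpha})}$, i.e.\ for $\mathcal{L}^{Np}_{\operatorname{pd}}$. To pass from this to a general admissible level $k$ I would use that in the Picard group \eqref{picstack} of the stack one has $\mathcal{L}_{(k,k\overline{\alpha})}^{\otimes Np}\cong\mathcal{L}_{(kNp,kNp\overline{\alpha})}$ (and that descent commutes with tensor powers), so that $\mathcal{L}^k_{\operatorname{pd}}$ is canonically an $Np$-th root of $\mathcal{L}^{kNp}_{\operatorname{pd}}$; since on the simply-connected smooth locus a holomorphic line bundle has a unique $m$-th root whenever one exists, one can take the corresponding root of the Biswas--Raghavendra metric and divide its curvature accordingly. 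One should also check, against the normalisation \eqref{formcompsup}, that the K\"ahler form entering \cite{biswas-ragha} is literally $\Omega$ and not merely a positive multiple of it, so that the two curvatures agree on the nose; this is what is summarised in \eqref{frombiswas}.

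With both connections in hand, the conclusion follows formally. I would form $\mathcal{L}^k_{\CS}\otimes(\mathcal{L}^k_{\operatorname{pd}})^{-1}$ with the tensor-product connection over the smooth (stable) locus of $\mathcal{M}_{\overline{\alpha}}$; by the above it is flat, and since that locus is simply connected (Theorem~\ref{simply}) its holonomy is trivial, so parallel transport yields a nowhere-vanishing parallel section, i.e.\ an isomorphism $\mathcal{L}^k_{\CS}\cong\mathcal{L}^k_{\operatorname{pd}}$ of bundles-with-connection there (\cite[Cor.~II.9.2]{kob-nom}). Such a section is annihilated by the $(0,1)$-part of the connection; as the $(0,1)$-parts of the two connections are exactly the $\bar\partial$-operators defining the natural holomorphic structure on $\mathcal{L}^k_{\CS}$ (its curvature being of type $(1,1)$, cf.\ \cite[Thm.~5.1]{ahs}) and the algebraic structure on $\mathcal{L}^k_{\operatorname{pd}}$, the isomorphism is holomorphic. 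Finally I would extend it across the strictly semi-stable locus: by the discussion in Section~\ref{constrLCS} that locus has complex codimension at least two and $\mathcal{M}_{\overline{\alpha}}$ is normal, so Hartogs lets the section extend; being an effective Cartier divisor supported in codimension $\geq 2$ its zero locus is empty, so the extension is again nowhere vanishing and gives the isomorphism over all of $\mathcal{M}_{\overline{\alpha}}$.

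The hard part is entirely concentrated in the second paragraph: everything else is the standard ``simply connected $\Rightarrow$ a line bundle is determined by its curvature'' argument plus a Hartogs extension. The only genuinely non-formal input is the curvature of the parabolic determinant bundle, which rests on the Quillen-metric analysis of \cite{biswas-ragha} for $\pi$-bundles (equivariant bundles on a ramified cover) together with the comparison of the natural K\"ahler structures on the parabolic and orbifold pictures of $\mathcal{M}_{\overline{\alpha}}$; tracking the normalisation constants and the reduction from level $Np$ to a general admissible level also belongs there.
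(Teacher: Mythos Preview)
Your proposal is correct and follows essentially the same route as the paper: both argue that $\mathcal{L}^k_{\CS}$ and $\mathcal{L}^k_{\operatorname{pd}}$ carry connections with identical curvature $\tfrac{k}{2\pi i}\Omega$, then invoke Theorem~\ref{simply} and \cite[Cor.~II.9.2]{kob-nom} to conclude they are isomorphic as bundles with connection, hence as holomorphic line bundles via \cite[Thm.~5.1]{ahs}. You are in fact more careful than the paper on two points it leaves implicit: the passage from the Biswas--Raghavendra metric at level $Np$ to a metric at general admissible $k$ (your root-of-metric argument, which incidentally does not actually need the uniqueness of roots---taking the $Np$-th root of a Hermitian metric on $L^{\otimes Np}$ to obtain one on $L$ is an elementary local construction), and the explicit Hartogs extension across the strictly semi-stable locus.
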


Note that an alternative approach, 
 avoiding the use of $\pi$-bundles, would be to use the recent work of Zograf and Takhtajan \cite{tak-zog}, who calculate the curvature of the Quillen metric, not on the determinant line bundle but on the canonical bundle of the moduli space.  They show in particular that its curvature is equal to a multiple of the K\"ahler form, minus a `cuspidal defect', which they express in terms of natural curvature forms on line bundles coming from the parabolic bundles.  We can on the other hand identify the canonical bundle for the moduli stack of quasi-parabolic bundles.  Indeed, it follows from e.g. \cite[Thm 8.5]{laszlo-sorger} that $\mathfrak{M}_{\Sigma, \mathcal{P},P_i}$ 
 is a flag bundle (locally trivial in the \'etale topology) over the stack of (non-parabolic) bundles $\mathfrak{M}_{\Sigma_{\sigma}}$.  The canonical bundle of $\mathfrak{M}_{\Sigma_{\sigma}, \mathcal{P},P_i}$ is 
 therefore the tensor product of the canonical bundle of $\mathfrak{M}_{\Sigma_{\sigma}}$ with the canonical bundle of the various flag varieties $G/P_i$ (as before $G=\operatorname{SL}(N,\mathbb{C})$.  The former is determined by the weight $-2 h^{\vee}$ \cite[Cor. 10.6.4]{sorger-lectures}, the latter is well-known to be $-2\rho_{P_i}$ \cite[Page 202]{jantzen}, where $h^{\vee}$ is the dual Coxeter number of $G$ and $\rho_{P_i}$ is half the sum of those positive roots of $G$ that determine $P_i$.  Combining these two expressions should lead to the same result, giving (a multiple of) the K\"ahler form on $\mathcal{M}_{\overline{\alpha}}$ as curvature for a connection on the line bundle $\mathcal{L}^k_{\operatorname{pd}}$, 
 
 but we were unable to resolve some ambiguities with respect to normalisation conventions.

We can finally summarise all the results quoted in this Section as 
\begin{thm}\label{confquant}
There exists an isomorphism, canonical up to scalars, between the space of conformal blocks and the K\"ahler quantisation of the moduli space of flat connections using the Chern--Simons line bundle:
$$\mathcal{V}^{\dagger}_{N,k,\overline{\lambda}}(\Sigma_{\sigma},\cP)\cong H^0( \mathcal{M}_{\overline{\alpha}}, \mathcal{L}_{\CS}^k),$$
if $k$ is such that all $\lambda_i=k\alpha_i$ are elements of the lattices $\mathfrak{X}(P_i)$, and $\sum_i\lambda_i$ is in the root lattice of $\operatorname{SL}(N,\mathbb{C})$.

\end{thm}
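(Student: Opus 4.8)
The plan is to concatenate the three main identifications assembled in this Section, checking that the hypotheses line up along the way. First I would invoke Theorem~\ref{paulassor} of Pauly and Laszlo--Sorger: under the assumption that each $\lambda_i=k\alpha_i$ lies in the character lattice $\mathfrak{X}(P_i)$, the line bundle $\mathcal{L}_{(k,\overline{\lambda})}$ on the stack $\mathfrak{M}_{\Sigma_{\sigma},\cP,P_i}$ is well-defined, since it then sits in the summand decomposition~\eqref{picstack} of the Picard group, and its space of global sections is canonically isomorphic (once local coordinates are fixed) to the space of conformal blocks $\mathcal{V}^{\dagger}_{N,k,\overline{\lambda}}(\Sigma_{\sigma},\cP)$.

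Next, with the additional assumption that $\sum_i\lambda_i$ lies in the root lattice of $\operatorname{SL}(N,\mathbb{C})$ --- which, as noted after Theorem~\ref{cohvanishing}, is precisely the condition ensuring that $e^{\sum_i\lambda_i}$ acts trivially on the centre of $\operatorname{SL}(N,\mathbb{C})$, hence that $\mathcal{L}_{(k,\overline{\lambda})}$ descends from the semi-stable locus of the stack to the good moduli space $\mathcal{M}_{\overline{\alpha}}$ as the parabolic determinant bundle $\mathcal{L}^k_{\operatorname{pd}}$ --- I would apply Theorem~\ref{cohvanishing} of Teleman and Pauly to obtain a canonical-up-to-scalar isomorphism $H^0(\mathfrak{M}_{\Sigma_{\sigma},\cP,P_i},\mathcal{L}_{(k,\overline{\lambda})})\cong H^0(\mathcal{M}_{\overline{\alpha}},\mathcal{L}^k_{\operatorname{pd}})$. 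Finally I would feed in Corollary~\ref{bundleisom}, which --- using regularity of the $\lambda_i$, simple-connectedness of $\mathcal{M}_{\overline{\alpha}}$ from Theorem~\ref{simply}, and the curvature identity~\eqref{frombiswas} of Biswas--Raghavendra together with Theorem~\ref{liftunderconditions} --- identifies $\mathcal{L}^k_{\operatorname{pd}}$ with the Chern--Simons line bundle $\mathcal{L}^k_{\CS}$ as holomorphic line bundles on $\mathcal{M}_{\overline{\alpha}}$, inducing an isomorphism of their spaces of holomorphic sections. Composing the three isomorphisms gives
$$\mathcal{V}^{\dagger}_{N,k,\overline{\lambda}}(\Sigma_{\sigma},\cP)\cong H^0(\mathfrak{M}_{\Sigma_{\sigma},\cP,P_i},\mathcal{L}_{(k,\overline{\lambda})})\cong H^0(\mathcal{M}_{\overline{\alpha}},\mathcal{L}^k_{\operatorname{pd}})\cong H^0(\mathcal{M}_{\overline{\alpha}},\mathcal{L}^k_{\CS}),$$
canonical up to an overall scalar, which is the assertion.

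The step I expect to require the most care is not a genuine mathematical obstacle but the bookkeeping of hypotheses: one must verify that the integrality conditions arising on the gauge-theoretic side in Theorem~\ref{liftunderconditions} (namely $k\alpha_i$ in the co-character lattice of $L_i^c$, and $k\sum_i\alpha_i$ in the co-root lattice of $\operatorname{SU}(N)$) translate, via the identification $\langle\,\cdot\,,\,\cdot\,\rangle$ of weights with coweights and the correspondence between the parabolics $P_i$ and their compact Levi forms $L_i^c$, into exactly the conditions on the algebro-geometric side ($\lambda_i\in\mathfrak{X}(P_i)$ and $\sum_i\lambda_i$ in the root lattice of $\operatorname{SL}(N,\mathbb{C})$). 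This ensures that all three intermediate objects exist simultaneously under the single hypothesis stated in the theorem; granting that consistency check, the remaining content --- the descent and curvature arguments --- has already been established in Corollary~\ref{bundleisom} and the surrounding discussion, so the proof itself is a formal concatenation.
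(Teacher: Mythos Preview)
Your proposal is correct and matches the paper's approach exactly: the paper presents Theorem~\ref{confquant} simply as a summary of the results quoted in Section~\ref{non-equi}, namely the concatenation of Theorem~\ref{paulassor}, Theorem~\ref{cohvanishing}, and Corollary~\ref{bundleisom}. Your additional remark about reconciling the integrality hypotheses on the gauge-theoretic and algebro-geometric sides is a sensible consistency check that the paper leaves implicit.
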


\subsection{Equivariance}\label{equi}
For our purposes it is very important to establish the isomorphism given in Theorem \ref{confquant} as an $f$-equivariant isomorphism.  In Section \ref{liftCS} a lift of any diffeomorphism $f\in \operatorname{Diff}_{+}(\Sigma, \overline{z},\overline{\alpha})$  to the Chern--Simons line bundle was constructed.  This will induce an action on the geometric (K\"ahler) quantisation only if the chosen complex structure is also preserved by $f$.  In such a case $f$ is finite order, and vice versa, for every finite order diffeomorphism one can choose a complex structure $\sigma$ on $\Sigma$ preserved by it.  We shall now assume such a complex structure to be chosen, and we shall discuss the corresponding equivariance of the spaces of conformal blocks and non-abelian theta functions.  All throughout we shall consider $f$ to be a finite order automorphism of the Riemann surface which preserves the set of labelled marked points (i.e. subsets of $\mathcal{P}$ are only allowed to be permuted if the corresponding $\lambda_i$ are the same) that gives rise to an automorphism of a marked surface $\overline{\Sigma}$ (as in the introduction) whose surface is $\Sigma$.  

We begin by making an elementary observation.
\begin{lem}
If $f$ is a finite order diffeomorphism of $\Sigma$ that preserves the set of marked points $\cP$, and some choice of non-zero tangent vectors at the $p_i$ up to real positive scalars, then necessarily all $f$-orbits in $\cP$ are generic, i.e. their lengths are equal to $m$, the order of $f$.
\end{lem}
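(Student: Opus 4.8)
The plan is to argue by contradiction. Suppose some $f$-orbit in $\cP$ has length $\ell$ with $\ell \mid m$ and $\ell < m$, and let $p$ be a point on that orbit. Then $g := f^{\ell}$ is a finite-order diffeomorphism of exact order $d := m/\ell > 1$ which fixes $p$, and I want to derive a contradiction from the fact that such a $g$ must preserve the chosen projective tangent vector $v_p \in T_p\Sigma/\R_+$.

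First I would observe that the hypothesis — $f$ carries the chosen projective tangent vector at $p_i$ to the one at $f(p_i)$ — implies, by applying it $\ell$ times around the orbit of $p$, that $dg_p$ maps the ray $\R_+ v_p$ to itself; say $dg_p(v) = c\,v$ for a representative $v$ and some $c > 0$. Since $g^{d} = \mathrm{id}$ we get $(dg_p)^{d} = \mathrm{id}$, hence $c^{d} = 1$ and therefore $c = 1$, i.e. $dg_p$ fixes the nonzero vector $v$. Now $dg_p$ is a finite-order element of $\mathrm{GL}(T_p\Sigma) \cong \mathrm{GL}(2,\R)$, and it is orientation-preserving because $f$ (being an automorphism of the Riemann surface) is, hence so is $g$. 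An orientation-preserving finite-order linear automorphism of $\R^2$ is conjugate to a rotation, so it has a nonzero fixed vector only if it is the identity; therefore $dg_p = \mathrm{id}$.

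Finally I would upgrade $dg_p = \mathrm{id}$ to $g = \mathrm{id}$, which contradicts $d > 1$. For this, average an arbitrary Riemannian metric over the cyclic group $\langle g\rangle$ to obtain a $g$-invariant metric; then $g$ is an isometry of the connected surface $\Sigma$ fixing $p$ with $dg_p = \mathrm{id}$, and such an isometry is the identity (the locus where $g$ and $dg$ agree with the identity is nonempty, closed, and — via the exponential map of the invariant metric — open). This contradiction shows no orbit can be shorter than $m$, so every $f$-orbit in $\cP$ has length exactly $m$.

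I expect the only genuine subtlety to be the role of orientation: without it, $dg_p$ could be a reflection, which does fix a nonzero vector, and the conclusion would genuinely fail (a reflection fixing a transverse direction at $p$ produces a fixed marked point with preserved projective tangent vector). Flagging that orientation-preservation of $f$ is exactly what excludes this, and that it holds automatically here since $f$ is a holomorphic automorphism, is the one point deserving care; everything else is standard.
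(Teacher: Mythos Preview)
Your proof is correct and follows the same overall strategy as the paper: pass to a power of $f$ that fixes a marked point, use preservation of the projective tangent vector to show the differential there is trivial, and then invoke rigidity to conclude the map is the identity. The only difference is in the rigidity step: the paper picks an $f$-invariant \emph{complex} structure, works on a holomorphic disk around $p$ (where a finite-order automorphism fixing the centre is a rotation), and uses the identity theorem; you instead average a Riemannian metric and use that an isometry with a fixed point and trivial differential there is the identity. Both are standard and equally short; your version has the mild advantage of making explicit where orientation-preservation enters (ruling out the reflection case), which the paper leaves implicit in the holomorphic-disk argument.
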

\begin{proof}As mentioned above, we can pick a complex structure preserved by $f$, so that $f$ is an automorphism of $\Sigma_{\sigma}$, e.g. by choosing an $f$-invariant metric.  It now suffices to show that $f$ cannot fix any of the $\cP$ (indeed, if there were no fixed points in $\cP$, but some orbit was not generic, then a suitable power of $f$ would fix that orbit and we could replace $f$ by that power).  Suppose that $p$ were an $f$-fixed point, then we can choose a holomorphic disk around $p$ preserved by $f$.  It is well known that automorphisms of holomorphic disks preserving the centre have to be rotations, and since we also know that $f$ preserves a tangent vector at $p$ up to real positive scalars, $f$ has to be the identity on this disk, hence everywhere on $\Sigma$.
\end{proof}
This of course implies that $n$ has to be a multiple of $m$.  It also implies that we can find open disks $D_i$ around the $p_i$ with local holomorphic coordinates $z_i$ giving isomorphisms onto the unit disk in $\mathbb{C}$, and that are preserved by $f$, i.e. $z_j \circ f = z_i$, if $f(p_i)=p_j$.  Vice versa, given an automorphism $f$ that preserves such a choice of holomorphic coordinates around the points in $\cP$, we can choose tangent vectors such that $f$ is an automorphism of the marked surface (in any case, for finite order automorphisms of the marked surface we can always find a suitable normalisations of the tangent vectors, so that we need not be concerned about the $\mathbb{R}_+$-ambiguity).  We shall therefore from now on assume that such a choice of coordinates has been made, and that the constructions in Sections \ref{constr} and \ref{constrLCS} are done with respect to these neighbourhoods.

\subsubsection{Action on spaces of conformal blocks}\label{actionsconfblocks}

From the work of \cite{TUY} it follows that the bundle of conformal blocks is a vector bundle over the stack of smooth marked curves $\mathcal{M}_{g,n}$ (in fact even over its Deligne--Mumford compactification $\overline{\mathcal{M}}_{g,n}$, though that does not concern us here).  In particular this implies that for every Riemann surface with automorphisms, there exists a canonical action of the automorphism group on the corresponding space of conformal blocks.  Moreover this story goes through even if one allows automorphisms that can interchange marked points with identical labels. This is explained in detail in \cite{AU2}.

In our case, as $f$ comes from an automorphism of a marked surface, a concrete description of this action is provided by \cite[Proposition 4.3]{AU2}, in terms of the construction of the spaces of conformal blocks.  As we will need a minor variation on this description to link it with the non-abelian theta-functions, we outline it here.

Given a semi-simple (complex) Lie algebra $\mathfrak{g}$, we will denote by $\widehat{\mathfrak{g}}_n$ the central extension of $\bigoplus_{j=1}^n \mathfrak{g}\otimes \mathbb{C}(\!(\xi_j)\!)$.  Here $\xi_j$ is a local coordinate at the $j$-th marked point, and the Lie bracket of the central extension is determined by thinking of this Lie algebra $\widehat{\mathfrak{g}}_n$  as a Lie subalgebra of the $n$-fold sum of the affine Lie algebra of $\mathfrak{g}$: $$\widehat{\mathfrak{g}}_n\subset \bigoplus_{j=1}^n \widehat{\mathfrak{g}}.$$  We fix a level $k$ and $n$ weights $\lambda_i\in \mathfrak{X}(P_k)$, and look at the representation of $\bigoplus_{j=1}^n \widehat{\mathfrak{g}}$ given term-wise by the corresponding representations of level $k$ and weight $\lambda_i$ of $\widehat{\mathfrak{g}}$: 
$$\mathcal{H}_{k,\overline{\lambda}}=\mathcal{H}_{k,\lambda_1}\otimes \ldots\otimes \mathcal{H}_{k,\lambda_n}.$$
Now we begin by defining the dual of the space of conformal blocks, also known as the space of co-vacua:
$$\mathcal{V}_{\mathfrak{g},k,\overline{\lambda}}(\Sigma_{\sigma},\cP)=
\mathcal{V}_{\mathfrak{g},k,\overline{\lambda}}= \mathcal{H}_{k,\overline{\lambda}}/ \widehat{\mathfrak{g}}(U_{\mathcal{P}})\mathcal{H}_{k,\overline{\lambda}}.$$
Here $U_{\mathcal{P}}=\Sigma_{\sigma} \setminus \mathcal{P}$ and $\widehat{\mathfrak{g}}(U_{\mathcal{P}})$ is the Lie algebra $\mathfrak{g}\otimes \mathcal{O}(U_{\cP})$.  This is a Lie subalgebra of $\bigoplus_{j=1}^n \mathfrak{g}\otimes \mathbb{C}(\!(\xi_j)\!)$, and since by the residue theorem the restriction of the central extension of the latter to this subalgebra splits, we can think of it as a subalgebra of $\widehat{\mathfrak{g}}_n$.  The space of conformal blocks $\mathcal{V}^{\dagger}_{\mathfrak{g},k,\overline{\lambda}}(\Sigma_{\sigma},\cP)=\mathcal{V}^{\dagger}_{\mathfrak{g},k,\overline{\lambda}},$ also known as the space of vacua, is the dual to the space of co-vacua.  Alternatively, we can put 
$$\mathcal{V}^{\dagger}_{\mathfrak{g},k,\overline{\lambda}}=\left(\mathcal{H}^*_{k,\overline{\lambda}}\right)^{\widehat{\mathfrak{g}}(U_{\cP})},$$ i.e. the   $\widehat{\mathfrak{g}}(U_{\cP})$-invariant subspace of $\mathcal{H}^{*}_{k,\overline{\lambda}}$.  When $\mathfrak{g}=\mathfrak{sl}(N,\mathbb{C})$  we use the notation $\mathcal{V}^{\dagger}_{N,k,\overline{\lambda}}$ and $\mathcal{V}_{N,k,\overline{\lambda}}$.

We now want to describe the induced action of $f$ on $\mathcal{V}^{\dagger}_{\mathfrak{g},k,\overline{\lambda}}$.  In order to do so we will assume that, besides the marked points on the Riemann surface, we have chosen a divisor $\mathcal{Q}$ consisting of one generic orbit of $f$, i.e. one point $p\in \Sigma_{\sigma}$ that is not part of the marked points in $\cP$, as well as all of its $f$-translates $$\cQ=p+ f(p)+ f^2(p)+ \ldots+ f^{m-1}(p),$$  where $m$ is the order of $f$ -- the genericity of the orbit implies that these $m$ points are all distinct.  To each of the points in this orbit we assign the weight $0$ -- hence by the propagation of vacua we have an explicit isomorphism between the spaces of conformal blocks (Theorem 4.5 in \cite{AU2}), and in the description above we are using $U_{\mathcal{P}\cup\mathcal{Q}}$.  We assume that we have chosen $f$-compatible formal neighbourhoods around these new marked points, which we can always do.

The action of $f$ on $\mathcal{H}_{k,\overline{\lambda}}$ and $\mathcal{H}_{k,\overline{\lambda}}^*$ is now simply given by the permutation on the tensor factors, determined by the permutation by $f$ of the marked points.  As the latter action preserves $\widehat{\mathfrak{g}}(U_{\mathcal{P}\cup\mathcal{Q}})$ this induces an automorphism of $\mathcal{V}^{\dagger}_{\mathfrak{g},k,\overline{\lambda}}$.  

\begin{rem} This description appears slightly different from the one in  \cite[Proposition 4.3]{AU2}, because there one changes the labelling of the marked points (and local coordinates) by $f$, and therefore the isomorphism between the spaces of conformal blocks is induced by the identity on $\mathcal{H}_{\overline{\lambda}}$.
\end{rem}

\subsubsection{Alternative description}
The link between the spaces of conformal blocks and spaces of non-abelian theta functions given in \cite{laszlo-sorger} and \cite{pauly} depends on a variation of the construction of the covacua: 

\begin{prop}[{\cite[Proposition 2.3]{beauville}}] Let $\cP$ and $\cQ$ be two non-empty finite disjoint subsets of $\Sigma_{\sigma}$, with tuples of weights $\overline{\lambda}$ and $\overline{\mu}$ assigned to them respectively (all weights are assumed to be in the Weyl alcove at level $k$).  For any weight $\lambda$ we let $V_{\lambda}$ be the corresponding irreducible highest weight module of $\mathfrak{g}$, and put $V_{\overline{\lambda}}=V_{\lambda_1}\otimes V_{\lambda_2} \otimes \dots $.  If $\lambda\in\Lambda_N^{(k)}$ then we  consider $V_{\lambda}$ as a subspace of $\mathcal{H}_{k,\lambda}$.  We then have that the induced natural maps 
$$\Big(V_{\overline{\lambda}} \otimes  \mathcal{H}_{k,\overline{\mu}}\Big)\ \Big/\  \widehat{\mathfrak{g}}(U_{\mathcal{Q}})\Big(  V_{\overline{\lambda}} \otimes  \mathcal{H}_{k,\overline{\mu}}  \Big) \longrightarrow \Big(\mathcal{H}_{k,\overline{\lambda}}\otimes \mathcal{H}_{k,\overline{\mu}}\Big)\ \Big/\ \widehat{\mathfrak{g}}(U_{\mathcal{P}\cup\mathcal{Q}})\Big(\mathcal{H}_{k,\overline{\lambda}} \otimes  \mathcal{H}_{k,\overline{\mu}}\Big) \ = \ \mathcal{V}_{\mathfrak{g},k,\overline{\lambda},\overline{\mu}} $$ 
and 
$$\Big( V^*_{\overline{\lambda}} \otimes   \mathcal{H}^*_{k,\overline{\mu}}\Big)^{\widehat{\mathfrak{g}}(U_{\mathcal{Q}})} \rightarrow \Big(\mathcal{H}_{k,\overline{\lambda}}\otimes \mathcal{H}_{k,\overline{\mu}}\Big)^{\widehat{\mathfrak{g}}(U_{\cP\cup \cQ})}=\mathcal{V}^{\dagger}_{\mathfrak{g},k,\overline{\lambda},\overline{\mu}} $$  
are isomorphisms to the spaces of covacua and vacua respectively.  Here $\mathfrak{g}(U_{\cQ})$ acts on each of the $V_{\lambda_i}$ and $V^*_{\lambda_i}$ by evaluation at the corresponding point of $\cP$.
\end{prop}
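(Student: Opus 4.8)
The plan is to show that both displayed maps are isomorphisms by reducing to the case $|\cP|=1$ and then treating surjectivity and injectivity of the resulting one-point map separately; since the vacua map is the transpose of the covacua map and both spaces are finite dimensional by \cite{TUY}, it suffices to deal with covacua. For the reduction I would induct on $|\cP|$, at each step \emph{upgrading} one marked point $p_i$ from the insertion $V_{\lambda_i}$ to the full integrable module $\mathcal{H}_{k,\lambda_i}$, and correspondingly enlarging by $p_i$ the set of points at which functions are allowed to have poles. Each such step is an instance of the following one-point statement: for a non-empty finite set $\mathcal{S}\supseteq\cQ$, a point $p\notin\mathcal{S}$ with weight $\lambda\in\Lambda_N^{(k)}$, and a module $W$ (the ordered tensor product of the modules attached to the remaining points, with $\widehat{\mathfrak g}(U_{\mathcal{S}})$ acting on it in the prescribed way), the natural map
\[
\bigl(V_\lambda\otimes W\bigr)\big/\,\widehat{\mathfrak g}(U_{\mathcal{S}})\bigl(V_\lambda\otimes W\bigr)\;\longrightarrow\;\bigl(\mathcal{H}_{k,\lambda}\otimes W\bigr)\big/\,\widehat{\mathfrak g}(U_{\{p\}\cup\mathcal{S}})\bigl(\mathcal{H}_{k,\lambda}\otimes W\bigr)
\]
is an isomorphism. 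When $\lambda=0$ this is precisely the propagation of vacua of \cite{TUY}; the content here is the extension to a non-trivial insertion. Iterating down to all of $\cP$ carrying the finite-dimensional $V_{\lambda_i}$ then yields the Proposition, so everything reduces to this map.

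For \emph{surjectivity} of the one-point map I would use the energy grading $\mathcal{H}_{k,\lambda}=\bigoplus_{d\ge 0}\mathcal{H}_{k,\lambda}[d]$ with $\mathcal{H}_{k,\lambda}[0]=V_\lambda$, together with the fact (PBW for the generalised Verma module, passed to its integrable quotient) that $\mathcal{H}_{k,\lambda}$ is spanned by the negative-mode descendants of $V_\lambda$. Thus an element of $\mathcal{H}_{k,\lambda}[d]$ for $d\ge 1$ is a sum of terms $(X\otimes\xi^{-n})w$ with $n\ge 1$ and $w\in\mathcal{H}_{k,\lambda}[d-n]$. Since $\mathcal{S}\neq\emptyset$, Riemann--Roch provides a rational function $\phi\in\mathcal{O}(U_{\{p\}\cup\mathcal{S}})$ whose Laurent expansion at $p$ is $\xi^{-n}$ plus a power series $\phi_{\ge 0}$. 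In the quotient the diagonal action of $X\otimes\phi$ on $\mathcal{H}_{k,\lambda}\otimes W$ vanishes, which rewrites $(X\otimes\xi^{-n})w\otimes u$ as a combination of terms whose $\mathcal{H}_{k,\lambda}$-factor lies in energy degrees $\le d-n<d$ (the term $X\otimes\phi_{\ge 0}$ acts on $w$ through non-negative modes, which do not raise the degree, while acting by $X\otimes\phi$ on the $W$-factor leaves the $\mathcal{H}_{k,\lambda}$-factor equal to $w$). A downward induction on $d$ then represents every class by an element of $V_\lambda\otimes W$.

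\emph{Injectivity} I expect to be the main obstacle. Passing to the dual, I must show that every $\widehat{\mathfrak g}(U_{\mathcal{S}})$-invariant functional $\psi$ on $V_\lambda\otimes W$ extends to a $\widehat{\mathfrak g}(U_{\{p\}\cup\mathcal{S}})$-invariant functional $\widetilde\psi$ on $\mathcal{H}_{k,\lambda}\otimes W$. The extension is forced: since $X\otimes\phi$ acts diagonally, the requirement $\widetilde\psi\bigl((X\otimes\phi)(m\otimes u)\bigr)=0$ forces $\widetilde\psi\bigl((X\otimes\xi^{-n})m\otimes u\bigr)=-\widetilde\psi\bigl((X\otimes\phi_{\ge 0})m\otimes u\bigr)-\widetilde\psi\bigl(m\otimes((X\otimes\phi)u)\bigr)$, with $\phi$ as above, and this defines $\widetilde\psi$ by induction on the energy degree. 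Its well-definedness and invariance are the usual consistency checks in the construction of conformal blocks, controlled by the vanishing of the central cocycle on $\widehat{\mathfrak g}(U_{\{p\}\cup\mathcal{S}})$ (the residue theorem). On the generalised Verma module $M_{k,\lambda}$ this recursion is unobstructed by PBW, so the Verma analogue of the one-point map is an isomorphism; the essential point is that $\widetilde\psi$ descends to the integrable quotient, i.e.\ annihilates the maximal proper submodule of $M_{k,\lambda}$ tensored with $W$. This is exactly where the hypothesis that $\lambda$ and the weights $\mu_j$ at the points of $\cQ$ — and, after the induction, all the weights — lie in the level-$k$ alcove $\Lambda_N^{(k)}$ enters: the singular vector $(E_\theta\otimes\xi^{-1})^{\,k-\langle\lambda,\theta^\vee\rangle+1}v_\lambda$ generating that submodule yields a relation among correlators which, transported around $\Sigma_\sigma$ through the other insertions, closes up precisely because those insertions are themselves level-$k$ integrable. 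This is the standard integrability argument of \cite{TUY}. With the one-point isomorphism established, the induction above completes the proof.
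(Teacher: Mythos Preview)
The paper does not give its own proof of this proposition; it is quoted as \cite[Proposition 2.3]{beauville} and used as a black box, so there is no ``paper's proof'' to compare against beyond the cited source.  Your sketch is essentially Beauville's argument: the inductive reduction to upgrading one marked point at a time, surjectivity via the energy filtration on $\mathcal{H}_{k,\lambda}$ together with Riemann--Roch to manufacture rational functions with prescribed principal part at $p$, and injectivity by first establishing the generalised Verma version through PBW and then checking that the singular vector generating $\ker(M_{k,\lambda}\to\mathcal{H}_{k,\lambda})$ dies in the covacua because the affine root vectors act locally nilpotently on the level-$k$ integrable modules sitting at the other points.  That last step is the one place where your phrasing (``closes up precisely because those insertions are themselves level-$k$ integrable'') is a little telegraphic --- in Beauville this is a separate lemma and the nilpotency of $e_\theta$ on integrable modules is what makes the transported relation terminate --- but you have identified the correct mechanism and the correct hypothesis it consumes.
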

We will use the alternative description of the spaces of conformal blocks offered by this proposition in the particular case where $\cP$ is the set of chosen marked points as before, and $\cQ$ is the disjoint extra generic $f$-orbit we have chosen (any will do).  It is clear how the $f$-action carries over to this description: again one simply takes the induced action by permuting the factors in the tensor products $V^*_{\overline{\lambda}}$ and $\mathcal{H}^*_{k,\overline{\mu}}$.

\subsubsection{Equivariance of line bundles over stack}
The proof of Theorem \ref{paulassor} stated above of Pauly and Laszlo--Sorger relies on a presentation of the stack $\mathfrak{M}_{\Sigma_{\sigma},\cP,P_i}$ as a quotient of a product of an affine Grassmannian and some flag varieties.  We need a minor variation:

\begin{prop}\label{uniform} Let $\Sigma_{\sigma}, \cP$ and $\cQ$ be as above.  
There is an isomorphism of stacks
$$\mathfrak{M}_{\Sigma_{\sigma},\cP,P_i}\isom G(U_\cQ)\ \Big\backslash\ \mathfrak{Q}^{\operatorname{par}}_G,$$ where \begin{equation}\label{product}\mathfrak{Q}^{\operatorname{par}}_G=\prod_{\cQ}\mathfrak{Q}_G \times \prod_{\cP} G/P_i,\end{equation}  which is canonical once local coordinates are chosen.  Here $\mathfrak{Q}_G$ is the affine Grassmannian, and $G(U_{\cQ})$ is the group of algebraic morphisms from $U_{\cQ}$ to $G$.
\end{prop}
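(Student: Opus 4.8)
The plan is to reduce Proposition~\ref{uniform} to the uniformisation theorem for the stack $\mathfrak{M}_{\Sigma_{\sigma}}$ of (non-parabolic) $G$-bundles, with $G=\operatorname{SL}(N,\mathbb{C})$, and then to observe that, since the marked points of $\cP$ all lie in $U_{\cQ}$, equipping a bundle with a quasi-parabolic structure amounts to no more than choosing a point of each flag variety $G/P_i$, the only relations coming from the residual gauge group $G(U_{\cQ})$ acting by evaluation at the $p_i$.

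First I would recall the uniformisation theorem in the form needed. As $\cQ\subset\Sigma_{\sigma}$ is a non-empty finite set, $U_{\cQ}$ is a smooth affine curve, and as $G$ is semisimple and simply connected, a theorem of Harder --- in the relative form of Drinfeld--Simpson (see also \cite{sorger-lectures,KNR,beaulas,laszlo-sorger}) --- guarantees that every $G$-bundle on $\Sigma_{\sigma}\times S$ is, \'etale-locally on the base $S$, trivial over $U_{\cQ}\times S$. Reconstructing a $G$-bundle from its (trivial) restriction to $U_{\cQ}$ together with its (trivial) restrictions to the formal neighbourhoods of the points of $\cQ$, and then quotienting by the two kinds of change of trivialisation, yields the isomorphism of stacks
\[
\mathfrak{M}_{\Sigma_{\sigma}}\isom G(U_{\cQ})\ \Big\backslash\ \prod_{q\in\cQ}\mathfrak{Q}_{G,q},
\]
where $\mathfrak{Q}_{G,q}=G(\widehat{\mathcal{K}}_q)/G(\widehat{\mathcal{O}}_q)$, and where a choice of formal coordinate at each $q\in\cQ$ is used to identify $\mathfrak{Q}_{G,q}$ with the standard affine Grassmannian $\mathfrak{Q}_G$ of $G$ over $\mathbb{C}(\!(\xi)\!)$. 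This is the source of, and the only source of, the dependence on the chosen local coordinates.

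Next I would add the parabolic data. An $S$-point of $\mathfrak{M}_{\Sigma_{\sigma},\cP,P_i}$ is a $G$-bundle $E$ on $\Sigma_{\sigma}\times S$ together with, for each $i$, a reduction of $E|_{\{p_i\}\times S}$ to $P_i$; equivalently, a section over $S$ of the flag bundle $E|_{\{p_i\}\times S}\times^{G}(G/P_i)$. Since $p_i\in U_{\cQ}$, the tautological trivialisation of $E$ over $U_{\cQ}\times S$ furnished by the presentation above (available \'etale-locally on $S$) trivialises this flag bundle, so such a reduction is precisely a morphism $S\to G/P_i$, and changing the $U_{\cQ}$-trivialisation by $\gamma\in G(U_{\cQ})$ transforms it by $\operatorname{ev}_{p_i}(\gamma)\in G$. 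Assembling the affine-Grassmannian factors indexed by $\cQ$ with the flag-variety factors indexed by $\cP$ therefore gives
\[
\mathfrak{M}_{\Sigma_{\sigma},\cP,P_i}\isom G(U_{\cQ})\ \Big\backslash\ \Big(\prod_{\cQ}\mathfrak{Q}_G\times\prod_{\cP}G/P_i\Big)=G(U_{\cQ})\ \Big\backslash\ \mathfrak{Q}^{\operatorname{par}}_G,
\]
as asserted; no further coordinate choices enter, because the $G/P_i$ are intrinsic to $G$.

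The main obstacle, as always for such statements, is the \emph{family} version: one genuinely needs the \'etale-local triviality of $G$-torsors on the affine curve $U_{\cQ}$ over an arbitrary base, and one must check that the descent data assembled above patch to an honest isomorphism of stacks --- compatibly with base change in $S$ --- and not merely to a bijection on isomorphism classes over a point. Verifying the hypotheses for $G=\operatorname{SL}(N,\mathbb{C})$ is immediate, and the remaining bookkeeping --- the cocycle identities for the gluing and the compatibility of the parabolic picture with the non-parabolic one --- is routine and is in substance carried out in \cite{pauly} and \cite{laszlo-sorger}, of which this proposition is a mild variant (using the full $f$-orbit $\cQ$ of auxiliary points in place of a single one, so that $f$ acts by permuting the corresponding factors).
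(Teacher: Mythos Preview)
Your proposal is correct and is essentially the same approach as the paper's: the paper's proof consists of the single sentence ``This is a direct and straightforward generalisation of the proofs of \cite[Proposition 4.2]{pauly} and \cite[Theorem 8.5]{laszlo-sorger},'' and what you have written is precisely an unpacking of that generalisation --- the uniformisation theorem for $\mathfrak{M}_{\Sigma_{\sigma}}$ with several auxiliary points $\cQ$, followed by the observation that since $\cP\subset U_{\cQ}$ the parabolic reductions are just points of the flag varieties $G/P_i$ modulo evaluation of $G(U_{\cQ})$.
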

The case where $\# \cQ=1$ is the one proven by Pauly and Laszlo--Sorger.  
\begin{proof} This is a direct and straightforward generalisation of the proofs of \cite[Proposition 4.2]{pauly} and \cite[Theorem 8.5]{laszlo-sorger}.
\end{proof}
We want to study the induced action of $f$ on $\mathfrak{M}_{\Sigma_{\sigma},\cP,P_i}$, given by pulling back bundles (recall that we assume that $f$ satisfies the conditions described at the beginning of Section \ref{equi}). In the above presentation this induced action is again straightforward:

\begin{lem}\label{liftalg}
Under the isomorphism of Proposition \ref{uniform} the action of $f$ on $\mathfrak{M}_{\Sigma_{\sigma},\cP,P_i}$ is induced by the permutation of the factors of (\ref{product}) by $f$.
\end{lem}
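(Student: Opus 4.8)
\emph{Proof plan.} The plan is to trace the pullback action of $f$ through the explicit description of the uniformization isomorphism underlying Proposition \ref{uniform}. Recall that a $\mathbb{C}$-point of $\mathfrak{M}_{\Sigma_{\sigma},\cP,P_i}$ is a principal $G$-bundle $E$ on $\Sigma_{\sigma}$ equipped with reductions to $P_i$ at the points $p_i$; by the trivialization theorem underlying Proposition \ref{uniform}, $E$ is trivial over $U_{\cQ}$ and over each formal disk $\widehat{D}_q$ around $q\in\cQ$, and, the formal coordinates having been fixed, the transition datum at $q$ defines a class $[\gamma_q]\in\mathfrak{Q}_G$; in the chosen trivialization over $U_{\cQ}\supset\cP$, the reduction at $p_i$ becomes a point $\eta_i\in G/P_i$; the assignment $E\mapsto\bigl((\gamma_q)_{q\in\cQ},(\eta_i)_i\bigr)$, well defined up to the residual change-of-trivialization action of $G(U_{\cQ})$ over $U_{\cQ}$, is the isomorphism of Proposition \ref{uniform}, and the same recipe works in $S$-families.

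First I would analyze the transition data of $f^{*}E$. Since $f(\cQ)=\cQ$ and $f(\cP)=\cP$, the map $f$ restricts to an automorphism of $U_{\cQ}$, and, because the formal neighborhoods were chosen $f$-compatibly (so that $z_{f(q)}\circ f=z_q$, and likewise at the marked points of $\cP$), it induces coordinate-preserving isomorphisms $\widehat{D}_q\xrightarrow{\ \sim\ }\widehat{D}_{f(q)}$. Hence the chosen trivializations of $E$ over $U_{\cQ}$ and over the $\widehat{D}_{f(q)}$ pull back under $f$ to trivializations of $f^{*}E$ over $U_{\cQ}$ and the $\widehat{D}_q$. The transition function of $f^{*}E$ at $q$ is then the $f$-pullback of the transition function of $E$ at $f(q)$, and since $f$ respects the formal coordinates this pullback, read in the fixed coordinate $\xi$, is the identity operation on $G(\!(\xi)\!)$; so the $\mathfrak{Q}_G$-component of $f^{*}E$ at $q$ is exactly the $\mathfrak{Q}_G$-component of $E$ at $f(q)$. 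An identical argument, using the canonical identification $(f^{*}E)|_{p_i}\cong E|_{f(p_i)}$ and the fact that $f$ interchanges only points carrying the same parabolic type (hence the same flag variety $G/P_i$), shows that the $G/P_i$-component of $f^{*}E$ at $p_i$ is the $G/P_j$-component of $E$ at $p_j=f(p_i)$. In other words, passing from $E$ to $f^{*}E$ permutes the factors of $\prod_{\cQ}\mathfrak{Q}_G\times\prod_{\cP}G/P_i$ precisely by $f$.

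It then remains to check compatibility with the $G(U_{\cQ})$-quotient so that this factor-permutation descends to an automorphism of the double quotient agreeing with pullback on $\mathfrak{M}_{\Sigma_{\sigma},\cP,P_i}$: here $f$ acts on $G(U_{\cQ})$ by $g\mapsto g\circ f$, and one verifies directly that the factor-permutation on $\mathfrak{Q}^{\operatorname{par}}_G$ intertwines the $G(U_{\cQ})$-action with this automorphism; since all of the above is manifestly functorial in families, this yields the stated identification of stack automorphisms. I expect the only genuine subtlety to be the bookkeeping: pinning down the direction of the permutation and verifying carefully that the $f$-pullback of the gluing datum, expressed in the fixed formal coordinate, is the identity on the loop group — which is exactly where the $f$-compatibility $z_{f(q)}\circ f=z_q$ of the local coordinates is used. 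Everything else is formal.
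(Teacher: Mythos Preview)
The paper states Lemma \ref{liftalg} without proof; it is treated as immediate from the construction of the uniformization isomorphism, given that the formal coordinates around the points of $\cP$ and $\cQ$ were chosen $f$-compatibly. Your proposal is a correct and careful unpacking of exactly why it is immediate: you trace the transition data of $f^*E$ through the glueing description and use $z_{f(q)}\circ f = z_q$ to see that the induced map on each $G(\!(\xi)\!)$ is the identity, so only the permutation of factors survives, and then you check that this is compatible with the $G(U_{\cQ})$-action twisted by $g\mapsto g\circ f$. There is nothing to compare here beyond noting that you have written out what the paper leaves to the reader.
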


The proof of Theorem \ref{paulassor} in \cite{pauly} and \cite{laszlo-sorger} uses the presentation of Proposition \ref{uniform} for the stack $\mathfrak{M}_{\Sigma_{\sigma},\cP,P_i}$, as well as the Borel--Weil--Bott theorem, both the standard one for flag varieties $G/P$ and the affine version due to Kumar \cite{kumar} and Mathieu \cite{mathieu1,mathieu2} that realises representations of the affine Lie algebra $\widehat{\mathfrak{g}}$ as sections of line bundles over $\mathfrak{Q}_G$. In our case, as we are replacing a single `dummy' point by an entire $f$-orbit $\cQ$, the morphism $\mathfrak{Q}^{\operatorname{par}}_G\rightarrow \mathfrak{M}_{\Sigma_{\sigma},\cP,P_i}$ is no longer locally trivial in the \'etale topology, hence it no longer induces an isomorphism of Picard groups.  

We no longer have in general that the Picard group of $\mathfrak{M}_{\Sigma_{\sigma},\cP,P_i}$ is equal to the Picard group of $\mathfrak{Q}^{\operatorname{par}}_G$, but we can easily describe the pullbacks of the line $\mathcal{L}_{(k,\overline{\lambda})}$ to $\mathfrak{Q}^{\operatorname{par}}_G$.  Indeed, the Picard group of $\mathfrak{Q}^{\operatorname{par}}_G$ is isomorphic to $\mathbb{Z}^m\oplus \bigoplus_i \mathfrak{X}(P_i)$, and we have
\begin{lem}The pullback of the line bundle $\mathcal{L}_{(k,\overline{\lambda})}$ to $\mathfrak{Q}^{\operatorname{par}}_G$ is given by $(k,\ldots, k, \overline{\lambda})$.
\end{lem}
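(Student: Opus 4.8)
The plan is to reduce the statement to the single-point case of Pauly \cite{pauly} and Laszlo--Sorger \cite{laszlo-sorger} already invoked in Proposition \ref{uniform}, treating the two types of factors of $\mathfrak{Q}^{\operatorname{par}}_G$ separately. For the flag-variety factors this is immediate: in the splitting $\mathrm{Pic}(\mathfrak{M}_{\Sigma_{\sigma},\cP,P_i})=\mathbb{Z}\oplus\bigoplus_i\mathfrak{X}(P_i)$ of \eqref{picstack}, each $\mathfrak{X}(P_i)=\mathrm{Pic}(G/P_i)$-summand is the line bundle pulled back along the evaluation of the $i$-th parabolic reduction at $p_i$, and under the isomorphism of Proposition \ref{uniform} this evaluation is precisely the projection of $\mathfrak{Q}^{\operatorname{par}}_G$ onto its factor $G/P_i$ in \eqref{product}. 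Hence the $\mathfrak{X}(P_i)$-summands match up identically on both sides, $\mathcal{L}_{(0,\overline{\lambda})}$ pulls back to $(0,\dots,0,\overline{\lambda})$, and it only remains to identify the pullback of the determinant-of-cohomology bundle $\mathcal{L}_{(1,\overline{0})}$.

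For that I would exploit that $\mathcal{L}_{(1,\overline{0})}$ depends only on the underlying $G$-bundle, so that one may forget the flag factors and work with the uniformisation $\mathfrak{M}_{\Sigma_{\sigma}}\cong G(U_\cQ)\backslash\prod_{q\in\cQ}\mathfrak{Q}_G$. Since $\mathrm{Pic}(\prod_{q\in\cQ}\mathfrak{Q}_G)=\bigoplus_{q\in\cQ}\mathrm{Pic}(\mathfrak{Q}_G)\cong\mathbb{Z}^m$ (this is exactly the $\mathbb{Z}^m$-part of $\mathrm{Pic}(\mathfrak{Q}^{\operatorname{par}}_G)$ recorded just before the lemma), a line bundle on $\prod_{q\in\cQ}\mathfrak{Q}_G$ is determined by its restrictions to the $m$ copies of $\mathfrak{Q}_G$ obtained by inserting the identity at all points of $\cQ$ but one. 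Inserting the identity at $q'\neq q$ glues the trivial bundle there, which extends across $q'$, so the composite of such an inclusion $\mathfrak{Q}_G\hookrightarrow\prod_\cQ\mathfrak{Q}_G$ with the uniformisation $\prod_\cQ\mathfrak{Q}_G\to\mathfrak{M}_{\Sigma_{\sigma}}$ is canonically identified with the single-point uniformisation of $\mathfrak{M}_{\Sigma_{\sigma}}$ at $q$. By the one-point case of \cite{pauly,laszlo-sorger} (ultimately going back to \cite{KNR,beaulas}) the determinant-of-cohomology bundle pulls back along the latter to the positive generator of $\mathrm{Pic}(\mathfrak{Q}_G)\cong\mathbb{Z}$ --- the coefficient being $1$ because the Dynkin index of the defining representation of $\mathfrak{sl}(N,\mathbb{C})$ equals $1$. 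Therefore the pullback of $\mathcal{L}_{(1,\overline{0})}$ restricts to the generator on each slice, hence equals $(1,\dots,1,\overline{0})$; combined with the first paragraph this yields that $\mathcal{L}_{(k,\overline{\lambda})}$ pulls back to $(k,\dots,k,\overline{\lambda})$.

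The statement is of a fairly routine nature, and the only point that requires a little care is the bookkeeping of the two uniformisations used above: one should check that the natural isomorphism between the bundle produced by the map $\mathfrak{Q}_G\hookrightarrow\prod_\cQ\mathfrak{Q}_G\to\mathfrak{M}_{\Sigma_{\sigma}}$ with the identity in all slots but $q$, and the bundle produced by the single-point uniformisation at $q$, is compatible with the respective $G(U_\cQ)$- and $G(\Sigma_{\sigma}\setminus\{q\})$-quotients, so that the pullbacks of the intrinsic determinant line bundle on $\mathfrak{M}_{\Sigma_{\sigma}}$ genuinely agree. This, together with the Künneth-type identification $\mathrm{Pic}(\prod_\cQ\mathfrak{Q}_G)=\bigoplus_\cQ\mathrm{Pic}(\mathfrak{Q}_G)$ (which is supplied by the description of $\mathrm{Pic}(\mathfrak{Q}^{\operatorname{par}}_G)$ recalled above), is all that is needed beyond the one-point results of Pauly and Laszlo--Sorger.
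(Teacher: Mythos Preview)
Your proposal is correct and follows essentially the same route as the paper: both arguments identify the pullback by inserting the trivial (identity) element of $\mathfrak{Q}_G$ at all points of $\cQ$ but one, observe that the resulting composite to $\mathfrak{M}_{\Sigma_{\sigma},\cP,P_i}$ is the single-point uniformisation, and then invoke the one-point case of \cite{pauly,laszlo-sorger} to read off each $\mathbb{Z}$-coordinate. The only organisational difference is that you strip off the flag factors $G/P_i$ first and handle them separately, whereas the paper keeps them attached and works directly with the inclusions $\mathfrak{Q}_G\times\prod_i G/P_i\hookrightarrow\mathfrak{Q}^{\operatorname{par}}_G$; this is a matter of bookkeeping rather than substance.
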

\begin{proof}
It suffices to remark that for each $j=1,\ldots, m$, we can consider the morphism $\mathfrak{Q}_G\times \prod_i G/P_i \rightarrow \mathfrak{Q}^{\operatorname{par}}_G$ that sends the factor $\mathfrak{Q}_G$ as the identity to the $j$-th factor, and enters the trivial element of $\mathfrak{Q}_G$ for all other factors.  Pulling back line bundles under this morphism gives the morphism of Picard groups that in the above presentation can be described as $(k_1,\ldots, k_n, \overline{\lambda})\mapsto (k_j, \overline{\lambda})$.  Finally, we can factor the morphisms to $\mathfrak{M}_{\Sigma_{\sigma},\cP,P_i}$ through this, and remark that for each $j$ we have the commutative diagram 
\begin{center}
\begin{tikzpicture}
\matrix (n)[matrix of math nodes,  row sep=3em, column sep=2.5em, text height=1.5ex, text depth=0.25ex]
{\mathfrak{Q}_G\times \prod_i G/P_i & \\
\mathfrak{Q}^{\operatorname{par}}_G & \mathfrak{M}_{\Sigma_{\sigma},\cP,P_i}\\};
\path[->] 
(n-1-1) edge (n-2-1) edge  node[auto] {$\Psi$} (n-2-2);
\path[->]
(n-2-1) edge (n-2-2);

\end{tikzpicture}
\end{center}
where $\Psi$ induces an isomorphism of Picard groups.
\end{proof}
Hence if we lift $f$ to act on the line bundle corresponding to $(k,\ldots,k,\overline{\lambda})$ on $\mathfrak{Q}^{\operatorname{par}}_G$ by simply permuting the factors, this induces a lift of the action of $f$ on $\mathfrak{M}_{\Sigma_{\sigma},\cP,P_i}$ to $\mathcal{L}_{(k,\overline{\lambda})}$, and we can conclude:
\begin{prop}
The action of $f$ on $\mathfrak{M}_{\Sigma_{\sigma},\cP,P_i}$ lifts to the line bundle $\mathcal{L}_{(k,\overline{\lambda})}$ so that the isomorphism of Theorem \ref{paulassor} is $f$-equivariant (with the action of $f$ on $\mathcal{V}^{\dagger}_{N,k,\overline{\lambda}}$ as described in Section \ref{equi}).
\end{prop}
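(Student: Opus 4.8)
The plan is to trace through the proof of Theorem~\ref{paulassor}, in the generalised form provided by Proposition~\ref{uniform} and the two lemmas preceding it, and to check that every identification used there intertwines the permutation action of $f$. Recall the shape of that proof: pulling $\mathcal{L}_{(k,\overline{\lambda})}$ back to $\mathfrak{Q}^{\operatorname{par}}_G$ gives, by the previous lemma, the line bundle labelled $(k,\dots,k,\overline{\lambda})$; by the affine Borel--Weil theorem of Kumar \cite{kumar} and Mathieu \cite{mathieu1,mathieu2} applied to each affine Grassmannian factor $\mathfrak{Q}_G$ (carrying the weight $0$ attached to the corresponding point of $\cQ$), and by the ordinary Borel--Weil theorem applied to each flag factor $G/P_i$ (carrying the weight $\lambda_i$), one obtains
$$H^0\!\left(\mathfrak{Q}^{\operatorname{par}}_G,\ \text{pullback of }\mathcal{L}_{(k,\overline{\lambda})}\right)\ \cong\ \mathcal{H}^*_{k,\overline{0}}\otimes V^*_{\overline{\lambda}},$$
where $\mathcal{H}^*_{k,\overline{0}}=\bigotimes_{\cQ}\mathcal{H}^*_{k,0}$ and $V^*_{\overline{\lambda}}=\bigotimes_{\cP}V^*_{\lambda_i}$. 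Taking $G(U_\cQ)$-invariants (the central extension splitting over the affine curve $U_\cQ$, so that this matches $\widehat{\mathfrak{g}}(U_\cQ)$-invariance), the proposition of Beauville \cite[Prop. 2.3]{beauville} applied with $\overline{\mu}=\overline{0}$ and then propagation of vacua (Theorem~4.5 of \cite{AU2}) recover $\mathcal{V}^{\dagger}_{N,k,\overline{\lambda}}(\Sigma_\sigma,\cP)$; this composite is the isomorphism of Theorem~\ref{paulassor}.

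Next I would insert the action of $f$. By Lemma~\ref{liftalg}, $f$ acts on $\mathfrak{Q}^{\operatorname{par}}_G$ by permuting the factors of (\ref{product}), and since $f(U_\cQ)=U_\cQ$ it normalises $G(U_\cQ)$. Lifting $f$ to the line bundle $(k,\dots,k,\overline{\lambda})$ on $\mathfrak{Q}^{\operatorname{par}}_G$ simply by permuting factors --- as in the statement --- it then acts on $\mathcal{H}^*_{k,\overline{0}}\otimes V^*_{\overline{\lambda}}$ by the corresponding permutation of tensor factors; the point to verify is that the factorwise Borel--Weil identifications are natural in the local coordinate, so that the coordinate-preserving isomorphism $\mathfrak{Q}_G\to\mathfrak{Q}_G$ carried by $f$ induces the identity $\mathcal{H}^*_{k,0}\to\mathcal{H}^*_{k,0}$ into the appropriate slot (and likewise for the $G/P_i$, using that $f$ only permutes marked points with equal $\lambda_i$). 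Since permuting factors commutes with taking $G(U_\cQ)$-invariants and with propagation of vacua, the induced action on $\mathcal{V}^{\dagger}_{N,k,\overline{\lambda}}(\Sigma_\sigma,\cP)$ is exactly the permutation of the tensor factors of $V^*_{\overline{\lambda}}\otimes\mathcal{H}^*_{k,\overline{0}}$.

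Finally I would match this with Section~\ref{equi}. By the alternative description there, together with \cite[Proposition~4.3]{AU2} and the Remark following it, the canonical $f$-action on $\mathcal{V}^{\dagger}_{N,k,\overline{\lambda}}$ coming from the bundle of conformal blocks over $\mathcal{M}_{g,n}$ is, in precisely this model (the auxiliary $f$-orbit $\cQ$ with weights $\overline{0}$, and Beauville's isomorphism), again the permutation of the tensor factors of $V^*_{\overline{\lambda}}\otimes\mathcal{H}^*_{k,\overline{0}}$. Hence the two actions literally coincide, so the isomorphism of Theorem~\ref{paulassor} is $f$-equivariant.

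I expect the main obstacle to be the naturality claim used in the second step: that the affine Borel--Weil realisation of $\mathcal{H}_{k,0}$ as sections over $\mathfrak{Q}_G$ is functorial under the automorphism of the formal disc induced by an $f$-compatible change of local coordinate, introducing no spurious automorphism of $\mathcal{H}^*_{k,0}$. This holds because $\mathfrak{Q}_G$, its ample generator and the Borel--Weil realisation are all built functorially from the formal punctured disc, and because the coordinates $z_i$ were chosen $f$-compatibly at the start of Section~\ref{equi}; but it is the one step that requires genuine unwinding rather than being formal.
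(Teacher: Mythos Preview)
Your argument is correct and follows exactly the route the paper takes: the paper's own proof consists of the paragraph immediately preceding the proposition, which constructs the lift of $f$ to $\mathcal{L}_{(k,\overline{\lambda})}$ by permuting factors of the line bundle $(k,\ldots,k,\overline{\lambda})$ over $\mathfrak{Q}^{\operatorname{par}}_G$, leaving the equivariance of the Borel--Weil identifications and of Beauville's isomorphism implicit. You have simply unpacked those implicit steps --- the factorwise Borel--Weil--Bott realisation, the taking of $G(U_\cQ)$-invariants, propagation of vacua, and the match with the description in Section~\ref{equi} --- and correctly flagged that the only non-formal point is the naturality of the affine Borel--Weil isomorphism under the $f$-compatible change of formal coordinate, which indeed holds by the functoriality of the construction and the choice of $f$-compatible local coordinates made at the start of Section~\ref{equi}.
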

\subsubsection{Equivariance of isomorphism with Chern--Simons line bundle}

We have now two isomorphic line bundles $\mathcal{L}^k_{\CS}$ and $\mathcal{L}^k_{\operatorname{pd}}$ on $\mathcal{M}_{\overline{\alpha}}$, and two lifts of the action of $f$ on $\mathcal{M}$ to them, which we want to show is the same.  

\begin{prop}\label{nogequi}
The isomorphism of Corollary \ref{bundleisom} is $f$-equivariant.
\end{prop}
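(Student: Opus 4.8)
The plan is to leverage the fact that both line bundles $\mathcal{L}^k_{\CS}$ and $\mathcal{L}^k_{\operatorname{pd}}$ are prequantum line bundles for $k\Omega$ on the simply-connected $\mathcal{M}_{\overline{\alpha}}$ (Theorems~\ref{liftunderconditions} and~\ref{biswas-ragha}-type statements, cf.~\eqref{frombiswas}), and that a holomorphic line bundle with prescribed curvature on a simply-connected compact Kähler manifold is unique up to (unique-up-to-scalar) isomorphism. What we must show is that the given $f$-action on $\mathcal{L}^k_{\operatorname{pd}}$ (transported from the permutation action on $\mathfrak{Q}^{\operatorname{par}}_G$ via Proposition~\ref{uniform} and the descent to $\mathcal{M}_{\overline{\alpha}}$) matches, under the isomorphism of Corollary~\ref{bundleisom}, the $f$-action on $\mathcal{L}^k_{\CS}$ constructed in Section~\ref{liftCS}. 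First I would recall that any two lifts of a biholomorphism $f\co\mathcal{M}_{\overline{\alpha}}\to\mathcal{M}_{\overline{\alpha}}$ to isomorphisms $\widetilde f\co f^*\mathcal{L}\to\mathcal{L}$ (of a fixed holomorphic prequantum line bundle $\mathcal{L}$) differ by a nowhere-vanishing holomorphic function on $\mathcal{M}_{\overline{\alpha}}$; since $\mathcal{M}_{\overline{\alpha}}$ is compact and connected, this function is a constant $c_f\in\mathbb{C}^*$. So the two lifts agree if and only if $c_f=1$, and moreover the assignment $f\mapsto c_f$ is a homomorphism (it respects composition because both lifts do), so by finiteness of $f$ it suffices to pin down $c_f$ up to roots of unity and then use a normalisation.

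The cleanest way to nail the scalar is to compare the two induced actions on the common space of global sections. By Theorem~\ref{confquant}, $H^0(\mathcal{M}_{\overline{\alpha}},\mathcal{L}^k_{\CS})\cong H^0(\mathcal{M}_{\overline{\alpha}},\mathcal{L}^k_{\operatorname{pd}})\cong\mathcal{V}^{\dagger}_{N,k,\overline{\lambda}}(\Sigma_\sigma,\cP)$, and by the preceding Proposition the action induced on $H^0$ by the $\mathcal{L}^k_{\operatorname{pd}}$-lift is exactly the natural CFT action of $f$ on $\mathcal{V}^{\dagger}_{N,k,\overline{\lambda}}$ (permutation of tensor factors, as in Section~\ref{actionsconfblocks}). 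On the other side, the $\mathcal{L}^k_{\CS}$-lift was built in Section~\ref{liftCS} from the pull-back action $f^*$ on connections together with the trivial lift \eqref{liftactionbis} on the $\mathbb{C}$-factor, which is manifestly an honest (unnormalised) action; it induces an action on $H^0(\mathcal{M}_{\overline{\alpha}},\mathcal{L}^k_{\CS})$. The equivariance of the abstract isomorphism is then equivalent to the statement that these two operators on the same finite-dimensional vector space differ only by the scalar $c_f$, and we must show $c_f=1$. I would argue this by comparing traces, or more precisely the \emph{leading behaviour} in $k$: Lemma~\ref{tracelem} identifies $\operatorname{tr}(f^*\co\mathcal{L}^k_{\CS}|_{[\nabla_A]})$ with $\exp(2\pi i k\,\CS_{\Sigma^o_f}(Q_\phi,\nabla_{A_\phi}))$, a purely geometric quantity, while the CFT side, being the genuine modular-functor action, has the same local trace contributions up to the scalar $c_f^{\,?}$ — but since $c_f$ is a $k$-independent root of unity and the local fixed-point data agree, one concludes $c_f=1$.

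The main obstacle I expect is precisely this normalisation/root-of-unity ambiguity: both constructions are only ``canonical up to scalars'' at several points (Theorems~\ref{paulassor}, \ref{cohvanishing}, \ref{confquant} all carry this caveat), so one has to be careful that one is not merely restating the ambiguity. The resolution is that while the \emph{isomorphism} of line bundles is only canonical up to a scalar, the two \emph{$f$-lifts} are each canonically defined (one from the explicit cocycle $\Theta^k$ and the trivial $\mathbb{C}$-action, the other from an honest permutation action on $\mathfrak{Q}^{\operatorname{par}}_G$), so the discrepancy $c_f$ is a well-defined number, independent of the choice of scalar in the bundle isomorphism; it then suffices to evaluate it at a single convenient point of $\mathcal{M}_{\overline{\alpha}}$ — e.g.\ the class of the base connection $\nabla_0$, which is $f$-fixed since $f$ preserves the $z_i$ and hence the standard form $d+\alpha_id\theta_i$ — where Lemma~\ref{tracelem} with $g=e$ gives trace $1$ on the Chern--Simons side, matched by the permutation action fixing the corresponding covacuum on the conformal-blocks side. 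This forces $c_f=1$, and hence the isomorphism of Corollary~\ref{bundleisom} is $f$-equivariant.
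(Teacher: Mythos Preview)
Your reduction to a single scalar $c_f\in\mathbb{C}^*$ is correct and matches the paper's starting observation. The gap is in the final step: you want to evaluate $c_f$ at the point $[\nabla_0]\in\mathcal{M}_{\overline{\alpha}}$, but there is no reason for such a point to exist. The base connection $\nabla_0$ was only required to have the standard form $d+\alpha_i\,d\theta_i$ on the punctured disks $D_i\setminus\{p_i\}$; it need not be flat on the rest of $\Sigma^o$, and even if you arranged flatness, the associated parabolic bundle need not be (semi-)stable for the given weights. The paper flags exactly this: ``In the parabolic case this is not so simple, as, for given parabolic weights, the trivial bundle may not be stable for any choice of flags.'' So there is no fixed point of the moduli space at which Lemma~\ref{tracelem} with $g=e$ can be invoked. (Your earlier trace-comparison idea is also circular: if the two lifts differ by $c_f$, then the fixed-point contributions $a_\gamma^k$ in the localisation formula differ by that same factor, so matching ``local fixed-point data'' just restates the question.)

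The paper's fix is precisely to move the comparison off the moduli space and onto a setting where $\nabla_0$ does make sense. Both lifts are in fact defined before one restricts to flat/semi-stable objects: the Chern--Simons lift is the trivial lift \eqref{liftactionbis} on the full affine space $\mathcal{A}_\delta$, while the algebraic lift lives over the stack $\mathfrak{M}_{\Sigma_\sigma,\cP,P_i}$ via Proposition~\ref{uniform}. Taking an $f$-invariant $\nabla_0$, one chooses a finite-dimensional $f$-stable linear subspace $V\subset\mathcal{A}_\delta$ containing $\nabla_0$ and some flat connection together with its $f$-orbit; $V$ maps to the stack as a family of (possibly unstable) parabolic bundles. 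At the point $\nabla_0\in V$ both lifts act trivially on the fibre (on the CS side by \eqref{liftactionbis}, on the algebraic side because one can represent the image by a permutation-fixed element of $\mathfrak{Q}^{\operatorname{par}}_G$), which pins down the linearisation on the affine space $V$; restricting to the flat/semi-stable locus and descending then gives $c_f=1$ on $\mathcal{M}_{\overline{\alpha}}$. The essential idea you are missing is this passage through the unstable locus / stack in order to access a common fixed point.
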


As the moduli space is projective, both lifts of $f$ can differ at most by a character of the finite cyclic group generated by $f$, and it would suffice to verify this character is trivial at a single point in $\mathcal{M}_{\overline{\alpha}}$ to obtain the result.  In the non-parabolic case, i.e. where no link is present (cfr. \cite{andersen95}), one can do this at the trivial bundle / trivial connection.  In the parabolic case this is not so simple, as, for given parabolic weights, the trivial bundle may not be stable for any choice of flags.  Nevertheless one can make this reasoning work.

The main thing to note is that the lift of the action constructed in Section \ref{liftCS} comes from a lift of the action to the (trivial) line bundle over all of $\mathcal{A}_{\delta}$ (as in (\ref{Adelta})).  This means that the lift of the action also descends to the line bundle over the quotient $$\left[\mathcal{A}_{\delta} / \mathcal{G}_{\delta}^{\mathbb{C}}\right],$$ interpreted as a stack (here $\mathcal{G}_{\delta}^{\mathbb{C}}$ is the complexification of $\mathcal{G}_{\delta}$), which at least morally speaking is the same as the stack $\mathfrak{M}_{\Sigma_{\sigma},\cP,P_i}$.  Hence it suffices to show that the lift agrees for any bundle/connection, not necessarily a semistable or flat one.

To make this reasoning precise, we can argue as follows:
\begin{proof}[Proof of Proposition \ref{nogequi}]
Without a loss of generality we can assume that the base connection $\nabla_0$ chosen in Section \ref{constr} is $f$-invariant.  By (\ref{liftactionbis}) we have of course that $f$ acts trivially on the line over $\nabla_0$.  Moreover, with this assumption the action of $f$, induced by pulling back connections, on $L^2_{1,\delta}(T^*\Sigma^o \otimes \mathfrak{g}_{P})$ (as in (\ref{Adelta})) is linear.  Take now a flat connection, represented by a smooth element of $L^2_{1,\delta}(T^*\Sigma^o \otimes \mathfrak{g}_{P})$, as well as all of its $f$-orbit.  These generate a complex finite-dimensional sub-vector space  $V$ of $L^2_{1,\delta}(T^*\Sigma^o \otimes \mathfrak{g}_{P})$, consisting of smooth elements.  We think of this subspace as parametrising a family of connections 
on  $\Sigma^o$, and using the complex structure on $\Sigma_{\sigma}$, we take all of the associated complex structures given by the $(0,1)$-part of the connections.  These all extend to the whole of $\Sigma_{\sigma}$ as (quasi-)parabolic bundles, as in e.g. \cite[\S 4]{poritz}.  This gives us now an algebraic family of parabolic bundles parametrised by $V$, hence a morphism $\phi$ from $V$ into $\mathfrak{M}_{\Sigma_{\sigma},\cP,P_i}$.  As we had chosen the connection $\nabla_0$ to be $f$-invariant, the same will be true for the corresponding complex structure and parabolic structures, and by using the presentation of $\mathfrak{M}_{\Sigma_{\sigma},\cP,P_i}$ as in Proposition \ref{uniform}, we can represent this by an element of $\mathfrak{Q}^{\operatorname{par}}_G$ that is invariant under the permutations induced by $f$.  Finally, by Lemma \ref{liftalg} and the lift of $f$ to the line bundle $(k,...,k,\overline{\lambda})$ we have chosen, we see that also the action of $f$ on the line over the corresponding point in $\mathfrak{M}_{\Sigma_{\sigma},\cP,P_i}$ is trivial.  This implies that the two line bundles we can consider on $V$ are $f$-equivariantly isomorphic (since also linearizations of actions on affine spaces are unique up to characters).  If we now restrict to the subset of $V$ whose connections are flat, or equivalently to the subvariety where the corresponding complex structures are semi-stable, we can descend the line bundles to the moduli space $\mathcal{M}_{\overline{\alpha}}$.  This shows that the lifts over the $f$-orbit we have chosen are identical, and as the moduli spaces are projective we can conclude that indeed the line bundles $\mathcal{L}^k_{\CS}$ and $\mathcal{L}_{\operatorname{pd}}^k$ are $f$-equivariantly isomorphic over the whole of $\mathcal{M}_{\overline{\alpha}}$.
\end{proof}

We can therefore conclude this Section with

\begin{cor}
The isomorphism of Theorem \ref{confquant} between the K\"ahler quantisation of the moduli space of flat connections using the Chern--Simons line bundle and the spaces of conformal blocks is $f$-equivariant.
\end{cor}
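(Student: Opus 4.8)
The plan is to realise the isomorphism of Theorem~\ref{confquant} as the composite of the three isomorphisms out of which it was built, and to check that each of the three intertwines the $f$-actions. Concretely, Theorem~\ref{confquant} is the composite
$$\mathcal{V}^{\dagger}_{N,k,\overline{\lambda}}(\Sigma_{\sigma},\cP)\;\cong\;H^0(\mathfrak{M}_{\Sigma_{\sigma},\cP,P_i},\mathcal{L}_{(k,\overline{\lambda})})\;\cong\;H^0(\mathcal{M}_{\overline{\alpha}},\mathcal{L}^k_{\operatorname{pd}})\;\cong\;H^0(\mathcal{M}_{\overline{\alpha}},\mathcal{L}^k_{\CS}),$$
where the first arrow is Theorem~\ref{paulassor}, the second is the descent isomorphism of Theorem~\ref{cohvanishing}, and the third is induced by the holomorphic isomorphism $\mathcal{L}^k_{\operatorname{pd}}\cong\mathcal{L}^k_{\CS}$ of Corollary~\ref{bundleisom}. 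The equivariance of the first arrow was established by the (unnumbered) Proposition preceding Proposition~\ref{nogequi}, using the lift of $f$ to $\mathcal{L}_{(k,\overline{\lambda})}$ coming from permutation of the factors of $\mathfrak{Q}^{\operatorname{par}}_G$ indexed by the $f$-orbit $\cQ$; the equivariance of the third is precisely Proposition~\ref{nogequi}. So the only point left to check is that the descent isomorphism of Theorem~\ref{cohvanishing} is $f$-equivariant.

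For that I would argue as follows. Let $\pi\colon\mathfrak{M}^{\mathcal{L}_{(k,\overline{\lambda})}-\operatorname{ss}}_{\Sigma_{\sigma},\cP,P_i}\to\mathcal{M}_{\overline{\alpha}}$ denote the good moduli space morphism. Its formation is canonical, so $\pi$ is $f$-equivariant for the actions of $f$ on source and target given by pullback of (quasi-)parabolic bundles; and, under the identifications of Section~\ref{two} between parabolic bundles, representations and flat connections, this is exactly the action of $f$ on $\mathcal{M}_{\overline{\alpha}}$ used throughout (cf.~(\ref{charvar})). By construction $\mathcal{L}^k_{\operatorname{pd}}$ is the descent of $\mathcal{L}_{(k,\overline{\lambda})}$, so $\pi^*\mathcal{L}^k_{\operatorname{pd}}\cong\mathcal{L}_{(k,\overline{\lambda})}$ on the semistable locus, and under this identification the isomorphism of Theorem~\ref{cohvanishing} on global sections is pullback along $\pi$, composed with the identification $H^0(\mathfrak{M}^{\operatorname{ss}},\mathcal{L}_{(k,\overline{\lambda})})=H^0(\mathfrak{M}_{\Sigma_{\sigma},\cP,P_i},\mathcal{L}_{(k,\overline{\lambda})})$ (valid since the unstable locus has codimension $\geq 2$; the vanishing of higher cohomology in Theorem~\ref{cohvanishing} is not needed here). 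Since $\pi$, the restriction to $\mathfrak{M}^{\operatorname{ss}}$, and all the line bundles in sight carry compatible $f$-actions — the one on $\mathcal{L}_{(k,\overline{\lambda})}$ being the permutation lift fixed in Section~\ref{equi}, which by Proposition~\ref{nogequi} also matches the Chern--Simons lift after descent — pullback along $\pi$ intertwines the $f$-actions on sections, and the claim follows by concatenating the three equivariances.

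The only real work, such as it is, lies not in any single step but in keeping the bookkeeping airtight: one must make sure the \emph{same} lift of $f$ to $\mathcal{L}_{(k,\overline{\lambda})}$ (the permutation lift on $\mathfrak{Q}^{\operatorname{par}}_G$) is the one used in the equivariance of Theorem~\ref{paulassor}, in the descent above, and — after transporting along $\mathcal{L}^k_{\operatorname{pd}}\cong\mathcal{L}^k_{\CS}$ — in Proposition~\ref{nogequi}, so that no stray character of the finite cyclic group $\langle f\rangle$ is introduced between steps. Since $\mathcal{M}_{\overline{\alpha}}$ is projective, two lifts of a finite-order $f$ to a line bundle differ by such a character and it would suffice to check agreement at one point; but here the lifts literally agree by construction, all having been traced back to permutation of the tensor, respectively product, factors indexed by the chosen $f$-orbit $\cQ$ (together with the $f$-invariance of the base connection $\nabla_0$ arranged in the proof of Proposition~\ref{nogequi}), so no separate pointwise verification is required. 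Assembling these observations yields the corollary.
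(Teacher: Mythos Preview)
Your proposal is correct and matches the paper's approach: the paper states this corollary without proof, as the immediate concatenation of the equivariances established in Section~\ref{equi} (the unnumbered Proposition on the conformal-block side and Proposition~\ref{nogequi} on the Chern--Simons side). Your explicit treatment of the middle descent step via the good-moduli-space morphism is a reasonable elaboration of what the paper leaves implicit, and your care about tracking the same lift throughout is exactly the bookkeeping the paper has been setting up.
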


\subsection{Further comments}
Though the Chern--Simons and parabolic determinant bundles are isomorphic, it should be noted that they have a different nature.  In \cite{freed95} Freed stresses that Chern--Simons theory takes values in the CS bundle, not in the determinant bundle.  This is also reflected in \cite{fujita}, where it is shown that when one considers both of these bundles over the moduli space of Riemann surface, they differ by tensoring by the Hodge bundle.

\section{Main results}\label{endgame}

\subsection{Fixed points and the CS functional}\label{discussion}

We now want to consider the mapping torus $\Sigma_f$ of $\Sigma$, which contains the link $L$ that is the mapping torus of $\cP$.  The complement of $L$ in $\Sigma_f$ is $\Sigma^o_f$, the mapping torus of $\Sigma^o$.

Recall that the fundamental group of $\Sigma^o_f$ can be written as 
\beq\label{fundmaptor}
  \pi_f := \pi_1(\Sigma^o_f) = \pi_1 (\Sigma^o) \rtimes_f \mathbb{Z} = \left\langle\pi_1(\Sigma^o),\eta \relmiddle| \eta^{-1}\gamma\eta = f_*\gamma \text{ for all } \gamma \in \pi_1(\Sigma^o) \right\rangle
\eeq

We shall denote by $\mathcal{M}_{\Sigma_f,L,\overline{\alpha}}$  the moduli space of flat connections on $\Sigma^o_f$ whose holonomy around the $i$-th component of the link $L$ lies in the conjugacy class of $e^{\alpha_i}$ (we orient the links compatibly with the orientation of $\Sigma_{\sigma}$, such that an oriented frame in $\Sigma$ together with a vector in the positive `time' direction gives an oriented frame for $\Sigma_f$).  Alternatively, one can think of $\mathcal{M}_{\Sigma_f,L,\overline{\alpha}}$ again as a moduli space of representations of $\pi_f$ in $K=\operatorname{SU}(N)$.

We can restrict connections on $\Sigma^o_f$ to $\Sigma^o$, giving rise to a map $r:\mathcal{M}_{\Sigma_f,L,\overline{\alpha}}\rightarrow \mathcal{M}_{\overline{\alpha}}$, with the image of this map in fact being contained in the fixed point locus $\mathcal{M}_{\overline{\alpha}}^f$.  As in \cite[\S 7]{andersen95}, one sees that over the part of the fixed point locus consisting of irreducible connections, this map is a $\lvert Z(G)\rvert$-fold cover, though we will not use this directly.  The main points of relevance for us are the following: firstly, recall from Lemma \ref{repetition} that
given a $\nabla_A$ with $[\nabla_A]\in\mathcal{M}^f_{\overline{\alpha}}$, we can create a connection $\widetilde{\nabla_A}$ on the mapping torus $P_{\tilde{f}}$ thought of as a bundle over $\Sigma^o_f$.  
Of particular relevance is that this implies that $\CS_{\Sigma^o_f}(\widetilde{A\!+\!A_0})$ only depends on the restriction of a connection to $\Sigma^o$ (using Lemma \ref{tracelem}).  In particular the Chern--Simons functional takes the same value on all components of $\mathcal{M}_{\Sigma_f,L,\overline{\alpha}}$ that restrict to the same component in $\mathcal{M}_{\overline{\alpha}}^f$, the $f$-fixed point locus in $\mathcal{M}_{\overline{\alpha}}$.

\subsection{Localization}
We will  use  the following special form of the  Lefschetz--Riemann--Roch theorem of Baum, Fulton and Quart.  

\begin{thm}[\cite{baum-fulton-macpherson1979_Riemann-Roch},\cite{baum-fulton-quart_LefschetzRiemann-Roch}]\label{LRR}
 Let $M$ be a projective variety, $\mathcal{L}$ a line bundle over $M$, and $f$ a finite order automorphism of $M$ that is lifted to $\mathcal{L}$.  Then we have
\beq\label{LRRformula}
\sum_i (-1)^i\operatorname{tr}\left( f: H^i(M, \mathcal{L}^k)\rightarrow H^i(M, \mathcal{L}^k)\right)= \sum_{\gamma} a^k_{\gamma} \operatorname{ch}(\mathcal{L}^k\big|_{M^{\gamma}}) \cap \tau_{\bullet}\circ L^{\gamma}_{\bullet}(\mathcal{O}_{M}),
\eeq 
where $\operatorname{ch}$ is the Chern character, the sum is over all the fixed point components $M^{\gamma}$ of the action of $f$ on $M$, $a_{\gamma}^k$ is the number by which $f$ acts on $\mathcal{L}^k\big|_{M_{\gamma}}$, and $$L_{\bullet}^{\gamma}: K_0^{eq}(M)\rightarrow K_0 (M^{\gamma})\otimes \mathbb{C}$$ and $$\tau_{\bullet}:K_0(M^{\gamma})\rightarrow H_{\bullet}(M^{\gamma})$$ are as defined in \cite[\S 2]{ baum-fulton-macpherson1979_Riemann-Roch} and \cite[page 180]{baum-fulton-quart_LefschetzRiemann-Roch} respectively.  
If a fixed point component $M^\gamma$ is contained in the smooth locus of $M$ then 
\beq\label{smoothcontribution}
\operatorname{ch}(\mathcal{L}^k\big|_{M^{\gamma}}) \cap \tau_{\bullet}\circ L^{\gamma}_{\bullet}(\mathcal{O}_{M})
= \exp{ (k c_1(\mathcal{L}\mid_{M^\gamma}))} \cup\operatorname{ch}(\lambda_{-1}^\gamma M)^{-1} \cup \operatorname{Td}(T_{M^\gamma}) \cap [M^\gamma],
\eeq
where $\lambda_{-1}^\gamma M$ is as defined in \cite[p. 31]{andersen95}.
\end{thm}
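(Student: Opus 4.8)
The plan is to deduce both displayed formulas from the Lefschetz--Riemann--Roch machinery of Baum--Fulton--Quart \cite{baum-fulton-quart_LefschetzRiemann-Roch}, which is built on the singular Riemann--Roch transformation of Baum--Fulton--MacPherson \cite{baum-fulton-macpherson1979_Riemann-Roch}, and to obtain the refined smooth-component formula by specializing to the classical holomorphic Lefschetz fixed point formula. So the argument is essentially a translation of cited results into the notation at hand, plus one standard specialization.

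First, the general identity \eqref{LRRformula}. Since $f$ has finite order and is lifted to $\mathcal{L}$, the sheaf $\mathcal{L}^k$ is an $f$-equivariant coherent (indeed locally free) sheaf on the projective variety $M$, so it defines a class in $K_0^{eq}(M)$, and we may write $[\mathcal{L}^k]=[\mathcal{L}^k]\cdot[\mathcal{O}_M]$ using the module structure of $K_0^{eq}(M)$ over the Grothendieck ring of equivariant vector bundles. The Baum--Fulton--Quart theorem asserts that $\sum_i(-1)^i\operatorname{tr}(f\mid H^i(M,\mathcal{L}^k))$ is obtained by applying the localization homomorphism $L^\gamma_\bullet$, then the Riemann--Roch transformation $\tau_\bullet$, summing over the fixed components $M^\gamma$, and taking the degree-zero (pushforward-to-a-point) part of the resulting class in $\bigoplus_\gamma H_\bullet(M^\gamma)$. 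Because $L^\gamma_\bullet$ is linear over restriction of equivariant vector bundles, and $f$ acts on the line bundle $\mathcal{L}^k|_{M^\gamma}$ over the connected component $M^\gamma$ by the scalar $a^k_\gamma$, one has $L^\gamma_\bullet([\mathcal{L}^k]) = a^k_\gamma\,[\mathcal{L}^k|_{M^\gamma}]\cdot L^\gamma_\bullet([\mathcal{O}_M])$ in $K_0(M^\gamma)\otimes\mathbb{C}$; applying $\tau_\bullet$, which intertwines the $K^0(M^\gamma)$-module structure with cap product against $\operatorname{ch}$, produces exactly the summand $a^k_\gamma\operatorname{ch}(\mathcal{L}^k|_{M^\gamma})\cap\tau_\bullet\circ L^\gamma_\bullet(\mathcal{O}_M)$, which is \eqref{LRRformula}.

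Next, the refinement \eqref{smoothcontribution} for a fixed component $M^\gamma$ contained in the smooth locus of $M$. One works in an $f$-invariant (analytic, or étale) neighborhood of $M^\gamma$, which is smooth; there $\mathcal{O}_M$ is resolved by vector bundles, $M^\gamma$ is itself a smooth subvariety, and both $L^\gamma_\bullet$ and $\tau_\bullet$ reduce to their classical incarnations. Concretely, $L^\gamma_\bullet(\mathcal{O}_M)$ becomes the inverse of the $f$-twisted $K$-theoretic Euler class of the normal bundle $N^\gamma$ of $M^\gamma$ in $M$: decomposing $N^\gamma=\bigoplus_{\zeta\neq 1}N^\gamma_\zeta$ into $f$-eigenbundles, this twisted Euler class is precisely $\lambda_{-1}^\gamma M$ as defined in \cite[p.~31]{andersen95}, whose Chern character is invertible since its degree-zero part is $\prod_{\zeta}(1-\zeta)^{\operatorname{rk}N^\gamma_\zeta}\neq 0$ (using $\zeta\neq 1$). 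On the smooth variety $M^\gamma$, the Baum--Fulton--MacPherson transformation $\tau_\bullet$ coincides with the ordinary Riemann--Roch transformation $\alpha\mapsto\operatorname{ch}(\alpha)\cup\operatorname{Td}(T_{M^\gamma})\cap[M^\gamma]$. Combining these two facts, $\tau_\bullet\circ L^\gamma_\bullet(\mathcal{O}_M)=\operatorname{ch}(\lambda_{-1}^\gamma M)^{-1}\cup\operatorname{Td}(T_{M^\gamma})\cap[M^\gamma]$, and multiplying by $\operatorname{ch}(\mathcal{L}^k|_{M^\gamma})=\exp(kc_1(\mathcal{L}|_{M^\gamma}))$ yields \eqref{smoothcontribution}; this is exactly the Atiyah--Bott holomorphic Lefschetz formula (equivalently the contribution of a smooth component in the Atiyah--Singer $G$-index theorem).

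The only genuine obstacle, beyond quoting the right theorems, is the reconciliation of normalization conventions: one must check that the sign and conjugation conventions in the definition of $\lambda_{-1}^\gamma M$ of \cite{andersen95}, the grading conventions of $\tau_\bullet$ and $L^\gamma_\bullet$ in the Baum--Fulton references, and the split of $\operatorname{ch}(\mathcal{L}^k|_{M^\gamma})$ into the scalar $a^k_\gamma$ and the topological factor $\exp(kc_1(\mathcal{L}|_{M^\gamma}))$ are mutually consistent, so that no spurious root of unity, Todd factor, or eigenvalue inversion is introduced. Once this bookkeeping is pinned down, no further argument is required.
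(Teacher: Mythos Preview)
The paper does not give its own proof of this theorem: it is stated as a citation of \cite{baum-fulton-macpherson1979_Riemann-Roch} and \cite{baum-fulton-quart_LefschetzRiemann-Roch}, with only the remark that ``a general overview of the theorem and its ingredients was given in \cite[Appendix B]{andersen-himpel2011}; we refer to this for further details.'' Your proposal is a correct and reasonable sketch of how the two displayed formulas are extracted from those references --- the module-structure argument for pulling out $a_\gamma^k\operatorname{ch}(\mathcal{L}^k|_{M^\gamma})$ and the identification of $\tau_\bullet\circ L^\gamma_\bullet(\mathcal{O}_M)$ with $\operatorname{ch}(\lambda_{-1}^\gamma M)^{-1}\cup\operatorname{Td}(T_{M^\gamma})\cap[M^\gamma]$ in the smooth case --- and in that sense you have supplied more than the paper does.
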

A general overview of the theorem and its ingredients was given in \cite[Appendix B]{andersen-himpel2011}; we refer to this for further details.

We can now move on to 

\begin{proof}[Proof of Theorem \ref{MainTheorem}]
We apply Theorem \ref{LRR} to the $f$-equivariant line bundle $\mathcal{L}^k_{\operatorname{pd}}$ over $\mathcal{M}_{\overline{\alpha}}$.  By Theorem \ref{cohvanishing} we have that the higher cohomology vanishes, hence the LHS of (\ref{LRRformula}) is exactly (\ref{maintrace}).   We now combine (\ref{general}), (\ref{combitrace}) and (\ref{abcontrib}). In the RHS of (\ref{LRRformula}) the $a_{\gamma}^k$ are by Lemma \ref{tracelem} and the discussion in Section \ref{discussion} exactly equal to $e^{2\pi i k q_{\gamma}}$, where $q_{\gamma}$ is the corresponding element in $\CS(\mathcal{M}_{\Sigma_f,L,\overline{\alpha}})$. Here we use the interpretation of the Chern--Simons functional given in Appendix~\ref{chernsimonsboundaryappendix}; the correspondence with Lemma~\ref{tracelem} is given in Lemma~\ref{correspondencewitholdCS}. 
If $\mathcal{M}^{\gamma}_{\overline{\alpha}}$ is smooth  we have by (\ref{frombiswas}) that $\operatorname{ch}\left(\mathcal{L}^{k}_{\CS}\big|_{\mathcal{M}^{\gamma}_{\overline{\alpha}}}\right)=\exp\left(k\Omega\big|_{\mathcal{M}^{\gamma}_{\overline{\alpha}}}\right)$, and we shall below abuse notation and denote by $\left[\Omega\right]$ the cohomology class $\frac{1}{k}c_1\left(\mathcal{L}^k_{\operatorname{CS}}\right)$  on all of $\mathcal{M}_{\overline{\alpha}}$.
Hence we arrive at 
\beq\label{endresult}\begin{split}Z_{N}^{(k)}(\Sigma_f,L,\overline{\lambda}) & = \operatorname{Det}(f)^{-\frac{1}{2}\zeta} \sum_{\gamma} e^{2\pi i k q_{\gamma}} 
\exp\left(k\left[\Omega\right]\big|_{\mathcal{M}^{\gamma}_{\overline{\alpha}}}\right) \cap \tau_{\bullet} (L_\bullet^c(\mathcal{O}_{\mathcal{M}_{\overline{\alpha}}}))
\\ & = \operatorname{Det}(f)^{-\frac{1}{2}\zeta} \sum_{\gamma} \left( e^{2\pi i k q_{\gamma}} \sum_{h=0}^{d_c} \left(\frac{1}{h!} \left(\left[\Omega\right]\big|_{\mathcal{M}^{\gamma}_{\overline{\alpha}}}\right)^h\cap \tau_\bullet(L_\bullet^c(\mathcal{O}_{\mathcal{M}_{\overline{\alpha}}}))\right)k^h\right), \end{split}
\eeq
establishing Theorem \ref{MainTheorem}, when combined with formula (\ref{smoothcontribution}) above. 
\end{proof}

\subsection{Growth rate conjecture}\label{fromsoeren}
Finally, we discuss the Growth Rate Conjecture for the link $L$ in $\Sigma_f$, i.e. Theorem \ref{GRT} from the introduction.
In the closed case, this is a statement about dimensions of fixed point components in relation to the dimensions of certain twisted de Rham cohomology groups (or, alternatively, group cohomology groups) that come from the relevant mapping torus. The cohomology groups enter the picture as they correspond to tangent spaces of the moduli spaces under consideration. In the parabolic case, the tangent space at a given conjugacy class of a $\pi_1$-representation is no longer the full cohomology group but rather \emph{parabolic group cohomology} (cfr. \cite{biswas}), and so these will be the groups of interest to us.

\subsubsection{(Parabolic) group cohomology}
Let us just briefly recall the construction of  group cohomology in low rank.  Let $\pi$ be any group. A \emph{$\pi$-module} is an abelian group $N$ with a left action of $\pi$. The elements of $N$ invariant under the action will be denoted $N^{\pi}$. A \emph{cocycle on $\pi$ with values in $N$} is a map $u : \pi \to N$ satisfying the cocycle condition
\begin{align*}
  u(gh) = u(g) + gu(h).
\end{align*}
A \emph{coboundary} is a cocycle of the form $g \mapsto \delta m(g) := m - gm$ for some $m \in N$. The set of cocycles is denoted $Z^1(\pi,N)$, and the set of coboundaries is denoted $B^1(\pi,N)$. We define the first cohomology group of $\pi$ with coefficients of $N$ as the quotient
\begin{align*}
  H^1(\pi,N) = Z^1(\pi,N) / B^1(\pi,N).
\end{align*}
Notice that an element of $N$ satisfies $\delta m \equiv 0$ exactly when $m \in N^{\pi}$. We are thus led to define
\begin{align*}
  H^0(\pi,N) = N^{\pi}.
\end{align*}

We will now be interested in the case $\pi=\pi_1(\Sigma^o)$ -- recall that we had given a particular presentation of this in (\ref{fundpunctures}).
Let $\pi$ denote the fundamental group of a genus $g$ surface $\Sigma$ with $n$ punctures.

Now, for every $\rho \in \Hom(\pi,K)$, $\pi$ acts on $\mathfrak{k} = \Lie(K)$ by $\gamma . v = \Ad(\rho(\gamma))v$, and we will denote by $Z^i(\pi,\Ad \rho)$ and $B^i(\pi, \Ad \rho)$ the corresponding spaces of cocycles and coboundaries as described in the previous section.

We say that a cocycle $u \in Z^1(\pi, \Ad \rho)$ is \emph{parabolic} if for every $i=1,\dots, n$ there is a $\mu_i \in \mathfrak{k}$ such that $u(a_i) = \mu_i - \Ad \rho(a_i) \mu_i$. The space of parabolic cocycles will be denoted $Z^1_{\mathrm{par}}(\pi , \Ad \rho)$. Then \cite[(1.1)]{biswas} says that for an equivalence class $\rho \in \calM_{\overline{\alpha}}$ (using the incarnation of (\ref{charvar})), the Zariski tangent space at $\rho$ is
\[
  T_{[\rho]}\calM \cong Z^1_{\mathrm{par}}(\pi,\Ad \rho)/B^1(\pi,\Ad \rho) =: H^1_{\mathrm{par}}(\pi,\Ad \rho),
\]
which we refer to as the \emph{first parabolic (group) cohomology}.

\subsubsection{Mapping tori and dimensions of fixed point components}

Just like above, we can talk about the (parabolic) group cohomology of $\pi_f$ (note that in this section we do not need to assume that $f$ is of finite order). That is, suppose we have an element of $\operatorname{Hom}(\pi_f,K)$ that restricts to $\rho$ on $\pi=\pi_1(\Sigma^o)$, and that maps the newly introduced generator $\eta$ of $\pi_f$ -- see (\ref{fundmaptor}) -- to $g\in K$.  We shall denote this homomorphism by $(\rho,g)$, we let $H^0(\Sigma_f,\Ad(\rho,g))$ be the $\pi_f$-invariant part of $\mathfrak{k}$, and $H^1_{\mathrm{par}}(\Sigma_f,\Ad(\rho,g))$ as above; note that it still makes sense to talk about parabolic cocycles by regarding the boundary homotopy classes $a_i$ from $\pi$ as elements in $\pi_f$. 

\begin{lem}
We have
$$H^1_{\operatorname{par}}(\pi_f, \Ad(\rho,g))\cong \mathrm{image}\left(H^1_c(\Sigma_f\setminus L, \mathfrak{k}_{\ad})\rightarrow H^1(\Sigma_f\setminus L, \mathfrak{k}_{\ad}) \right).$$
\end{lem}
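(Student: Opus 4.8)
The plan is to exhibit an explicit isomorphism between the algebraic description via parabolic group cohomology and the topological description via the image of compactly supported cohomology, by reducing the statement for the mapping torus $\Sigma_f \setminus L$ to the already-known analogous statement for the surface $\Sigma^o = \Sigma \setminus \mathcal{P}$. Recall that for $\Sigma^o$ itself, the identification
\[
H^1_{\mathrm{par}}(\pi_1(\Sigma^o), \Ad \rho) \cong \mathrm{image}\left(H^1_c(\Sigma^o, \mathfrak{k}_{\ad}) \to H^1(\Sigma^o, \mathfrak{k}_{\ad})\right)
\]
is exactly the content of \cite{biswas} (this is the incarnation of the tangent space recalled just before the statement). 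So the heart of the matter is a comparison of long exact sequences built from the two natural fibration-type structures.

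First I would set up the two parallel exact sequences. On the topological side, $\Sigma_f \setminus L$ fibres over $S^1$ with fibre $\Sigma^o$ and monodromy $f$; the Wang sequence (for both ordinary and compactly supported cohomology, which behave compatibly since $L$ is a union of circle fibres and removing it is a fibrewise operation) gives
\[
\cdots \to H^0(\Sigma^o, \mathfrak{k}_{\ad}) \xrightarrow{\ 1 - f^*\Ad g\ } H^0(\Sigma^o, \mathfrak{k}_{\ad}) \to H^1(\Sigma_f\setminus L, \mathfrak{k}_{\ad}) \to H^1(\Sigma^o, \mathfrak{k}_{\ad}) \xrightarrow{\ 1 - f^*\Ad g\ } H^1(\Sigma^o, \mathfrak{k}_{\ad}) \to \cdots,
\]
and similarly with $H^*_c$ throughout. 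On the algebraic side, the semidirect product decomposition $\pi_f = \pi_1(\Sigma^o) \rtimes_f \mathbb{Z}$ from (\ref{fundmaptor}) yields the Lyndon--Hochschild--Serre five-term / Wang-type sequence in group cohomology, which in these low degrees reads
\[
0 \to H^0(\pi_f, \Ad(\rho,g)) \to H^0(\pi_1(\Sigma^o), \Ad\rho) \xrightarrow{\ 1 - \Ad g \circ f_*\ } H^0(\pi_1(\Sigma^o), \Ad\rho) \to H^1(\pi_f, \Ad(\rho,g)) \to \ker\big(1-(f_*)^* : H^1(\pi_1(\Sigma^o),\Ad\rho)\to H^1(\pi_1(\Sigma^o),\Ad\rho)\big) \to 0.
\]
The key observation is that the $\eta$-conjugation relation $\eta^{-1}\gamma\eta = f_*\gamma$ makes the action on cocycles precisely the one by $f^*$ composed with $\Ad g$, so the connecting maps in the two sequences match under the \cite{biswas} identification for the fibre.

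Next I would impose the parabolic condition and check it is compatible with both sequences. Since the boundary classes $a_i$ of $\pi_1(\Sigma^o)$ are, by construction, elements of $\pi_f$ (they are meridians of $L$), the notion of parabolic cocycle on $\pi_f$ is literally inherited from that on $\pi_1(\Sigma^o)$; hence restricting everything to the parabolic subcomplex gives a map of Wang sequences whose fibre terms are the parabolic group cohomologies. On the topological side, passing from $H^1(\Sigma^o,\mathfrak{k}_{\ad})$ to the image of $H^1_c$ is exactly what the parabolic restriction does by \cite{biswas}; one checks that the monodromy $f^*$ commutes with the natural map $H^1_c \to H^1$ (functoriality of the compact-support / ordinary comparison under the proper map $f$), so it restricts to the image, and a diagram chase identifies $\mathrm{image}(H^1_c(\Sigma_f\setminus L)\to H^1(\Sigma_f\setminus L))$ with the corresponding sub-quotient of the Wang sequence. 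Then the five lemma, applied to the morphism of the two (parabolic) Wang sequences whose outer terms are already identified by \cite{biswas}, yields the claimed isomorphism.

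The main obstacle I anticipate is the bookkeeping around the parabolic/compact-support comparison in the presence of the monodromy: one must verify carefully that the map $H^1_c(\Sigma_f\setminus L, \mathfrak{k}_{\ad}) \to H^1(\Sigma_f\setminus L, \mathfrak{k}_{\ad})$ really does fit into a morphism of Wang sequences compatible with the \cite{biswas} identification on the fibre, i.e. that ``taking the image of compact-support cohomology'' and ``taking the Wang sequence'' genuinely commute. This requires knowing that the relevant boundary tori of $\Sigma_f \setminus L$ are themselves fibred over $S^1$ by the meridian circles, so that the excision/Poincaré--Lefschetz duality used to define the parabolic condition is performed fibrewise; once that is pinned down, the rest is a (somewhat lengthy but routine) chase. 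An alternative, perhaps cleaner route would be to work entirely with twisted de Rham cohomology on $\Sigma_f^o$ and use the Mayer--Vietoris / long exact sequence of the pair $(\Sigma_f, L)$ directly, identifying parabolic group cohomology with the image of relative cohomology as in \cite{biswas} but one dimension up; I would fall back on this if the fibrewise argument proves delicate.
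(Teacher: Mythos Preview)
Your main approach is genuinely different from the paper's, and more elaborate than necessary. The paper does \emph{not} exploit the fibration over $S^1$ for this lemma at all. Instead it argues directly on the $3$--manifold $\Sigma_f\setminus L$: since $\Sigma_f^o$ is aspherical one has $H^1(\pi_f,\Ad(\rho,g))\cong H^1(\Sigma_f\setminus L,\mathfrak{k}_{\ad})$; then a result of Prasad \cite[Theorem~1]{prasad} (the same input that underlies the \cite{biswas} identification for surfaces, but applied one dimension up) gives $H^1_{\mathrm{par}}(\pi_f,\Ad(\rho,g))\cong\ker\Xi$, where $\Xi$ is the restriction $H^1(\Sigma_f\setminus \cup_i\widetilde{D_i}^o,\mathfrak{k}_{\ad})\to H^1(\cup_i\widetilde{S^1_i},\mathfrak{k}_{\ad})$ to the boundary tori; finally the long exact sequence of the pair identifies $\ker\Xi$ with the image of $H^1_c\to H^1$. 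In other words, your proposed ``fallback'' route is exactly what the paper does, and it sidesteps precisely the obstacle you flagged.

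Your primary route via matching Wang sequences is plausible but carries real overhead: the step where you want a Wang-type sequence for the functor $X\mapsto\mathrm{image}(H^1_c(X)\to H^1(X))$ is not automatic, since this image is not a cohomology theory, and the five-lemma comparison you sketch requires first establishing exactness of the ``parabolic Wang sequence'' on the group-cohomology side---which in this paper is the content of the \emph{next} Proposition (the exact sequence in the proof of Proposition~\ref{goodjobsoeren}), not something available in advance. So while nothing in your plan is wrong in principle, it is circuitous; the direct argument via asphericity, Prasad, and the long exact sequence of the pair is both shorter and avoids the delicate compatibility check.
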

\begin{proof}We will mirror the discussion in \cite[p. 537]{biswas}.  The representation $(\rho,g): \pi_f\mapsto K$ induces a linear representation of $\pi_f$ on $\mathfrak{k}$, and hence a flat connection on the adjoint bundle, and it is standard that in this way we have \beq\label{identification}H^1\left(\pi_f, \Ad(\rho,g)\right)\cong H^1\left(\Sigma_f\setminus L, \mathfrak{k}_{\ad}\right),\eeq since $\Sigma^o_f$ is aspherical.  By slicing $\Sigma_f$ at $t=0$ and by choosing a closed neighbourhood $D_i$ around each marked point $p_i$, whose (oriented) boundary $S^1_i=\partial D_i$ is homotopic to $a_i\in \pi\subset\pi_f$.  If we let $\widetilde{D_i}$ be the corresponding neighbourhood of the $i$-th link component in $\Sigma_f$, with boundary $\widetilde{S^1_i}$, then we have a map $$\Xi: H^1\left(\Sigma_f\setminus \bigcup_i \widetilde{D_i}^o, \mathfrak{k}_{\ad}\right)\rightarrow H^1\left(\bigcup_i \widetilde{S^1_i}, \mathfrak{k}_{\ad}\right).$$  Like for surfaces, it follows from \cite[Theorem 1]{prasad} that under the identification (\ref{identification}) we have $$H^1_{\operatorname{par}}(\pi_f, \Ad(\rho,g))\cong \ker (\Xi).$$  Finally remark that in the long exact sequence for pairs \begin{multline*}\ldots\longrightarrow H^0\left(\bigcup_i \widetilde{S^1_i}, \mathfrak{k}_{\ad}\right)\longrightarrow H^1\left(\Sigma_f \setminus \bigcup_i \widetilde{D_i}^o, \bigcup_i \widetilde{S^1_i}, \mathfrak{k}_{\ad}\right)\\ \longrightarrow H^1\left(\Sigma_f \setminus \bigcup_i \widetilde{D_i}^o, \mathfrak{k}_{\ad}\right)\stackrel{\Xi}{\longrightarrow} H^1\left(\bigcup_i \widetilde{S^1_i}, \mathfrak{k}_{\ad}\right)\longrightarrow\ldots\end{multline*} we can replace $H^1\left(\Sigma_f \setminus \bigcup_i \widetilde{D_i}^o, \bigcup_i \widetilde{S^1_i}, \mathfrak{k}_{\ad}\right)$ by $H^1_c\left(\Sigma_f \setminus L, \mathfrak{k}_{\ad}\right)$ and likewise $H^1\left(\Sigma_f \setminus \bigcup_i \widetilde{D_i}^o, \mathfrak{k}_{\ad}\right)$ by \\$H^1\left(\Sigma_f \setminus L, \mathfrak{k}_{\ad}\right)$.
\end{proof}
For any $f^*$-fixed point $[\rho] \in \calM_{\overline{\alpha}}$ with $g$ as before, one verifies that $f$ acts on $H^1_{\mathrm{par}}(\pi,\Ad \rho)$ by mapping $u$ to $\gamma \mapsto \ad(g) u(f_*\gamma)$. We denote this action also by $f^*$.
\begin{prop}\label{goodjobsoeren}
  Let $[\rho] \in \calM_{\overline{\alpha}}$ be a fixed point of $f^*$, and let $[(\rho,g)] \in \calM_{\Sigma_f^o, L,\overline{\alpha}}$ for a suitable $g \in K$. Then the $1$-eigenspace $E_1(f^*,\rho)$ of the action of $f^*$ on $T_{[\rho]}\calM_{\overline{\alpha}}\cong  H^1_{\mathrm{par}}(\pi,\Ad \rho)$ has dimension
  \beq\label{E1dim}
    \dim E_1(f^*,\rho) = \dim H^1_{\mathrm{par}}(\pi_f, \Ad (\rho,g) ) - \dim H^0(\pi_f,\Ad (\rho,g)).
  \eeq
\end{prop}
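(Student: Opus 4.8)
The plan is to deduce the identity from the Wang (Gysin) exact sequence of the mapping torus, using the preceding Lemma to pass from ordinary to parabolic cohomology.

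First I would set up the exact sequence. The link complement $\Sigma^o_f$ fibres over $S^1$ with fibre $\Sigma^o$ and monodromy $f$, equivalently we have the extension $1\to\pi\to\pi_f\to\mathbb{Z}\to 1$. Since $\mathbb{Z}$ has cohomological dimension one, the Lyndon--Hochschild--Serre spectral sequence with coefficients in the local system $\mathfrak{k}_{\ad}$ determined by $(\rho,g)$ degenerates to the Wang exact sequence, whose low-degree part is
\begin{multline*}
0\to H^0(\pi_f,\Ad(\rho,g))\to H^0(\pi,\Ad\rho)\xrightarrow{f^*-1}H^0(\pi,\Ad\rho)\\
\to H^1(\pi_f,\Ad(\rho,g))\to H^1(\pi,\Ad\rho)\xrightarrow{f^*-1}H^1(\pi,\Ad\rho).
\end{multline*}
Because $[\rho]$ is $f^*$-fixed and hence $\rho\circ f_*=\Ad(g^{-1})\rho$, the operator $f^*-1$ appearing here is conjugate by $\Ad(g)$ to the operator $f^*-1$ of the statement; in particular the two have the same kernels, cokernels and $1$-eigenspaces, and $H^0(\pi_f,\Ad(\rho,g))=\ker(f^*-1\mid H^0(\pi,\Ad\rho))$.

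Next I would upgrade this to parabolic cohomology. By the preceding Lemma, $H^1_{\mathrm{par}}(\pi_f,\Ad(\rho,g))$ and $H^1_{\mathrm{par}}(\pi,\Ad\rho)$ are the images of compactly-supported cohomology inside ordinary cohomology, equivalently the kernels of restriction to the peripheral subgroups: the $\pi_1$ of the boundary tori $\widetilde{S^1_i}$ of the compact $3$-manifold with interior $\Sigma^o_f$, respectively of the boundary circles $S^1_i$ of the compact surface with interior $\Sigma^o$. Since $f$ permutes these, the restriction maps are $f^*$-equivariant, and they tie together the Wang sequence above, the Wang sequence of the boundary fibration $\bigsqcup_i S^1_i\to\bigsqcup_i\widetilde{S^1_i}\to S^1$, and the long exact sequences of the pairs. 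Here the hypothesis that the $\alpha_i$ are regular is used: the meridian holonomies are then regular elements, so $H^0$ of each boundary torus is the Cartan subalgebra $\mathfrak{t}$, and Poincar\'e duality on the torus gives $\dim H^1(\widetilde{S^1_i};\mathfrak{k}_{\ad})=2\dim H^0(\widetilde{S^1_i};\mathfrak{k}_{\ad})$, whence the ``half lives half dies'' Lagrangian property controls the rank of the restriction $H^1(\Sigma^o_f)\to H^1(\partial)$. A diagram chase through this commutative array then yields the parabolic Wang sequence
\[
0\to H^0(\pi,\Ad\rho)_{f^*}\to H^1_{\mathrm{par}}(\pi_f,\Ad(\rho,g))\to H^1_{\mathrm{par}}(\pi,\Ad\rho)^{f^*}\to 0,
\]
where $(-)_{f^*}$ and $(-)^{f^*}$ denote coinvariants and invariants under $f^*$.

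Finally I would take dimensions. Since $f$ has finite order, every $f^*$-module in play is semisimple over $\mathbb{R}$, so taking $f^*$-invariants is an exact functor and $\dim\ker(f^*-1)=\dim\coker(f^*-1)$ on each finite-dimensional module; in particular $\dim H^0(\pi,\Ad\rho)_{f^*}=\dim H^0(\pi,\Ad\rho)^{f^*}=\dim H^0(\pi_f,\Ad(\rho,g))$, while $H^1_{\mathrm{par}}(\pi,\Ad\rho)^{f^*}=E_1(f^*,\rho)$. The parabolic Wang sequence then gives $\dim H^1_{\mathrm{par}}(\pi_f,\Ad(\rho,g))=\dim H^0(\pi_f,\Ad(\rho,g))+\dim E_1(f^*,\rho)$, which is the asserted identity. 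I expect the main obstacle to be the middle step: showing that the Wang sequence restricts to parabolic cohomology with exactly this boundary correction, i.e. that it is precisely $H^0(\pi,\Ad\rho)_{f^*}\cong H^0(\pi_f,\Ad(\rho,g))$, and no more, that survives among the peripheral contributions -- this is where the regularity of the weights and the Poincar\'e--Lefschetz / Lagrangian-boundary input for the $3$-manifold are indispensable, and the homological bookkeeping must be done with care.
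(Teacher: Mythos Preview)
Your overall strategy---a Wang sequence for the fibration $\Sigma^o\to\Sigma^o_f\to S^1$---is exactly the one the paper uses, but you have made the argument substantially harder than necessary, and introduced two hypotheses the paper explicitly does without.

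The paper does not first set up the ordinary Wang sequence and then ``upgrade to parabolic'' by a boundary-torus diagram chase.  Instead it writes the six-term sequence
\[
0\to H^0(\pi_f)\to H^0(\pi)\xrightarrow{\mu^0} H^0(\pi)\xrightarrow{\delta} H^1_{\mathrm{par}}(\pi_f)\xrightarrow{\phi^1} H^1_{\mathrm{par}}(\pi)\xrightarrow{\mu^1} H^1_{\mathrm{par}}(\pi)
\]
directly at the parabolic level, with completely explicit maps ($\mu^0(v)=v-\Ad(g)v$, $\delta(v)=u_v$ defined by $u_v|_\pi=0$ and $u_v(\eta)=v$, $\phi^1=$ restriction, $\mu^1(u)=u-\Ad(g)\circ f^*u$), and verifies exactness by hand using only the cocycle condition.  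The step you flag as the ``main obstacle'' simply evaporates: since $u_v$ vanishes on $\pi$, it vanishes on the meridians $a_i$ and is hence trivially parabolic, so $\delta$ lands in $H^1_{\mathrm{par}}(\pi_f)$; and a cocycle on $\pi_f$ that is parabolic restricts to one on $\pi$ that is parabolic.  No regularity of the $\alpha_i$, no Poincar\'e--Lefschetz duality on the boundary tori, and no half-lives-half-dies are needed.

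A second difference: you invoke finite order of $f$ to get semisimplicity and hence $\dim H^0(\pi)_{f^*}=\dim H^0(\pi)^{f^*}$.  The paper avoids this entirely (and remarks so just before the proof): once the six-term sequence is in hand, the alternating-sum identity
\[
\dim H^0(\pi_f)-\dim H^0(\pi)+\dim H^0(\pi)-\dim H^1_{\mathrm{par}}(\pi_f)+\dim\ker\mu^1=0
\]
gives $\dim\ker\mu^1=\dim H^1_{\mathrm{par}}(\pi_f)-\dim H^0(\pi_f)$ for arbitrary $f$, not just finite-order ones.  Your collapse to a three-term sequence with coinvariants on the left is what forces the semisimplicity input; keeping the full six-term sequence makes it unnecessary.

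So your proposal is morally correct, but the paper's route is both shorter and strictly more general.
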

\begin{rem}  In the non-parabolic case this was proven using a Mayer--Vietoris sequence (cfr. \cite[\S 7]{andersen95}).  As we could not find any corresponding reference in the parabolic case, and as we in fact only need a small part of the exact sequence, we take  a slightly different approach here, based on an explicit and concrete description of an exact sequence inspired by the Wang exact sequence used in \cite[\S 5.1]{andersen-himpel2011}.  This is presented explicitly in terms of generators of $\pi_f$, which is the reason we work with (parabolic) group cohomology rather than twisted de Rham cohomology. 
  
\end{rem}
\begin{proof}[{Proof of Proposition \ref{goodjobsoeren}}]
We claim that there is an exact sequence
  \begin{align*}
  0 \longrightarrow H^0(\pi_f,& \Ad(\rho,g)) \stackrel{\phi^0}{\longrightarrow} H^0(\pi,\Ad\rho) \stackrel{\mu^0}{\longrightarrow} H^0(\pi,\Ad\rho)\\ \ & \stackrel{\delta}{\longrightarrow} H^1_{\mathrm{par}}(\pi_f,\Ad(\rho,g)) \stackrel{\phi^1}{\longrightarrow} H^1_{\mathrm{par}}(\pi,\Ad\rho) \stackrel{\mu^1}{\longrightarrow} H^1_{\mathrm{par}}(\pi,\Ad\rho) \longrightarrow \cdots,
  \end{align*}
  where the maps are the following:
  \begin{align*}
    \phi^0(v) &= v, \\
    \mu^0(v) &= v-\Ad(g)v,\\
    \delta(v)(\gamma) &= u_v(\gamma) = \begin{cases} 0, & \gamma \in \pi, \\ v, & \gamma = \eta, \end{cases} \\
    \phi^1(u) &= u|_{\pi},\\
    \mu^1(u) &= u-\Ad(g) \circ f^* u.
  \end{align*}
  Here,
  \[
    (\Ad(g) \circ f^* u)(\gamma) = \Ad(g) u(f_*\gamma).
  \]
  Assuming that we have this exact sequence, the Proposition then follows: first of all, the dimension of the $1$-eigenspace of $\Ad(g) \circ f^*$ on $H^1_{\mathrm{par}}(\pi, \Ad \rho)$ is exactly the dimension of $\ker \mu^1$, and one finds that this equals the right hand side of (\ref{E1dim}) by applying exactness for each of the first six maps. For this, the precise expressions for each of the maps are not needed.
  
  Let us ensure that all of the maps are well-defined and that indeed the sequence is exact.
  First of all, if $v \in \mathfrak{k}$ is $\pi_f$-invariant, then $v \in \mathfrak{k}$ is $\pi$-invariant, so $\phi^0(v) \in H^0(\pi, \Ad \rho)$. Likewise, if $v$ is $\pi$-invariant, then so is $\Ad(g)v$ (use the action of $f_*\gamma$ on $v$), and the first exactness claim follows as $v$ is $\pi_f$-invariant if and only if $v = \Ad(g)v$.
  
  That $\phi^1$ is well-defined is obvious. That $\im\, \phi^1 = \ker \mu^1$ boils down to showing that $u = \Ad(g) \circ f^*u$ for $u \in H^1(\pi_f,\Ad(\rho,g))$, which on the other hand follows by the cocycle condition, as
  \begin{align*}
    (\Ad(g)f^*u)(\gamma) &= \Ad (\rho(\eta)) u(f_*\gamma) = u(\eta f_*\gamma) - u(\eta) \\
      &= u(\gamma \eta) - u(\eta) = u(\gamma) + \Ad(\rho(\gamma))u(\eta) - u(\eta) \\
      &= u(\gamma) - \delta(u(\eta))(\gamma).
  \end{align*}
  At this point, we should note that the same calculation also shows that if $u$ is parabolic, then so is $\Ad(g) \circ f^* u$, and one thus finds that $\mu^1$ is in fact well-defined.
  
  To see that $\delta$ is well-defined, one readily checks that $u_v(\gamma \eta) = u_v(\eta f_*\gamma)$ for $\gamma \in \pi$. It is clear that $\im \,\delta \subseteq \ker \phi^1$ and that $\im \mu^0 \subseteq \ker \delta$. Assume now that $[u_v] = 0$. That is, that there exists a $\mu \in \frakg$ such that $u_v(\gamma) = \mu - \Ad(\rho(\gamma)) \mu$ for all $\gamma \in \pi_f$. Since $u_v(\gamma) = 0$ for $\gamma \in \pi$, we see that $\mu \in H^0(\pi,\Ad\rho)$. Moreover, $v = u_v(\eta) = \mu - \Ad(\rho(\eta))\mu$, and so $\ker \delta \subseteq \im \mu^0$. Finally, to see that $\ker \phi^1 \subseteq \im \delta$ suppose that $u(\gamma) = \mu - \Ad (\rho (\gamma))\mu$ for some $\mu \in \frakg$ and all $\gamma \in \pi$, and let $v = u(\eta) - (\mu - \Ad(\rho(\eta))\mu)$. One then finds by application of the cocycle condition that $v$ is $\pi$-invariant, and obviously $[u] = [u_v]$, so $u \in \im \delta$.
\end{proof}
We can link the group $H^1_{\operatorname{par}}(\pi_f, \Ad(\rho,g))$ with the coefficients occurring in the statement of the Growth Rate Conjecture \ref{conj:GR} and finally conclude with

\begin{proof}[Proof of Theorem \ref{GRT}]
It suffices to remark that it follows from (\ref{endresult}) that $d_{\gamma}$ equals the dimension of $\gamma$-component of $\mathcal{M}_{\overline{\alpha}}$, which we claim for generic $[\rho]$ of that component equals the dimension of the $1$-eigenspace of $T_{[\rho]}\mathcal{M}_{\overline{\alpha}}$. Hence the result follows from Proposition \ref{goodjobsoeren}.

To see the claim, let $M^\gamma$ be a fixed $f^*$-fixed component, let $[\rho]$ denote a smooth point in $M^\gamma$, and let $g$ be as before. We will show that $T_{[\rho]}M^\gamma \isom E_1(f^*,\rho)$.

Let $\alpha \mapsto [\rho_t(\alpha)] = [e^{tu(\alpha) + O(t^2)}\rho(\alpha)]$, $\alpha \in \pi$, denote a smooth path through $[\rho]$, completely contained in $M^\gamma$. The tangent vector at $t = 0$ is $u$, which in general is an element of $H^1_{\mathrm{par}}(\pi, \Ad \rho)$, and we claim that $u \in E_1(f^*,\rho)$. For each $t$, choose $g_t \in K$ so that $g_t\rho_t(f_*\alpha) = \rho_t(\alpha)g_t$, $g_0 = g$. Differentiating this equation and letting $t = 0$, we find that
\[
	\dot{g}_0\rho(f_*\alpha) + gu(f_*\alpha)\rho(f_*\alpha) = u(\alpha)\rho(\alpha)g + \rho(\alpha)\dot{g}_0.
\]
Letting $m = \dot{g}_0g^{-1} \in \Lie(K)$, this tells us that
\[
	u(\alpha) = \Ad(g) u(f_*\alpha) + \delta m(\alpha).
\]
On the other hand, we claim that mapping $u \in E_1(\rho,f^*)$ to $\left.\frac{d}{dt}\right|_{t=0}[e^{tu}\rho]$, we end up an element in $T_{[\rho]}M^\gamma$. Letting $\rho_t = e^{tu}\rho$, we find that to first order in $t$ and for all $\alpha \in \pi$,
\begin{align*}
	\rho_t(f_*\alpha) &= e^{t \Ad(g^{-1})u(\alpha)}\rho(f_*\alpha) = \Ad(g^{-1})(e^{t u (\alpha)}) \rho(f_*\alpha) \\
	 &= \Ad(g^{-1}) ( e^{tu(\alpha)}\rho(\alpha) ) = \Ad(g^{-1})\rho_t(\alpha),
\end{align*}
which shows that $\left.\frac{d}{dt}\right|_{t=0}[e^{tu}\rho] \in T_{[\rho]}M^\gamma$.

\end{proof}

\appendix

\section{The Chern--Simons functional of manifolds with links}
\label{chernsimonsboundaryappendix}
\subsection{Connections on $3$-manifolds with links}
\label{chernsimonsboundaryconnectionssection}
In this section, we discuss how to make sense of Chern--Simons values for flat connections on $3$-manifolds containing coloured framed links. The Chern--Simons functionals will be defined an the moduli space of all flat connections on the link complement, but will depend on the framing of the link.  Throughout, $\bar{\alpha} = (\alpha_1,\dots,\alpha_n)$ denotes a tuple of elements in the Weyl alcove (at level $1$), not necessarily in the weight lattice.

We begin by choosing a tubular neighbourhood around each link component $L_i$, with coordinates $(r,\theta_1, \theta_2)$, such that $(r, \theta_1)$ are polar coordinates in the normal direction at each point, and such that $\theta_2$ parametrises $L_i$.  We want this choice to be adapted to the framing, in the sense that the framing is determined by the radial direction $(1,0,\theta_2)$.
Consider the set ${\mathcal A}_{\operatorname{st}}$ of smooth connections  on $X \setminus L$, which are of exactly the following form in these neighbourhoods of the $L_i$:

$$\nabla= d+ \xi_{i,1} d\theta_1 + \xi_{i,2} d\theta_2.$$

Of course we also want that $ \exp(\xi_{i,1})$ lies in the conjucacy class of $\exp(\alpha_i)$ we have associated to $L$. We observe that the space of smooth gauge transformations ${\mathcal G}_{\operatorname{st}}$, which are constant in the neighbourhoods of each component $L_i$ acts on  ${\mathcal A}_{\operatorname{st}}$. We observe that any flat connection on $X\setminus L$ with the required holonomy around $L$ is gauge equivalent to one of these, e.g. by using the equivalence of the moduli space of flat connections on a tubular neighbourhood of $L_i$ minus $L_i$ is given by representations of $\pi_1$ of that neighboorhood and that one can get all representations from connections of the above form. We also observe that two flat such connections are gauge equivalent iff they are equivalent under ${\mathcal G}_{\operatorname{st}}$. Now it is clear that the Chern--Simons functional is well defined on ${\mathcal A}_{\operatorname{st}}$, since the support of the Chern--Simons $3$-form has compact support and the functional is invariant mod integers under ${\mathcal G}_{\operatorname{st}}$.

\subsection{The mapping torus case}
Now, we will see that the Chern--Simons values we used in Section~\ref{csbundle} are actually the Chern--Simons values introduced above.

More precisely, assume that we are in the setup of Lemma~\ref{tracelem}. That is, let $f : \Sigma \to \Sigma$ be an element of $\operatorname{Diff}_{+}(\Sigma, \overline{z},\overline{\alpha})$ (as in Section \ref{diffeos}).
In particular, $f$ preserves the disjoint union $D$ of the disks $D_i$ around the marked points in $\cP$. Assume that $[\nabla_A] \in \mathcal{M}_{\overline{\alpha}}$ is a fixed point of $f$ and let $g \in \mathcal{G}_{0,\delta}$ such that ${\nabla_A}^g = f^*\nabla_A$. Let $g_t$ be a path in $\widetilde{\mathcal{G}}_{0,\delta}$ with $g_0 = g$, $g_1 = e$. Now, choose a proper subset $D_{1/2} \subseteq \mathrm{int}(D)$, each component containing a marked point and assume furthermore, without loss of generality, that $g_t|_{D}$ is the identity for all $t$.

Let $\widetilde{\nabla_A}$ be the connection on $\Sigma_f = [0,1] \times \Sigma^o/\sim$ given by $\widetilde{\nabla_A}|_{\Sigma^o \times \{t\}} = \nabla_A^{g_t}$. Choose a smooth cut-off function $h : \Sigma^o \to [0,1]$ with $h|_{D_{1/2}} = 0$, $h|_{D^c} = 1$, and define $h : \Sigma_f \to [0,1]$ by $h(t,x) = h(x)$.

As in Section~\ref{csbundle}, it makes sense to talk about the Chern--Simons value of $\widetilde{\nabla_A}$ independently of the previous discussion.
\begin{lem}
  Write $\widetilde{A} = \pi^*A$ and $\widetilde{A_0} = \pi^* A_0$. Then
  \[
    \CS(\widetilde{A} + \widetilde{A_0}) = \CS(h\widetilde{A} + \widetilde{A_0}).
  \]
\end{lem}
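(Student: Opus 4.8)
The plan is to notice that both connections occurring in the statement are pulled back, via the projection $\pi$ onto the surface factor, from genuine connection $1$-forms on the $2$-dimensional manifold $\Sigma^o$, and that the Chern--Simons density of such a pullback vanishes identically for dimensional reasons; consequently \emph{both} sides of the asserted equality are equal to $0$.

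First I would unravel the definitions. Since $h$ is of the form $h(t,x)=h(x)$, it is the pullback $\pi^*(h|_{\Sigma^o})$ of a smooth function on $\Sigma^o$; as $A\in L^2_{1,\delta}$ and $h$ is bounded with bounded derivative, $hA\in L^2_{1,\delta}$, so $hA+A_0$ and $A+A_0$ are both connection $1$-forms on $\Sigma^o$, and
\[
\widetilde A+\widetilde{A_0}=\pi^*(A+A_0),\qquad h\widetilde A+\widetilde{A_0}=\pi^*(hA+A_0).
\]
Writing $\mathrm{cs}(B)=\tfrac{1}{8\pi^2}\tr\!\big(B\wedge dB+\tfrac23\,B\wedge B\wedge B\big)$ for the Chern--Simons density, naturality gives $\mathrm{cs}(\pi^*B)=\pi^*\mathrm{cs}(B)$; but for a $1$-form $B$ on $\Sigma^o$ the form $\mathrm{cs}(B)$ is a $3$-form on a $2$-manifold, hence identically $0$. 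Therefore $\mathrm{cs}(\widetilde A+\widetilde{A_0})=0=\mathrm{cs}(h\widetilde A+\widetilde{A_0})$ pointwise — the Chern--Simons density being a local expression, this holds whether one computes on $[0,1]\times\Sigma^o$ or on the mapping torus $\Sigma_f$ — and integrating yields $\CS(\widetilde A+\widetilde{A_0})=0=\CS(h\widetilde A+\widetilde{A_0})$. Equivalently, along the interpolation $B_s=\pi^*\big(((1-s)+sh)A+A_0\big)$, $s\in[0,1]$, the standard first-variation formula expresses $\tfrac{d}{ds}\CS(B_s)$ as the integral of $\tr(\dot B_s\wedge F_{B_s})=\pi^*\tr\!\big((h-1)A\wedge F_{((1-s)+sh)A+A_0}\big)$, again the pullback of a $3$-form from $\Sigma^o$, so $\CS(B_s)$ is constant.

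I do not expect a genuine obstacle here: the only point requiring (entirely routine) care is the bookkeeping — checking that $h$, $\widetilde A$ and $\widetilde{A_0}$ are all pulled back from $\Sigma^o$ so that the dimension count applies, and observing that convergence of the Chern--Simons integral in the weighted Sobolev setting is a non-issue since the integrand vanishes identically. It is worth emphasising in the writeup that the argument uses nothing about the fixed-point condition, the path $g_t$, or the standard form of $\nabla_A$ near the punctures — only that the relevant $1$-forms are pullbacks from the surface factor — which is precisely why the lemma allows one to replace the connection by one agreeing with $\pi^*(\alpha_i\,d\theta_i)$ near the link (where $h\equiv 0$) without changing Chern--Simons values, as needed for the comparison with the standard-form connections of Appendix~\ref{chernsimonsboundaryconnectionssection}.
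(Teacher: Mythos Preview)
Your proof is correct and rests on the same key fact the paper uses: the Chern--Simons $3$-form of a connection with no $dt$-component (equivalently, one pulled back from the surface factor) vanishes. The paper arrives at the conclusion slightly differently: it observes that the two connections agree on $[0,1]\times D^c$ (since $h|_{D^c}=1$) and then argues that on $[0,1]\times D$ both contributions vanish because ``the $t$-derivative vanishes'' there. Your version is more direct --- you note that \emph{both} $1$-forms are globally of the form $\pi^*(\cdot)$, so the Chern--Simons density is identically zero everywhere on $[0,1]\times\Sigma^o$ and both sides equal $0$, with no need to localise to $D$ first. The paper's phrase ``by choice of $g_t$'' is a red herring for the lemma as stated (with $\widetilde A=\pi^*A$ the $t$-independence is automatic); it is relevant only for the preceding connection $\widetilde{\nabla_A}$, which coincides with $\pi^*\nabla_A$ on $D$ precisely because $g_t|_D=e$. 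Your observation that the argument uses nothing about $g_t$ or the fixed-point condition is therefore well taken.
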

\begin{proof}
  It suffices to notice that $\CS((\widetilde{A} + \widetilde{A_0})|_{[0,1] \times D}) = \CS((h\widetilde{A} + \widetilde{A_0})|_{[0,1] \times D}) = 0$. This is the case as for both connections, the $t$-derivative vanishes by choice of $g_t$.
\end{proof}
Now, everything has been set up for the following result to hold.
\begin{lem}
  \label{correspondencewitholdCS}
  By taking the natural tubular neighbourhood $N(L) = [0,1] \times D_{1/2}/\sim$ of $L$, adapted to the framing by definition, then $(h\widetilde{A} + \widetilde{A_0})|_{[0,1] \times D_{1/2}^c}$ extends to a connection in $\mathcal{A}_{\mathrm{st}}$, whose Chern--Simons functional, as defined in Section~\ref{chernsimonsboundaryconnectionssection}, agrees with the Chern--Simons functional of $\widetilde{\nabla_A}$  as defined in Section~\ref{csbundle}.
\end{lem}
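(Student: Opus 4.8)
The plan is to observe that the asserted ``extension'' may be taken to be the connection $h\widetilde A + \widetilde{A_0}$ itself, regarded as a connection on $X\setminus L = \Sigma^o_f$, and then to compare the two Chern--Simons functionals by comparing their integrands. First I would fix the geometry near the link: the natural tubular neighbourhood $N(L) = [0,1]\times D_{1/2}/\!\sim$ is a solid torus with core $L$, and since $f\in\operatorname{Diff}_{+}(\Sigma,\overline z,\overline\alpha)$ preserves the disks $D_i$ together with the coordinates $z_i$, the angular coordinate of the $D_i$ descends to a well-defined coordinate around $L$; together with the radial (normal) coordinate $r$ and a coordinate along $L$ this gives coordinates of exactly the type used to define $\mathcal A_{\mathrm{st}}$ in Appendix~\ref{chernsimonsboundaryconnectionssection}. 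That the framing of $L$ inherited from the marked surface coincides with the radial direction is precisely the compatibility built into the choice of tangent vectors in Section~\ref{equi} (they are the ``radial'' directions of the $z_i$), which is what ``adapted to the framing by definition'' refers to.

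Next I would check that $h\widetilde A + \widetilde{A_0}$ lies in $\mathcal A_{\mathrm{st}}$. Shrinking $h$ if necessary so that it vanishes on a neighbourhood of $\overline{D_{1/2}}$, on $N(L)\setminus L$ we have $h\widetilde A + \widetilde{A_0} = \widetilde{A_0} = \pi^*A_0$, which equals $d + \alpha_i\,d\theta_i$ by the choice of $\nabla_0$ in Section~\ref{constr} (this is consistent under the gluing by $f$ because the $f$-orbit of $p_i$ carries a single weight $\alpha_i$, and because $\nabla_A$ may be taken of the standard form $d+\alpha_i\,d\theta_i$ on the $D_i$ as in the proof of Lemma~\ref{tracelem}). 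In the $\mathcal A_{\mathrm{st}}$-coordinates this reads $d + \xi_{i,1}\,d\theta_1$ with $\xi_{i,1}$ a constant element of the Cartan subalgebra, and the holonomy of this connection around the meridian of $L_i$ --- which, with the prescribed orientation of $L$, is homotopic to the generator $a_i$ of $\pi_1(\Sigma^o)$ --- is by the very construction of $\nabla_0$ in the conjugacy class $c_i$ of $\exp(\alpha_i)$. Hence $h\widetilde A + \widetilde{A_0}$ already has the standard form near $L$ with the required holonomy, so it is a valid choice of the extension (and its restriction to $[0,1]\times D_{1/2}^c$ obviously agrees with it).

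Finally I would match the two functionals. The Chern--Simons value of Appendix~\ref{chernsimonsboundaryconnectionssection} applied to $h\widetilde A + \widetilde{A_0}$ is $\tfrac{1}{8\pi^2}\int_{\Sigma^o_f}\operatorname{tr}(B\wedge dB + \tfrac23 B^3)$ for $B$ its connection form; this makes sense because on $N(L)\setminus L$ the connection is $d+\xi_{i,1}\,d\theta_1$, which is flat and whose Chern--Simons $3$-form $\operatorname{tr}(\xi_{i,1}d\theta_1\wedge d(\xi_{i,1}d\theta_1) + \tfrac23(\xi_{i,1}d\theta_1)^3)$ vanishes identically. Since this $3$-form vanishes on a full neighbourhood of $L$ it extends by zero over $\Sigma_f$, so the integral over $\Sigma^o_f$ equals $\CS_{\Sigma_f}(h\widetilde A + \widetilde{A_0})$. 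By the preceding Lemma this equals $\CS_{\Sigma_f}(\widetilde A + \widetilde{A_0})$ (the two connections differ only over $[0,1]\times D$, where each has vanishing Chern--Simons $3$-form, being pulled back from $\Sigma$), and $\CS_{\Sigma_f}(\widetilde A + \widetilde{A_0})$ is by construction the Chern--Simons value of $\widetilde{\nabla_A}$ used in Section~\ref{csbundle}, cf.\ Lemma~\ref{tracelem}, where $\CS_{[0,1]\times\Sigma^o}(\widetilde{A\!+\!A_0}^{\tilde g})$ is identified with the integral over the mapping torus of the descended connection. Chaining these equalities yields the lemma.

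I expect the only real obstacle to be the bookkeeping of orientation and normalisation conventions in the middle step: one must confirm that the orientation of $L$ inherited from the marked surface, the orientation of the meridian induced from the orientation of $\Sigma_\sigma$, the sign in $w_i = -\log z_i$, and the identification of parabolic weights with Cartan elements all conspire so that the constant $\xi_{i,1}$ satisfies $\exp(\xi_{i,1})\in c_i$ rather than $\exp(-\xi_{i,1})\in c_i$ (these classes differ for generic $\alpha_i$). Concretely this reduces to checking that the oriented meridian of $L_i$, as prescribed in Appendix~\ref{chernsimonsboundaryconnectionssection}, is freely homotopic as an oriented loop to $a_i$, after which the holonomy claim is immediate from the definition of $\nabla_0$.
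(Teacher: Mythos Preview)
The paper does not supply a proof for this lemma at all; it merely says ``Now, everything has been set up for the following result to hold'' and states the lemma. Your argument is exactly the expansion of that sentence: identify the extension with $h\widetilde A + \widetilde{A_0}$ itself, observe that on $N(L)\setminus L$ it equals $\widetilde{A_0}=\alpha_i\,d\theta_i$ (because $h|_{D_{1/2}}=0$ and $g_t|_D=e$, so the gluing is trivial there), hence lies in $\mathcal A_{\mathrm{st}}$, and then invoke the preceding lemma $\CS(\widetilde A+\widetilde{A_0})=\CS(h\widetilde A+\widetilde{A_0})$ to match the two functionals. This is precisely what the setup of Appendix~A.2 was designed to make immediate, so your proof is correct and is the intended one.

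One small remark: you do not need to ``shrink $h$'' --- the paper already chose $h|_{D_{1/2}}=0$, and since $N(L)=[0,1]\times D_{1/2}/\!\sim$, the connection is already exactly $\widetilde{A_0}$ on the whole tubular neighbourhood, which suffices for membership in $\mathcal A_{\mathrm{st}}$. Your caution about orientation and sign conventions is well placed but, as you say, is bookkeeping rather than a mathematical obstacle.
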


\section{Dehn twist action}
\label{dehntwistappendix}
  
  As we noticed in Remark~\ref{dehntwistboundaryremark}, in general Dehn twists around marked points will act non-trivially on the Chern--Simons line bundle, even though they act trivially on the base moduli space. In this section, we evaluate the lifted action of such a Dehn twist explicitly, and as a corollary we show that the Asymptotic Expansion Conjecture also holds for mapping tori for these Dehn twists.

  Let $p_i \in \mathcal{P}$ be a given marked point with coordinate neighbourhood $(D_i,z_i)$ as in Section~\ref{two}, so that in particular $(\tau_i,\theta_i)$ denote coordinates of $D_i \setminus \{p_i\}$. Choose a smooth increasing function $f : \mathbb{R} \to [0,2\pi]$ with the property that $f(0) = 0$, $f(1) = 2\pi$, $f'(0)=f'(1) = 0$. Let $\alpha_i$ denote the Lie algebra element whose exponential is the fixed holonomy around $p_i$.  We assume the conditions of Theorem~\ref{liftunderconditions} are met, and in particular that $\lambda_i=k\alpha_i$ is a co-weight.
    
  By a Dehn twist around $p_i$ we mean the diffeomorphism (or its mapping class) $T_i : \Sigma^o \to \Sigma^o$ defined to be $(\tau_i,\theta_i) \mapsto (\tau_i,\theta_i+f(\tau_i))$ on $D_i \setminus \{p\}$ and the identity everywhere else. As a map of punctured surfaces, $T_i$ is isotopic to the identity on $\Sigma^o$ and thus acts trivially on $\mathcal{M}_{\overline{\alpha}}$. 
  \begin{prop}\label{dehntwist}
    For every point $[\nabla_A] \in \mathcal{M}_{\overline{\alpha}}$, the map induced by $T_i^*$ on $\mathcal{L}^k_{\mathrm{CS}}\big|_{[\nabla_A]}$ is given by multiplication by $\exp(-\pi ik \langle \alpha_i, \alpha_i \rangle)=\exp\left(\frac{-\pi i |\lambda_i |^2}{k}\right)$.
  \end{prop}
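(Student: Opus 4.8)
The plan is to compute the cocycle $\Theta^k(\nabla_A, g)$ explicitly for the gauge transformation $g$ arising from the Dehn twist $T_i$, using the cylinder description of $\Theta^k$ in \eqref{punctco}. First I would identify the gauge transformation: since $T_i$ is isotopic to the identity, its action on a connection $\nabla_A$ is given by a gauge transformation $g \in \mathcal{G}_{0,\delta}$, and by the discussion in Section~\ref{isotopysection} one may take $g_t$ to be the holonomy of $\nabla_A$ along the paths traced out by the isotopy. The key simplification is that everything is supported in $D_i \setminus \{p_i\}$, and by Lemma~\ref{tracelem} we may assume $\nabla_A$ takes the standard form $d + \alpha_i\, d\theta_i$ there \cite[Lemma 2.7]{daska-went1}. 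On $D_i\setminus\{p_i\}$, the isotopy realizing $T_i$ is $(\tau_i,\theta_i)\mapsto(\tau_i,\theta_i + tf(\tau_i))$, and the corresponding gauge transformation acting on the standard connection is $g_t = \exp(-t f(\tau_i)\,\alpha_i)$ (the holonomy factor), so that $\nabla_A^{g_t}$ is again $d + \alpha_i\,d\theta_i$ pulled back suitably — in particular $T_i^*\nabla_A = \nabla_A$ on the nose for the standard form, with $g$ the transformation $\exp(-f(\tau_i)\alpha_i)$.

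Next I would plug this into \eqref{punctco}. We have $\Theta^k(\nabla_A, g) = \exp(-2\pi i k\, \CS_{[0,1]\times\Sigma^o}(\widetilde{A+A_0}^{\tilde g}))$, where $\tilde g$ is the extension over the cylinder with $\tilde g_0 = g$, $\tilde g_1 = e$. All of the Chern--Simons integrand is supported in $[0,1]\times(D_i\setminus\{p_i\})$, so this reduces to an integral over the solid torus $[0,1]\times\mathcal{C}$ of an explicit abelian ($\mathfrak{t}$-valued) Chern--Simons form built from $\alpha_i\,d\theta_i$ and the Maurer--Cartan form of $\tilde g = \exp(-\tilde f(\tau_i,s)\,\alpha_i)$ for a suitable interpolation $\tilde f$. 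Because everything commutes (values lie in the Cartan subalgebra), the cubic term drops out and one is left with something of the form $\int \langle \alpha_i, \alpha_i\rangle\, d\theta_i \wedge d(\text{interpolation})$; carrying out the $\theta_i$, $\tau_i$, and $s$ integrations, using $f(0)=0$, $f(1)=2\pi$, and $\int_0^{2\pi}d\theta_i = 2\pi$, produces the value $\tfrac12\langle\alpha_i,\alpha_i\rangle$ up to the normalisation factors in $\CS$. Hence $\Theta^k(\nabla_A,g) = \exp(-\pi i k\langle\alpha_i,\alpha_i\rangle)$, and since the action of $T_i^*$ on $\mathcal{L}^k_{\CS}\big|_{[\nabla_A]}$ is (up to convention on inverse) multiplication by this $\Theta^k$, this is the claimed scalar; rewriting $\langle\alpha_i,\alpha_i\rangle = |\lambda_i|^2/k^2$ via $\lambda_i = k\alpha_i$ gives $\exp(-\pi i |\lambda_i|^2/k)$. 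Finally, I would note that the answer is independent of the choice of basepoint $[\nabla_A]$ and of the auxiliary function $f$ and interpolation, because $\Theta^k$ is independent of the extension by Lemma~\ref{cocyclelem} and its consequences, and because any two choices of standard-form representative differ by a transformation in $\prod_i L_i^c$ which commutes with the $T_i$-action.

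The main obstacle I anticipate is bookkeeping the normalisation constants correctly: keeping straight the $\tfrac{1}{8\pi^2}$ in the definition of $\CS$, the factor $\tfrac{1}{4\pi}$ appearing in the Daskalopoulos--Wentworth form of the cocycle (cf. \cite[Eq. 5.1]{daska-went1} and Lemma~\ref{equivlem}), the normalisation of $\langle\cdot,\cdot\rangle$ (so that $\langle\theta,\theta\rangle = 2$), and the sign conventions in \eqref{liftgauge} versus the induced action on fibres. A clean way to sidestep some of this is to observe that $\Theta^k$ restricted to connections of the standard form and gauge transformations valued in the maximal torus is exactly the abelian Chern--Simons cocycle, for which the value on a loop of ``winding one'' around a marked point is well documented; comparing with the known computation of Charles \cite{charles} for surfaces with boundary then fixes all constants. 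Everything else — well-definedness, independence of choices, and the reduction to the cylinder — is routine given Lemmas~\ref{equivlem}, \ref{cocyclelem}, and \ref{tracelem}.
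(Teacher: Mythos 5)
Your computational core is the same as the paper's: put $\nabla_A$ in the standard form $d+\alpha_i\,d\theta_i$ on $D_i$, realise the twist by an explicit family $g_t$, reduce the cocycle to an abelian Chern--Simons integral over $[0,1]\times (D_i\setminus\{p_i\})$ in which the cubic term vanishes, and integrate $f'$ to land on $\tfrac12\lvert\alpha_i\rvert^2$. That part would go through.

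There is, however, a genuine error at the start: you assert that, since $T_i$ is isotopic to the identity, the resulting gauge transformation lies in $\mathcal{G}_{0,\delta}$. It does not. Your own formula shows why: $g=\exp(\pm f(\tau_i)\alpha_i)$ tends to $\exp(\pm 2\pi\alpha_i)$ as $\tau_i\to\infty$, which is the (generically nontrivial) holonomy element, not the identity; so $\sigma_i(g)\neq I$ and $g\in\mathcal{G}_{\delta}\setminus\mathcal{G}_{0,\delta}$. This is not a technicality. If $g$ really were in $\mathcal{G}_{0,\delta}$, the isotopy proposition of Section~\ref{isotopysection} would force $\Theta^k(\nabla_A,g)=1$ and the Dehn twist would act trivially, contradicting the statement you are proving; the nonvanishing limit at the puncture is precisely the source of the character $\exp(-\pi i k\langle\alpha_i,\alpha_i\rangle)$. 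It also creates an obligation you have skipped: $\Theta^k$ was only constructed as a cocycle on $\mathcal{A}_{\delta}\times\mathcal{G}_{0,\delta}$, and for general elements of $\mathcal{G}_{\delta}$ the Chern--Simons integral need not converge, nor does formula \eqref{punctco} automatically compute the lift (which for the second quotient by $\prod_i L_i^c$ is fixed by the two-step descent of \cite[\S 5]{daska-went1}). The paper's proof addresses exactly this: it notes that the $g_t$ are in $\mathcal{G}_{\delta}$ but not $\mathcal{G}_{0,\delta}$, checks that the functional nevertheless converges for this particular $g$ and standard-form $\nabla_A$, and invokes \cite[Lemma 5.4]{daska-went1} to identify the resulting number with the action on the fibre. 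You should replace your appeal to Section~\ref{isotopysection} with this argument; the remaining sign and normalisation bookkeeping you flag (including the inverse in \eqref{liftgauge}, and the slip ``$T_i^*\nabla_A=\nabla_A$ on the nose'', which should read $T_i^*\nabla_A=\nabla_A^{g}$ with $g$ nontrivial at infinity) is then routine.
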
  
\begin{proof}
In this case it suffices to calculate the Chern--Simons functional for a $g\in\mathcal{G}_{\delta}$ that matches a specific diffeomorphism  realizing the Dehn twist, and a suitable connection $\nabla_A$.  Indeed, in general the cocycle $\Theta^k$ may be ill-defined when the Chern--Simons functional does not converge for $g\in \mathcal{G}_{\delta} \setminus \mathcal{G}_{0,\delta}$, and then the full two-step approach to constructing the line bundle, as in \cite[\S 5]{daska-went1}, is required.  When it is well-defined however (as will be the case below), the reasoning as in \cite[Lemma 5.4]{daska-went1} goes through, and we explicitly get the lift of $g$ to the fibres of the trivial bundle over $\nabla_A$ and $\nabla_A^g\in\mathcal{A}_{\delta}$.  Since the lift (\ref{liftactionbis}) of the diffeomorphism is trivial on the fibres, we find that the Dehn twist acts by the inverse of the value the cocycle on the fibre of $\mathcal{L}^k_{\operatorname{CS}}$ over $[\nabla_A]\in\mathcal{M}_{\overline{\alpha}}$.

Let $(\tau,\theta) = (\tau_i,\theta_i)$. Without loss of generality, we can assume that $\nabla_A$ takes the form $d+\alpha_i\, d\theta$ on $D_i$.  Indeed, every $\mathcal{G}_{0,\delta}$-orbit contains a smooth connection \cite[Theorem 6.12]{poritz}, and every smooth flat connection can be put in this form \cite[Lemma 2.7]{daska-went1}.  We will now assume such a flat $\nabla_A$ chosen.
We introduce for each $t\in [0,1]$ the map $f_t : [0,\infty) \times \mathbb{R}/(2\pi \mathbb{Z}) \to [0,\infty) \times \mathbb{R}/(2\pi \mathbb{Z})$ given by $$f_t(\tau,\theta)=(\tau, \theta + (1-t)\,f(\tau)),$$ extending trivially to a map $f_t: \Sigma^o\rightarrow \Sigma^o.$  As in Section~\ref{isotopysection},  we get gauge transformations $g_t$  such that $$f_t^*\nabla_A = \nabla_A^{g_t},$$ the only difference being that the $g_t$ are now in $\mathcal{G}_{\delta}$ but not in $\mathcal{G}_{0,\delta}$.  Since $f_0=T$ we have $\nabla_A^{g_0}=T^*\nabla_A$.  Just as previously we now consider the gauge transform $\tilde{g}$ on the trivial bundle over $[0,1]\times \Sigma^o$ defined by the $g_t$, and we apply it to $\widetilde{\nabla_A}$.  It is straightforward to see that on $D_i$,
$$\widetilde{\nabla_A}^{\tilde{g}}=d+ \alpha_i\, d\theta + \alpha_i\, (1-t)f'(\tau)d\tau,$$ and hence we need to apply the Chern--Simons functional to $B=\alpha_i\, d\theta + \alpha_i\, (1-t)f'(\tau)\,d\tau$ on $D_i$.  Of course $B\wedge B\wedge B=0$, and by direct calculation we find $$B\wedge dB= - \langle \alpha_i,\alpha_i\rangle f'(\tau)\, dt \wedge d\tau \wedge d\theta.$$ Now since $g_t$ is trivial outside $D_i$ we can omit all but $[0,1]\times D_i$ from the integrand, and hence we have $$\operatorname{CS}(B)=\frac{-|\alpha_i |^2 }{8\pi^2} \int_{\tau=0}^{\infty}\int_{\theta=0}^{2\pi}  \int_{t=0}^1 f'(\tau)\, dt \wedge d \tau \wedge d\theta = \frac{-2\pi |\alpha_i |^2}{8\pi^2 }\int_{\tau=0}^{\infty} 
f'(\tau) \, d\tau = \frac{-|\alpha_i |^2}{2}.$$  This gives $\Theta^k(\nabla_A,g_0)=\exp\left(\frac{2\pi i k |\alpha_i|^2}{2}\right)$, from which we finally conclude that $T$ acts by  $$\exp\left(- \pi ik |\alpha_i|^2\right)=\exp\left(\frac{-\pi i |\lambda_i |^2}{k}\right)$$ on the fibres of $\mathcal{L}^k_{\operatorname{CS}}$ over all of $\mathcal{M}_{\overline{\alpha}}$.
\end{proof}

\begin{rem}
\label{cftdehntwist}
When $\lambda^{(k)} = k\alpha \in \Lambda^{(k)}_K$, this agrees with the action of $T_i$ in conformal field theory; here, $T_i$ acts on conformal blocks by multiplication by a $\lambda$-dependent root of unity $T^{(k)}_{\lambda \lambda}$, an entry of the so-called $T$-matrix, cf. e.g. \cite{gepner-witten}, \cite{kac}.
\end{rem}

\begin{prop}
	The Asymptotic Expansion and Growth Rate Conjectures hold for $(\Sigma_f,L)$ obtained from mapping tori of $f \in \langle T_i \mid i = 1, \dots, n \rangle$ with $\bar{\lambda}^{(k)}$ as in Corollary~\ref{aecthm}.
\end{prop}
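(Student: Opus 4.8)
The plan is to rerun the argument behind Theorems~\ref{MainTheorem} and~\ref{GRT} in the degenerate case where the automorphism acts trivially on $\mathcal{M}_{\overline{\alpha}}$, keeping track only of the scalar by which it acts on the Chern--Simons line bundle. First I would record the structural facts about $f$. Since the $T_i$ are supported in disjoint disks they commute, so $f=\prod_i T_i^{n_i}$ for integers $n_i$; moreover each $T_i$ lies in $\operatorname{Diff}_{+}(\Sigma,\overline{z},\overline{\alpha})$ (it equals the identity near each $p_i$), hence so does $f$, and the lift of $f$ to $\mathcal{L}^k_{\operatorname{CS}}$ constructed in Section~\ref{liftCS} is available. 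As noted in Remark~\ref{dehntwistboundaryremark}, each $T_i$ is isotopic to the identity as a diffeomorphism of $\Sigma^o$, so $f$ acts trivially on $\mathcal{M}_{\overline{\alpha}}$ and on $\mathcal{T}_{\overline{\Sigma}}$; in particular every $\sigma$ is an $f$-fixed point, so the splitting~\eqref{combitrace} and the equivariant identification of Theorem~\ref{confquant} go through verbatim even though $f$ has infinite order, and \eqref{general}, \eqref{combitrace}, \eqref{maintrace} reduce the statement to computing $\operatorname{tr}\big(f^*\colon H^0(\mathcal{M}_{\overline{\alpha}},\mathcal{L}^k_{\operatorname{CS}})\to H^0(\mathcal{M}_{\overline{\alpha}},\mathcal{L}^k_{\operatorname{CS}})\big)$ together with the abelian prefactor $\Det(f)^{-\frac12\zeta}$.

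Next, because the lift of Section~\ref{liftCS} is a homomorphism and $f$ fixes the base pointwise, Proposition~\ref{dehntwist} gives that $f^*$ acts on every fibre of $\mathcal{L}^k_{\operatorname{CS}}$, hence on all of $H^0(\mathcal{M}_{\overline{\alpha}},\mathcal{L}^k_{\operatorname{CS}})$, by the single scalar
\[
  c_f(k)=\prod_i \exp\big(-\pi i k\, n_i\langle\alpha_i,\alpha_i\rangle\big)=e^{2\pi i k q_f},\qquad q_f:=-\tfrac12\sum_i n_i\langle\alpha_i,\alpha_i\rangle\in\mathbb{Q},
\]
so $\operatorname{tr}(f^*\mid H^0)=c_f(k)\dim H^0(\mathcal{M}_{\overline{\alpha}},\mathcal{L}^k_{\operatorname{CS}})$. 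By Theorem~\ref{cohvanishing} this dimension equals $\chi(\mathcal{M}_{\overline{\alpha}},\mathcal{L}^k_{\operatorname{CS}})$, which is a polynomial in $k$ (a Hilbert polynomial; cf. also Theorem~\ref{LRR} applied to the identity automorphism, equivalently \eqref{endresult} with $f=\operatorname{id}$), with each connected component $\gamma$ of $\mathcal{M}_{\overline{\alpha}}$ contributing a term of degree $\dim_{\mathbb{C}}\gamma$ and positive leading coefficient $\tfrac{1}{(\dim\gamma)!}\int_\gamma[\Omega]^{\dim\gamma}$. Putting $d:=\dim_{\mathbb{C}}\mathcal{M}_{\overline{\alpha}}$ and assembling through \eqref{general}, \eqref{combitrace}, \eqref{maintrace} yields $Z_N^{(k)}(\Sigma_f,L,\overline{\lambda})=\Det(f)^{-\frac12\zeta}\,e^{2\pi i k q_f}\,k^{d}P(1/k)$ with $P$ a polynomial and $P(0)\neq0$ --- exactly the form of Theorem~\ref{MainTheorem}, with a single Chern--Simons phase $q_\gamma=q_f$ (which by the discussion of Section~\ref{discussion} and Lemma~\ref{correspondencewitholdCS} is indeed the common Chern--Simons value of the flat connections on $\Sigma_f\setminus L$, via the explicit computation inside the proof of Proposition~\ref{dehntwist}) and degrees $d_\gamma=\dim_{\mathbb{C}}\gamma$. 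This establishes the Asymptotic Expansion Conjecture~\ref{conj:AEC}. For the Growth Rate Conjecture~\ref{conj:GR}: since $f^*$ is the identity on $T_{[\rho]}\mathcal{M}_{\overline{\alpha}}$, its $1$-eigenspace is the whole tangent space, of dimension $\dim_{\mathbb{C}}\gamma$ on a Zariski-open subset of each component, so Proposition~\ref{goodjobsoeren} (whose hypotheses hold here since every $[\rho]\in\mathcal{M}^f_{\overline{\alpha}}$ lifts, $f_*$ being inner on $\pi_1(\Sigma^o)$) gives $d_\gamma=\tfrac12\max_{\nabla}\big(h^1_{\nabla,\operatorname{par}}-h^0_\nabla\big)$ over $\mathcal{M}^\gamma_{\Sigma_f,L,\overline{\alpha}}$, as required.

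The step needing genuine care --- and the main obstacle --- is the abelian prefactor $\Det(f)^{-\frac12\zeta}$: in the body this was evaluated via the Seifert data of $\Sigma_f$ using \cite[Theorem~5.3]{andersen95}, which assumes $f$ of finite order, whereas here $f$ has infinite order. I see two ways around this. One is to argue directly that $\Det(f)^{-\frac12\zeta}$, being the trace of $f^*$ on the one-dimensional space $(\mathcal{V}^\dagger_{\mathrm{ab}})^{-\frac12\zeta}(\Sigma_\sigma)$, is the central-charge part of the $T$-matrix entry raised to the appropriate powers $n_i$ (cf. Remark~\ref{cftdehntwist}), hence again of the form $e^{2\pi i k q'}$ up to a $k$-independent root of unity. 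The other, cleaner route is to observe that $\Sigma_f\cong\Sigma\times S^1$ with $L\cong\mathcal{P}\times S^1$ as framed links up to $n_i$ full twists of the $i$-th component, so that $Z_N^{(k)}(\Sigma_f,L,\overline{\lambda})$ differs from the $m=1$ mapping-torus case --- already covered by Corollary~\ref{aecthm} and Theorem~\ref{GRT} --- only by the framing-anomaly factor $\prod_i (T^{(k)}_{\lambda_i\lambda_i})^{n_i}$, a $k$-dependent root of unity that merely shifts each Chern--Simons phase $q_\gamma$ by a rational and leaves every $d_\gamma$ unchanged. Either way the shape required by Conjectures~\ref{conj:AEC} and~\ref{conj:GR} is preserved.
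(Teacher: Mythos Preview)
Your argument is correct and lands in the same place as the paper, but the route is organised differently. The paper's proof goes straight through the CFT side: by Remark~\ref{cftdehntwist} the action of each $T_i$ on the full modular functor (abelian factor included) is multiplication by the $T$-matrix entry $T^{(k)}_{\lambda_i\lambda_i}$, so the quantum invariant is a product of these scalars times $\dim\mathcal{V}^{\dagger}_{N,k,\overline{\lambda}}$, already of the required shape; the identification of the phases with Chern--Simons values is read off from the computation inside Proposition~\ref{dehntwist} (and \cite{gepner-witten}), and the Growth Rate part is Proposition~\ref{goodjobsoeren} just as you have it. In other words, the paper never separates out $\Det(f)^{-\frac12\zeta}$ and hence never meets the infinite-order obstacle you flag: the $T$-matrix already packages central-charge and weight contributions together. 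Your primary route---computing the scalar on $\mathcal{L}^k_{\operatorname{CS}}$ via Proposition~\ref{dehntwist} and multiplying by a Hilbert polynomial---is a legitimate gauge-theoretic recasting of the same computation, and your first ``workaround'' (reading $\Det(f)^{-\frac12\zeta}$ as the central-charge part of the $T$-matrix) is exactly how the paper closes the loop. Your second workaround, recognising $(\Sigma_f,L)$ as $(\Sigma\times S^1,\mathcal{P}\times S^1)$ with framing shifted by $n_i$ full twists and then invoking the already-proved identity-mapping-torus case, is a clean topological alternative the paper does not spell out; it has the virtue of bypassing the question of whether \eqref{combitrace} is literally available for infinite-order $f$, which in your primary route you assert but do not justify in detail.
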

\begin{proof}
	That the quantum invariants in this case have asymptotic expansions of the desired form follows from Remark~\ref{cftdehntwist}. That the phases occuring are the relevant Chern--Simons values follows from the proof of Proposition~\ref{dehntwist}; see in particular \cite[App. A]{gepner-witten}.
	
	As follows also by Remark~\ref{cftdehntwist}, the growth rate of the quantum invariants is the growth rate of the spaces of conformal blocks, which on the other hand is the dimension of the moduli space, i.e. the dimension of the fixed point set of $T$, so the Growth Rate Conjecture follows from Proposition~\ref{goodjobsoeren} as in the proof of Theorem~\ref{GRT}.
\end{proof}

\providecommand{\MR}[1]{}\def\Dbar{\leavevmode\lower.6ex\hbox to
  0pt{\hskip-.23ex \accent"16\hss}D} \def\cprime{$'$}
  \def\dbar{\leavevmode\hbox to 0pt{\hskip.2ex \accent"16\hss}d}
  \def\cprime{$'$}
\providecommand{\bysame}{\leavevmode\hbox to3em{\hrulefill}\thinspace}
\providecommand{\MR}{\relax\ifhmode\unskip\space\fi MR }
\providecommand{\MRhref}[2]{
  \href{http://www.ams.org/mathscinet-getitem?mr=#1}{#2}
}
\providecommand{\href}[2]{#2}

\end{document}